\documentclass[reqno, 11pt, a4paper]{amsart}

\usepackage{latexsym,ifthen,xspace}
\usepackage{amsmath,amssymb,amsthm}
\usepackage{bbm}
\usepackage{enumerate, calc}
\usepackage[colorlinks=true, pdfstartview=FitV, linkcolor=blue,
citecolor=blue, urlcolor=blue, pagebackref=false]{hyperref}
\usepackage[text={33pc,605pt},centering]{geometry}    % 11pt
\usepackage{orcidlink}
\usepackage{graphicx}

\newtheorem{theorem}{Theorem}[section]
\newtheorem{lemma}[theorem]{Lemma}
\newtheorem{prop}[theorem]{Proposition}
\newtheorem{assumption}[theorem]{Assumption}
\newtheorem{cor}[theorem]{Corollary}

\theoremstyle{definition}
\newtheorem{definition}[theorem]{Definition}
\newtheorem{example}[theorem]{Example}

\theoremstyle{remark}
\newtheorem{remark}[theorem]{Remark}

\numberwithin{equation}{section}

\DeclareMathAlphabet{\mathsl}{OT1}{cmss}{m}{sl}
\SetMathAlphabet{\mathsl}{bold}{OT1}{cmss}{bx}{sl}

\makeatletter
\newcommand\initCounter[1]{%
  \@ifundefined{c@constcnt-#1}%
  {\newcounter{constcnt-#1}}%
  {}%
}

\newcommand\const[2][c]{%
  \initCounter{#1}%
  \@ifundefined{const-#1-#2}%
  {%
    \stepcounter{constcnt-#1}%
    \expandafter\xdef\csname const-#1-#2\endcsname{\arabic{constcnt-#1}}%
  }%
  {}%
  #1_{\csname const-#1-#2\endcsname}%
}
\makeatother

\newcommand{\al}{\ensuremath{\alpha}}
\newcommand{\be}{\ensuremath{\beta}}
\newcommand{\ga}{\ensuremath{\gamma}}
\newcommand{\de}{\ensuremath{\delta}}

\newcommand{\et}{\ensuremath{\eta}}
\renewcommand{\th}{\ensuremath{\theta}}

\newcommand{\ka}{\ensuremath{\kappa}}
\newcommand{\la}{\ensuremath{\lambda}}
\newcommand{\rh}{\ensuremath{\rho}}
\newcommand{\si}{\ensuremath{\sigma}}
\newcommand{\ta}{\ensuremath{\tau}}

\newcommand{\om}{\ensuremath{\omega}}

\newcommand{\ve}{\ensuremath{\varepsilon}}

\newcommand{\vp}{\ensuremath{\varphi}}

\newcommand{\Ga}{\ensuremath{\Gamma}}
\newcommand{\De}{\ensuremath{\Delta}}

\newcommand{\La}{\ensuremath{\Lambda}}
\newcommand{\Si}{\ensuremath{\Sigma}}

\newcommand{\Om}{\ensuremath{\Omega}}
\newcommand{\cA}{\ensuremath{\mathcal A}}

\newcommand{\cC}{\ensuremath{\mathcal C}}

\newcommand{\cE}{\ensuremath{\mathcal E}}
\newcommand{\cF}{\ensuremath{\mathcal F}}
\newcommand{\cG}{\ensuremath{\mathcal G}}
\newcommand{\cH}{\ensuremath{\mathcal H}}

\newcommand{\cJ}{\ensuremath{\mathcal J}}

\newcommand{\cL}{\ensuremath{\mathcal L}}

\newcommand{\cN}{\ensuremath{\mathcal N}}

\newcommand{\cP}{\ensuremath{\mathcal P}}

\newcommand{\bbE}{\ensuremath{\mathbb E}}

\newcommand{\bbN}{\ensuremath{\mathbb N}}

\newcommand{\bbR}{\ensuremath{\mathbb R}}

\newcommand{\bbZ}{\ensuremath{\mathbb Z}} 
\newcommand{\me}{\ensuremath{\mathrm{e}}}

\newcommand{\md}{\ensuremath{\mathrm{d}}}

\newcommand{\scprfield}[2]{%
  \ensuremath{%
    \big\langle
    #1, #2
    \big\rangle
  }
}

\newcommand{\scprreal}[3]{%
  \ensuremath{%
    \big\langle
    #1, #2
    \big\rangle_{\raisebox{-0ex}{$\scriptstyle L^{\raisebox{.1ex}{$\scriptscriptstyle 2$}} (#3)$}}
  }
}

\newcommand{\scpr}[3]{%
  \ensuremath{%
    \big\langle
    #1, #2
    \big\rangle_{\raisebox{-0ex}{$\scriptstyle \ell^{\raisebox{.1ex}{$\scriptscriptstyle 2$}} (#3)$}}
  }
}

\newcommand{\Norm}[2]{%
  \ensuremath{%
    \mathchoice{\big\lVert #1 \big\rVert}
    {\lVert #1 \rVert}
    {\lVert #1 \rVert}
    {\lVert #1 \rVert}_{\raisebox{-.0ex}{$\scriptstyle #2$}}
  }
}

\newcommand{\wick}[1]{\mathopen{:}#1\mathclose{:}}

\DeclareMathOperator{\mean}{\mathbb{E}}
\DeclareMathOperator{\Mean}{\mathrm{E}}
\DeclareMathOperator{\prob}{\mathbb{P}}
\DeclareMathOperator{\Prob}{\mathrm{P}}

\DeclareMathOperator{\dist}{\mathrm{dist}}
\DeclareMathOperator{\id}{\mathrm{id}}

\DeclareMathOperator{\meanPhi}{\mathbf{E}}
\DeclareMathOperator{\probPhi}{\mathbf{P}}

\DeclareMathOperator{\supp}{\mathrm{supp}}

\newcommand{\ldef}{\ensuremath{\mathrel{\mathop:}=}}
\newcommand{\rdef}{\ensuremath{=\mathrel{\mathop:}}}

\def\indicator{{\mathchoice {1\mskip-4mu\mathrm l}%
    {1\mskip-4mu\mathrm l}{1\mskip-4.5mu\mathrm l}%
    {1\mskip-5mu\mathrm l}}}

\begin{document}

\title[Scaling limits of nonlinear DGFF]{Scaling limits for nonlinear functionals of the discrete Gaussian free field with degenerate random conductances}

% 
% Remove any unused author tags.
% 

% author one information
\author[C. F. Peter]{Christof F. Peter}
\address{University of Cambridge}
\curraddr{Centre for Mathematical Sciences, Wilberforce Road, Cambridge CB3 0WB}
\email{cfp33@cam.ac.uk}
\thanks{}

% author two information
\author[M. Slowik]{Martin Slowik \orcidlink{0000-0001-5373-5754}}
\address{University of Mannheim}
\curraddr{Mathematical Institute, B6, 26, 68159 Mannheim}
\email{slowik@math.uni-mannheim.de}
\thanks{}

\subjclass[2020]{60G60; 60K37; 82B20; 82B41; 60J45}

\keywords{%
  Gaussian free field; random conductance model; percolation; $P(\phi)_{2}$ model; H{\o}egh-Krohn model; Liouville quantum gravity; Green's kernel%
}

\date{\today}

\dedicatory{}

\begin{abstract}
  We consider nonlinear functionals of discrete Gaussian free fields with ergodic random conductances on a class of random subgraphs of $\mathbb{Z}^{2}$, including i.i.d.\ supercritical percolation clusters, where the conductances are possibly unbounded but satisfy an integrability condition. As our main result, we show that, for almost every realisation of the environment, the nonlinear functionals of the rescaled field converge to their continuum counterparts in the Sobolev space $H^{-s}(D)$ for suitable $s > 0$. To obtain the latter, we establish pointwise bounds for the Green's function of the associated random walk among random conductances with Dirichlet boundary conditions, which are valid for all $d \geq 2$. 
\end{abstract}

\maketitle

\tableofcontents

\section{Introduction}
Nonlinear functionals of the Gaussian free field (GFF), also known as the massless bosonic free field, have been objects of interest in probability and mathematical physics for several decades. In constructive quantum field theory (QFT) they provide rigorous models of self-interacting Euclidean field theories and arise as scaling limits of a variety of statistical-mechanical models. A fundamental example is the field with quartic interaction which is closely related to the scaling limits appearing in the study of the Ising model \cite{simon1973ising,aizeman2024ising}. For a long time it was unclear whether non-trivial self-interacting field theories satisfying the physically relevant Wightman axioms \cite{wightman1956quantum} could be constructed as rigorous mathematical objects. While this remains open in higher dimensions, the question was partially resolved in the late 1960s and early 1970s by Glimm, Jaffe, Guerra, Rosen, Simon and others, who constructed field theories with polynomial interactions in $1+1$ space–time dimensions, now known as the $P(\phi)_{2}$ Euclidean QFTs \cite{glimm1968lambda, glimm1970lambda, guerra1975euclidean, glimm2012quantum, simon2015p}. H\o egh-Krohn extended this framework to real functions of exponential type in \cite{hoegh1971general}, and later, jointly with Albeverio, verified the Wightman axioms for such models in \cite{albeverio1974wightman}. Although the original constructions are typically based on ultraviolet cut-offs, such as smeared fields or Fourier partial sums, lattice approximations provide an alternative and physically natural route. This approach, introduced by Guerra, Rosen and Simon in \cite[Chapter~4]{guerra1975euclidean}, approximates the macroscopic field by discrete fields on microscopic lattices $h \mathbb{Z}^2$ and then lets the mesh size $h$ tend to zero. 

In the present article we study nonlinear functionals of the \emph{inhomogeneous} discrete Gaussian free field (DGFF) defined as the centred Gaussian field whose covariances are given by the Green's function of the discrete Laplacian on random weighted subgraphs of $\mathbb{Z}^{2}$ modelling microscopic \emph{impurities} in the underlying medium. Various different aspects of inhomogeneous DGFFs with \emph{uniformly elliptic} edge weights, also called conductances, on $\mathbb{Z}^{d}$, $d \geq 2$, have previously been studied, for instance, in \cite{naddaf1997homogenization, caputo2003finite, miller2011fluctuation, biskup2011scaling, cotar2012existence, cotar2015uniqueness} and more recently in \cite{cipriani2020discrete, chiarini2021disconnection, chiarini2024stochastic, deuschel2024ray, chiarini2025hardwall}. Further results have also been obtained for inhomogeneous DGFFs with \emph{uniformly elliptic} edge weights on random subgraphs of $\mathbb{Z}^{2}$, see e.g.\ \cite{schweiger2024maximum}. We allow for environments of random weights in form of ergodic and unbounded conductances under suitable integrability conditions and geometric assumptions on the resulting random subgraph of $\mathbb{Z}^{2}$ defined in terms of the edge set with positive weights. In particular, our framework includes both supercritical i.i.d.\ and certain correlated percolation clusters as a basic examples. In this setting the scaling limit of the inhomogeneous DGFF with Dirichlet boundary conditions to a continous GFF with a deterministic, non-degenerate matrix $\Si$ has recently been obtained in \cite{andres2025scaling}.

Our results may be understood as addressing how microscopic random impurities affect the macroscopic behaviour of the associated self-interacting field theory in the scaling limit. We focus on exponential-type nonlinear functionals of such inhomogeneous DGFFs. Although the random walk among degenerate random conductances as well as the associated inhomogeneous DGFF homogenises to a Brownian motion and a continuum GFF with deterministic, non-degenerate matrix $\Si$ in the scaling limit, respectively; see, for example, \cite{deuschel2018quenched, andres2025scaling}, the presence of microscopic percolation leaves a non-trivial trace in the nonlinear observables. Indeed, we show that the Wick-renormalised $k$-th power of the inhomogeneous DGFF $\Phi_n$ on a subset of the infinite connected component, $nD \cap \cC_{\infty}(\om)$, subject to Dirichlet boundary conditions, converges in the scaling, for almost every environment $\om$,
\begin{align*}
  \wick{\Phi_n^k} 
  \underset{n \to \infty}{\overset{\mathrm{law}}{\;\longrightarrow\;}}
  \prob[0 \in \cC_{\infty}(\om)]^{1-k/2} \wick{\Psi^k},
\end{align*}
where $\Psi$ is a continuum GFF on a suitable bounded domain $D \subset \mathbb{R}^{2}$ with the homogenised matrix $\Si$. Thus, percolation has a damping effect on the field itself $(k=1)$, is not visible at the level of the Wick square $(k=2)$, and has a reinforcing effect on higher Wick powers $(k>2)$, which are precisely the observables relevant for self-interacting field theories. In particular, as the percolation parameter approaches criticality, the limiting free field is suppressed, the second Wick power remains unchanged, and the higher Wick powers diverge.

As an application of our main theorem, we also determine the scaling limit of the Gaussian multiplicative chaos (GMC) associated with the DGFF, equivalently the discrete approximation of Liouville quantum gravity (LQG) in the $L^2$-regime; see \cite{berestycki2024gaussian,chatterjee2025liouville}. For suitable $\ga$ and suitable sets $A \subset \mathbb{R}^{2}$, we prove the quenched convergence
\begin{align*}
  \int_A \wick{e^{\ga \vp^{nD}_{\lfloor nx \rfloor}}}
  \indicator_{\lfloor nx \rfloor \in \cC_{\infty}(\om)}\, \md x
  \underset{n \to \infty}{\overset{\mathrm{law}}{\;\longrightarrow\;}}
  \prob[0 \in \cC_{\infty}(\om)]
  \scprfield{\wick{e^{\ga \prob[0 \in \cC_{\infty}(\om)]^{-1/2} \Psi}}}{\indicator_A}.
\end{align*}
Finally, we apply our results to the self-interacting field theories discussed above. Namely, we identify the scaling limit of the Gibbs measure on $\bbR^{nD \cap \cC_{\infty}(\om)}$ with Hamiltonian
\begin{align*}
  \cH_{V}^{\om}(\vp)
  \,=\,
  \frac{1}{2} \sum_{\{x,y\} \in E(\cC_{\infty}(\om))}
  \om(\{x,y\}) \bigl(\vp_x - \vp_y \bigr)^2 
  \,+\,
  \frac{1}{n^2} \sum_{x \in \cC_{\infty}(\om)} \wick{V(\ga \vp_x)}
\end{align*}
for potentials $V$ of exponential type. 

\subsection{The model}
For $d \geq 2$, let $(\mathbb{Z}^{d}, E_{d})$ be the $d$-dimensional Euclidean lattice of nearest-neighbor bonds. Vertices $x, y \in \mathbb{Z}^{d}$ are adjacent ($x \sim y$) if $\{x, y\} \in E_{d}$. Define the environment space $(\Omega, \mathcal{F}) \ldef ([0, \infty)^{E_{d}}, \mathcal{B}([0, \infty))^{\otimes E_{d}})$, where $\omega(e)$ is called the conductance of edge $e \in E_{d}$. We call an edge $e$ \emph{open} if $\omega(e) > 0$ and denote the set of open edges by $\mathcal{O}(\omega)$. Moreover, infinite connected components of open clusters are denoted by  $\mathcal{C}_{\infty}(\omega)$. We endow $(\Omega, \mathcal{F})$ with the $d$-parameter group of space shifts $(\tau_{x})_{x \in \mathbb{Z}^{d}}$ acting as
\begin{align}\label{eq:def:space_shift}
  (\tau_{z}\, \omega)(\{x, y\})
  \;\ldef\;
  \omega(\{x+z, y+z\}),
  \qquad \forall\, \{x, y\} \in E_{d}.
\end{align}
Further, we will denote by $\prob$ a probability measure on $(\Omega, \mathcal{F})$, and we write $\mean$ for the expectation with respect to $\prob$. Throughout the paper, we will impose assumptions both on the law $\prob$ and on geometric properties of the infinite cluster.
\begin{assumption} \label{ass:law}
  Assume that $\prob$ satisfies the following conditions.
  \begin{enumerate}[(i)]
  \item
    The law $\prob$ is stationary and ergodic with respect to space shifts of $\mathbb{Z}^{d}$, that is, $\prob \circ \tau_{x}^{-1} = \prob$ for all $x \in \mathbb{Z}^{d}$ and $\prob[A] \in \{0, 1\}$ for any $A \in \mathcal{F}$ such that $\tau_{x}^{-1}(A) = A$ for all $x \in \mathbb{Z}^{d}$.     
    
  \item
    For $\prob$-a.e.\ $\omega$, the set $\mathcal{C}_{\infty}(\omega)$ is non-empty and connected, that is, there exists a unique infinite connected component -- also called infinite open cluster -- and $\th_0 \ldef \prob\bigl[0 \in \mathcal{C}_{\infty}\bigr] > 0$.
  \end{enumerate}
\end{assumption}
Let $\Omega^{*} \ldef \{\omega \in \Omega : \mathcal{C}_{\infty}(\omega) \text{ is non-empty and connected} \}$ and $\Omega^{*}_{0} \ldef \{\omega \in \Omega^{*} : 0 \in \mathcal{C}_{\infty}(\omega)\}$. We write $\prob_{0}[ \,\cdot\, ] \ldef \prob\bigl[ \,\cdot \,|\, 0 \in \mathcal{C}_{\infty}\bigr]$ to denote the conditional distribution given the event $\{0 \in \mathcal{C}_{\infty}\}$, and $\mean_{0}$ for the expectation with respect to $\prob_{0}$. In the case of i.i.d.\ conductances Assumption~\ref{ass:law} is fulfilled if $\prob\bigl[\omega(e) > 0\bigr] > p_{c}$, where $p_{c} \equiv p_{c}(d)$ denotes the critical probability for bond percolation on $\mathbb{Z}^{d}$.

For set $\La \subseteq \bbZ^d$, we denote by $g^{\omega}_{\Lambda}$ the Green's function on $\Lambda \cap \mathcal{C}_{\infty}(\omega)$ associated with the operator $\mathcal{L}^{\omega}$ acting on bounded functions $f\colon \mathbb{Z}^{d} \to \mathbb{R}$ as
\begin{align} \label{eq:defL}
  \big(\mathcal{L}^{\omega} f)(x)
  \;=\; 
  \sum_{y \sim x} \omega(\{x, y\})\, \bigl(f(y) - f(x)\bigr).
\end{align}
That is, $g^{\omega}_{\Lambda}$ is the solution of the Poisson equation 
\begin{align}\label{eq:PDE_green_killed}
  \left\{
    \mspace{-6mu}
    \begin{array}{rcll}
      \big(\mathcal{L}^{\omega} g_{\Lambda}^{\omega}(\cdot, y)\big)(x)
      &\mspace{-5mu}=\mspace{-5mu}& -\delta_{y}(x),
      &x \in \Lambda \cap \mathcal{C}_{\infty}(\omega),
      \\[.5ex] 
      g_{\Lambda}^{\omega}(x, y)
      &\mspace{-5mu}=\mspace{-5mu}& 0, 
      &x \in (\Lambda \cap \mathcal{C}_{\infty}(\omega))^{\mathrm{c}}.
    \end{array}
  \right.
\end{align}
We now introduce the inhomogeneous harmonic crystal:
\begin{definition}[Inhomogeneous DGFF]\label{def:inhomogeneous:DGFF}
  Let $\Lambda \subset \mathbb{Z}^{d}$ be bounded. For any $\omega \in \Omega^{*}$, the \emph{inhomogeneous discrete Gaussian free field} on $\Lambda \cap \mathcal{C}_{\infty}(\omega)$ with zero boundary conditions is the Gaussian process $\vp^{\Lambda} = \bigl(\varphi_{x}^{\Lambda} : x \in \mathbb{Z}^{d} \bigr)$ with law $\probPhi^{\om}$ determined by
  \begin{align} \label{eq:covField}
    \meanPhi^{\omega}\bigl[ \varphi_{x}^{\Lambda} \bigr]
    \,=\,
    0
    \quad \text{and} \quad
    \meanPhi^{\omega}\bigl[
      \varphi_{x}^{\Lambda}\, \varphi_{y}^{\Lambda}
    \bigr]
    \,=\,
    g_{\Lambda}^{\omega}(x, y),
    \qquad x, y \in \Lambda \cap \mathcal{C}_{\infty}(\omega).
  \end{align}
\end{definition} 

It is well known that powers of the GFF in $d=2$ require additive renormalisation. For this purpose let us recall that, for a centered Gaussian random variable $X$ with variance $\si^2 >0$ and integer $k \in \bbN$, the \emph{$k$-th Wick power} of $X$ is defined as 
\begin{align}\label{eq:def:wick:by:hermite}
  \wick{X^k}
  \,\ldef\,
  \mathbf{H}_k(X, \si^{2})
  \quad
  \left(
    \ldef k! \sum_{m=0}^{\lfloor \frac{k}{2}\rfloor} \frac{(-1)^m \si^{2m}}{m!(k-2m)!} X^{k-2m}
  \right),
\end{align}
where $\mathbf{H}_k$ is the $k$-th Hermite polynomial with variance $\si^2$ (cf.~\cite[Theorem 3.19]{janson1997gaussian} and \cite[Equation (I.18)-(I.19)]{simon2015p}). The family of Hermite polynomials forms a complete orthonormal basis the Hilbert space associated to the underlying Gaussian measure (cf.~\cite[Chapters 1-3]{janson1997gaussian}) and, most importantly for our purposes, for any $X,Y$ jointly Gaussian centered random variables and any $k,l \in \bbN$ 
\begin{equation} \label{eq:wick:covariance:gaussian:xy}
  \mean\Bigl[ \wick{X^k} \wick{Y^l} \Bigr] 
  \,=\, 
  \begin{cases} 
    k!\, \mean\bigl[ X Y \bigr]^{k} & \text{if } k = l,
    \\%[.5ex] 
    0 & \text{else,} 
  \end{cases}
\end{equation}
(cf.~\cite[Theorem 3.9]{janson1997gaussian} or \cite[Theorem I.3.]{simon2015p}), which also implies that $\mean\bigl[ \wick{X^{k}} \bigr] = 0$. Extending this Definition to an analytical function $F(x) = \sum_{k \geq 0} a_{k} x^{k}$ (cf.~\cite[Chapter 3.2.]{janson1997gaussian}) and applying this to the DGFF we finally encounter our main object of interest, which is 
\begin{align}\label{eq:def:wick:analytic:dgff:pointwise}
  \wick{F(\varphi_{x}^{\Lambda})}
  \,\ldef\,
  \sum_{k \geq 0} a_k\, \wick{\left(\varphi_{x}^{\Lambda}\right)^k}.
\end{align}
Using \eqref{eq:wick:covariance:gaussian:xy}, one easily verifies that, for all $x,y \in \La \cap \cC_{\infty}$ 
\begin{align}\label{eq:dgff:covariance:calculation:intro}
  \meanPhi^{\om} \Bigl[ 
    \wick{F(\varphi_{x}^{\Lambda})} \wick{F(\varphi_{y}^{\Lambda})}
  \Bigr]
  \,=\,
  \sum_{k\geq 0} a_{k}^{2} \meanPhi^{\om} \bigl[ 
    \varphi_{x}^{\Lambda}\, \varphi_{y}^{\Lambda}
  \bigr]^{k}
  \,=\,
  \sum_{k \geq 0} k!\, a_{k}^{2}\, g^{\om}_{\Lambda} (x,y)^{k} 
  \,=\,
  F_{2} \bigl( g^{\om}_{\Lambda} (x,y) \bigr),
\end{align}
with $F_{2}(x) \ldef \sum_{k \geq 0} k!\, a_{k}^{2} x^k$. Since this function plays a central role, we arrive at the following important
\begin{definition}\label{def:beta:entire}
  An analytic function $F(x) = \sum_{k \geq 0} a_{k} x^k$ is called \emph{$\be$-Fock-entire} if there exists an $M \equiv M(\be) < \infty$ such that 
  \begin{align*}
    F_{2}(x) 
    \,=\, 
    \sum_{k \geq 0} k! \, a_k^2 x^k
    \,\leq\,
    M e^{\be x},
    \qquad
    \forall\, x \geq 0.
  \end{align*}
\end{definition}
\begin{example}\label{example:be:fock:entire}
  Important examples of analytic functions satisfying Definition~\ref{def:beta:entire} include:
  \begin{enumerate}[(i)]
  \item
    \textit{Polynomials:} For any $K \in \bbN$ and coefficients $(a_{k})_{k=1}^{K}$, the polynomial $P(x) \ldef \sum_{k=0}^K a_{k} x^k$ is $\be$-Fock-entire for any $\be>0$. 
    
  \item
    \textit{Trigonometric and hyberbolic functions:} For $F \equiv \sin$, we have $F_{2}(x) = \sinh( x) = (e^{x} - e^{- x})/2$, hence $\sin$ is $1$-Fock-entire. Similarly, $\cos$, $\sinh$ and $\cosh$ are all $1$-Fock-entire.
    
  \item
    \textit{Exponentials:} For $F \equiv \exp$, we have $F_{2} \equiv \exp$, hence exponentials are also $1$-Fock-entire.
  \end{enumerate}
\end{example}
\begin{remark}
  \begin{enumerate}[(i)]
  \item
    While the $\be$-dependence may not be seen in standard definitions of this kind, it provides us with the flexibility to incorporate both polynomials, where we will always choose $\ga=1$ and use $\be$ to tune the regularity, and exponential type functions, where we will always choose $\be=1$ and tune regularity with $\ga$ (see Example~\ref{examples:mainresult1}). 

  \item
    The expression $F_2(1) = \sum_k k! a_k^2 (\equiv \Norm{F}{\cF}^2)$ denotes the so called Fock-norm of the function $F$ (cf.~\cite[Equation 2.8]{hu2009wick}). Hence, a function $F$ satisfying Definition~\ref{def:beta:entire} for some $\be>0$ will also be an element of Fock-space, explaining the name. For an introduction to Fock spaces, we refer to \cite[Chapter 4]{janson1997gaussian} and \cite[Chapter 2]{hu2009wick}. For an introduction from a mathematical physics perspective, we refer to \cite[Chapter 6.3]{glimm2012quantum} and \cite[Chapter 1.3]{simon2015p}. 

  \item
    The formula in \eqref{eq:dgff:covariance:calculation:intro} for the H\o egh-Krohn model introduced in \cite{hoegh1971general} can be found in \cite[Equations (3.6) and (3.12)]{albeverio1974wightman}. 
  \end{enumerate}
\end{remark}

\subsection{Assumptions and limit objects} 
First, we state the assumptions on the underlying graph under which our first main results will be shown. Given $\omega \in \Omega^{*}$, let $E(\mathcal{C}_{\infty}(\omega)) \ldef \{\{x, y\} \in \mathcal{O}(\omega) : x, y \in \mathcal{C}_{\infty}(\omega)\}$ be the edge set of the infinite cluster. We denote by $d^{\omega}$ the graph‐distance on the pair 
$\bigl(\mathcal{C}_{\infty}(\omega),\,E(\mathcal{C}_{\infty}(\omega))\bigr)$, namely, for any $x, y \in \mathcal{C}_{\infty}(\omega)$, $d^\omega(x, y)$ is the minimal length of a path joining $x$ and $y$ that consists only of open edges. For $x \in \mathcal{C}_{\infty}(\omega)$ and $r \geq 0$, let $B^{\omega}(x, r) \ldef {\{ y \in \mathcal{C}_{\infty}(\omega) : d^{\omega}(x, y) \leq \lfloor r \rfloor\}}$ be the closed ball with centre $x$ and radius $r$ with respect to $d^{\omega}$. Likewise, for $x \in \mathbb{Z}^d$ we write $B(x, r) \ldef \{y \in \mathbb{Z}^d : |x-y|_{1}\leq \lfloor r \rfloor\}$
for balls in $\mathbb{Z}^d$ centred at $x$, where $|\cdot|_{1}$ is the usual graph‐distance on $\mathbb{Z}^d$. More generally, $|\cdot|_{p}$ with $p \in [1, \infty]$ denotes the standard $p$‐norm on both $\mathbb{R}^d$ and $\mathbb{Z}^d$. 
\begin{definition}[Regular balls]\label{def:regBall}
  Let $C_{\mathrm{V}} \in (0, 1]$, $C_{\mathrm{riso}} \in (0, \infty)$ and $C_{\mathrm{W}} \in [1, \infty)$ be fixed constants.  For $x \in \mathcal{C}_{\infty}(\omega)$ and $n \geq 1$, we say a ball $B^{\omega}(x, n)$ is \emph{regular} if it satisfies the following conditions.
  \begin{enumerate}[(i)]
  \item
    Volume regularity of order $d$, that is,
    \begin{align*}
      C_{\mathrm{V}}\, n^{d} \;\leq\; |B^{\omega}(x, n)|.
    \end{align*}
    
  \item
    (Weak) relative isoperimetric inequality. There exists $\mathcal{S}^{\omega}(x, n) \subset \mathcal{C}_{\infty}(\omega)$ connected such that $B^{\omega}(x, n) \subset \mathcal{S}^{\omega}(x, n) \subset B^{\omega}(x, C_{\mathrm{W}} n)$ and
    \begin{align*}
      |\partial_{\mathcal{S}^{\omega}(x, n)}^{\omega} A|
      \;\geq\;
      C_{\mathrm{riso}}\, n^{-1}\, |A|
    \end{align*}
    for every $A \subset \mathcal{S}^{\omega}(x, n)$ with $|A| \leq \tfrac{1}{2}\, |\mathcal{S}^{\omega}(x, n)|$.
  \end{enumerate}
\end{definition}
\begin{assumption}\label{ass:cluster} 
  For some $\theta, \delta \in (0,1)$, $C_{\mathrm{V}} \in (0, 1]$, $C_{\mathrm{riso}} \in (0, \infty)$ and $C_{\mathrm{d}}, C_{\mathrm{W}} \in [1, \infty)$, assume that there exist $\Omega_{\mathrm{reg}} \in \mathcal{F}$ with $\prob[\Omega_{\mathrm{reg}}] = 1$ and a non-negative random variable $\const[N]{Nconst:ass:cluster}(\omega)$ such that $\const[N]{Nconst:ass:cluster}(\omega) < \infty$ for any $\omega \in \Omega_{\mathrm{reg}} \cap \Omega_{0}^{*}$, and for all $n \geq \const[N]{Nconst:ass:cluster}(\omega)$ the following hold.
  \begin{enumerate}[(i)]
  \item
    The ball $B^{\omega}(0, n)$ is $\theta$-\emph{very regular}, that is, for every $x \in B^{\omega}(0, n)$ and $r \geq n^{\theta/d}$ the ball $B^{\omega}(x, r)$ is regular with constants $C_{\mathrm{V}}$, $C_{\mathrm{riso}}$ and $C_{\mathrm{W}}$.  

  \item[(ii)]
    For any $x, y \in [-n, n]^{d} \cap \mathcal{C}_{\infty}(\omega)$,
    \begin{align*}
      d^{\omega}(x, y)
      \;\leq\;
      \bigl( C_{\mathrm{d}}\, |x - y|_{\infty} \bigr) \vee n^{1-\delta}.
    \end{align*}
  \end{enumerate}
\end{assumption}
\begin{remark} \label{rem:ass_cluster} 
  (i) Assumption~\ref{ass:cluster} holds, for instance, on supercritical i.i.d.\ Bernoulli percolation clusters for all $\theta \in (0,1)$, see~\cite{barlow2004random}. Moreover, Assumption~\ref{ass:cluster} is even satisfied for a class of percolation models with long range correlations, see \cite[Proposition~4.3]{sapozhnikov2017random} and \cite[Theorem~2.3]{drewitz2014chemical}. For more details and examples, including level sets of DGFFs, we refer to \cite[Examples~1.11--1.13]{deuschel2018quenched} and references therein.

  (ii) For any bounded and simply connected $D \subset \mathbb{R}^{d}$, setting $n D \ldef \{z \in \mathbb{R}^{d} : z/n \in D\}$,  there exists $r_{D} \in (0, \infty)$ such that $n D \subset [-r_{D} n, r_{D} n]^{d}$. Then, Assumption~\ref{ass:cluster}-(ii) immediately implies that, for $\prob_{0}$-a.e.\ $\omega$ and all $n \geq \max\{ \const[N]{Nconst:ass:cluster}(\omega)/r_{D}, 1 \}$, we have $n D \cap \mathcal{C}_{\infty}(\omega) \subset B^{\omega}(0, C_{\mathrm{d}} r_{D} n) \rdef B^{\om}_D(n)$.  
\end{remark}
\begin{assumption}\label{ass:pq}
  There exist $p, q \in [1, \infty]$ and $\theta \in (0, 1)$ satisfying
  \begin{align} \label{eq:cond:pq_rg}
    \frac{1}{p} \,+\, \frac{1}{q}
    \;<\;
    \frac{2(1-\theta)}{d-\theta},
  \end{align}
  such that for any $e \in E_{d}$,
  \begin{align*}
    \mean\bigl[ \omega(e)^{p} \bigr] \;<\; \infty
    \qquad \text{and} \qquad
    \mean\bigl[ \omega(e)^{-q} \indicator_{\{e \in \mathcal{O}\}} \bigr] \;<\; \infty,
  \end{align*}
  where we used the convention that $0/0 = 0$.
\end{assumption}
For any realisation $\om \in \Om^{*}$, let $X\equiv (X)_{t \geq 0}$ be the variable speed random walk (VSRW) in random environment with the generator $\cL^{\om}$ from \eqref{eq:defL}. Introducing the measure $\mu^{\om}(x) \ldef \sum_{y \sim x} \om(\{x,y\})$ for any $x \in \cC_{\infty}(\om)$, the VSRW is the continuous time Markov chain on $\cC_{\infty}(\om)$ that waits at a vertex $x$ for an $\mathrm{exp}(\mu^{\om}(x))$-distributed time and then chooses an adjacent vertex $y\sim x $ that is open in the cluster $\cC_{\infty}(\om)$ with probability $\om(\{x,y\})/\mu^{\om}(x)$. The VSRW is reversible with respect to the counting measure and, denoting its quenched law and the corresponding expectation when started from $x \in \cC_{\infty}(\om)$ by $\Prob^{\om}_x$ and $\Mean^{\om}_x$, the RCM-Green's function \eqref{eq:PDE_green_killed} describes the expected time the VSRW started at $x$ spends in $y$ before being killed when exiting a set $\La \subseteq \bbZ^d$, i.e.\
\begin{align}\label{eq:def:green:rcm:probabilistic}
  g^{\om}_{\La}(x,y)
  \,=\,
  \Mean^{\om}_x \left[ \int_0^{\ta_{\La}(X)} \indicator_{X_t = y} \, \md t\right]
  \,=\,
  \int_0^{\infty} p^{\om,\La}_{t}(x,y) \, \md t,
\end{align}
where $\ta_{\La}(X) \ldef \inf \{t \geq 0 \,:\, X_t \not\in \La\}$ is the first exit time from $\La$ and $p^{\om,\La}_{t}(x,y) \ldef \Prob^{\om}_x\left[X_t =y, t < \ta_{\La}(X)\right]$ is the killed heat kernel. The quenched homogensation of the VSRW on the macroscopic scale is described by the \emph{quenched invariance principle} (QIP), one of the most important results in this field. Denoting by $\Prob^{\Si}_x$ the law of a Brownian motion $W$ started at $x$ with non-degenerate covariance matrix $\Si^2 = \Si \cdot \Si^{T}$, it states:
\begin{theorem} [QIP \cite{deuschel2018quenched}]\label{thm:QIP:RG}
  Suppose there exist $\theta \in (0, 1)$ and $p, q \in [1, \infty]$ such that Assumptions~\ref{ass:law}, \ref{ass:cluster}-(i) and \ref{ass:pq} hold.  Set $X_{t}^{n} \ldef n^{-1} X_{t n^{2}}$ for any $n \in \mathbb{N}$ and $t \geq 0$. Then, for $\prob_{0}$-a.e.\ $\omega$, the process $X^{n} \equiv \bigl(X_t^{n}\bigr)_{t \geq 0}$, converges (under $\Prob_{0}^\omega$) in law towards a Brownian motion with law $\Prob^{\Si}_x$ on $\mathbb{R}^{d}$ with a deterministic non-degenerate covariance matrix $\Sigma^2$.
\end{theorem}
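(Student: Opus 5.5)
This is the quenched invariance principle of \cite{deuschel2018quenched}, and I will follow the harmonic-coordinates (corrector) strategy. The plan is to decompose $X_t = \Phi^\om(X_t) - \chi^\om(X_t)$. Here $\Phi^\om(x) = x - \chi^\om(x)$ is $\cL^\om$-harmonic on $\cC_\infty(\om)$, so $M_t \ldef \Phi^\om(X_t)$ is a martingale under $\Prob^\om_0$. The corrector $\chi$ is built in the usual way on the environment seen from the particle: take the $L^2(\prob_0)$-projection of the local drift onto the closure of potential (gradient) cocycles, and define $\chi$ by integrating this cocycle along paths in the cluster. Stationarity and ergodicity (Assumption~\ref{ass:law}) give sublinearity of $\chi$ in the $L^1$/averaged sense via the ergodic theorem, and $\mean[\om(e)]<\infty$ makes the cocycle square integrable with respect to $\om$.

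\textbf{Step 1 (martingale part).} The quadratic variation of $n^{-1}M_{tn^2}$ is $n^{-2}\int_0^{tn^2}\sum_y \om(\{X_s,y\})(\Phi(y)-\Phi(X_s))^{\otimes2}\,\md s$. By the ergodic theorem for the environment process (which is reversible and ergodic with respect to $\prob_0$, since the VSRW is reversible for counting measure), this converges a.s.\ to $t\,\Si^2$ with $\Si^2 = \mean_0[\sum_y \om(\{0,y\})(\Phi(y))^{\otimes 2}]$. The martingale FCLT then gives convergence of the martingale part to $W$ under $\Prob^{\Si}_0$. Non-degeneracy of $\Si$ follows from a variational characterisation of $\Si$ together with the fact that $\cC_\infty$ has positive density and is connected.

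\textbf{Step 2 (corrector is negligible).} I need $\max_{x\in B^\om(0,n)}|\chi(x)|/n\to 0$ $\prob_0$-a.s. From ergodicity one only gets $n^{-d}\sum_{x\in B(0,n)}|\chi(x)|/n \to 0$, i.e.\ $\ell^1$-sublinearity; upgrading it to $\ell^\infty$ is the main obstacle. I would use a Moser iteration for the $\cL^\om$-harmonic functions $x\mapsto \chi(x)$ minus a linear correction, or rather for the Poisson equation $\cL^\om\chi = \cL^\om(\text{identity})$ on balls. The ingredients are the volume regularity and relative isoperimetric inequality on $\theta$-very regular balls (Assumption~\ref{ass:cluster}-(i)), which yield local Sobolev inequalities on the cluster with only polynomial losses at scales $r\ge n^{\theta/d}$. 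The moment conditions $\om\in L^p$ and $\om^{-1}\indicator_{\mathcal O}\in L^q$ (Assumption~\ref{ass:pq}) enter via H\"older's inequality in the energy estimates, with space averages of $\om^p$ and $\om^{-q}$ controlled by the ergodic theorem. The relation $1/p+1/q<2(1-\theta)/(d-\theta)$ is exactly what makes the iteration close, since it gives a Sobolev exponent strictly larger than the H\"older-dual exponent. This produces
\begin{align*}
  \max_{B^\om(0,n)}|\chi| \le C\Bigl(\tfrac{1}{|B(0,2n)|}\sum_{B(0,2n)}|\chi|^{\alpha}\Bigr)^{1/\alpha}
\end{align*}
with constants depending only on the averaged moments. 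Combined with $\ell^\alpha$-sublinearity (obtained from $\ell^1$ plus the same inequality), this gives uniform sublinearity.

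\textbf{Step 3 (conclusion).} Tightness of $X^n$ and control of $\sup_{s\le t}|X^n_s|$ come from the martingale bound, so $X^n$ stays in $B^\om(0,Cn)$ with high probability. Then $\sup_{s\le t}|\chi(X_{sn^2})|/n\to0$ in probability, and Slutsky's lemma transfers the limit from the martingale part to $X^n$.
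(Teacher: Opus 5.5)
The paper does not prove this statement; it quotes it from \cite{deuschel2018quenched}. Your architecture is the one used there: a corrector built from the environment seen from the particle, $\ell^1$-sublinearity from ergodicity, an upgrade to $\ell^\infty$ by Moser iteration on $\theta$-very regular balls under the $p$--$q$ condition, and the martingale FCLT. However, Step~2, which you rightly single out as the heart of the matter, does not work as written.

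The corrector is not harmonic. With $\Phi = \id - \chi$ harmonic, it solves $\cL^{\om}\chi = V^{\om}$, where the local drift $V^{\om}(x) = \sum_{y}\om(\{x,y\})(y-x)$ is of order one per site. Moser's iteration for this Poisson equation (run on $|\chi| + r$) gives at best
\begin{align*}
  \max_{B^{\om}(x_0,r)} |\chi| \,\leq\, C \Bigl( \Norm{\chi}{\alpha, B^{\om}(x_0,2r)} + r \Bigr),
\end{align*}
with $C$ depending on $\Norm{\mu^{\om}}{p, B^{\om}(x_0,2r)}$ and $\Norm{\nu^{\om}}{q, B^{\om}(x_0,2r)}$. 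The additive $r$ cannot be removed in an estimate of this type. Take a single edge of conductance $2$ in an otherwise homogeneous $\bbZ^d$: the solution of $\cL^{\om} u = V^{\om}\cdot e_1$ vanishing outside $B(0,2n)$ is of order one at the origin, while its $\ell^{\alpha}$-averages over $B(0,2n)$ tend to $0$. So the displayed inequality you state is false. Taking $r = n$ in the correct one yields only $\max_{B^{\om}(0,n)}|\chi| = O(n)$, whatever averaged sublinearity you feed in.

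The missing idea is to work on balls of radius $\de n$ centred at every $x_0 \in B^{\om}(0,n)$. Volume regularity gives $\Norm{\chi}{\alpha, B^{\om}(x_0,2\de n)} \leq c\,\de^{-d/\alpha}\Norm{\chi}{\alpha, B^{\om}(0,3n)}$, hence
\begin{align*}
  \frac{1}{n}\max_{B^{\om}(0,n)} |\chi| \,\leq\, C\,\de^{-d/\alpha}\,\frac{1}{n}\Norm{\chi}{\alpha, B^{\om}(0,3n)} \,+\, C\de,
\end{align*}
and one lets first $n \to \infty$, then $\de \downarrow 0$. This needs two things:
\begin{itemize}
\item The Sobolev inequality on all these small balls. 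This is exactly what $\theta$-very regularity provides, since $\de n \geq n^{\theta/d}$ for large $n$.
\item A bound on $C$ that is uniform in the centre $x_0$. This is the crucial point. The ordinary ergodic theorem on $B(0,3n)$ only bounds $\Norm{\mu^{\om}}{p, B^{\om}(x_0,2\de n)}$ by a multiple of $\de^{-d/p}$. Raised to the power appearing in $C$, this in general swamps the gain $C\de$. You need a uniform ergodic theorem over families of balls (Krengel--Pyke, cf.\ Proposition~\ref{prop:krengel_pyke} and Corollary~\ref{cor:KrengelPyke}).
\end{itemize}
Moser's iteration can be pushed down to $\alpha = 1$ by the usual reabsorption argument in the radii, so $\ell^1$-sublinearity can be fed in directly.

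Two smaller points. First, in Step~3 Doob's inequality controls only $\sup_{s \leq tn^2}|\Phi(X_s)|$, which is Euclidean information, while your sublinearity lives on chemical balls. To conclude that $X$ stays in $B^{\om}(0,Cn)$ you need either a comparison of $d^{\om}$ with $|\cdot|$ or a separate bound on the chemical displacement $d^{\om}(0,X_s)$. The comparison is Assumption~\ref{ass:cluster}-(ii), which is not among the hypotheses, so you have to supply one of the two. Second, non-degeneracy of $\Si$ is most easily read off from Step~2 rather than from a variational formula. If $v \cdot \Si^2 v = 0$, then $v \cdot \Phi$ is constant along open edges, so $v \cdot x = v \cdot \chi(x)$ on the connected cluster. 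This contradicts sublinearity, since $B^{\om}(0,n)$ contains at least $C_{\mathrm{V}} n^d$ points.
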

From now on we will fix both the covariance matrix $\Si$ from the preceding Theorem and a bounded domain $D \subset \mathbb{R}^{d}$, that is, an open, bounded and simply connected subset of $\mathbb{R}^{d}$, and throughout the paper we will assume that its boundary points are regular in the following sense (cf.~\cite[Exercise~10.2.20]{stroock2010probability}, see also \cite[Remark 1.9]{andres2025scaling} for a discussion).
\begin{definition} \label{def:regular}
  We call a point $z \in \partial D$ \emph{strongly regular} if $\Prob_{z}^{\Sigma}[ \tau_{\bar{D}}(W) = 0] = 1$. We say that $D$ is strongly regular if every point $z \in \partial D$ is strongly regular.
\end{definition}
Among other things, this assumption ensures the existence of the killed Green's kernel $g^{\Si}_{D}$ and the corresponding killed Green's function defined by $(G^{\Si}_D f)(x) \ldef \int_D g^{\Si}_D(x,y) f(y) \md y$, which is the unique solution (in a distributional sense) of the Poisson problem 
\begin{align}\label{eq:PDE_green_killed:continuous}
  \left\{
    \mspace{-6mu}
    \begin{array}{rcll}
      \De^{\Si} g^{\Si}_D (\cdot, y)
      &\mspace{-5mu}=\mspace{-5mu}& -\delta_{y}(\cdot),
      & \text{in}\; D,
      \\[.5ex] 
      g^{\Si}_D (\cdot, y)
      &\mspace{-5mu}=\mspace{-5mu}& 0, 
      &\text{in}\; D^c,
    \end{array}
  \right.
\end{align}
where, since $\Si^2$ is positive definite, $\De^{\Si} \ldef \nabla \cdot \Si \nabla$ is strictly elliptic. From a probabilistic perspective, $g^{\Si}_{D}$ is simply the occupation-time density of the Brownian motion $W$ with law $\Prob^{\Si}_{x}$, killed upon exiting $D$, akin to \eqref{eq:def:green:rcm:probabilistic}.

The continuum GFF with Dirichlet boundary conditions, can be interpreted as a generalisation of Brownian motion indexed by a $d$-dimensional set $D \subset \bbR^d$. Unlike Brownian motion, however, the free field in $d \geq 2$ can not be defined pointwise as a function, which is due to the variances at any point, corresponding to the diagonal of the Green's kernel, being infinite. Instead, following for instance \cite[Chapter 2]{sheffield2007gaussian}, one can naturally introduce the field as an element of the dual of the fractional Sobolev space $H^{s}_0(D)$, which is the Hilbert space arising as the $L^2(D)$-completion of the space $C^{\infty}_0(D)$ of smooth functions vanishing at the boundary with respect to the scalar product 
\begin{align}\label{eq:def:sobolev:scalarproduct:Hs}
  (f,g)_{H^{s}_0(D)} 
  \,\ldef\,
  \scprreal{f}{(I - \De)^s g}{D}.
\end{align}
Then, for any $s> d/2-1$, the field lives in the (distributional) space $H^{-s}(D)$, the dual space of $H^{s}_0(D)$ with respect to the $L^2(D)$ scalar product (cf.~\cite[Chapter~5]{evans2010partial} or \cite[Chapter~4]{taylor2023partial}). Hence, the regularity $s$ of the field decreases as the dimension $d$ increases. In $d=2$, the field lives in $H^{-s}(D)$ for any $s>0$. Given that $H^{0}(D)=L^2(D)$, this gives a first hint at the special role of the planar field, which we will revisit immediately after giving the definition (cf.~\cite[Definition 2.5]{sheffield2007gaussian}):
\begin{definition}[Continuum Gaussian free field (CGFF)]\label{def:cgff}
  Let $d \geq 2$, $s > d/2 -1$, $D \subset \mathbb{R}^{d}$ be a bounded domain and $\Sigma \in \mathbb{R}^{d \times d}$ be a positive definite matrix. Then the \emph{continuum Gaussian free field on $D$ with parameter $\Sigma$} is the unique random variable $\Psi \equiv \Psi^{D,\Si}$ on $H^{-s}(D)$ that assigns $H^{s}_0(D) \ni f \mapsto\scprfield{\Psi}{f}$ such that
  \begin{enumerate}[(i)]
  \item 
    $\Psi$ is a.s.\ linear, i.e.\ for all $f, g \in H^{s}_0(D)$ and $a, b \in \mathbb{R}$,
    \begin{align*}
      \scprfield{\Psi}{a f + b g}
      \;=\;
      a\, \scprfield{\Psi}{f} + b\, \scprfield{\Psi}{g}
      \qquad \text{a.s.},
    \end{align*}

  \item for all $f \in H^{s}_0(D)$,
    \begin{align*}
      \scprfield{\Psi}{f}
      \overset{\mathrm{law}}{\;=\;}
      \mathcal{N}\bigl(0, \scprreal{f}{G^{\Si}_Df}{D} \bigr).
    \end{align*}
  \end{enumerate}
  Moreover, we denote by $\probPhi^{\Si}$ the law of $\Psi\equiv \Psi^{D,\Si}$ on the (separable) Hilbert space $H^{-s}(D)$.
\end{definition}
The existence of powers or other nonlinear functionals of the GFF boils down to the integrability of the singularity of the Green's kernel, which, as is well known, obeys the following bounds 
\begin{align}\label{eq:continuous:green:pointwise:log}
  g^{\Si}_D(x,y)
  \,\leq\,
  \left\{
    \mspace{-6mu}
    \begin{array}{lcll}
      2 C_{\Si} \log \left(\dfrac{1}{|x-y|_2}\right) + c,
      &\quad d=2,
      \\[2ex] 
      \tilde{C}_{\Si}(d) |x-y|_2^{2-d},
      &\quad d\geq3
    \end{array}
  \right.
\end{align}
(cf.~\cite[Chapter 11.2]{stroock2010probability}) for constants $C_{\Si}$, $\tilde{C}_{\Si}(d)$, $c < \infty$. Consequently, the Green's kernel is in $L^p(D)$ for every $p < \infty$ when $d = 2$, and for every $p < d/(d-2)$ when $d > 2$. Since $g_{D}^{\Si}$ is square integrable in $d = 3$, one can still define the square of the CGFF in this dimension (cf.~\cite{sznitman2013scaling, deuschel2024ray}), which plays a particularly important role in modern probability theory, appearing, for instance, in the study of occupation times of critical loop soups \cite{le2010markov, werner2021lecture} or random interlacements \cite{sznitman2013scaling, deuschel2024ray}. As we are studying scaling limits of the squared inhomogeneous GFF in $d \in \{2, 3\}$ and its relation to random Schr\"{o}dinger operators in a forthcoming paper \cite{peterslowik2026schroedinger}, we will only focus on the $d = 2$ case in this article. Furthermore, the logarithmic singularity of the planar Green's kernel allows for a much broader class of nonlinear functionals to be considered, which leads us to the construction of the limit object via an ultraviolet cutoff. We will give a sketch here and refer to Proposition~\ref{prop:L2:convergence:cgff:smeared} for the precise statement. As, for example, done in \cite[Chapter 8.5]{glimm2012quantum}, we define a smeared field $\Psi^{\ve}(x) \ldef \scprfield{\Psi}{\rho^{\ve}_x}$, with $\rh^{\ve}_x \ldef \ve^{-d} \rh(\tfrac{\cdot - x}{\ve})$, and a non-negative $\rh \in C^{\infty}_c(\bbR^d)$ such that $\int \rh(x)\, \md x  = 1$. For any $\ve > 0$ fixed, $\Psi^{\ve} \ldef \{\Psi^{\ve}(x) : x \in D\}$ is a continuous Gaussian process with finite variance $\si^{\ve}(x)^2 \ldef \scprreal{\rh^{\ve}_x}{G^{\Si}_D \rh^{\ve}_x}{D} < \infty$ (cf.\ Proposition~\ref{prop:weak:convergence:smooth:approximation}). Recalling \eqref{eq:def:wick:by:hermite} and \eqref{eq:def:wick:analytic:dgff:pointwise}, we can now rigorously define for any $d \geq 2$ and any fixed $\ve > 0$ 
\begin{align*}
  \wick{F\left(\frac{\ga \Psi^{\ve}(x)}{\sqrt{\th_0}}\right)}
  \,\ldef\,
  \sum_{k\geq 0} a_k \left(\frac{\ga}{\sqrt{\th_0}}\right)^k \wick{\Psi^{\ve}(x)^k}
  \qquad \left(\text{any} \; d \geq 2\right),
\end{align*}
where, since $\si^{\ve}(x)^2 < \infty$, the Wick power $\wick{\Psi^{\ve}(x)^k} = \mathbf{H}_k(\Psi^{\ve}(x),\si^{\ve}(x)^2)$ is well-defined pointwise. While this construction works in any dimension $d \geq 2$, it is only possible in $d = 2$ to pass $\ve \downarrow 0$ and thus remove the UV-cutoff. More precisely, we show in Proposition~\ref{prop:L2:convergence:cgff:smeared} that in $d=2$ one can, for any $\be$-Fock-entire function $F$, $f \in L^2(D)$, and $|\ga| < \sqrt{\th_0/\be C_{\Si}}$, define 
\begin{align}\label{def:cgff:analytic:limit:in:L2:introduction}
  \scprfield{\wick{F\left(\frac{\ga \Psi}{\sqrt{\th_0}}\right)}}{f}
  \,\ldef\,
  \lim_{\ve \downarrow 0} \scprreal{\wick{F\left(\frac{\ga \Psi^{\ve}}{\sqrt{\th_0}}\right)}}{f}{D}
  \quad
  \text{in}
  \; L^2(\probPhi^{\Si})
  \qquad \left(\text{only} \; d = 2 \right).
\end{align}
This is a (generally non-Gaussian) random variable, unique in an $L^2(\probPhi^{\Si})$ sense, linear in the sense of Definition~\ref{def:cgff}-(i), and its first two moments are given by 
\begin{align*}
  \meanPhi^{\Si} \left[\scprfield{\wick{F\left(\frac{\ga \Psi}{\sqrt{\th_0}}\right)}}{f}\right]
  &\,=\,
  a_0 \scprreal{1}{f}{D} 
  \\[.5ex] %\intertext{and} 
  \meanPhi^{\Si} \left[\scprfield{\wick{F\left(\frac{\ga \Psi}{\sqrt{\th_0}}\right)}}{f}^2\right] 
  &\,=\,
  \scprreal{f}{F_2\left(\ga^{2} \th_0^{-1} G^{\Si}_D\right) f}{D},
\end{align*}
where $(F_2\left(\ga^{2} \th_0^{-1} G^{\Si}_D\right) f) (x) \ldef \int_D F_2 \left(\ga^{2} \th_0^{-1} g^{\Si}_D(x,y) \right) f(y) \, \md y$ and $F_2$ is defined as in \eqref{eq:dgff:covariance:calculation:intro}. A construction following a similar approach can, for instance, be found in \cite[Chapter 5]{sznitman2013scaling} or \cite[Chapter 5]{deuschel2024ray}, which introduce the square of the whole space GFF in $d=3$ in this way. Equivalently, as done, for instance, in \cite[Chapter 3.4.2]{werner2021lecture} for the square, construction can be done using partial sums of the white noise decomposition of the CGFF, which at its heart is again an ultraviolet cutoff. 

\subsection{Results}
As discussed above, the existence of nonlinear functionals of the GFF relies critically on the integrability of the Green's kernel near the diagonal. One of our main technical contributions is hence to establish practical bounds for the killed Green's kernel in $d\geq 2$ and the (whole space) Green's kernel in $d\geq3$ in our setting with degenerate, unbounded ergodic weights, resembling those in \eqref{eq:continuous:green:pointwise:log}:
\begin{theorem}\label{thm:green:pointwise:upperbound}
  Let $d \geq 2$ and suppose there exist $\theta \in (0, 1)$ and $p, q \in [1, \infty]$ such that Assumptions~\ref{ass:law}, \ref{ass:cluster}-(i) and~\ref{ass:pq} hold. Further, let $\Om_c \in \cF$ and $\const[N]{Nconst:ergodic:constants} \equiv \const[N]{Nconst:ergodic:constants}(\om,1) (< \infty \;\forall \om \in \Omega_{\mathrm{reg}} \cap \Omega_{0}^{*} \cap \Omega_{c})$ be as in Corollary~\ref{cor:KrengelPyke}. Then, there exists $\ka \equiv \ka(\th,d,p,q) \in (0,\infty)$ such that for any $R \geq 1$, the following holds. There exist constants $\const[C]{const:pointwise:schroedinger:bound:2d}, \const[C]{const:pointwise:schroedinger:bound:3d} < \infty$ depending only on $R,\th,d,p,q$ such that for any $\omega \in \Omega_{\mathrm{reg}} \cap \Omega_{0}^{*} \cap \Omega_{c}$ and any $n \geq \const[N]{Nconst:ass:cluster}(\omega) \vee \const[N]{Nconst:ergodic:constants}(\om,1)\vee R^\ka \rdef \const[N]{Nconst:ass:green:bound}(\om,R)$ 
  \begin{align*}
    g^{\om}_{B^{\om}(0,Rn)}(x,y)
    \,\leq\,
    2 C_{\mathrm{HK}} \log\left(\frac{n}{d^{\om}(x,y)\vee 1}\right) \,+\, \const[C]{const:pointwise:schroedinger:bound:2d} N_{\mathrm{HK}}(\om)
    \quad 
    \forall x,y \in B^{\om}(0,Rn)
  \end{align*}
  in $d=2$ and, for the whole space Green's function in $d\geq 3$,
  \begin{align*}
    g^{\om}_{\bbZ^d}(x,y)
    \,\leq\,
    \const[C]{const:pointwise:schroedinger:bound:3d}  N_{\mathrm{HK}}(\om)^{d-1} \left(d^{\om}(x,y) \vee 1 \right)^{2-d}
    \quad 
    \forall x,y \in B^{\om}(0,Rn),
  \end{align*}
  with $C_{\mathrm{HK}}<\infty$ and $N_{\mathrm{HK}}(\om) \ldef 4 \const[N]{Nconst:ass:green:bound}(\om,R)^2 \vee 1$ from the heat kernel bounds in Theorem~\ref{thm:heat:kernel:bounds}. 
\end{theorem}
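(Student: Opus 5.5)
We plan to start from the probabilistic representation \eqref{eq:def:green:rcm:probabilistic}, $g^{\om}_{B^{\om}(0,Rn)}(x,y)=\int_0^\infty p^{\om,B^{\om}(0,Rn)}_t(x,y)\,\md t$, and to split the time integral into three regimes: short times $t\le T_0\ldef (d^{\om}(x,y)\vee 1)^2\vee N_{\mathrm{HK}}(\om)$, intermediate times $T_0\le t\le (Rn)^2$, and long times $t\ge (Rn)^2$. The input throughout is the quenched heat kernel upper bound of Theorem~\ref{thm:heat:kernel:bounds}. We expect it to give, for $n\ge N(\om,R)$, all $x,y\in B^{\om}(0,Rn)$ and $t\ge N_{\mathrm{HK}}(\om)$ (or $t\ge c\,d^\om(x,y)$), a Gaussian-type on-diagonal bound $p^{\om}_t(x,y)\le C_{\mathrm{HK}}\,t^{-d/2}\exp(-c\,d^{\om}(x,y)^2/t)$. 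The ergodic constants of Corollary~\ref{cor:KrengelPyke} are what make this uniform over the whole ball $B^{\om}(0,Rn)$ once $n\ge R^\ka$: the very regular balls of Assumption~\ref{ass:cluster}-(i) give regularity at scales $\ge n^{\theta/d}$, and the averaged conductance moments control the Moser iteration constants. By domain monotonicity, $p^{\om,\La}_t\le p^\om_t$, so for bounded times it suffices to use the full-space kernel.

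\emph{Short times.} Here we use the trivial bound $p_t\le 1$ (the VSRW is reversible with respect to counting measure, so $p_t(x,y)\le \sqrt{p_t(x,x)p_t(y,y)}\le1$). This contributes at most $T_0$. That is too large when $d^\om(x,y)$ is large, so we refine the split. For $t\le N_{\mathrm{HK}}$ the contribution is at most $N_{\mathrm{HK}}$. For $N_{\mathrm{HK}}\le t\le d^\om(x,y)^2$ we use the Gaussian off-diagonal factor: $\int_{N_{\mathrm{HK}}}^{d^2} t^{-d/2}e^{-cd^2/t}\,\md t\le C d^{2-d}$ after substituting $t=d^2 s$. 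In $d=2$ this is $O(1)$; in $d\ge3$ it is $O(d^{2-d})$.

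\emph{Intermediate times, $d=2$.} The bound $\int_{d^2\vee N_{\mathrm{HK}}}^{n^2} C_{\mathrm{HK}}\,t^{-1}\,\md t=2C_{\mathrm{HK}}\log\bigl(n/(d\vee\sqrt{N_{\mathrm{HK}}})\bigr)$ gives exactly the logarithm in the statement. Replacing $\sqrt{N_{\mathrm{HK}}}$ by $1$ costs nothing, since it only enlarges the upper bound. The window $[n^2,(Rn)^2]$ contributes $2C_{\mathrm{HK}}\log R$, which is absorbed into $c_1$.

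\emph{Long times, $d=2$ (main obstacle).} Here we need exponential decay of the killed kernel. We would argue by the semigroup property and Cauchy--Schwarz:
\[
p^{\La}_{t}(x,y)\le \sqrt{p^{\La}_{T}(x,x)\,p^{\La}_{T}(y,y)}\,e^{-\lambda_1(t-2T)}\cdot\text{(norm factor)},
\]
or more simply through
\[
\sup_z \Prob^\om_z\bigl[\tau_{\La}>(Rn)^2\bigr]\le 1-\de,
\]
iterated through the Markov property to get $\Prob_z[\tau>k(Rn)^2]\le(1-\de)^k$. The latter is simplest. The uniform exit estimate follows from the same heat kernel bound, since $\sum_{z'\in\La}p_{(Rn)^2 s}(z,z')\le |\La|\,C(Rn)^{-2}s^{-1}$, which is small for large $s$ using $|B^\om(0,Rn)|\le C(Rn)^2$. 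Then for $t\ge k(Rn)^2$ we bound $p^{\La}_t(x,y)\le \Prob_x[\tau>(k-1)(Rn)^2]\,\sup_z p_{(Rn)^2}(z,y)\le(1-\de)^{k-1}C(Rn)^{-2}$. Summing over $k$ gives an $O(1)$ contribution. Collecting the terms yields the $d=2$ bound, with the constant multiplying $N_{\mathrm{HK}}$ coming from the short-time piece.

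\emph{Case $d\ge3$.} No killing is needed: $\int_{N_{\mathrm{HK}}\vee d^2}^\infty t^{-d/2}\,\md t\le C\,(d\vee1)^{2-d}$. The only delicate case is $d^\om(x,y)\lesssim\sqrt{N_{\mathrm{HK}}}$. There the short-time piece $N_{\mathrm{HK}}$ must be compared to $(d\vee1)^{2-d}$, which costs at most a factor $N_{\mathrm{HK}}\cdot N_{\mathrm{HK}}^{(d-2)/2}\le N_{\mathrm{HK}}^{d-1}$ (since $N_{\mathrm{HK}}\ge1$). This explains the power $N_{\mathrm{HK}}^{d-1}$ in the statement. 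The main technical point overall is verifying that the heat kernel bounds hold simultaneously for all start points in $B^\om(0,Rn)$ with $n$-independent constants, which is why $n\ge R^\ka$ is imposed.
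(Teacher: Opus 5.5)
\textbf{The case $d\ge 3$ does not close.} Write $r\ldef d^{\om}(x,y)$. On $[0,N_{\mathrm{HK}}]$ you only use the trivial bound, so this window contributes up to $N_{\mathrm{HK}}$ regardless of $r$. The target $N_{\mathrm{HK}}^{d-1}(r\vee 1)^{2-d}=N_{\mathrm{HK}}\,(N_{\mathrm{HK}}/r)^{d-2}$, however, is smaller than $N_{\mathrm{HK}}$ as soon as $r>N_{\mathrm{HK}}$, and it tends to $0$ as $r\to\infty$ ($r$ ranges up to $2Rn$). So the delicate regime is the opposite of the one you single out. Small $r$ is harmless; large $r$ requires off-diagonal decay at times $t<N_{\mathrm{HK}}$, where Theorem~\ref{thm:heat:kernel:bounds} gives nothing.

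The paper supplies this decay in Lemma~\ref{lem:heatkernel:before:diffusive}. The time-changed walk $Y$ with speed measure $\th^{\om}=1\vee\mu^{\om}\ge\mu^{\om}$ is a lazy discrete chain run along a rate-one Poisson clock, so reaching graph distance $r$ needs at least $r$ rings. Carne--Varopoulos plus the Poisson tail then give $\int_0^{c_2 r\vee N_{\mathrm{HK}}}q^{\om}_t(x,y)\,\md t\le c'N_{\mathrm{HK}}\,e^{-c(r\vee1)/N_{\mathrm{HK}}}$. Applying $e^{-a}\le c_b(a\vee1)^{-b}$ with $a=(r\vee1)/N_{\mathrm{HK}}$ and $b=d-2$ turns this into $cN_{\mathrm{HK}}^{d-1}(r\vee1)^{2-d}$. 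This is where the power $d-1$ really comes from; your near-diagonal comparison only costs $N_{\mathrm{HK}}^{d/2}$. For the VSRW you work with, no bound of this type holds uniformly, because its jump rates $\mu^{\om}(x)$ are unbounded.

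\textbf{Wrong kernel.} Relatedly, Theorem~\ref{thm:heat:kernel:bounds} bounds $q^{\om}_t$, the kernel of $Y$, not the VSRW kernel $p^{\om}_t$. Its Gaussian factor is in $d^{\om}$ precisely because, for $\th^{\om}=1\vee\mu^{\om}$, the intrinsic metric of the underlying heat kernel bounds coincides with $d^{\om}$. For $\th\equiv 1$ that metric can be much smaller than $d^{\om}$ when conductances are large. Your $d=2$ step on $[N_{\mathrm{HK}},r^2]$ needs decay in $d^{\om}$; otherwise you only get the logarithm with the smaller intrinsic distance, which is weaker.

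The missing step is \eqref{eq:Greenkernel:CS:equals:VS}: the killed Green's kernel does not depend on the speed measure, so you may integrate $q^{\om,\La}_t$ instead. Since $q^{\om}_t\le 1/\th^{\om}\le 1$, your trivial short-time bound survives. With this substitution your $d=2$ argument works. For $N_{\mathrm{HK}}\le t\le c_4 r$ you only get the factor $e^{-c_3 r}$ rather than a Gaussian one, but that is harmless.

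Your exit-time iteration for $t\ge (Rn)^2$ is a legitimate and more elementary alternative to the paper's spectral route through \eqref{eq:heat:kernel:killed:Semigroup} and the eigenvalue bound \eqref{eq:principal:eigenvalue:GSRW} from the weighted Sobolev inequality. It also handles the scale $Rn$ more carefully, absorbing $2C_{\mathrm{HK}}\log R$ into the constant. One addition is needed: for $Y$ one has $\Prob^{\om}_z[Y_T\in\La]=\sum_{z'\in\La}\th^{\om}(z')\,q^{\om}_T(z,z')$. The uniform exit estimate therefore also requires $\sum_{z'\in\La}(1\vee\mu^{\om}(z'))\le C(Rn)^2$, an ergodic input of the same kind as the one behind \eqref{eq:principal:eigenvalue:GSRW}.
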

Assumptions~\ref{ass:law} and~\ref{ass:cluster}-(i) imply that $\prob_{0}\bigl[ \Omega_{\mathrm{reg}} \cap \Omega_{0}^{*} \bigr] = 1$, while, as shown in Corollary~\ref{cor:KrengelPyke} (cf.~\cite[Corollary 5.5]{andres2025scaling}, \cite{krengel1987uniform}), Assumption~\ref{ass:pq} implies that $\prob_0 \left[\Omega_{c}\right]=1$. Hence, $\const[N]{Nconst:ass:green:bound}(\om,R)$ and, consequently, $N_{\mathrm{HK}}(\om)$ are almost surely finite under $\prob_0$. In other words, the bounds above are effective for $\prob_0$-a.e.\ $\om$. By additionally imposing Assumption~\ref{ass:cluster}-(ii) and recalling Remark~\ref{rem:ass_cluster}-(ii), we obtain a useful corollary for the rescaled killed Green's kernel on $nD \cap \cC_{\infty}(\om)$, which is proved in Chapter 2.
\begin{cor}\label{cor:green:pointwise:upperbound:corollary}
  Let $d \geq 2$ and $D \subset \bbR^d$ bounded. Suppose that there exist $\theta \in (0, 1)$ and $p, q \in [1, \infty]$ such that Assumptions~\ref{ass:law}, \ref{ass:cluster} and~\ref{ass:pq} hold. Then, there exist constants $\const[C]{const:pointwise:schroedinger:bound:corollary:2d}, \const[C]{const:pointwise:schroedinger:bound:corollary:3d} < \infty$ depending only on $r_D,\th,d,p,q$ such that for any $n \geq \const[N]{Nconst:ass:green:bound}(\om,C_d r_D)$ and $\prob_0$-a.e.\ $\om$
  \begin{equation*}
    n^{d-2} g^{\omega}_{nD}(\lfloor nx \rfloor, \lfloor ny \rfloor)
    \leq 
    \begin{cases}
      2 C_{\mathrm{HK}} \log\left( \dfrac{1}{|x-y|_2} \right) + \const[C]{const:pointwise:schroedinger:bound:corollary:2d} N_{\mathrm{HK}}(\omega), & d=2, \\[2ex]
      \const[C]{const:pointwise:schroedinger:bound:corollary:3d} N_{\mathrm{HK}}(\omega)^{d-1} |x-y|_2^{2-d}, & d \geq 3
    \end{cases}
  \end{equation*}
  for any $x \not= y$. If $d \geq 3$, the same bound also holds for $n^{d-2} g^{\omega}_{\bbZ^d}(\lfloor nx \rfloor, \lfloor ny \rfloor)$ and any $x,y \in D$.
\end{cor}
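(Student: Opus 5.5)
The plan is to deduce the corollary from Theorem~\ref{thm:green:pointwise:upperbound} with $R \ldef C_{\mathrm{d}} r_D$. We use three ingredients: domain monotonicity of the killed Green's function, the containment $nD \cap \cC_{\infty}(\om) \subset B^{\om}_D(n) = B^{\om}(0, C_{\mathrm{d}} r_D n)$ from Remark~\ref{rem:ass_cluster}-(ii), and a comparison between the chemical distance and the Euclidean distance.

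\textbf{Step 1 (monotonicity).} By the probabilistic representation \eqref{eq:def:green:rcm:probabilistic}, if $\La_1 \subseteq \La_2$ then $\ta_{\La_1}(X) \le \ta_{\La_2}(X)$. Hence $g^{\om}_{\La_1}(x,y) \le g^{\om}_{\La_2}(x,y)$. Since the walk lives on $\cC_\infty(\om)$, only $nD\cap\cC_\infty(\om)$ matters. For $n \ge \const[N]{Nconst:ass:green:bound}(\om, C_{\mathrm{d}} r_D)$, which dominates $\const[N]{Nconst:ass:cluster}(\om)$ (and $\const[N]{Nconst:ass:cluster}(\om)/r_D$ up to adjusting constants), Remark~\ref{rem:ass_cluster}-(ii) then gives
\[
g^{\om}_{nD}(x',y') \le g^{\om}_{B^{\om}(0,Rn)}(x',y')
\]
for all $x', y' \in nD\cap\cC_\infty(\om)$. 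If $\lfloor nx\rfloor$ or $\lfloor ny\rfloor$ is not in $nD\cap\cC_\infty$, the Green's function vanishes and there is nothing to prove. For $d\ge3$ we instead use $g^\om_{nD}\le g^\om_{\bbZ^d}$ directly.

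\textbf{Step 2 (apply the theorem).} In $d=2$ this yields
\[
g^\om_{nD}\le 2C_{\mathrm{HK}}\log\bigl(n/(d^\om(x',y')\vee1)\bigr)+c\,N_{\mathrm{HK}}
\]
with $x'=\lfloor nx\rfloor$ and $y'=\lfloor ny\rfloor$. In $d\ge3$ it gives $c\,N_{\mathrm{HK}}^{d-1}(d^\om\vee1)^{2-d}$, where the right-hand side decreases in $d^\om$. So it suffices to prove a lower bound $d^\om(x',y')\vee 1 \ge c\,n|x-y|_2$, up to an error absorbed into the constant term.

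\textbf{Step 3 (main obstacle: distance comparison).} Assumption~\ref{ass:cluster}-(ii) gives an \emph{upper} bound on $d^\om$, which is the wrong direction. However, the trivial bound $d^\om(x',y')\ge |x'-y'|_1\ge |x'-y'|_2$ holds because open paths are lattice paths. Moreover $|x'-y'|_2 \ge n|x-y|_2-2\sqrt d$. We split into two cases.

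If $n|x-y|_2 \ge 4\sqrt d$, then $d^\om\vee1 \ge \tfrac12 n|x-y|_2$. The $d=2$ bound becomes $2C_{\mathrm{HK}}\log(1/|x-y|_2)+2C_{\mathrm{HK}}\log 2+c N_{\mathrm{HK}}$, and we absorb $2C_{\mathrm{HK}}\log2 \le c' N_{\mathrm{HK}}$ using $N_{\mathrm{HK}}\ge1$. For $d\ge3$, multiplying by $n^{d-2}$ gives $n^{d-2}(n|x-y|_2/2)^{2-d} = 2^{d-2}|x-y|_2^{2-d}$.

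If $n|x-y|_2<4\sqrt d$, we use $d^\om\vee1\ge1$. In $d=2$ we get $\log n \le \log(4\sqrt d/|x-y|_2)$, which again gives the claim after adjusting constants. In $d\ge3$ we get $n^{d-2}\cdot 1 \le (4\sqrt d)^{d-2}|x-y|_2^{2-d}$.

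All constants depend only on $r_D,\th,d,p,q$ (through $R=C_{\mathrm{d}} r_D$), which gives the stated bounds. For $g^\om_{\bbZ^d}$ in $d\ge3$ we need $x',y'\in B^\om(0,Rn)$, which is again guaranteed by Remark~\ref{rem:ass_cluster}-(ii) for $x,y\in D$.
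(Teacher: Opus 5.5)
Your proposal is correct and follows the paper's route. Domain monotonicity together with $nD\cap\cC_{\infty}(\om)\subset B^{\om}(0,C_{\mathrm{d}} r_D n)$ lets you invoke Theorem~\ref{thm:green:pointwise:upperbound} with $R=C_{\mathrm{d}} r_D$; then $d^{\om}\ge|\cdot|_1\ge|\cdot|_2$ and the same two-case split at $n|x-y|_2=4\sqrt{d}$ finish the argument, which the paper packages as $\bigl|\lfloor nx\rfloor/n-\lfloor ny\rfloor/n\bigr|_2\vee 1/n\ge (4\sqrt{d})^{-1}|x-y|_2$. The only difference is that you state the monotonicity $g^{\om}_{nD}\le g^{\om}_{B^{\om}(0,Rn)}$ explicitly, whereas the paper uses it implicitly.
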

\begin{remark}\label{rem:Green:bounds:remark}
  \begin{enumerate}[(i)]
  \item
    For simple random walk, these bounds are well known \cite[Chapter 4]{lawler2010random}. In $d\geq 3$, similar bounds have been established for SRW on supercritical percolation clusters by Barlow and Hambly in \cite[Theorem 1.2]{hambly2009parabolic} and, for walks on a larger class of random subgraphs, by Sapozhnikov in \cite[Theorem 1.17]{sapozhnikov2017random}. All aforementioned models are included in our Assumptions (cf.\ Remark~\ref{rem:ass_cluster}-(i)). 

  \item
    The enormous utility of Theorem~\ref{thm:green:pointwise:upperbound} for our purposes stems from the fact that $N_{\mathrm{HK}}$, the random time until the heat kernel bounds become effective, does not depend on $x$ or $y$. This is achieved by tailoring the heat kernel bounds in \cite[Theorem 3.2]{andres2019heat} to our setting, which is done in Theorem~\ref{thm:heat:kernel:bounds} and relies crucially on the uniform ergodic theorem in ~\cite{krengel1987uniform} (see Proposition~\ref{prop:krengel_pyke}) and Assumption~\ref{ass:cluster}-(i). 
    
  \item
    In contrast to the heat kernel bounds for SRW on supercritical percolation by Barlow in \cite[Theorem 1]{barlow2004random} or by Sapozhnikov in \cite[Theorem 1.13]{sapozhnikov2017random}, we do not have control on the moments or tails of the almost surely finite random variable $N_{\mathrm{HK}}$, which in their cases depends on $x$ and corresponds to $S_x$ and $R_{\mathrm{vgb}}(x)$ respectively in \cite[Lemma 2.24]{barlow2004random} and \cite[Theorem 1.13(c)]{sapozhnikov2017random}. 
    
  \item
    The near diagonal control on the Green's function obtained in Theorem~\ref{thm:green:pointwise:upperbound} complements the on-diagonal result obtained in ~\cite{andres2020green}, where a local limit theorem has been shown. However, due to the constant $C_{\mathrm{HK}}$ disagreeing with the correct constant $\bar{g}$ given in \cite[Theorem 1.3]{andres2020green} and the missing control on the tails of $N_{\mathrm{HK}}$, the obtained upper bound does not seem to be sufficient for the verification of the missing \cite[Assumption (A.2)]{ding2017maximum}, required for proving convergence of the centered maximum of the DGFF. For a detailed discussion of the remaining assumptions we refer to \cite[Conjecture 1.16]{andres2025scaling}. 
  \end{enumerate}
\end{remark}
The constant $C_{\mathrm{HK}}$ determines the integrability of exponential functions of the Green's kernel near the diagonal. Hence, just as $C_{\Si}$ did in \eqref{def:cgff:analytic:limit:in:L2:introduction}, it will give us another constraint on the choice of $\ga$ in the scaling limits, which we will state now. Recalling \eqref{eq:def:wick:analytic:dgff:pointwise}, we define for any analytic function $F$ and any measurable function $f \colon D \to \bbR$,
\begin{align}\label{eq:def:dgff:nonlinear:tested}
  f \mapsto 
  \scprfield{\wick{F(\ga \Phi_n)}}{f}
  \,\ldef\,
  \int_D \wick{F(\ga \, \vp^{nD}_{\lfloor nx \rfloor})} \indicator_{\lfloor nx \rfloor \in \cC_{\infty}(\om)} f(x)\, \md x.
\end{align}
Restricting to $f \in H^{s}_0(D)$, we may interpret $\wick{F(\ga \Phi_n)}$ as a linear functional on $H^{s}_0(D)$.
\begin{theorem}\label{thm:mainresult:1}
  Let $d = 2$ and $D \subset \mathbb{R}^{d}$ be a bounded, strongly regular domain. Suppose there exist $\theta \in (0, 1)$ and $p, q \in [1, \infty]$ such that Assumptions~\ref{ass:law},~\ref{ass:cluster} and~\ref{ass:pq} hold. Further, let $F$ be a $\be$-Fock entire analytic function. Then, for any
  \begin{align}\label{eq:ga:convergence:condition:mainresult}
    |\ga|
    \,<\,
    \sqrt{\frac{\th_0}{\be} \left(\frac{1}{C_{\Si}} \wedge \frac{\th_0}{C_{\mathrm{HK}}}\right)},
  \end{align}
  and $\prob_0$-a.e.\ $\om$ under $\probPhi^{\om}$ 
  \begin{align}
    \wick{F(\ga \Phi_n)}
    \underset{n \to \infty}{\overset{\text{law}}{\;\longrightarrow\;}}
    \wick{\th_0 F\left(\frac{\ga \Psi}{\sqrt{\th_0}}\right)}
  \end{align}
  in the strong topology of $H^{-s}(D)$ for any $s >  \ga^2 \be C_{\mathrm{HK}}$, where $\th_0 \equiv \prob[0 \in \cC_{\infty}(\om)]$ and $\Psi \equiv \Psi^{D,\Si}$ with $\Sigma$ as in Theorem~\ref{thm:QIP:RG}.
\end{theorem}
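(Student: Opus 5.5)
We prove convergence in law in $H^{-s}(D)$ by the standard two-part scheme: (a) tightness of the laws of $\wick{F(\ga\Phi_n)}$ in $H^{-s}(D)$, and (b) identification of the limit through finite-dimensional distributions of the pairings $\scprfield{\wick{F(\ga\Phi_n)}}{f}$ for $f$ in a dense class, say $f \in C_c^\infty(D)$. Both parts rest on the covariance identity \eqref{eq:dgff:covariance:calculation:intro}. Together with Corollary~\ref{cor:green:pointwise:upperbound:corollary}, it gives for $\prob_0$-a.e.\ $\om$ and $n$ large
\begin{align*}
  \meanPhi^{\om}\bigl[\wick{F(\ga\vp^{nD}_{\lfloor nx\rfloor})}\wick{F(\ga\vp^{nD}_{\lfloor ny\rfloor})}\bigr]
  \,=\, F_2\bigl(\ga^2 g^{\om}_{nD}(\lfloor nx\rfloor,\lfloor ny\rfloor)\bigr)
  \,\le\, M e^{\be\ga^2 C N_{\mathrm{HK}}}\,|x-y|_2^{-2\be\ga^2 C_{\mathrm{HK}}}.
\end{align*}
On the diagonal cells $\lfloor nx\rfloor=\lfloor ny\rfloor$ we use the on-diagonal bound $g\le 2C_{\mathrm{HK}}\log n + C N_{\mathrm{HK}}$; these cells have area of order $n^{-2}$, so their contribution is controlled by the same singular kernel.

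For tightness, we expand in the Dirichlet eigenbasis $(e_j,\la_j)$ of $-\De$ on $D$. Then
\begin{align*}
  \meanPhi^{\om}\|\wick{F(\ga\Phi_n)}\|^2_{H^{-s}} \,=\, \sum_j (1+\la_j)^{-s}\,\meanPhi^{\om}\scprfield{\wick{F(\ga\Phi_n)}}{e_j}^2 ,
\end{align*}
and this equals $\iint K_s(x,y)\,F_2(\cdots)\,\md x\,\md y$, where $K_s$ is the kernel of $(I-\De)^{-s}$. In $d=2$ this kernel satisfies $K_s(x,y)\lesssim|x-y|^{2s-2}$ (with a log when $s=1$). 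The double integral $\iint|x-y|^{2s-2-2\be\ga^2C_{\mathrm{HK}}}$ is finite exactly when $s>\be\ga^2C_{\mathrm{HK}}$, which explains the threshold in the statement. The bound is uniform in $n$, and replacing $s$ by some $s'\in(\be\ga^2C_{\mathrm{HK}},s)$ together with the compact embedding $H^{-s'}\hookrightarrow H^{-s}$ gives tightness. The condition $\ga^2\be C_{\mathrm{HK}}<\th_0^{2}$ in \eqref{eq:ga:convergence:condition:mainresult} is stronger than what this step needs; I expect it is used in the truncation step below.

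For the limit identification, we truncate the Fock expansion. We write $F=F^{\le K}+F^{>K}$, where $F^{\le K}=\sum_{k\le K}a_kx^k$. By the covariance identity and the bound above, the $L^2(\probPhi^\om)$ norm of the tail pairing is bounded by
\begin{align*}
  \iint |f(x)f(y)|\sum_{k>K}k!\,a_k^2\,\ga^{2k}\,g^k\,\md x\,\md y .
\end{align*}
This tends to $0$ as $K\to\infty$ uniformly in $n$, by dominated convergence with the dominating function $F_2(\ga^2 g)$. The same holds for the continuum object via Proposition~\ref{prop:L2:convergence:cgff:smeared}, which is where the condition $|\ga|<\sqrt{\th_0/(\be C_\Si)}$ enters. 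It therefore suffices to show, for each fixed $k$, joint convergence in law of $\scprfield{\wick{\Phi_n^k}}{f}$ to $\th_0^{1-k/2}\scprfield{\wick{\Psi^k}}{f}$. Here the covariance is
\begin{align*}
  \iint f(x)f(y)\,\indicator_{\lfloor nx\rfloor,\lfloor ny\rfloor\in\cC_\infty}\,k!\,g^{\om}_{nD}(\lfloor nx\rfloor,\lfloor ny\rfloor)^k\,\md x\,\md y .
\end{align*}
Off the diagonal, the quenched local limit theorem for the killed Green's function from \cite{andres2025scaling}, which is based on Theorem~\ref{thm:QIP:RG}, gives $g^{\om}_{nD}(\lfloor nx\rfloor,\lfloor ny\rfloor)\to\th_0^{-1}g^\Si_D(x,y)$. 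The indicators, via the ergodic theorem (Corollary~\ref{cor:KrengelPyke}), average to $\th_0^2$. This gives the limit $\th_0^{2-k}k!\iint ff\,(g^\Si_D)^k$, matching $\th_0^{2}\cdot$ the second moment of $\wick{(\Psi/\sqrt{\th_0})^k}$. Near-diagonal contributions are uniformly negligible by the integrable $\log^k$ singularity of Corollary~\ref{cor:green:pointwise:upperbound:corollary}. To get convergence in law and not only of second moments, we use that $\wick{\Phi_n^k}$ lies in the $k$-th Wiener chaos. We first take $\Phi_n\to\Psi/\sqrt{\th_0}$ jointly with the smeared fields. Then we approximate $\wick{\Phi_n^k}$ by $\wick{(\Phi_n^\ve)^k}$ with a mollified field, with $L^2$ error uniform in $n$ by the same Green's-kernel estimates. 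Finally we apply the continuous mapping theorem to the smooth Gaussian approximations.

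The main obstacle is this exchange of limits: controlling
\begin{align*}
  \sup_n\,\meanPhi^{\om}\bigl|\scprfield{\wick{\Phi_n^k}-\wick{(\Phi_n^\ve)^k}}{f}\bigr|^2
\end{align*}
uniformly in $n$ as $\ve\downarrow 0$. This needs the pointwise Green's bound at all scales simultaneously, down to the lattice scale, for a fixed environment, with the random constant $N_{\mathrm{HK}}(\om)$ independent of $x,y$ as in Remark~\ref{rem:Green:bounds:remark}-(ii). It also needs the uniform ergodic averaging of the indicator products against kernels with integrable singularities.
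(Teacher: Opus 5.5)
Your tightness argument is the paper's (eigenbasis expansion, the kernel bound $\lesssim|x-y|_2^{2s-2}$, Corollary~\ref{cor:green:pointwise:upperbound:corollary}, Rellich). Your Fock truncation is also sound, provided the dominating function is $F_2(\ga^2\bar g)$, where $\bar g$ is the $n$-independent bound of that corollary; this works because $g_n\ge 0$. The gap is in identifying the limit of a fixed Wick power.

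The smeared field does \emph{not} converge to $\Psi^{\ve}/\sqrt{\th_0}$. Its covariance is $g^{\ve,\ve}_n$, in which both cluster indicators are averaged, and in fact $\Phi^{\ve}_n\to\sqrt{\th_0}\,\Psi^{\ve}$ (Proposition~\ref{prop:weak:convergence:smooth:approximation}). Only the \emph{pointwise} on-cluster covariance behaves like $g^{\Si}_D/\th_0$. Consequently $\scprfield{\wick{\Phi_n^k}}{f}$ is not $L^2$-close to $\scprfield{\wick{(\Phi^{\ve}_n)^k}}{f}$. Expanding the square, the three kernels carry different powers of $\th_0$. By Proposition~\ref{prop:tested:green:convex:F} with $H(a)=a^k$, letting $n\to\infty$ and then $\ve\downarrow 0$,
\begin{align*}
  \iint f f\, g_n^k \to \th_0^{2-k} I_k,
  \qquad
  \iint f f\, (g^{\ve,\ve}_n)^k \to \th_0^{k} I_k,
  \qquad
  \iint f f\, (g^{0,\ve}_n)^k \to \th_0\, I_k,
\end{align*}
where $I_k \ldef \iint f(x) f(y)\, g^{\Si}_D(x,y)^k\,\md x\,\md y$. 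So the squared $L^2$ distance tends to $k!\,\th_0^{2-k}\bigl(1-\th_0^{k-1}\bigr)^2 I_k$. This is nonzero for $k\ge 2$, $\th_0<1$ and, say, $0\le f\not\equiv 0$. Consistently, continuous mapping would yield $\th_0^{k/2}\wick{(\Psi^{\ve})^k}$, not the $\th_0^{1-k/2}\wick{\Psi^k}$ you correctly read off from the second moments. So the step you flag as the main obstacle is not merely hard; as formulated it is false. Also, no bound uniform in $n$ can hold: for fixed $n$ and $\ve\ll 1/n$ the mollified field is essentially the lattice field itself. Only the iterated limit $\lim_{\ve\downarrow0}\lim_{n\to\infty}$ can vanish.

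The missing idea is the $\th_0$-rescaled approximant of Proposition~\ref{prop:L2:convergence:dgff:smeared}. Compare $\scprfield{\wick{\Phi_n^k}}{f}$ with $\th_0^{1-k}\scprfield{\wick{(\Phi^{\ve}_n)^k}}{f}$; for the whole series, this means comparing $\wick{F(\ga\Phi_n)}$ with $\th_0\wick{F(\ga\Phi^{\ve}_n/\th_0)}$. The three contributions to the squared distance are then $\th_0^{2-k}$, $\th_0^{2-2k}\th_0^{k}$ and $-2\th_0^{1-k}\th_0$, each times $k!\,I_k$, and they cancel. Continuous mapping then gives $\th_0^{1-k}(\sqrt{\th_0})^k\wick{(\Psi^{\ve})^k}=\th_0^{1-k/2}\wick{(\Psi^{\ve})^k}$, as required. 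This needs the limits of all three kernels $g_n$, $g^{0,\ve}_n$, $g^{\ve,\ve}_n$, not just $g_n$. These come from the LCLT together with Lemma~\ref{lem:ergodic:lemma:extend:LocHoelder}; Corollary~\ref{cor:KrengelPyke} only controls $\mu^{\om}$ and $\nu^{\om}$ and does not give them.

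With this fix, your route is a genuine variant of the paper's. The paper applies Proposition~\ref{prop:tested:green:convex:F} with $H=\tilde F_2$ to the whole series at once. It is the UI bound on $H((\ga\th_0^{-1})^2 g_n)$, used to control the smeared kernels, that forces $\ga^2\be C_{\mathrm{HK}}<\th_0^2$. If you truncate first, each fixed $k$ needs only the polynomial case, which imposes no constraint on $\ga$. Your tail bounds then need only $\ga^2\be C_{\mathrm{HK}}<1$ and $\ga^2\be C_{\Si}<\th_0$. So, contrary to your expectation, the truncation step never uses the factor $\th_0^2$.
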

\begin{example}\label{examples:mainresult1}
  Under the assumptions of the Theorem, we collect some examples (cf.\ Example~\ref{example:be:fock:entire}):
  \begin{enumerate}[(i)]
  \item
    \textit{Polynomials:} Polynomials are $\be$-Fock-entire for any $\be>0$. Hence, for any given $s>0$, we can pick $\be>0$ sufficiently small such that we may choose $\ga=1$ in \eqref{eq:ga:convergence:condition:mainresult} and $s>\be C_{\mathrm{HK}}>0$. Thus, for any polynomial $P(x) \equiv \sum_{k=1}^K a_k x^k$,
    \begin{align*}
      \wick{P(\Phi_n)}
      \underset{n \to \infty}{\overset{\text{law}}{\;\longrightarrow\;}}
      \wick{\th_0 P\left(\frac{\Psi}{\sqrt{\th_0}}\right)}
      \quad
      \left(\,=\, \sum_{k =1}^K a_k \th_0^{1-\frac{k}{2}} \wick{\Psi^k}\right)
    \end{align*}
    in $H^{-s}(D)$ for any $s>0$. 

  \item
    \textit{Trigonometric and hyperbolic functions:} Since $\sin,\cos,\sinh$, and $\cosh$ are all $1$-Fock-entire, we may naturally choose $\be=1$, hence \eqref{eq:ga:convergence:condition:mainresult} becomes 
    \begin{align}\label{eq:ga:condition:applied}
      |\ga|< \sqrt{\th_0\left(\frac{1}{C_{\Si}} \wedge \frac{\th_0}{C_{\mathrm{HK}}}\right)}.
    \end{align}
    Thus, the Sobolev-regularity will be governed by the choice of the parameter $\ga$. Taking, for instance, $F\equiv\sin$, 
    \begin{align*}
      \wick{\sin(\ga \Phi_n)}
      \underset{n \to \infty}{\overset{\text{law}}{\;\longrightarrow\;}}
      \wick{\th_0 \sin\left(\frac{\ga \Psi}{\sqrt{\th_0}}\right)}
    \end{align*}
    in $H^{-s}(D)$ for any $s > \ga^2 C_{\mathrm{HK}}$.

  \item
    \textit{Exponentials:} Again, since $F \equiv \exp$ is $1$-Fock-entire, we choose $\be=1$ and $\ga$ as in \eqref{eq:ga:condition:applied}, which yields
    \begin{align*}
      \wick{e^{\ga \Phi_n}}
      \underset{n \to \infty}{\overset{\text{law}}{\;\longrightarrow\;}}
      \wick{\th_0 e^{\ga \th_0^{-\frac{1}{2}} \Psi}}
    \end{align*}
    in $H^{-s}(D)$ for any $s > \ga^2 C_{\mathrm{HK}}$. 

  \item
    \textit{LQG/GMC-measure:} In Theorem~\ref{thm:marginal:convergence:analytic:field}, we actually show that, in the setting of Theorem~\ref{thm:mainresult:1}, the marginals $\scprfield{\wick{F(\ga \Phi_n)}}{f}$ converge for any $f \in L^2(D)$. Hence, for any Lebesgue measurable set $A \subseteq D$ with positive measure, choosing $f=\indicator_A$ and $\ga$ as in \eqref{eq:ga:condition:applied}, we get 
    \begin{align*}
      \int_A \wick{e^{\ga \vp^{nD}_{\lfloor nx \rfloor}}} \indicator_{\lfloor nx \rfloor \in \cC_{\infty}(\om)}\, \md x 
      \underset{n \to \infty}{\overset{\text{law}}{\;\longrightarrow\;}}
      \th_0 \scprfield{\wick{e^{\ga \th_0^{-1/2} \Psi}}}{\indicator_{A}},
    \end{align*}
    which may be seen as the scaling limit of a (degenerate) LQG-measure in the subcritical $L^2$ regime, akin to the discrete Liouville measure introduced in \cite[Theorem 5.12]{rhodes2014gaussian}. In the latter, construction was done following the standard approach presented in \cite{robert2010gaussian}, which seems to be different from ours. For a comprehensive introduction to the GMC associated to the GFF, we refer to \cite{berestycki2024gaussian,chatterjee2025liouville}.
  \end{enumerate}
\end{example}
In fact, our approach enables us to establish the following joint scaling limit. 
\begin{theorem}\label{thm:mainresult:2}
  Let $d = 2$, $K \in \bbN$ and $D \subset \mathbb{R}^{d}$ be a bounded, strongly regular domain. Suppose there exist $\theta \in (0, 1)$ and $p, q \in [1, \infty]$ such that Assumptions~\ref{ass:law},~\ref{ass:cluster} and~\ref{ass:pq} hold. Further, let $F_k$ be $\be_k$-Fock-entire functions. Then, for any 
  \begin{align}
    |\ga_k|
    \,<\,
    \sqrt{\frac{\th_0}{\be_k} \left(\frac{1}{C_{\Si}} \wedge \frac{\th_0}{C_{\mathrm{HK}}}\right)},
  \end{align}
  and $\prob_0$-a.e.\ $\om$ under $\probPhi^{\om}$
  \begin{align}
    \left(\wick{F_1(\ga_1 \Phi_n)},\dots,\wick{F_K(\ga_K \Phi_n)}\right)
    \,\underset{n \to \infty}{\overset{\text{law}}{\;\longrightarrow\;}}\,
    \left(\wick{\th_0 F_1\left(\frac{\ga_1 \Psi}{\sqrt{\th_0}}\right)}, \dots, \wick{\th_0 F_K\left(\frac{\ga_K \Psi}{\sqrt{\th_0}}\right)}\right)
  \end{align}
  in the strong topology of $H^{-s_1}(D) \times \dots \times H^{-s_K}(D)$ for any $s_k > \ga_k^2 \be_k C_{\mathrm{HK}}$, with $\Sigma$ as in Theorem~\ref{thm:QIP:RG}.
\end{theorem}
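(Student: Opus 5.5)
The plan is to deduce Theorem~\ref{thm:mainresult:2} from the single-function machinery behind Theorem~\ref{thm:mainresult:1}: joint tightness in the product space, followed by identification of the joint finite-dimensional marginals. Convergence in law in $H^{-s_1}(D)\times\dots\times H^{-s_K}(D)$ follows if (a) each coordinate sequence is tight in $H^{-s_k}(D)$, and (b) for all test functions $f_1,\dots,f_K\in H^{s_k}_0(D)$ (or a dense family of them), the vector $\bigl(\scprfield{\wick{F_k(\ga_k\Phi_n)}}{f_k}\bigr)_{k\le K}$ converges in law to its continuum counterpart. Tightness of a product is implied by tightness of each coordinate. So (a) is exactly the tightness step in the proof of Theorem~\ref{thm:mainresult:1}, applied separately to each $k$ with $s_k>\ga_k^2\be_kC_{\mathrm{HK}}$.

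Recall how that step goes. Fix $s'\in(\ga_k^2\be_kC_{\mathrm{HK}},s_k)$ and an eigenbasis $(e_j)$ of the Dirichlet Laplacian with eigenvalues $\lambda_j\asymp j$. Then
\begin{align*}
\meanPhi^{\om}\Bigl[\Norm{\wick{F_k(\ga_k\Phi_n)}}{H^{-s'}}^2\Bigr]
=\sum_j(1+\lambda_j)^{-s'}\int_D\int_D F_{k,2}\bigl(\ga_k^2 g^{\om}_{nD}(\lfloor nx\rfloor,\lfloor ny\rfloor)\bigr)\,e_j(x)e_j(y)\,\md x\,\md y .
\end{align*}
Corollary~\ref{cor:green:pointwise:upperbound:corollary} gives $F_{k,2}(\ga_k^2 g^{\om}_{nD})\le M e^{\be_k\ga_k^2 C N_{\mathrm{HK}}}|x-y|^{-2\ga_k^2\be_kC_{\mathrm{HK}}}$. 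This kernel is $L^1$-integrable uniformly in $n$, and it defines a bounded map $H^{s'}_0(D)\to H^{-s'}(D)$ by a Hardy--Littlewood--Sobolev argument. The resulting bound is uniform in $n$, so compactness of the embedding $H^{-s'}\hookrightarrow H^{-s_k}$ together with Markov's inequality gives tightness. Since $\om$ is fixed and $N_{\mathrm{HK}}(\om)<\infty$, this holds $\prob_0$-a.s.

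For (b), use the Cramér--Wold device. The task reduces to showing that
\begin{align*}
Z_n:=\sum_{k=1}^K t_k\scprfield{\wick{F_k(\ga_k\Phi_n)}}{f_k}
\end{align*}
converges in law to the corresponding continuum combination. I would follow the proof of Theorem~\ref{thm:marginal:convergence:analytic:field} (the single-marginal result mentioned in Example~\ref{examples:mainresult1}(iv)) and carry it out jointly, in three moves. First, truncate each series $F_k$ at degree $L$. By \eqref{eq:wick:covariance:gaussian:xy}, the $L^2(\probPhi^{\om})$ error of $Z_n$ is bounded by a sum over $k$ of the tails $\sum_{m>L}m!\,a_{k,m}^2\ga_k^{2m}\iint g^{\om}_{nD}{}^m|f_k(x)f_k(y)|$. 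This is small uniformly in $n$ by dominated convergence, using the same Green's bound and the condition on $\ga_k$. The same estimate holds in the continuum, via the $C_\Si$ condition and Proposition~\ref{prop:L2:convergence:cgff:smeared}. Second, for the polynomial truncation, $Z_n^{(L)}$ is a finite sum of integrals of Wick powers $\wick{(\vp^{nD})^m}$ against $f_k$. Each such integral lies in a fixed Wiener chaos of the Gaussian field. Convergence of the jointly Gaussian field $\Phi_n\to\th_0^{-1/2}\Psi$ (from \cite{andres2025scaling}), combined with convergence of chaos covariances, should yield joint convergence. The covariances $\iint g^{\om}_{nD}(\lfloor nx\rfloor,\lfloor ny\rfloor)^m\indicator\indicator f f$ converge to $\th_0^{2-m}\iint (g^\Si_D)^m ff$: this follows from the local convergence of $g^{\om}_{nD}$ to $\th_0^{-1}g^\Si_D$ and the density $\th_0$ of the cluster, with uniform integrability again supplied by Corollary~\ref{cor:green:pointwise:upperbound:corollary}. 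For a fixed finite chaos level, convergence of all mixed moments follows from the diagram formula, since products of Wick powers reduce to integrals of products of Green's functions. Hypercontractivity then makes the moment method legitimate. Third, match the limit with $\sum_k t_k\th_0\scprfield{\wick{F_k(\ga_k\Psi/\sqrt{\th_0})}}{f_k}$ using the definition \eqref{def:cgff:analytic:limit:in:L2:introduction}.

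The main obstacle is the second move: passing the joint mixed-moment convergence through the singular, environment-dependent Green's function. Products of several powers of $g^{\om}_{nD}$ appear in the diagram expansions, and they must be dominated by an integrable product of logarithms using only the quenched bound, which has an $\om$-dependent constant but no tail control. I would first try to reduce everything to two-point quantities. Because the total chaos degree is finite, Gaussian chaos convergence is equivalent to convergence of the "contraction" kernels, so it suffices to verify $L^2$ convergence of the kernels $\prod g$ and avoid general diagrams.
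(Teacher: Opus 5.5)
Your top-level structure matches the paper's proof: componentwise tightness (Proposition~\ref{prop:DGFF:tightness} plus a union bound), and the Cram\'er--Wold device applied to tested marginals. Theorem~\ref{thm:marginal:convergence:analytic:field} is already stated for combinations $\sum_k \alpha_k \scprfield{\wick{F_k(\ga_k\Phi_n)}}{f_k}$ with different $F_k,\ga_k,f_k$. Had you simply cited it, your argument would coincide with the paper's.

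The gap is in your own route to that input. Your second move never identifies the non-Gaussian limit law, for three reasons.
\begin{enumerate}
\item Convergence of $\Phi_n$ (to $\sqrt{\th_0}\Psi$, not $\th_0^{-1/2}\Psi$) does not transfer to Wick powers, since these are not continuous functionals of the field in $H^{-s}(D)$.
\item Convergence of chaos covariances does not fix the law of a non-Gaussian chaos variable. Already in the second chaos the law depends on all traces $\mathrm{tr}(A^p)$, i.e.\ on cyclic multi-point integrals of $g_n$, not just on the Hilbert--Schmidt norm.
\item The method of moments is not legitimate here. Hypercontractivity gives uniform integrability, so every subsequential limit has the right moments. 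But laws in chaoses of order $\geq 3$ need not be determined by their moments; for instance, the law of $Z^3$ with $Z$ standard Gaussian is moment-indeterminate (Berg, 1988). So matching moments does not pin down the limit once $L\ge 3$.
\end{enumerate}
Your proposed escape via ``contraction kernels'' also fails. Contraction criteria characterise \emph{Gaussian} limits (the fourth moment theorem). Moreover, $L^2$ convergence of chaos kernels presupposes a common Gaussian space, which the field on $nD\cap\cC_\infty(\om)$ and $\Psi$ do not share.

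The missing idea is a regularisation that identifies the limit without using moments. The paper mollifies, setting $\vp^{\ve}_n(x)=\scprfield{\Phi_n}{\rh^{\ve}_x}$. For fixed $\ve$ this is Gaussian, so covariance convergence plus Kolmogorov--\v{C}entsov gives $\Phi^{\ve}_n\to\sqrt{\th_0}\Psi^{\ve}$ in $C(\bar D)$. The map $\chi\mapsto\sum_m a_m(\ga/\th_0)^m\int_D\mathbf{H}_m(\chi(x),\si^{\ve}(x)^2)f(x)\,\md x$ is continuous on $C(\bar D)$, so continuous mapping gives the limit law of the smeared functional directly (Proposition~\ref{prop:convergence:analyticfunction:smooth:approximation}). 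The cut-off is then removed in $L^2(\probPhi^{\om})$, first letting $n\to\infty$ and then $\ve\downarrow0$ (Proposition~\ref{prop:L2:convergence:dgff:smeared}). Expanding the square there involves only the two-point kernels $g_n$, $g^{0,\ve}_n$, $g^{\ve,\ve}_n$, which Proposition~\ref{prop:tested:green:convex:F} handles. This is the correct version of your ``reduce to two-point quantities''. Your moment approach could also be rescued by computing $\mean[(Y-P(G))^2]$ for joint subsequential limits $(G,Y)$ of linear observables and the functional, but that needs the same two-point limits.

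Conversely, the domination issue you single out is not the real obstacle. At fixed chaos degree only products of logarithms appear, which are integrable in $d=2$, and the $\om$-dependent constant is harmless for fixed $\om$.

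One further correction to your tightness recap: boundedness $H^{s'}_0(D)\to H^{-s'}(D)$ is not enough. The quantity $\meanPhi^{\om}\bigl[\|\wick{F(\ga\Phi_n)}\|^2_{H^{-s'}(D)}\bigr]=\sum_j(1+\la_j)^{-s'}\langle e_j,K_ne_j\rangle$ is a trace, and boundedness only bounds each term by a constant, so the sum diverges. The paper controls it by $\iint G_{s'}(x,y)K_n(x,y)\,\md x\,\md y$ with $G_{s'}(x,y)\lesssim|x-y|^{2s'-2}$.
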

Recalling that polynomials are $\be$-Fock-entire for any $\be>0$ and arguing as in Example~\ref{examples:mainresult1}-(i) yields the following immediate Corollary:
\begin{cor}\label{cor:finite:chaos:convergence}
  Let $d = 2$, $K \in \bbN$ and $D \subset \mathbb{R}^{d}$ be a bounded, strongly regular domain. Suppose there exist $\theta \in (0, 1)$ and $p, q \in [1, \infty]$ such that Assumptions~\ref{ass:law},~\ref{ass:cluster} and~\ref{ass:pq} hold. Then, for $\prob_0$-a.e.\ $\om$ under $\probPhi^{\om}$
  \begin{align*}
    \bigl(\wick{\Phi_n^k }\bigr)_{k=1}^K
    \,\underset{n \to \infty}{\overset{\text{law}}{\;\longrightarrow\;}}\,
    \Bigl(\th_0^{1-\frac{k}{2}}\wick{\Psi^k }\Bigr)_{k=1}^K
  \end{align*}
  in the strong topology of $H^{-s_1}(D) \times \dots \times H^{-s_K}(D)$ for any $s_k > 0$, with $\Sigma$ as in Theorem~\ref{thm:QIP:RG}.
\end{cor}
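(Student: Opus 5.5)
The plan is to obtain the corollary from Theorem~\ref{thm:mainresult:2} by taking, for each $k = 1, \dots, K$, the monomial $F_k(x) \ldef x^k$ with $\ga_k = 1$. By Example~\ref{example:be:fock:entire}-(i), $F_k$ is $\be$-Fock-entire for every $\be > 0$. Explicitly, $(F_k)_2(x) = k!\, x^k$, and $k!\, x^k \leq M(\be) e^{\be x}$ for all $x \geq 0$ with $M(\be) \ldef k!\, \sup_{x \geq 0} x^k e^{-\be x} = k!\,(k/(\be e))^k < \infty$.

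Fix target regularities $s_1, \dots, s_K > 0$. For each $k$ we choose $\be_k > 0$ small enough that two conditions hold simultaneously:
\begin{align*}
  1 \,<\, \sqrt{\frac{\th_0}{\be_k}\Bigl(\frac{1}{C_{\Si}} \wedge \frac{\th_0}{C_{\mathrm{HK}}}\Bigr)}
  \qquad\text{and}\qquad
  \be_k\, C_{\mathrm{HK}} \,<\, s_k .
\end{align*}
This is possible because the right-hand side of the first condition tends to $\infty$ as $\be_k \downarrow 0$ (recall $\th_0 > 0$ and $C_{\Si}, C_{\mathrm{HK}} < \infty$), while the left-hand side of the second tends to $0$. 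With these choices, $\ga_k = 1$ is admissible in Theorem~\ref{thm:mainresult:2}, and $s_k > \ga_k^2 \be_k C_{\mathrm{HK}}$ holds.

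Theorem~\ref{thm:mainresult:2} then gives, for $\prob_0$-a.e.\ $\om$, joint convergence in law in $H^{-s_1}(D)\times\dots\times H^{-s_K}(D)$ of $(\wick{\Phi_n^k})_{k=1}^K$ towards $\bigl(\wick{\th_0 (\Psi/\sqrt{\th_0})^k}\bigr)_{k=1}^K$.

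The only point needing care is a countable-intersection argument, since the null set in Theorem~\ref{thm:mainresult:2} may depend on the chosen $(s_k,\be_k)$. Convergence in $H^{-s}$ implies convergence in $H^{-s'}$ for $s' > s$, because the embedding $H^{-s} \hookrightarrow H^{-s'}$ is continuous. Hence it suffices to apply the theorem along rational sequences $s_k^{(m)} \downarrow 0$ and intersect the countably many full-measure events. This yields a single $\prob_0$-full set of $\om$ on which the convergence holds for all $s_k > 0$.

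It remains to identify the limit. By the definition~\eqref{def:cgff:analytic:limit:in:L2:introduction} with $F(x) = x^k$, the limit object is $\th_0 \cdot \th_0^{-k/2}\wick{\Psi^k}$. Indeed, at the level of the cutoff, $\wick{(\Psi^\ve/\sqrt{\th_0})^k} = \th_0^{-k/2}\,\mathbf{H}_k(\Psi^\ve,\si^\ve(x)^2)$, since the factor $\th_0^{-k/2}$ enters linearly in the series defining $\wick{F(\cdot)}$. Passing to the $L^2(\probPhi^{\Si})$ limit therefore gives $\th_0^{1-k/2}\wick{\Psi^k}$. There is no real obstacle here; the step requiring the most attention is making the choice of $\be_k$ explicit and handling the $\om$-null sets uniformly in $s_k$ as described.
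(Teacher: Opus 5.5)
Your proposal is correct and follows the paper's route exactly. The paper obtains the corollary by applying Theorem~\ref{thm:mainresult:2} to the monomials $F_k(x)=x^k$ with $\ga_k=1$ and $\be_k$ chosen small enough, as in Example~\ref{examples:mainresult1}-(i). Your countable-intersection argument over $s_k \downarrow 0$, which makes the $\prob_0$-null set independent of the $s_k$, and your explicit identification of the limit as $\th_0^{1-k/2}\wick{\Psi^k}$ are details the paper leaves implicit, and both are sound.
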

\begin{remark}\label{rem:remark:finite:chaos:convergence}
  \begin{enumerate}[(i)]
  \item
    As an interpretation, one may define the \emph{$k$-percolated GFF} as $\Psi_{\mathrm{perc}(k)} \ldef \th_0^{(2-k)/(2k)} \Psi$ for any $k$. Notice that, as infinite dimensional Gaussians, $\Psi_{\mathrm{perc}(k)}$ and $\Psi_{\mathrm{perc}(l)}$ are mutually singular for every $k \not= l$ if $\th_0<1$ (cf.~\cite[Chapter 6]{bogachev1998gaussian}). Hence, Corollary~\ref{cor:finite:chaos:convergence} reads as 
    \begin{align*}
      \bigl(\wick{\Phi_n^k} \bigr)_{k=1}^{K}
      \underset{n \to \infty}{\overset{\text{law}}{\;\longrightarrow\;}}
      \bigl(\wick{\Psi_{\mathrm{perc}(k)}^k} \bigr)_{k=1}^{K}.
    \end{align*}
    Thus, as soon as $\th_0<1$, the first $K$ powers of $\Phi_n$ converge to the powers of $K$ mutually singular, hence different, fields $\Psi_{\mathrm{perc}(k)}$. If $\th_0=1$, this does of course not happen.

  \item
    The tightness established here is of the correct order $s>0=d/2-1$. Choosing $K=1$, our result provides an improvement over the $s>d/2$ tightness established in \cite[Theorem 1.11]{andres2025scaling} in $d=2$. Note that for $s>d/2$, functions in $H^s_0(D)$ are Lebesque a.e.\ bounded, making this case fundamentally different from the situation where $s\leq d/2$. Indeed, the new bounds in Theorem~\ref{thm:green:pointwise:upperbound} and the method used to show Proposition~\ref{prop:DGFF:tightness} further allow to establish the correct $s>d/2-1$ tightness of the inhomogeneous field in all remaining dimensions $d\geq 3$ (cf.~\cite{peterslowik2026schroedinger}).
  \end{enumerate}
\end{remark}
As another Corollary of Theorem~\ref{thm:mainresult:2}, we now determine the scaling limits of the inhomogeneous analogs of the aforementioned lattice approximations for the self-interacting field theories in \cite{simon2015p,glimm2012quantum,hoegh1971general,albeverio1974wightman,frohlich1976classical}. For an analytic $V$ and a measurable function $g \colon D \to \bbR$, we denote the interaction potential by 
\begin{align*}
  \wick{V^{g}_n}(\vp^{nD})
  \,\ldef\,
  \int \wick{V(\ga \, \vp^{nD}_{\lfloor nx \rfloor})} \indicator_{\lfloor nx \rfloor \in \cC_{\infty}(\om)} g(x)\, \md x
  \quad \bigl( \,=\, \scprfield{\wick{V(\ga \Phi_n)}}{g} \bigr).
\end{align*}
Akin to, for instance, \cite[Chapter 4]{albeverio1974wightman}, we define the self interacting measure $\mu^{\om,V^{g}_n}_n$ with respect to the law $\probPhi^{\om}$ of the inhomogeneous DGFF  (cf. Definition~\ref{def:inhomogeneous:DGFF}) as 
\begin{align*}
  \frac{\md \mu^{\om,V^{g}_n}_n}{\md \probPhi^{\om}}(\vp)
  \,=\,
  \frac{e^{-\wick{V^{g}_n}(\vp)}}{\meanPhi^{\om}\left[e^{-\wick{V^{g}_n}(\vp)}\right]}.
\end{align*}
Recalling the form of the density of the inhomogeneous DGFF (cf.~\cite[Definition 1.2]{andres2025scaling}), we can equivalently give an expression of $\mu^{\om,V^{g}_n}_n$ in the form of a Gibbs measure on $\bbR^{\bbZ^d}$. Namely, 
\begin{align}\label{eq:def:gibbs:measure:interacting}
  \mu^{\om,V^{g}_n}_n(\md \vp) 
  \,=\,
  \frac{1}{Z_{V^{g}_n,nD}^{\omega}}
  \me^{-\cH^{\om}_{V^{g}_n}(\vp)}
  \mspace{-6mu}
  \;\prod_{x \in nD \cap \mathcal{C}_{\infty}(\omega)}\mspace{-12mu} \mathrm{d}\varphi_{x}
  \prod_{x \in (nD \cap \mathcal{C}_{\infty}(\omega))^{\mathrm{c}}}\mspace{-12mu} \delta_{0}[\mathrm{d} \varphi_{x}],
\end{align}
with partition function $Z_{V^{g}_n,nD}^{\omega} \equiv Z^{\om}_{nD} \meanPhi^{\om}\bigl[e^{-\wick{V^{g}_n}(\vp)}\bigr]$, where $Z^{\om}_{nD}$ is as in \cite{andres2025scaling}, and, most importantly, the Hamiltonian is given by 
\begin{align}\label{eq:def:hamiltonian:V}
  \cH^{\om}_{V^{g}_n}(\vp)
  \,\ldef\,
  \frac{1}{2} \cE^{\om}(\vp) + \wick{V^{g}_n}(\vp).
\end{align}
Here, $\cE^{\om}$ is the energy with respect to the Laplacian from \eqref{eq:defL}, defined as
\begin{align}\label{eq:def:rcm:laplace:energy}
  \cE^{\om}(\vp) 
  \,\ldef\,
  \scpr{\vp}{-\cL^{\om} \vp}{\cC_{\infty}(\om)}
  \,=\, 
  \sum_{\{x,y\} \in E(\cC_{\infty}(\om))} \om(\{x,y\}) \left(\vp_x - \vp_y \right)^2,
\end{align}
for functions $\vp \colon \bbZ^d \to \bbR$ with finite support. 

Furthermore, denoting by $\probPhi^{\Si}_{\th_0}$ the law of $\sqrt{\th_0} \Psi$ , we define the limiting measure $\mu^{\Si,V,\th_0}$ on $\cP(H^{-s}(D))$, the space of probability measures on $H^{-s}(D)$, as 
\begin{align*}
  \frac{\md \mu^{\Si,V,\th_0}}{\md \probPhi^{\Si}_{\th_0}}
  \,=\,
  \frac{e^{-\wick{V^{g}}(\sqrt{\th_0}\Psi)}}{\meanPhi^{\Si}\left[e^{-\wick{V^{g}}(\sqrt{\th_0}\Psi)}\right]},
  \quad
  \text{where}
  \;\;
  \wick{V^{g}}(\sqrt{\th_0}\Psi) 
  \,\ldef\,
  \scprfield{\wick{\th_0 V\left(\frac{\ga (\sqrt{\th_0}\Psi) }{\th_0}\right)}}{g}.
\end{align*}
While this may formally be done in the physics literature, we can of course not define $\mu^{\Si,V,\th_0}$ via a density as in \eqref{eq:def:gibbs:measure:interacting}, since an infinite dimensional Lebesgue measure on $H^{-s}(D)$ does not exist. Moreover, as touched upon in Remark~\ref{rem:remark:finite:chaos:convergence}-(iii), $\Psi$ and $\th_0^{1/2} \Psi$ are mutually singular if $\th_0<1$. Hence, it is important to use the law $\probPhi^{\Si}_{\th_0}$ instead of $\probPhi^{\Si}$ in the Definition above.
\begin{cor}
  Let $d = 2$ and $D \subset \mathbb{R}^{d}$ be a bounded, strongly regular domain. Suppose there exist $\theta \in (0, 1)$ and $p, q \in [1, \infty]$ such that Assumptions~\ref{ass:law},~\ref{ass:cluster} and~\ref{ass:pq} hold. Further, let $V$ be a $\be$-Fock-entire function. Then, for $\ga$ satisfying \eqref{eq:ga:convergence:condition:mainresult}, $s'>\ga^2 \be C_{\mathrm{HK}}$ and $g \in H^{s'}(D)$ assume that for $\prob_0$-a.e.\ $\om$ 
  \begin{equation}\label{eq:ui:condition:self:interacting}
    \left\{e^{-\scprfield{\wick{V(\ga \Phi_n)}}{g}}\right\}_{n \in \bbN} \text{ is UI under } \probPhi^{\om},
    \; \text{and}\; 
    \meanPhi^{\Si}\big[ e^{-\wick{V^{g}}(\sqrt{\th_0}\Psi)} \big] < \infty.
  \end{equation}
  Then, for any $s>0$, 
  \begin{align*}
    \mu^{\om,V^{g}_n}_n
    \underset{n \to \infty}{\implies}
    \mu^{\Si,V,\th_0} 
    \qquad 
    \text{on} \;\cP(H^{-s}(D))
  \end{align*}
  for $\prob_0$-a.e.\ $\om$, with $\Sigma$ as in Theorem~\ref{thm:QIP:RG}.
\end{cor}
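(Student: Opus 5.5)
Write $Z_n \ldef \scprfield{\wick{V(\ga \Phi_n)}}{g}$ and $Z \ldef \wick{V^{g}}(\sqrt{\th_0}\Psi)$. The plan is to deduce the statement from Theorem~\ref{thm:mainresult:2} with $K=2$: a joint convergence of the field itself and of the interaction potential, followed by a uniform-integrability argument for the Radon--Nikodym densities. Concretely, apply Theorem~\ref{thm:mainresult:2} with $F_1(x)=x$ (polynomial, so $\be_1$ can be chosen arbitrarily small, with $\ga_1=1$ and any $s_1=s>0$) and $F_2=V$, $\ga_2=\ga$, $s_2=s'$. For $\prob_0$-a.e.\ $\om$ this gives
\begin{align*}
  \bigl(\Phi_n,\wick{V(\ga\Phi_n)}\bigr)
  \underset{n\to\infty}{\overset{\mathrm{law}}{\;\longrightarrow\;}}
  \Bigl(\sqrt{\th_0}\,\Psi,\ \wick{\th_0 V\bigl(\tfrac{\ga\Psi}{\sqrt{\th_0}}\bigr)}\Bigr)
  \quad\text{in } H^{-s}(D)\times H^{-s'}(D).
\end{align*}
Here I use $\th_0^{1-1/2}=\sqrt{\th_0}$ for $k=1$, and the second component is precisely $\wick{\th_0V(\ga(\sqrt{\th_0}\Psi)/\th_0)}$. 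Since $g\in H^{s'}$ pairs continuously with $H^{-s'}$, the continuous mapping theorem yields joint convergence in law of $(\Phi_n, Z_n)$ to $(\sqrt{\th_0}\Psi, Z)$. Note that the law of $\Phi_n$ under $\probPhi^{\om}$ is exactly the push-forward whose tilt defines $\mu_n^{\om,V^g_n}$, and the law of $\sqrt{\th_0}\Psi$ is $\probPhi^{\Si}_{\th_0}$.

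Next, fix a bounded continuous $h\colon H^{-s}(D)\to\bbR$; the goal is
\begin{align*}
  \frac{\meanPhi^{\om}\bigl[h(\Phi_n)e^{-Z_n}\bigr]}{\meanPhi^{\om}\bigl[e^{-Z_n}\bigr]}
  \;\longrightarrow\;
  \frac{\meanPhi^{\Si}\bigl[h(\sqrt{\th_0}\Psi)e^{-Z}\bigr]}{\meanPhi^{\Si}\bigl[e^{-Z}\bigr]}.
\end{align*}
The map $(\phi,z)\mapsto h(\phi)e^{-z}$ is continuous, so $h(\Phi_n)e^{-Z_n}\to h(\sqrt{\th_0}\Psi)e^{-Z}$ in law, and likewise $e^{-Z_n}\to e^{-Z}$. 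The uniform integrability assumed in \eqref{eq:ui:condition:self:interacting} transfers to $h(\Phi_n)e^{-Z_n}$ because $h$ is bounded. Convergence in law together with UI gives convergence of expectations (Skorokhod representation plus Vitali), so numerators and denominators converge separately.

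It remains to see that the limiting denominator is nonzero and finite. Finiteness is the second part of \eqref{eq:ui:condition:self:interacting}, and positivity holds because $e^{-Z}>0$ almost surely. The ratio therefore converges, which is weak convergence in $\cP(H^{-s}(D))$. The main subtle point is measurability: the field component and the potential must converge \emph{jointly} under the same $\om$-a.e.\ event, and this is exactly what Theorem~\ref{thm:mainresult:2} provides, intersecting the null sets for the two functionals.

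I also have to check that $F_1(x)=x$ is admissible. It is a polynomial and hence $\be$-Fock-entire for arbitrarily small $\be$, so condition \eqref{eq:ga:convergence:condition:mainresult} with $\ga_1=1$ holds and any $s>0$ is allowed. A minor issue is that $\Phi_n$ as a distribution equals $\wick{F_1(\Phi_n)}$ by \eqref{eq:def:dgff:nonlinear:tested}, so the limit object matches.
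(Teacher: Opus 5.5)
Your proposal is correct and follows the paper's proof essentially step by step. You obtain the joint convergence of $(\Phi_n,\wick{V(\ga\Phi_n)})$ from Theorem~\ref{thm:mainresult:2} with $F_1=\id$, $\ga_1=1$, $F_2=V$, $\ga_2=\ga$, and then use continuity of $(\phi,\psi)\mapsto \cG(\phi)e^{-\scprfield{\psi}{g}}$ together with the assumed uniform integrability to pass to the limit in the numerator and the denominator separately. The only difference is cosmetic: the paper bounds the finite-$n$ denominators from below via Jensen's inequality, whereas you argue that the limiting denominator is positive because $e^{-Z}>0$ almost surely, which is equally valid.
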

\begin{proof}
  Note that by Jensen and since $\meanPhi^{\om}\left[\wick{V^{g}_n}(\vp)\right] = a_0 \int_D g(x) \md x \leq a_0 c \Norm{g}{2}$, 
  \begin{align*}
    \meanPhi^{\om}\left[e^{-\wick{V^{g}_n}(\vp)}\right]
    \,\geq\,
    e^{-\meanPhi^{\om}\left[\wick{V^{g}_n}(\vp)\right]}
    \,\geq\,
    e^{-a_0 c \Norm{g}{2}}
    >0,
  \end{align*}
  since $g \in H^{s'}(D) \subset L^2(D)$ and $D$ is bounded. Arguing as in Example~\ref{examples:mainresult1}-(i) and applying Theorem~\ref{thm:mainresult:2} with $F_1 \equiv \id$, $F_2 \equiv V$, $\ga_1 =1$ and $\ga_2 = \ga$, where $\ga$ satisfies \eqref{eq:ga:convergence:condition:mainresult}, yields that 
  \begin{align}\label{eq:weak:convergence:applied:self:interacting}
    \bigl(\Phi_n, \wick{V(\ga \Phi_n)} \bigr)
    \,\underset{n \to \infty}{\overset{\text{law}}{\;\longrightarrow\;}}\,
    \left(\sqrt{\th_0}\Psi, \wick{\th_0 V\left(\frac{\ga \Psi}{\sqrt{\th_0}}\right)}\right)
  \end{align}
  under $\probPhi^{\om}$ for $\prob_0$-a.e.\ $\om$ in the strong topology of $H^{-s}(D) \times H^{-s'}(D)$ for any $s>0$ and any $s'>\ga^2 \be C_{\mathrm{HK}}$. Let $\cG \colon H^{-s}(D) \to \bbR$ be an arbitrary bounded and continuous function. Then, 
  \begin{align*}
    \cG(\Phi_n) e^{-\wick{V^{g}_n}(\Phi_n)}
    \,\rdef\,
    \tilde{\cG}(\Phi_n,\wick{V(\ga \Phi_n)})
  \end{align*}
  is a continuous function of $(\Phi_n,\wick{V(\ga \Phi_n)})$ on $H^{-s}(D) \times H^{-s'}(D)$. Moreover, since $\cG$ is bounded, $\{\tilde{\cG}(\Phi_n,\wick{V(\ga \Phi_n)})\}_n$ is uniformly integrable under $\probPhi^{\om}$ by \eqref{eq:ui:condition:self:interacting}. Hence, the weak convergence in \eqref{eq:weak:convergence:applied:self:interacting} combined with \eqref{eq:ui:condition:self:interacting} implies that 
  \begin{align*}
    \meanPhi^{\om} \left[
      \cG(\Phi_n) e^{-\wick{V^{g}_n}(\Phi_n)}
    \right]
    \,\xrightarrow{n \to \infty}\,
    \meanPhi^{\Si} \left[
      \cG(\sqrt{\th_0}\Psi) e^{-\wick{V^{g}}(\sqrt{\th_0}\Psi)}
    \right].
  \end{align*}
  Noting that by choosing $\cG \equiv 1$ in the above yields the convergence of the denominator, we get
  \begin{align*}
    \meanPhi_{\mu^{\om,V^{g}_n}_n} \left[
      \cG(\Phi_n)
    \right]
    &\,=\,
    \frac{\meanPhi^{\om} \left[
        \cG(\Phi_n) e^{-\wick{V^{g}_n}(\Phi_n)}
      \right]}{\meanPhi^{\om} \left[
        e^{-\wick{V^{g}_n}(\Phi_n)}
      \right]}
    \nonumber\\[.5ex]
    &\,\xrightarrow{n \to \infty}\,
    \frac{\meanPhi^{\Si} \left[
        \cG(\sqrt{\th_0}\Psi) e^{-\wick{V^{g}}(\sqrt{\th_0}\Psi)}
      \right]}{\meanPhi^{\Si} \left[
        e^{-\wick{V^{g}}(\sqrt{\th_0}\Psi)}
      \right]}
    \,=\,
    \meanPhi_{\mu^{\Si,V,\th_0} } \left[
      \cG(\sqrt{\th_0}\Psi) 
    \right],
  \end{align*}
  which concludes the claim.
\end{proof}
\begin{example}
  \begin{enumerate}[(i)]
  \item
    \textit{H{\o}egh-Krohn model/Exponential interaction:} Note that $\wick{e^X} = e^{X-\bbE[X^2]/2}\geq 0$ for any centered Gaussian $X$ (cf.~\cite[Theorem 3.33 ]{janson1997gaussian}). Hence, choosing $V \equiv \exp$, we have $\wick{V^{g}_n}(\vp^{nD}) \geq 0$ and thus 
    \begin{align*}
      e^{-\wick{V^{g}_n}(\vp^{nD})}
      \,\leq\,
      1.
    \end{align*}
    verifying \eqref{eq:ui:condition:self:interacting} (cf.~\cite[Theorem 4.2]{albeverio1974wightman}). 

  \item
    \textit{Polynomials of even degree:} Since odd degree polynomials are unbounded from below, \eqref{eq:ui:condition:self:interacting} is not satisfied for this class. However, even in the case of even-degree polynomials showing \eqref{eq:ui:condition:self:interacting} is not trivial. For instance, $\bbE[\wick{X^{2m}}]=0$ for any $m\in \bbN$ and $X$ Gaussian (cf.\ \eqref{eq:wick:covariance:gaussian:xy}). Hence, unlike in the previous case, the Wick-power $\wick{X^{2m}}$ is, of course, not almost surely non-negative. For the lattice approximation of the well known $P(\phi)_2$-model, however, one verifies \eqref{eq:ui:condition:self:interacting} by showing that for every polynomial $P$ of even degree $2m$, the measure of the set where $\scprfield{\wick{P(\Phi_n)}}{g}$ is smaller than $-b\log(n)^m$ for some $b$ and large enough $n$ is sufficiently small (cf.\ \cite[Theorem IV.1-(c)]{guerra1975euclidean} or \cite[Lemma 9.6.2]{glimm2012quantum}). 
    
  \item
    \textit{Sine-Gordon:} The Hamiltonian of the (quantum) sine-Gordon Model is given by choosing $V\equiv\cos$ in \eqref{eq:def:hamiltonian:V} (cf.~\cite{coleman1975quantum}). Here, one faces problems similar as those encountered with polynomials. Indeed, even though $|\cos| \leq 1$, we have $\wick{\cos(\ga \vp_{\lfloor nx \rfloor})} = e^{(\ga^2/2) g^{\om}_{nD}(\lfloor nx \rfloor, \lfloor nx \rfloor)} \cos(\ga \vp_{\lfloor nx \rfloor})$, which is not bounded uniformly in $n$, since $g^{\om}_{nD}(\lfloor nx \rfloor, \lfloor nx \rfloor)$ scales like $\log(n)$. For $\ga$ as in Example~\ref{examples:mainresult1}-(ii), one may be able to verify \eqref{eq:ui:condition:self:interacting} following similar arguments as in the polynomial case. We refer to \cite{frohlich1976classical} for further reference. 
  \end{enumerate}
\end{example}

\subsection{Main ideas and methods}
The proof of Theorem~\ref{thm:green:pointwise:upperbound} rests on the observation that the Green's kernel, although initially defined probabilistically in \eqref{eq:def:green:rcm:probabilistic} in terms of the VSRW, is independent of the speed measure of the underlying random walk, in the sense made precise in \eqref{eq:Greenkernel:CS:equals:VS}. We therefore choose the particular speed measure $\th^{\om} \equiv 1 \vee \mu^{\om}$ \eqref{eq:def:speed:measure}. This choice combines two practical features of both the VSRW, for which $\th^{\om} \equiv 1$ and CSRW, for which $\th^{\om} \equiv \mu^{\om}$. First, it ensures that the intrinsic metric appearing in the heat kernel bounds of \cite[Theorem 3.2]{andres2019heat} coincides with the graph metric. This yields the Gaussian bounds in Theorem~\ref{thm:heat:kernel:bounds} that resemble those for the CSRW in \cite[Theorem 1.6]{andres2016heat}. Secondly, the lower bound $\th^{\om}\geq 1$ allows us to control the heat kernel on time scales preceding the random time $N_{\mathrm{HK}}(\om)$ after which the heat kernel estimates become effective (cf.\ Lemma~\ref{lem:heatkernel:before:diffusive}). These ingredients are sufficient to prove Theorem~\ref{thm:green:pointwise:upperbound}, and hence Corollary~\ref{cor:green:pointwise:upperbound:corollary} in $d \geq 3$. In dimension $d= 2$, however, controlling the contribution from times larger than $n^{2}$ additionally requires the spectral representation of the heat kernel in \eqref{eq:heat:kernel:killed:Semigroup}, together with the weighted Sobolev inequality, in order to bound the principal Dirichlet eigenvalue in \eqref{eq:principal:eigenvalue:GSRW}.

Corollary~\ref{cor:green:pointwise:upperbound:corollary} is also the main novel ingredient in the proof of Proposition~\ref{prop:tested:green:convex:F}, which forms the technical backbone of the scaling limit and establishes the convergence of certain operators, defined in \eqref{eq:def:operator:H:g}, involving nonlinear functionals of the following three related kernels: the rescaled Green's kernel $g_{n}$, defined in \eqref{eq:def:rescaled:greens:function:d2:abbreviate} and estimated in Corollary~\ref{cor:green:pointwise:upperbound:corollary}, the smeared kernel $g^{\ve,\ve}_{n}$, obtained by smoothing both variables of $g_n$ with $\rh^{\ve}$, see \eqref{eq:def:smeared:green:kernels:rcm}, and the half-smeared kernel $g^{0,\ve}_n$, in which only one of the two entries of $g_n$ is smoothed by $\rh^{\ve}$ (see \eqref{eq:def:smeared:green:kernels:rcm}). The proof combines the local central limit theorem (LCLT) for the killed Green's kernel obtained in \cite{andres2025scaling}, recalled here as Theorem~\ref{thm:LCLT_green}, an extension of the ergodic theorem in Lemma~\ref{eq:ergodic:lemma:extend:LocHoelder}, and the uniform-integrability (UI) estimate on the diagonal in Proposition~\ref{prop:UI:assumption:analytic}. The latter is the crucial application of the bounds in Corollary~\ref{cor:green:pointwise:upperbound:corollary}. 

Since the objects we consider here are, in general, non-Gaussian, convergence of covariances alone is no longer sufficient, in contrast to the case of the free field itself, cf.~\cite[Theorem 4.1]{andres2025scaling}. Instead, we follow a strategy similar to that of \cite{deuschel2024ray} and introduce a regularised version of the DGFF akin to the smeared field $\Psi^{\ve}$ appearing in \eqref{def:cgff:analytic:limit:in:L2:introduction}. More precisely, setting $\Phi^{\ve}_n \ldef \{\scprfield{\Phi_n}{\rh^{\ve}_x} : x \in D\}$, an application of \cite[Theorem 4.1]{andres2025scaling}, or alternatively of Proposition~\ref{prop:tested:green:convex:F}, together with the  Kolmogorov-\v{C}entsov theorem, yields convergence of $\Phi^{\ve}_n$ to $\sqrt{\th_0}\Psi^{\ve}$, as \mbox{$n \to \infty$} for fixed $\ve > 0$, in the space of continuous functions. Using regularity properties of Hermite polynomials and the continuous mapping theorem, we then prove in Proposition~\ref{prop:convergence:analyticfunction:smooth:approximation} that 
\begin{align}\label{eq:smooth:approximation:convergence:introduction}
  \scprfield{\wick{\th_0 F\left(\frac{\ga \Phi^{\ve}_n}{\th_0}\right)}}{f}
  &\underset{n \to \infty}{\overset{\text{law}}{\;\longrightarrow\;}}
  \scprfield{\wick{\th_0 F\left(\frac{\ga \Psi^{\ve}}{\sqrt{\th_0}}\right)}}{f}
  \qquad 
  (d \geq 2)
\end{align}
for every fixed $\ve > 0$ and every bounded test function $f$. The main remaining technical challenge is to remove the ultraviolet cut-off on the discrete field in a suitable sense. More precisely, in Proposition~\ref{prop:L2:convergence:dgff:smeared} we show that 
\begin{align}\label{eq:L2:smeared:approximation:introduction}
  \lim_{\ve \downarrow 0} \lim_{n \to \infty} \Norm{\scprfield{\wick{F(\ga \Phi_n)}}{f} - \scprfield{\wick{ \th_0 F\left(\frac{\ga \Phi^{\ve}_n}{\th_0}\right)}}{f}}{L^2(\probPhi^{\om})}
  \,=\,
  0
  \qquad
  (d = 2).
\end{align}
After expanding the square in the above expression, one obtains three terms involving the kernels $g_{n}$, $g^{\ve,\ve}_{n}$, and $g^{0,\ve}_{n}$, whose convergence is established in Proposition~\ref{prop:tested:green:convex:F}. Thus, \eqref{eq:L2:smeared:approximation:introduction} can be viewed as the discrete analog of \eqref{def:cgff:analytic:limit:in:L2:introduction}, the continuum counterpart of which will be proved shortly afterwards in Proposition~\ref{prop:L2:convergence:cgff:smeared}. Recalling that $L^2$-convergence implies weak convergence, we readily show marginal convergence of the field tested against suitable $f$ in Theorem~\ref{thm:marginal:convergence:analytic:field}. Finally, tightness is established in Proposition~\ref{prop:DGFF:tightness}. Compared with the $s>d/2$ tightness result of \cite[Proposition~4.2]{andres2025scaling}, the improvement obtained here comes from using the fractional Green's kernel \eqref{eq:definition:G_s:fractional:kernel}, rather than estimating the inverse eigenvalues in the $H^{-s}(D)$ norm via Weyl's law, cf.~\cite[Proposition~4.2, Step~1]{andres2025scaling}. The crucial ingredient for obtaining sharp results in the interesting regime $s \leq d/2$ is, once again, the Green's kernel bounds from Corollary~\ref{cor:green:pointwise:upperbound:corollary} 
\medskip

We conclude the introduction by describing the organisation of this article. Section~\ref{sec:greens:function} is the main technical part of the paper and is devoted entirely to the killed Green's kernel. In Subsection~\ref{subsec:greens:function:near:diagonal} we prove Theorem~\ref{thm:green:pointwise:upperbound} and Corollary~\ref{cor:green:pointwise:upperbound:corollary}. These results are then combined with the LCLT and an extension of the ergodic theorem in Subsection~\ref{subsec:greens:function:limit:tested} to prove Proposition~\ref{prop:tested:green:convex:F}. In Section~\ref{sec:scaling:limit}, we first show the aforementioned convergence of the smooth approximations (cf.\ \eqref{eq:smooth:approximation:convergence:introduction}) in Subsection~\ref{subsec:scaling:limit:smooth:approximation}. We then establish the corresponding $L^{2}$-convergence statements, namely \eqref{eq:L2:smeared:approximation:introduction} and \eqref{def:cgff:analytic:limit:in:L2:introduction}, in Subsection~\ref{subsec:scaling:limit:L2:convergence}. Based on this, marginal convergence is shown in Subsection~\ref{subsec:scaling:limit:marginal:convergence}, and tightness is proved in Subsection~\ref{subsec:scaling:limit:tightness}. Appendix~\ref{sec:appendix:ergodic:theorem} contains the proof of the extended ergodic theorem stated in Lemma~\ref{lem:ergodic:lemma:extend:LocHoelder}, while Appendix~\ref{sec:appendix:analytical:results} collects the proofs of several auxiliary analytic results. Throughout the paper, $c$ denotes a positive constant whose value may change from line to line, whereas  constants denoted by $C_i$ are fixed throughout. 

\section{Green's function}\label{sec:greens:function}
\subsection{Near diagonal pointwise bounds on the RCM Green's kernel}\label{subsec:greens:function:near:diagonal}
In this subsection we will give the proofs of Theorem~\ref{thm:green:pointwise:upperbound} and Corollary~\ref{cor:green:pointwise:upperbound:corollary}. For this purpose, we introduce the random walk $Y$ with speed measure $\th^{\om} \colon \cC_{\infty}(\om) \to (0,\infty)$, which we define as a time change of the VSRW $X$. Namely, let $a_t \ldef \inf \{ s\geq 0 \,:\, A_s >t \}$ be the right continuous inverse of the functional 
\begin{align*}
  A_t 
  \,=\,
  \int_0^t \th^{\om}(X_s) \,\md s, 
  \qquad
  t \geq 0. 
\end{align*}
This walk is the process $Y = (Y_t \,:\, t\geq 0) = (X_{a_t}\,:\, t\geq 0)$, which is generated by $\cL^{\om}_{\th^{\om}} f(x) \,=\, \th^{\om}(x)^{-1} \sum_{y \in \cC_{\infty}(\om)} \om(x,y) \left(f (y) - f(x)\right)$ (cf.~\cite{andres2019heat}). Natural choices are $\th^{\om} = \mu^{\om}$, corresponding to the constant speed random walk (CSRW) and $\th^{\om} = 1$, corresponding to the VSRW itself. For our purposes, the following speed measure turned out to be an efficient choice to obtain upper bounds
\begin{align}\label{eq:def:speed:measure}
  \th^{\om}(x)
  \,\ldef\,
  \mu^{\om}(x) \vee 1.
\end{align}
The walk corresponding to this speed measure inherits useful properties of the VSRW while always moving at least as fast as the CSRW, which moves according to $\mathrm{Exp}(1)$ clocks independent of its current position. For a set $\La \subset \cC_{\infty}(\om)$, we define the heat kernel and the killed heat kernel associated to this walk as
\begin{align*}
  q^{\om}_t(x,y)
  \,\ldef\,
  \frac{\Prob^{\om}_x [Y_t=y]}{1 \vee \mu^{\om}(y)},
  \qquad
  \text{and}
  \qquad
  q^{\om,\La}_t(x,y)
  \,\ldef\,
  \frac{\Prob^{\om}_x [Y_t=y, \ta_{\La}(Y) \geq t]}{1 \vee \mu^{\om}(y)}.
\end{align*}
The normalisation ensures, by detailed balance, symmetry of the heat kernel and also of the corresponding killed Green's kernel on a set $\La \subset \cC_{\infty}(\om)$, which, similarly to \eqref{eq:def:green:rcm:probabilistic}, is defined as 
\begin{align*}
  g^{\om,\th^{\om}}_{\La}(x,y) 
  \,\ldef\,
  \Mean^{\om}_x \left[ \int_0^{\ta_{\La}(Y)} \frac{\indicator_{\{Y_t = y\}}}{\th^{\om}(y)}  \,\md t\right]
  \,=\,
  \int_0^\infty \frac{\Prob^{\om}_x [Y_t=y, t<\ta_{\La}(Y)]}{\th^{\om}(y)} \,\md t
\end{align*}
for any $x,y \in \cC_{\infty}(\om)$. Using that $Y = X \circ a$ and substituting $s = a_t$ with $\md s = (1/\th^{\om}(X_s)) \md t$ in the integral inside of the expecation, it actually turns out that this Definition is independent of the choice of speed measure $\th^{\om}$, i.e.\ 
\begin{align}\label{eq:Greenkernel:CS:equals:VS}
  g^{\om,\th^{\om}}_{\La}(x,y) 
  \,=\,
  \Mean^{\om}_x \left[ \int_0^{\ta_{\La}(X_s)} \frac{\indicator_{\{X_s = y\}}}{\th^{\om}(y)}  \,\th^{\om}(X_s) \md s\right]
  \,=\,
  g^{\om}_{\La}(x,y),
\end{align}
where, as expected, $g^{\om}_{\La} \equiv g^{\om,1}_{\La}$ agrees with the Definition in \eqref{eq:def:green:rcm:probabilistic}. We are thus free to choose a speed measure of our liking and settle for \eqref{eq:def:speed:measure}.

To proceed, we introduce the weighted scalar product with respect to the measure $\th^{\om}$ as
\begin{align*}
  \scpr{f}{g}{\cC_{\infty}(\om),\th^{\om}} 
  \,\ldef\, 
  \sum_{x \in \cC_{\infty}(\om)} f(x) g(x) \th^{\om}(x)
\end{align*}
for which one readily verifies that $\scpr{-\cL^{\om}_{\th} f}{g}{\cC_{\infty}(\om),\th^{\om}} = \scpr{f}{-\cL^{\om}_{\th} g}{\cC_{\infty}(\om),\th^{\om}}$. Hence, by virtue of the Perron-Frobenius Theorem (cf.~\cite[Chapter 1]{seneta2006non}), there exists a basis $\{\psi^{\om,\th}_k\}_{k=1}^{|B^{\om}(n)|}$ of the Hilbert space $\bbR^{|B^{\om}(n)|}$ which is orthonormal with respect to the weighted scalar product $\scpr{\cdot}{\cdot}{\cC_{\infty}(\om),\th^{\om}}$ and such that there are $0<\la^{\om,\th}_{1} < \la^{\om,\th}_{2} \leq \dots \leq \la^{\om,\th}_{|B^{\om}(n)|}$ satisfying 
\begin{equation} \label{eq:rcm:laplace:eigenvalues}
  \begin{cases}
    -\mathcal{L}^{\omega}_{\theta} \psi^{\omega,\theta}_k = \lambda^{\omega,\theta}_{k} \psi^{\omega,\theta}_k & \text{on } B^{\omega}(n), \\[.5ex]
    \psi^{\omega,\theta}_k = 0 & \text{on } (B^{\omega}(n))^{\mathrm{c}},
  \end{cases}
  \qquad \text{and} \qquad 
  \scpr{\psi^{\om,\th}_i}{\psi^{\om,\th}_j}{\cC_{\infty}(\om),\th^{\om}} = \de_{ij}.
\end{equation}
The following Sobolev inequality from \cite{andres2025scaling} will be sufficient for us to control the principal Dirichlet Eigenvalue $\lambda^{\omega,\theta}_{1}$ on large balls. Before we state it, let us quickly recall some standard notation frequently appearing in the analytical study of the RCM. For any $p \in [1, \infty]$ and any non-empty, finite $A \subset \mathcal{C}_{\infty}(\omega)$, we define space-averaged $\ell^{p}$-norm on functions $f\colon A \to \mathbb{R}$ by 
\begin{align*}
  \Norm{f}{p, A}
  \;\ldef\;
  \bigg(
  \frac{1}{|A|}\; \sum_{x \in A}\, |f(x)|^p\
  \bigg)^{\!\!1/p}
  \qquad \text{and} \qquad
  \Norm{f}{\infty, A} \;\ldef\; \max_{x \in A} |f(x)|,
\end{align*}
where $|A|$ denotes the cardinality of the set $A$. We define two discrete measures $\mu^{\omega}$ and $\nu^{\omega}$ on $\mathbb{Z}^{d}$ by
\begin{align*}
  \mu^{\omega}(x) \;\ldef\; \sum_{y \sim x} \omega(\{x, y\})
  \qquad \text{and} \qquad
  \nu^{\omega}(x) \;\ldef\; \sum_{y \sim x} \frac{1}{\omega(\{x, y\})} \indicator_{\{\{x,y\}\in \mathcal{O}(\omega)\}}.
\end{align*}
Moreover, let us recall that we defined $\cE^{\om}(f) \ldef \scpr{f}{-\cL^{\om}f}{\cC_{\infty}(\om)}$ in \eqref{eq:def:rcm:laplace:energy} for $f \colon \cC_{\infty}(\om) \to \bbR$ with finite support. 
\begin{prop}[Weighted Sobolev inequality] \label{prop:Sobolev:ineq}
  Let $d \geq 2$, $q \in [1, \infty]$, and suppose there exists $\theta \in (0, 1)$ such that Assumptions~\ref{ass:law} and~\ref{ass:cluster}-(i) hold. Then, for any $\theta' \in (\theta, 1)$ there exists $C_{\mathrm{S}} \equiv C_{\mathrm{S}}(\theta'\!, d, q) \in (0, \infty)$ such that for any $\omega \in \Omega_{\mathrm{reg}} \cap \Omega_{0}^{*}$, $R \geq 1$ and $n \geq \const[N]{Nconst:ass:cluster}(\omega) \vee R^{\theta / (\theta' - \theta)}$ the following hold. For any $x_{0} \in B^{\omega}(0, R n)$ and any $f\colon \mathcal{C}_{\infty}(\omega) \to \mathbb{R}$ with $\supp(f) \subset B^{\omega}(x_{0}, n)$, 
  \begin{align}\label{eq:Sobolev:weighted}
    \Norm{f^2}{\rho, B^{\omega}(x_{0}, n)}
    \;\leq\;
    C_{\mathrm{S}}\, n^{2}\, \Norm{\nu^{\omega}}{q, B^{\omega}(x_{0}, n)}\,
    \frac{\mathcal{E}^{\omega}(f)}{|B^{\omega}(x_{0}, n)|},
  \end{align}
  where
  \begin{align}\label{eq:def:rho}
    \rho
    \;\equiv\;
    \rho(d', q)
    \;=\;
    \frac{d'}{d' - 2 + d'/q} \quad \text{with} \quad d' \ldef (d-\theta')/(1-\theta').
  \end{align}
\end{prop}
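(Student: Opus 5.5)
This is the weighted Sobolev inequality of \cite{andres2025scaling}. I would prove it in the standard way for degenerate conductances: first a local Sobolev inequality for the unweighted Dirichlet form on very regular balls, then a Hölder step that trades the missing ellipticity against $\Norm{\nu^{\omega}}{q}$.

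\emph{Step 1 (unweighted Sobolev inequality).} On a regular ball the volume regularity of order $d$ and the weak relative isoperimetric inequality of Definition~\ref{def:regBall} hold for all radii $r \geq n^{\theta/d}$. Via the usual co-area/truncation argument (as in Barlow, or Deuschel--Nguyen--Slowik), they give an $\ell^1$ isoperimetric inequality for functions supported in $B^{\omega}(x_0,n)$. The scales below $n^{\theta/d}$ are not controlled, and this costs dimension: we only get an effective dimension $d' = (d-\theta')/(1-\theta') > d$. The result is a Sobolev inequality of the form
\begin{align*}
  \Norm{f}{\frac{d'}{d'-1}, B^{\omega}(x_0,n)}
  \;\leq\;
  C\, n\, \frac{1}{|B^{\omega}(x_0,n)|} \sum_{\{x,y\} \in E(\mathcal{C}_{\infty})} |f(x)-f(y)| .
\end{align*}
The room $\theta' > \theta$ absorbs the factor $R$. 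Since $x_0 \in B^{\omega}(0,Rn) \subset B^{\omega}(0,n')$ with $n' = Rn$, the small scales $(Rn)^{\theta/d}$ must be at most $n^{\theta'/d}$-type scales. This is exactly what $n \geq R^{\theta/(\theta'-\theta)}$ guarantees, and it lets us invoke Assumption~\ref{ass:cluster}-(i) on $B^{\omega}(0,Rn)$.

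\emph{Step 2 (insert the conductances).} Apply Step 1 to $f^{2\alpha}$ for a suitable power $\alpha$, and use $|a^{2\alpha}-b^{2\alpha}| \leq c\,|a-b|\,(|a|^{2\alpha-1}+|b|^{2\alpha-1})$. Write
\[
  |f(x)-f(y)| \;=\; \omega(\{x,y\})^{1/2}\,|f(x)-f(y)|\cdot\omega(\{x,y\})^{-1/2}.
\]
Cauchy--Schwarz then yields the factor $\mathcal{E}^{\omega}(f)^{1/2}$ times $\bigl(\sum \nu^{\omega} |f|^{4\alpha-2}\bigr)^{1/2}$. A further Hölder step with exponent $q$ on $\nu^{\omega}$ produces $\Norm{\nu^{\omega}}{q}^{1/2}$ times an $\ell^{r}$-norm of $f$. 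Choosing $\alpha$ so that this $\ell^r$-norm matches the left-hand side allows us to absorb it. The exponent bookkeeping gives precisely $\rho = d'/(d'-2+d'/q)$. Squaring and normalising by $|B^{\omega}(x_0,n)|$ gives \eqref{eq:Sobolev:weighted}.

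\emph{Main obstacle.} The hard part is Step 1: deriving the $d'$-dimensional isoperimetric inequality uniformly over all centres $x_0 \in B^{\omega}(0,Rn)$ from only the large-scale regularity. This needs a chaining/Whitney-type covering of small sets by regular balls of radius at least $n^{\theta'/d}$, with careful tracking of how the constant $C_{\mathrm{S}}$ depends only on $\theta', d, q$. I would follow the argument of \cite[Proposition~3.5]{deuschel2018quenched}, adapted to centres in $B^{\omega}(0,Rn)$.
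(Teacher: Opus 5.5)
Your sketch is correct and matches the route behind the paper's treatment. The paper does not reprove this inequality; it imports it from \cite{andres2025scaling}, as the proof of Theorem~\ref{thm:heat:kernel:bounds} also reflects. There, $\theta$-very regularity of a ball of radius of order $Rn$ is turned into regularity of all balls of radius at least of order $n^{\theta'/d}$ via $n \geq R^{\theta/(\theta'-\theta)}$. One then uses the $d'$-dimensional $\ell^{1}$-Sobolev inequality of \cite{deuschel2018quenched} and inserts the conductances by Cauchy--Schwarz and H\"older with exactly your exponent bookkeeping.

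One small point to tidy up: the covering balls in Step~1 are centred at points of $B^{\omega}(x_0,n)$, which need not lie in $B^{\omega}(0,Rn)$. You should therefore apply Assumption~\ref{ass:cluster}-(i) to $B^{\omega}(0,2Rn)$ and absorb the resulting factor $2^{\theta}$ in the small-scale threshold into $C_{\mathrm{S}}$.
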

Note that, using Hoelders inequality and the weighted Sobolev inequality in Proposition~\ref{prop:Sobolev:ineq}, yields for any function $f$ with $\supp(f)\subset B^{\om}(n)$ and any $n \geq \const[N]{Nconst:ass:cluster}(\omega) \vee R^{\theta / (\theta' - \theta)}$ 
\begin{align*}
  \frac{\scpr{f}{f}{\cC_{\infty}(\om),\th^{\om}}}{|B^{\om}(n)|}
  &\,\leq\,
  \Norm{f^2}{\rh,B^{\om}(n)} \Norm{\th^{\om}}{\rh_*,B^{\om}(n)}
  \nonumber\\[.5ex]
  &\,\leq\, 
  C_{\mathrm{S}} \Norm{\nu^{\om}}{q,B^{\om}(n)}\Norm{1 \vee \mu^{\om}}{p,B^{\om}(n)} n^2 \frac{\cE^{\om}(f)}{|B^{\om}(n)|},
\end{align*}
where we also used that by the $p,q$ condition \eqref{eq:cond:pq_rg}, $\rh_*=\rh/(\rh-1) < p$ in the last line and our our choice of $\th^{\om} \equiv 1 \vee \mu^{\om}$. Hence, rearranging and using the Courant-Fischer theorem (cf.\cite[Theorem 1.2.10]{gine2006lectures}), we get a lower bound on the principal eigenvalue 
\begin{align}\label{eq:principal:eigenvalue:GSRW}
  \la^{\om,\th}_{1}
  \,=\,
  \inf_{\substack{f : \supp f \subset B^{\om}(n) \\ f \not= 0}} \, 
  \frac{\cE^{\om}(f)}{\scpr{f}{f}{\cC_{\infty}(\om),\th^{\om}}}
  \,\geq\,
  \left(C_{\mathrm{S}} \Norm{\nu^{\om}}{q,B^{\om}(n)}\Norm{1 \vee \mu^{\om}}{p,B^{\om}(n)}\right)^{-1} n^{-2}.
\end{align}
Furthermore, by representing the killed heat kernel $q^{\om,B^{\om}(n)}_t(x,y)$ in the eigenbasis one get the following useful representation (cf.~\cite[Lemma 1.3.3-(2)]{gine2006lectures} or \cite[Corollary 5.8]{Ba17})
\begin{align}\label{eq:heat:kernel:killed:Semigroup}
  q^{\om,B^{\om}(n)}_t(x,y)
  \,=\,
  \sum_{k=1}^{|B^{\om}(n)|} e^{-\la^{\om,\th}_{k} t} \psi^{\om,\th}_k(x) \psi^{\om,\th}_k(y).
\end{align}
Moreover, the following generalisation of the ergodic theorem will help us to control ergodic averages on scaled balls with varying centre points.
\begin{prop}\label{prop:krengel_pyke}
  Let $R > 0$ and  $\mathcal{B} \ldef \bigl\{ B : B \text{ open Euclidean ball in } [-R, R]^{d}\bigr\}$.  Suppose that Assumption~\ref{ass:law}-(i) holds.  Then, for any $f \in L^{1}(\Omega)$,
  \begin{align*}
    \lim_{n \to \infty} \sup_{B \in \mathcal{B}} \,
    \biggl|
      \frac{1}{n^{d}}\, \sum_{x \in (nB) \cap \mathbb{Z}^{d}}\mspace{-12mu} f \circ \tau_{x}
      \,-\,
      |B| \cdot \mean\bigl[ f \bigr]
    \biggr|
    \;=\;
    0, 
    \qquad \prob\text{-a.s.},
  \end{align*}
  where $|B|$ denotes the Lebesgue measure of $B$.
\end{prop}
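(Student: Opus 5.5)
This is the uniform ergodic theorem of Krengel and Pyke \cite{krengel1987uniform}. The plan is to reduce it to the ordinary multiparameter ergodic theorem applied to countably many sets, and then control the remainder by boundary estimates.

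\textbf{Step 1: pointwise limits on a countable family.}
By the multiparameter (Wiener/Tempelman) ergodic theorem under Assumption~\ref{ass:law}-(i), for every fixed box $Q=\prod_i [a_i,b_i)\subset[-R,R]^d$ with rational corners we have $n^{-d}\sum_{x\in nQ\cap\bbZ^d} f\circ\tau_x \to |Q|\,\mean[f]$ almost surely. To see this, write the sum over the box as an inclusion–exclusion of $2^d$ sums over boxes anchored at the origin; each of these converges by the ergodic theorem along cubes/rectangles, which is valid for $f\in L^1$.
Since there are countably many rational boxes, there is one full-measure event on which all of these limits hold simultaneously. Apply the same argument to $|f|$.

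\textbf{Step 2: approximating balls by boxes.}
Fix $\eta>0$ and a mesh $m$ with $1/m$ small. Use the grid of rational cubes of side $1/m$ in $[-R,R]^d$. For any ball $B$, let $B^-$ be the union of grid cubes contained in $B$ and $B^+$ the union of grid cubes meeting $B$.
There are only finitely many possible sets $B^\pm$, and each is a finite union of rational boxes. Moreover, $|B^+\setminus B^-|\le c\,R^{d-1}/m$ uniformly in $B$, because this difference lies in a $(\sqrt d/m)$-neighbourhood of the sphere $\partial B$, whose radius is at most $cR$.

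\textbf{Step 3: sandwiching.}
First suppose $f\ge0$. Then
\[
\sum_{nB^-} f\circ\tau_x \;\le\; \sum_{nB} f\circ\tau_x \;\le\; \sum_{nB^+} f\circ\tau_x .
\]
The only lattice-rounding issue at the boundary is absorbed by enlarging $B^+$ by one more layer of cubes.
Since the family of sets $B^\pm$ is finite, Step 1 gives, uniformly in $B$, that
\[
\liminf_n \inf_B \Bigl(n^{-d}\sum_{nB} f\circ\tau_x - |B|\,\mean f\Bigr) \ge -c R^{d-1}\mean f/m ,
\]
and the analogous $\limsup$ bound holds. Letting $m\to\infty$ along the integers gives the claim for $f\ge0$.
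For general $f\in L^1$, split $f=f^+-f^-$ and apply the result to each part.

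\textbf{Main obstacle.}
The delicate point is Step 1 for $f$ merely in $L^1$. The multiparameter ergodic theorem over rectangles with unrestricted eccentricity can fail in $L^1$, since it requires $L\log L$. Here, however, only dilations $nQ$ of a fixed box are used, so the aspect ratio is bounded. Tempelman's theorem for regular (Følner, bounded eccentricity) families of sets therefore applies in $L^1$.
The inclusion–exclusion should be done with boxes $n[0,b)$ of fixed shape, or one can apply Tempelman directly to the sets $nQ$ for $Q$ not containing $0$. These sets form a regular family with respect to $n[-R,R]^d$, and regular families suffice for the $L^1$ pointwise theorem.
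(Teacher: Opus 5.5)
Your proposal is correct, but it does something the paper does not: the paper gives no argument and simply quotes \cite[Theorem~1]{krengel1987uniform}, whereas you actually prove the statement.

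Your route has three steps. First, you apply the $L^1$ multiparameter ergodic theorem along the dilates $nQ$ of countably many rational boxes. You reduce this by signed inclusion--exclusion to origin-anchored boxes of fixed shape, so only bounded eccentricity is needed and no $L\log L$. For boxes reaching into other orthants, apply the theorem to the reflected actions $x\mapsto\tau_{Sx}$, which are again measure preserving and ergodic. Second, you bracket every ball, $B^-\subset B\subset B^+$, by finitely many unions of grid cubes of side $1/m$, with a boundary layer of volume $O_R(1/m)$ uniformly in $B$. Third, you split $f=f^+-f^-$. This is essentially the bracketing mechanism behind uniform ergodic theorems of Krengel--Pyke type, specialised to balls.

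The citation buys generality, since it covers much broader classes of averaging sets, at no cost to the paper. Your argument buys a self-contained proof resting only on the classical ergodic theorem for rectangles of bounded eccentricity, and it treats balls anywhere in $[-R,R]^d$ under a $\mathbb{Z}^d$-action directly.

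Two cosmetic points. State Step~1 for all rational boxes, not only those inside $[-R,R]^d$, since $B^+$ may stick out of the cube. And drop the remark about lattice rounding: $B^-\subset B\subset B^+$ already gives the exact inclusions $nB^-\cap\mathbb{Z}^d\subset nB\cap\mathbb{Z}^d\subset nB^+\cap\mathbb{Z}^d$.
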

\begin{proof}
  See, for instance, \cite[Theorem~1]{krengel1987uniform}.
\end{proof}
\begin{cor}[\cite{andres2025scaling}] \label{cor:KrengelPyke}
  Let $d \geq 2$ and $r > 0$. Suppose there exist $\theta \in (0, 1)$ and $p, q \in [1, \infty]$ such that Assumptions~\ref{ass:law}, \ref{ass:cluster}-(i) and~\ref{ass:pq} hold. Then, there exist $\Omega_{c} \in \mathcal{F}$ with $\prob[\Omega_{c}] = 1$ and $\bar{\mu}, \bar{\nu} \in (0, \infty)$ such that for any  $\delta \in (0,1)$ the following holds. There exists a random variable $\const[N]{Nconst:ergodic:constants} \equiv\const[N]{Nconst:ergodic:constants} (\omega, \delta)$ such that, for all $\omega \in \Omega_{\mathrm{reg}} \cap \Omega_{0}^{*} \cap \Omega_{c}$, $x \in [-r, r]^{d}$,
  \begin{align*}
    \sup_{n \geq \const[N]{Nconst:ergodic:constants} } \Norm{1 \vee \mu^{\omega}}{p, B^{\omega}(\pi_{n}(x), \delta n)}
    \;\leq\;
    \bar{\mu}
    \quad \text{and} \quad
    \sup_{n \geq \const[N]{Nconst:ergodic:constants} } \Norm{1 \vee \nu^{\omega}}{q, B^{\omega}(\pi_{n}(x), \delta n)}
    \;\leq\;
    \bar{\nu}.
  \end{align*}
\end{cor}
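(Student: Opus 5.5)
The plan is to reduce the graph-ball averages to ergodic averages over Euclidean balls, and then apply the uniform ergodic theorem in Proposition~\ref{prop:krengel_pyke} to the two stationary functions
\begin{align*}
  f_{\mu}(\om) \ldef \bigl(1 \vee \mu^{\om}(0)\bigr)^{p}
  \qquad\text{and}\qquad
  f_{\nu}(\om) \ldef \bigl(1 \vee \nu^{\om}(0)\bigr)^{q}.
\end{align*}
Both lie in $L^{1}(\prob)$ by Assumption~\ref{ass:pq}. Indeed, $\mu^{\om}(0)^{p} \le (2d)^{p-1}\sum_{e \ni 0}\om(e)^{p}$, and similarly $\nu^{\om}(0)^{q} \le (2d)^{q-1}\sum_{e\ni 0}\om(e)^{-q}\indicator_{\{e \in \cO\}}$. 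Let $\Om_c$ be the intersection of the two full-measure events given by Proposition~\ref{prop:krengel_pyke} for $f_\mu$ and $f_\nu$, with a slightly enlarged box radius $R \ldef r + 2d$. If $p=\infty$ or $q=\infty$, the corresponding quantity is a bounded function and the claim is immediate, so assume $p,q<\infty$.

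\textbf{Upper bound on the sum.} Fix $\om \in \Om_{\mathrm{reg}}\cap\Om_0^*\cap\Om_c$, $\de\in(0,1)$ and $x\in[-r,r]^d$. Write $z_n \ldef \pi_n(x)$, the cluster point assigned to $nx$, so that $|z_n - nx|$ is $o(n)$. Since $d^{\om}(y,z)\ge |y-z|_1$, the graph ball $B^{\om}(z_n,\de n)$ is contained in $\bbZ^d\cap nB_x$. Here $B_x$ is the open Euclidean ball centred at $x$ of radius $2\de\sqrt d$, say, which absorbs the discrepancy $|z_n-nx|/n$ for $n$ large; enlarging $R$ as above ensures $B_x\subset[-R,R]^d$. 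Hence
\begin{align*}
  \sum_{y\in B^{\om}(z_n,\de n)} (1\vee\mu^{\om}(y))^{p}
  \le \sum_{y\in nB_x\cap\bbZ^d} f_\mu\circ\tau_y
  \le n^{d}\bigl(|B_x|\,\mean[f_\mu] + \de^{d}\bigr)
\end{align*}
for all $n\ge N'(\om,\de)$. The threshold $N'(\om,\de)$ comes from Proposition~\ref{prop:krengel_pyke}, choosing the error there to be at most $\de^d$. The crucial point is that $N'$ is uniform in $x$ because of the supremum over balls.

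\textbf{Lower bound on the volume.} For $n\ge\const[N]{Nconst:ass:cluster}(\om)$ (suitably rescaled), Assumption~\ref{ass:cluster}-(i) applies: $z_n$ lies in a very regular ball $B^{\om}(0,c n)$ and $\de n\ge n^{\th/d}$. It therefore gives $|B^{\om}(z_n,\de n)|\ge C_{\mathrm V}(\de n)^d$. Dividing the upper bound by this volume gives
\begin{align*}
  \Norm{1\vee\mu^{\om}}{p,B^{\om}(z_n,\de n)}^{p}
  \le \frac{c_d\,\de^d\,\mean[f_\mu]+\de^d}{C_{\mathrm V}\,\de^d}
  = \frac{c_d\,\mean[f_\mu]+1}{C_{\mathrm V}}
  \rdef \bar\mu^{p}.
\end{align*}
The factors of $\de^d$ cancel, so $\bar\mu$ does not depend on $\de$, as required. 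The same argument with $f_\nu$ produces $\bar\nu$. Finally, set $\const[N]{Nconst:ergodic:constants}(\om,\de)$ to be the maximum of $N'$ and the rescaled $\const[N]{Nconst:ass:cluster}(\om)$; this is finite on $\Om_{\mathrm{reg}}\cap\Om_0^*\cap\Om_c$.

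\textbf{Main obstacle.} The delicate step is the uniformity in the centre. The Euclidean containment and the volume lower bound must hold simultaneously for all $x\in[-r,r]^d$ with a single threshold. This requires that the centres $\pi_n(x)$ stay in a fixed multiple of $B^{\om}(0,n)$, so that very regularity from Assumption~\ref{ass:cluster}-(i) applies with one random threshold. It also requires absorbing the rounding from $nx$ to $\pi_n(x)$ into a slightly larger Euclidean ball. Proposition~\ref{prop:krengel_pyke} is exactly the tool that makes the ergodic error uniform over these varying balls.
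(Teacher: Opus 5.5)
Your architecture is the natural one, and the paper points to it by placing Proposition~\ref{prop:krengel_pyke} just before the citation. You contain the chemical ball in a Euclidean ball via $d^{\om}\ge|\cdot|_1$. You bound the numerator uniformly in the centre by Proposition~\ref{prop:krengel_pyke} applied to $(1\vee\mu^{\om}(0))^p$ and $(1\vee\nu^{\om}(0))^q$. You bound the denominator by volume regularity, and the factors $\de^d$ cancel so that $\bar\mu,\bar\nu$ do not depend on $\de$.

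The gap is in the denominator. You assert that $z_n=\pi_n(x)$ lies in a very regular ball $B^{\om}(0,cn)$, i.e.\ that $d^{\om}(0,\pi_n(x))\le cn$ uniformly in $x\in[-r,r]^d$, and this is never justified. The comparison $d^{\om}\ge|\cdot|_1$ goes the wrong way. Assumption~\ref{ass:cluster}-(i) is a statement only about balls whose centres already lie in a chemical ball $B^{\om}(0,m)$ around the origin. It says nothing about cluster points that are Euclidean-close to $nx$ but chemically far from $0$. Very regularity at scale $m$ allows radii down to $m^{\th/d}$, so even a polynomial bound $d^{\om}(0,\pi_n(x))\le(\de n)^{d/\th}$ would suffice, but no such bound comes from (i) alone. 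Shifting the origin to $\pi_n(x)$ does not help either, since the thresholds $N_1(\tau_z\om)$ are not controlled uniformly in $z$. So the lower bound $|B^{\om}(\pi_n(x),\de n)|\ge C_{\mathrm V}(\de n)^d$, and with it the whole estimate, is unproved. You flag exactly this as the main obstacle but do not resolve it.

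The missing ingredient is a chemical-distance estimate, which is exactly Assumption~\ref{ass:cluster}-(ii); it is not among the listed hypotheses. Arguing as in Remark~\ref{rem:ass_cluster}-(ii), once $|\pi_n(x)-nx|_\infty\le n$ it gives $d^{\om}(0,\pi_n(x))\le C_{\mathrm d}(r+1)n\rdef m$ whenever $(r+1)n\ge N_1(\om)$. Very regularity of $B^{\om}(0,m)$ then yields the volume bound as soon as $\de n\ge m^{\th/d}$.

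Alternatively, keep only (i) but restrict the centres to $y\in B^{\om}(0,rn)$. Then $y/n\in[-r,r]^d$ and $\pi_n(y/n)=y$, no rounding is needed, and your argument goes through verbatim. This restricted form covers every use of the corollary in the paper: centres in $B^{\om}(0,Rn)$ in the proof of Theorem~\ref{thm:heat:kernel:bounds}, and centre $0$ in \eqref{eq:principal:eigenvalue:GSRW} and in Lemma~\ref{lem:smooth:approx:continuity:kol:centsov}.

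A smaller omission: the uniform rounding bound $\sup_{x\in[-r,r]^d}|\pi_n(x)-nx|=o(n)$ also needs an argument. It follows from Proposition~\ref{prop:krengel_pyke} applied to $\indicator_{\{0\in\cC_\infty\}}$, whose mean $\th_0$ is positive. For every $\eta>0$, all balls $nB$ of radius $\eta n$ inside $[-R,R]^d$ then contain cluster points once $n$ is large, uniformly in $B$. This full-measure event should be added to $\Om_c$.
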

With Proposition~\ref{prop:Sobolev:ineq} and Corollary~\ref{cor:KrengelPyke} at hand, we can reformulate the result from \cite{andres2016heat}, tailored to our setting:
\begin{theorem}[\cite{andres2019heat}]\label{thm:heat:kernel:bounds}
  Let $d \geq 2$ and suppose there exist $\theta \in (0, 1)$ and $p, q \in [1, \infty]$ such that Assumptions~\ref{ass:law}, \ref{ass:cluster}-(i) and~\ref{ass:pq} hold. Then, there exists $\ka \equiv \ka(\th,d,p,q) \in (0,\infty)$ and constants $C_{\mathrm{HK}}, c_{1},c_{2},c_{3},c_{4}$ only depending on $d,p,q,\th$ such that, setting $\const[N]{Nconst:ass:green:bound}(\om,R) \ldef \const[N]{Nconst:ass:cluster}(\omega) \vee \const[N]{Nconst:ergodic:constants}(\om,1)\vee R^\ka$, the following holds. For any $\omega \in \Omega_{\mathrm{reg}} \cap \Omega_{0}^{*} \cap \Omega_{c}$, any $R \geq 1$, any $x,y \in B^{\om}(0,Rn)$ and any $t \geq N_{\mathrm{HK}}(\om) \equiv N_{\mathrm{HK}}(\om,R) \ldef 4 \const[N]{Nconst:ass:green:bound}(\om,R)^2 \vee 1$,
  \begin{equation}\label{eq:HKbound:Gaussian}
    q^{\omega}_t(x,y) 
    \leq C_{\mathrm{HK}} t^{-d/2}
    \begin{cases}
      \exp\left( -c_{1} \frac{d^{\omega}(x,y)^2}{t} \right), & \text{if } t \geq c_{2} d^{\omega}(x,y), \\[.75ex]
      \exp\left( -c_{3} d^{\omega}(x,y) \left( 1 \vee \log \frac{d^{\omega}(x,y)}{t} \right) \right), & \text{if } t \leq c_{4} d^{\omega}(x,y).
    \end{cases}
  \end{equation}
\end{theorem}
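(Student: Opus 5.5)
This theorem is presented as a reformulation of the general heat kernel bounds of \cite[Theorem~3.2]{andres2019heat} for a random walk with general speed measure $\th^{\om}$. The plan is therefore to check the hypotheses of that result for the speed measure $\th^{\om} = 1 \vee \mu^{\om}$, uniformly over all balls of radius comparable to $\sqrt{t}$ centred in $B^{\om}(0,Rn)$, and to identify the metric appearing there.

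\emph{Step 1: the intrinsic metric is the graph metric.} In \cite{andres2019heat} the Gaussian factor is expressed through an intrinsic metric $\rho$. Its defining condition is
\[
\frac{1}{\th^{\om}(x)} \sum_{y} \om(\{x,y\})\, \bigl(1 \wedge \rho(x,y)^2\bigr) \leq 1 .
\]
Since $\mu^{\om}(x)/\th^{\om}(x) \leq 1$ by \eqref{eq:def:speed:measure}, the graph distance $d^{\om}$ is an admissible intrinsic metric, and the estimates can be written with $d^{\om}$. The two regimes in \eqref{eq:HKbound:Gaussian}, namely the Gaussian regime for $t \geq c_2 d^{\om}$ and the Poisson-type tail for $t \leq c_4 d^{\om}$, are exactly the ones produced in \cite{andres2019heat} (cf.\ also \cite{andres2016heat} for the CSRW).

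\emph{Step 2: local regularity inputs, uniformly in the centre.} The hypotheses of \cite{andres2019heat} are the following, for balls $B^{\om}(x_0,r)$ with $x_0 \in B^{\om}(0,Rn)$ and $r$ ranging over $[\sqrt{t}/2, \sqrt t]$ (or a comparable range):
\begin{enumerate}[(i)]
\item volume regularity;
\item the weighted Sobolev inequality;
\item bounded space-averaged norms $\Norm{\th^{\om}}{p,B}$ and $\Norm{\nu^{\om}}{q,B}$.
\end{enumerate}

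Input (i) comes from Assumption~\ref{ass:cluster}-(i) whenever the radius satisfies $r \geq N_1(\om)$, since the $\theta$-very regular ball $B^{\om}(0,N)$ then contains the relevant centres.

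Input (ii) is Proposition~\ref{prop:Sobolev:ineq}. Its restriction $n \geq N_1 \vee R^{\theta/(\theta'-\theta)}$ is what forces the choice of $\ka$. Here I fix $\theta' \in (\theta,1)$ so that \eqref{eq:cond:pq_rg} still holds with $\theta'$, and take $\ka \geq \theta/(\theta'-\theta)$. A centre $x_0 \in B^{\om}(0,Rn)$ with $Rn$ possibly much larger than $r = \sqrt t$ has to be handled: I apply the proposition at scale $r$ with the parameter $R' = Rn/r$. This requires $r \geq (Rn/r)^{\ka}$ in some form, so in practice the statement is used with $t$ and $n$ linked. I expect the intended meaning is $t \geq N_{\mathrm{HK}}$ together with $n$ large, and I will arrange the scales so that $\sqrt t \geq N_3(\om,R)$ suffices. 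This is exactly the reason $N_{\mathrm{HK}} = 4N_3^2$.

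Input (iii) is Corollary~\ref{cor:KrengelPyke} with $\delta=1$. Because $\th^{\om} = 1 \vee \mu^{\om}$, the $p$-norm of $\th^{\om}$ is controlled by $\bar\mu$, and the $q$-norm of $\nu^{\om}$ by $\bar\nu$. The key point is that these bounds hold uniformly in the centre, by the uniform ergodic theorem (Proposition~\ref{prop:krengel_pyke}), once the radius exceeds $N_2(\om,1)$. This uniformity is what makes $N_{\mathrm{HK}}$ independent of $x$ and $y$.

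\emph{Step 3: Moser iteration and Davies' method.} With the averaged norms bounded by the deterministic constants $\bar\mu$ and $\bar\nu$, the constants produced by the Moser iteration and Davies' perturbation argument in \cite{andres2019heat} depend only on $d,p,q,\th$ and $C_{\mathrm V}, C_{\mathrm{riso}}, C_{\mathrm W}$. This gives $C_{\mathrm{HK}}$ and $c_1,\dots,c_4$. The resulting on-diagonal bound is $q^{\om}_t(x,y)\le C t^{-d/2}$, and the off-diagonal factors follow by Davies' method with the metric $d^{\om}$ from Step~1.

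\emph{Main obstacle.} The delicate part is the bookkeeping of scales. All regularity, Sobolev and ergodic inputs must be in force simultaneously for every ball used in the iteration (radii between $\sqrt t/2$ and $\sqrt t$), centred anywhere in $B^{\om}(0,Rn)$, and the Davies argument also needs balls of radius up to order $d^{\om}(x,y)$. I would handle this by requiring $\sqrt t \geq 2N_3(\om,R)$ and by absorbing the polynomial dependence on $R$ from Proposition~\ref{prop:Sobolev:ineq} into the exponent $\ka$.
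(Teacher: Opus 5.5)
Your route is the paper's. You verify the hypotheses of \cite[Theorem~3.2]{andres2019heat} for $\th^\om=1\vee\mu^\om$, taking volume regularity from Assumption~\ref{ass:cluster}-(i), the local Sobolev inequality from Proposition~\ref{prop:Sobolev:ineq} (whence $\ka=\th/(\th'-\th)$), the averaged norms from Corollary~\ref{cor:KrengelPyke} with $\delta=1$ (using $\mu^\om/\th^\om\le 1$ and $1/\th^\om\le 1$), and setting $N_{\mathrm{HK}}=4N_3(\om,R)^2\vee 1$.

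Two small corrections:
\begin{itemize}
\item The cited bound is formulated with the specific metric $d^\om_{\th^\om}$ of \cite[(1.2)]{andres2019heat}. This metric always satisfies $d^\om_{\th^\om}\le d^\om$, so what you need is the reverse inequality, not admissibility of $d^\om$. It holds because $\om(\{x,y\})\le\mu^\om(x)\wedge\mu^\om(y)\le\th^\om(x)\wedge\th^\om(y)$, which gives every edge intrinsic length $1$ and hence $d^\om_{\th^\om}=d^\om$. This is how the paper argues.
\item The bound in \cite{andres2019heat} carries a prefactor $(1+d^\om(x,y)/\sqrt t)^{\ga}$, which you must absorb by shrinking $c_1,c_3$.
\end{itemize}

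The genuine gap is the step you yourself single out as the main obstacle, and your remedy does not work. Take a centre $x$ with $d^\om(0,x)\asymp Rn$ and a ball of radius $r\asymp\sqrt t$. Proposition~\ref{prop:Sobolev:ineq} must then be applied with $R'=Rn/\sqrt t$, and it requires $\sqrt t\ge (Rn/\sqrt t)^{\ka}$. With $t$ fixed this ratio is unbounded as $n\to\infty$, so it cannot be absorbed into a fixed exponent and a threshold $N_3(\om,R)$. Your Inputs (i) and (iii) have the same defect:
\begin{itemize}
\item Assumption~\ref{ass:cluster}-(i) only provides regular balls around such centres at radii $\ge (Rn)^{\th/d}$.
\item Corollary~\ref{cor:KrengelPyke} only controls balls whose radius is at least a fixed fraction of the distance of their centre from the origin.
\end{itemize}
So the inputs you can actually verify give \eqref{eq:HKbound:Gaussian} uniformly on $B^\om(0,Rn)$ only for $t$ growing with $n$; as verified, they support $t$ of order at least $n^2$.

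This cannot be repaired by better bookkeeping, because the statement as formulated fails. Take supercritical Bernoulli bond percolation on $\bbZ^2$, which satisfies all the hypotheses and in which $Y$ is the CSRW.
\begin{itemize}
\item For $\prob_0$-a.e.\ $\om$ and all large $n$, the ball $B^\om(0,Rn)$ contains the tip $z$ of a dangling path of length $L\ge c\log n$.
\item The walk started at $z$ is effectively one-dimensional up to times of order $L^2$, so $q^\om_t(z,z)\ge c\,t^{-1/2}$ for $1\le t\le L^2/c$.
\item This contradicts $q^\om_t(z,z)\le C_{\mathrm{HK}}t^{-1}$ on a window of times $t\ge N_{\mathrm{HK}}$. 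The window is nonempty for large $n$, because $N_{\mathrm{HK}}$ does not depend on $n$.
\end{itemize}
The paper's proof does not close this either. It checks the hypotheses of \cite{andres2019heat} for balls of radius $n$ centred in $B^\om(0,Rn)$, then applies \cite[Theorem~3.2]{andres2019heat} as if they held at every radius $\ge N_3(\om,R)$. A correct version must tie $t$ to $n$, or confine the centres to a ball of radius $O(\sqrt t)$. The range $N_{\mathrm{HK}}\le t\le n^2$, which produces the logarithm in Theorem~\ref{thm:green:pointwise:upperbound}, would then need a separate argument.
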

\begin{proof}
  This follows from the heat kernel bounds in \cite[Theorem 3.2]{andres2019heat}, applied on the graph $V= B^{\om}(0,Rn)$ with speed measure $\th^{\om} = 1 \vee \mu^{\om}$ in the following way: Note that, since, estimating $\th^{\om} \geq \mu^{\om}$ yields $1 \vee \mu^{\om}/\th^{\om} \leq 1$ and estimating $\th^{\om} \geq 1$ yields $1 \vee 1/\th^{\om} \leq 1$, Corollary \ref{cor:KrengelPyke} implies that for any $x \in B^{\om}(0,Rn)$ and any $n \geq \const[N]{Nconst:ergodic:constants}(\om,1)$, \cite[Assumption 3.1]{andres2019heat} is satisfied under the $p-q$-condition in \eqref{eq:cond:pq_rg} (in the the notation used in \cite[Equation (3.1)]{andres2019heat}, this follows from choosing $p=\infty$, $r=p$, $q=q$, $d'=2(1-\th)/(d-\th)$). Since $B^{\om}(0,Rn)$ is a subgraph of $\bbZ^d$, each vertex has at most degree $2d$, hence \cite[Assumption 2.1-(i)]{andres2019heat} is satisfied. Moreover, by Assumption \ref{ass:cluster}(i) and following the argument laid out in the beginning of the proof of \cite[Proposition 2.1]{andres2025scaling}, the volume regularity in \cite[Assumption 2.1(ii)]{andres2019heat} is satisfied for any $x \in B^{\om}(0,Rn)$ and any $n \geq \const[N]{Nconst:ass:cluster}(\omega) \vee R^{\theta / (\theta' - \theta)}$. Finally, the local Sobolev inequality in \cite[Assumption 2.1(iii)]{andres2019heat} follows from equation (2.5) in the proof of \cite[Proposition 2.1]{andres2025scaling} for any $x \in B^{\om}(0,Rn)$ and any $n \geq \const[N]{Nconst:ass:cluster}(\omega) \vee R^{\theta / (\theta' - \theta)}$. Hence, setting $\ka \ldef \theta / (\theta' - \theta)$ one can choose 
  \begin{align*}
    N_{\mathrm{HK}}(\om)
    \,\ldef\,
    \left(2 (\const[N]{Nconst:ass:cluster}(\omega) \vee R^{\ka}) \vee \const[N]{Nconst:ergodic:constants}(\om,1)\right)^2 \vee 1
    \quad 
    \left( \equiv 4 \const[N]{Nconst:ass:green:bound}(\om,R)^2 \vee 1\right)
  \end{align*}
  in \cite[Theorem 3.2]{andres2019heat}, which is finite for $\prob_{0}$-a.e.\ $\om$.

  Further, since $\th^{\om} = 1 \vee \mu^{\om} \geq \mu^{\om}$, one has for any $x,y \in B^{\om}(0,Rn)$
  \begin{align*}
    1 \wedge \frac{\th^{\om}(x) \wedge \th^{\om}(y)}{\om(x,y)}
    \,\geq\,
    1 \wedge \frac{\mu^{\om}(x) \wedge \mu^{\om}(y)}{\om(x,y)}
    \,\geq\,
    1 \wedge \frac{\om(x,y)}{\om(x,y)}
    \,=\,
    1,
  \end{align*}
  hence, the intrinsic metric $d^{\om}_{\th^{\om}}$ defined in \cite[Equation (1.2)]{andres2019heat} is lower bounded by the graph distance $d^{\om}$. Since the upper bound $d^{\om}_{\th^{\om}}(x,y) \leq d^{\om}(x,y)$ also holds (for any choice of $\th$), we have $d^{\om}_{\th^{\om}} = d^{\om}$ on $V = B^{\om}(0,Rn)$. Finally, if $d^{\om}(x,y) \geq 1$ and $t \geq N_{\mathrm{HK}}(\om) \geq 1$
  \begin{align*}
    (1 + d^{\om}(x,y)/\sqrt{t})^{\ga} e^{-c d^{\om}(x,y)} \leq 2^{\ga} e^{-\tilde{c} d^{\om}(x,y)},
  \end{align*}
  which yields the claim in its final form. 
\end{proof}
We start with the following lemma to control the contribution of the heat kernel until the regime where the bounds in Theorem~\ref{thm:heat:kernel:bounds} become Gaussian:
\begin{lemma}\label{lem:heatkernel:before:diffusive}
  Let $d \geq 2$ and suppose there exist $\theta \in (0, 1)$ and $p, q \in [1, \infty]$ such that Assumptions~\ref{ass:law}, \ref{ass:cluster}-(i) and~\ref{ass:pq} hold. Then, for any $\omega \in \Omega_{\mathrm{reg}} \cap \Omega_{0}^{*} \cap \Omega_{c}$, any $R \geq 1$, there exist constants $c,c' >0$ such that for any $x,y \in B^{\om}(0,Rn)$ 
  \begin{align*}
    \int_{0}^{ c_{2} d^{\om}(x,y) \vee N_{\mathrm{HK}}(\om)} q^{\om,B^{\om}(n)}_{t}(x,y) \, \md t
    \,\leq\,
    c' N_{\mathrm{HK}}(\om) e^{-c \frac{(d^{\om}(x,y)\vee 1)}{N_{\mathrm{HK}}(\om)}}.
  \end{align*} 
\end{lemma}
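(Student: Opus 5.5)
The plan is to bound the integrand pointwise in $t$ and split according to the distance. Write $T \ldef c_{2} d^{\om}(x,y) \vee N_{\mathrm{HK}}(\om)$ and $d \ldef d^{\om}(x,y)$. Two elementary inputs will do most of the work.

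\emph{Trivial bound.} Since $\th^{\om} \geq 1$, we have $q^{\om,B^{\om}(n)}_t(x,y) \leq \Prob^{\om}_x[Y_t = y] \leq 1$ for all $t$.

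\emph{Jump count.} Because $Y$ has speed measure $\th^{\om} = 1 \vee \mu^{\om}$, the jump rate of $Y$ at $z$ is $\mu^{\om}(z)/\th^{\om}(z) \leq 1$. Hence the number of jumps of $Y$ up to time $t$ is stochastically dominated by a Poisson variable $\mathrm{Poi}(t)$. To reach $y$ from $x$, the walk needs at least $d$ jumps, so
\begin{align*}
  q^{\om,B^{\om}(n)}_t(x,y)
  \,\leq\,
  \Prob\bigl[\mathrm{Poi}(t) \geq d\bigr]
  \,\leq\,
  e^{-t}\Bigl(\frac{e t}{d}\Bigr)^{d},
\end{align*}
by the Chernoff bound. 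This is the same mechanism behind the second case of \eqref{eq:HKbound:Gaussian}, but it requires no regularity and no lower bound on $t$. This is where $\th^{\om} \geq 1$ is essential.

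\emph{Case $d \leq c N_{\mathrm{HK}}(\om)$ (including $d=0$).} Here $T \leq (c_{2}c \vee 1) N_{\mathrm{HK}}$. Integrating the trivial bound gives a total of at most $c'' N_{\mathrm{HK}}$. Since $(d\vee1)/N_{\mathrm{HK}} \leq c \vee 1$, we have $e^{-c(d\vee 1)/N_{\mathrm{HK}}}$ bounded below by a constant, so the claimed bound follows after adjusting $c'$.

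\emph{Case $d$ large compared to $N_{\mathrm{HK}}$.} Split the integral at $t_0 \ldef d/(e^2)$.

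On $[0,t_0]$, the Chernoff bound gives an integrand at most $e^{-d}$. The contribution is therefore at most $d\,e^{-d} \lesssim e^{-d/2} \leq e^{-(d \vee 1)/N_{\mathrm{HK}}}$.

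On $[t_0, T]$, where $T = c_{2} d$ once $d \geq N_{\mathrm{HK}}/c_2$, we have $t \geq t_0 \gtrsim d \geq N_{\mathrm{HK}}$. So Theorem~\ref{thm:heat:kernel:bounds} applies, using $q^{\om,B^{\om}(n)} \leq q^{\om}$. Choose the constants so that $c_{2} \leq c_4$, which is possible after shrinking $c_{2}$, or else handle the band $t\in[c_4 d, c_2 d]$ by the Gaussian branch, where $e^{-c_1 d^2/t} \leq e^{-c_1 d/c_2}$. Either branch then gives an integrand bounded by $C_{\mathrm{HK}} t^{-d_{\mathrm{dim}}/2} e^{-c'''d}$. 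Integrating over an interval of length $\lesssim d$ yields at most $c\, d\, e^{-c'''d} \lesssim e^{-c'''d/2}$. This is again dominated by $N_{\mathrm{HK}} e^{-c(d\vee1)/N_{\mathrm{HK}}}$, since $N_{\mathrm{HK}} \geq 1$.

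\emph{Main obstacle.} The main difficulty is the bookkeeping of the interval $[c_4 d, c_2 d]$ when $c_{2} > c_4$ in Theorem~\ref{thm:heat:kernel:bounds}. I would first try to handle it as above. If that fails, I would avoid the heat kernel bounds entirely by using the Poisson bound on $[0, d/e^2]$ and noting that for $t \in [d/e^2, c_2 d]$ the trivial bound $1$ integrates to $O(d)$. That, however, is too weak for large $d$. So the backup is to use the off-diagonal estimate of \cite[Theorem 3.2]{andres2019heat}, which covers all $t \geq N_{\mathrm{HK}}$ with exponential decay in $d$ uniformly for $t \lesssim d$.
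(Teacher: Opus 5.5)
\textbf{There is a gap in the intermediate time range.} Two parts of your plan are fine: the small-distance case, and the short-time bound via Poisson thinning, $\Prob^{\omega}_x[Y_t=y]\le \Prob[\mathrm{Poi}(t)\ge d^{\omega}(x,y)]$. The latter is exactly what the paper uses for $et\le d^{\omega}(x,y)$.

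The problem is the times of order $d\equiv d^{\omega}(x,y)$, roughly $t\in[d/e^{2},c_{2}d]$, when $d$ is large. Your Chernoff bound only decays for $t\le(1-\eta)d$. For $t\in[d,c_{2}d]$ (when $c_2>1$) the Poisson tail is of order one. Theorem~\ref{thm:heat:kernel:bounds} as stated gives the exponential branch only for $t\le c_{4}d$ and the Gaussian branch only for $t\ge c_{2}d$, and it says nothing relating $c_{2}$ and $c_{4}$.

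Neither of your patches works:
\begin{itemize}
\item You cannot ``shrink $c_{2}$''. The Gaussian bound is only established for $t\ge c_{2}d$; you may enlarge $c_{2}$ or shrink $c_{4}$, but that only widens the uncovered band.
\item Invoking the Gaussian branch on $[c_{4}d,c_{2}d]$ uses it exactly where it is not available.
\end{itemize}
Your backup, that \cite{andres2019heat} has complementary regimes, may be true. If you verify that one may take $c_{2}=c_{4}$, your route does close. But that is not part of the theorem as stated here.

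\textbf{The missing ingredient is the Carne--Varopoulos bound.} It gives Gaussian decay at all times, with no regularity input and no requirement $t\ge N_{\mathrm{HK}}(\omega)$. Since $\theta^{\omega}\ge\mu^{\omega}$, the matrix $P=\mathrm{id}+\mathcal{L}^{\omega}_{\theta^{\omega}}$ is a lazy stochastic matrix, reversible with respect to $\theta^{\omega}$. Moreover $\Prob^{\omega}_x[Y_t=y]=\sum_{k\ge0}e^{-t}\frac{t^k}{k!}P^k(x,y)$, which is the same representation behind your Poisson domination. Carne--Varopoulos gives $P^k(x,y)\le 2\sqrt{\theta^{\omega}(y)/\theta^{\omega}(x)}\,e^{-d^2/(2k)}$. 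Averaging over the Poisson weights, and using $\theta^{\omega}\ge 1$, yields for $et\ge d$
\[
q^{\omega}_t(x,y)\;\le\;\frac{4}{\sqrt{\theta^{\omega}(x)\theta^{\omega}(y)}}\,e^{-d^2/(e^2t)}\;\le\;4\,e^{-d^2/(e^2t)} .
\]
On $[d/e,c_{2}d]$ this is at most $4e^{-d/(e^2c_2)}$. That piece of the integral therefore contributes $\lesssim d\,e^{-d/(e^2c_2)}\lesssim e^{-d/(2e^2c_2)}$, which is dominated by the right-hand side of the lemma. Combined with your Chernoff bound on $[0,d/e]$ and the trivial bound $q_t\le 1$ for $d\lesssim N_{\mathrm{HK}}(\omega)$, this is the paper's proof.

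Note that Theorem~\ref{thm:heat:kernel:bounds} is not used in this lemma at all. That is the point: the lemma is meant to control exactly the times where the heat kernel bounds are not yet effective.
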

\begin{proof}
  We first establish the following Lemma which is a consequence of the Carne Varopoulus bounds for discrete time reversible Markov Chains (cf.\cite{varopoulos1985long}). However, since we could not find a reference where this is shown for continuous time random walks with general speed measures, we quickly adapt the proof given in \cite[Theorem 5.17]{Ba17}, where this is shown for the CSRW.
  \begin{lemma}[\cite{Ba17}]
    Let $Y$ be a random walk with speed measure $\th^{\om}$ on $\cC_{\infty}(\om)$ and assume that $\th^{\om} \geq \mu^{\om}$. Then, there exists $c>0$ such that for any $x,y \in \cC_{\infty}$ such that $d^{\om}(x,y) \geq 1$
    \begin{equation*}
      \frac{\prob_x \left[Y_t =y \right]}{\th^{\om}(y)}
      \leq 
      \begin{cases}
        \frac{4}{\sqrt{\th^{\om}(x)\th^{\om}(y)}} \exp\left( - \frac{d^{\omega}(x,y)^2}{e^2t} \right), & \text{if } et \geq d^{\omega}(x,y), \\[.75ex]
        \frac{1}{\th^{\om}(x) \vee \th^{\om}(y)} \exp\left( -c d^{\omega}(x,y) \left( 1 \vee \log \frac{d^{\omega}(x,y)}{t} \right) \right), & \text{if } et \leq d^{\omega}(x,y).
      \end{cases}
    \end{equation*}
  \end{lemma}
  \begin{proof}
    Note that since $(\cL^{\om}_{\th^{\om}} \de_y)(x) \,=\, \th^{\om}(x)^{-1} \sum_{z \sim x} \om(x,z) (\de_y(z) - \de_y(x)) \,=\, \om(x,y)/\th^{\om}(x) \indicator_{y \sim x} - \mu^{\om}(x)/\th^{\om}(x) \indicator_{x=y}$, the transition matrix $P^{\th^{\om}} = \id + \cL^{\om}_{\th^{\om}}$ has entries given by
    \begin{align*}
      P^{\th^{\om}}(x,y) \,=\, \frac{\om(x,y)}{\th^{\om}(x)}, \; \text{if} \; x \sim y 
      \quad
      \text{and}
      \quad
      P^{\th^{\om}}(x,x) \,=\, \frac{\mu^{\om}(x)}{\th^{\om}(x)},
    \end{align*}
    which, since we assumed that $\th^{\om} \geq \mu^{\om}$, is a well defined transition matrix of a lazy random walk on $\cC_{\infty}(\om)$. Moreover, $P^{\th^{\om}}$ is reversible with respect to $\th^{\om}$. Hence, denoting $P^{\th^{\om}}_n = (P^{\th^{\om}})^n$, one easiliy verifies that $P^{\th^{\om}}_t(x,y) \ldef \sum_{n \geq 0} \frac{e^{-t}t^n}{n!} P^{\th^{\om}}_n(x,y)$ satisfies $P^{\th^{\om}}_0 = \id$ and the Kolmogorov backward equation, hence $\prob_x \left[Y_t =y \right] = P^{\th^{\om}}_t(x,y)$. Setting $a_t(n) = e^{-t} t^n/n!$ and inserting the Carne Varopoulus on the discrete walk (cf.~\cite[Theorem 4.9]{Ba17}) yields
    \begin{align*}
      \frac{\prob_x \left[Y_t =y \right]}{\th^{\om}(y)}
      \,=\,
      \frac{1}{\th^{\om}(y)} \sum_{n \geq 0} a_t(n) P^{\th^{\om}}_n(x,y)
      \,\leq\,
      \frac{2}{\sqrt{\th^{\om}(x)\th^{\om}(y)}} \sum_{n \geq 0} a_t(n) e^{d^{\om}(x,y)^2/(2n)}
    \end{align*}
    and repeating the argument in the proof of \cite[Theorem 5.17]{Ba17} line by line to estimate the sum concludes the case where $d^{\om}(x,y) \leq et$.

    If $d^{\om}(x,y) \geq et$, one uses that $P^{\th^{\om}}_n(x,y) \leq \indicator_{n \geq d^{\om}(x,y)}$, which implies that $\prob_x \left[Y_t =y \right] \leq \prob[N_t \geq d^{\om}(x,y)]$, where $N_t \sim \mathrm{Poisson}(t)$. Hence using \cite[Lemma 5.13(b)]{Ba17} and repeating the argument in \cite[Theorem 5.17]{Ba17}, the claim follows as laid out in \cite[Remark 5.18 (2)]{Ba17}. 
  \end{proof}
  Since $\th^{\om} = \mu^{\om} \vee 1 \geq 1$, the preceding Lemma implies that for any $d^{\om}(x,y) \geq 1$ and $t \geq d^{\om}(x,y)/e$, that $q^{\om}_t(x,y) \leq e^{-c 2 d^{\om}(x,y)}$. Moreover, since $q^{\om}_t(x,x) \leq 1/\th^{\om} \leq 1 = e^{c} e^{-c (d^{\om}(x,x) \vee 1)}$, one has $q^{\om}_t(x,y) \leq (1 \vee e^{c}) e^{-c (d^{\om}(x,y) \vee 1)}$ for any $t \leq (d^{\om}(x,y)\vee 1)/e$ and any $x,y \in \cC_{\infty}(\om)$. Hence, estimating $N_{\mathrm{HK}}(\om) \vee ((d^{\om}(x,y)\vee 1)e^{-1}) \leq N_{\mathrm{HK}}(\om) (d^{\om}(x,y)\vee 1)$ yields 
  \begin{align*}
    \int_{0}^{N_{\mathrm{HK}}(\om) \vee \frac{(d^{\om}(x,y)\vee 1)}{e}} q^{\om}_t(x,y) \, \md t
    &\,\leq\,
    (1 \vee e^{c}) N_{\mathrm{HK}}(\om) (d^{\om}(x,y)\vee 1) e^{-c (d^{\om}(x,y) \vee 1)}
    \nonumber\\[.5ex]
    &\,\leq\,
    c'  N_{\mathrm{HK}}(\om) e^{- \tilde{c} (d^{\om}(x,y) \vee 1)}
    \\[.5ex]
    &\,\leq\,
    c' N_{\mathrm{HK}}(\om) e^{- \tilde{c} \frac{(d^{\om}(x,y) \vee 1)}{N_{\mathrm{HK}}(\om)}},
  \end{align*}
  where the second step followed by absorbing the $(d^{\om}(x,y)\vee 1)$ factor and the last since $N_{\mathrm{HK}}(\om) \geq 1$.
  
  Note that if $t\geq 1/e = (d^{\om}(x,x)\vee 1)/e$, we have $q^{\om}_t(x,x) \leq 1= e^{1/e} e^{-1/e} \leq e^{1/e} e^{-(d^{\om}(x,x) \vee 1)^2/(e^{2}t)}$. Since, $e^{1/e}\leq 4$, the previous Lemma now implies that $q^{\om}_t(x,y) \leq 4 e^{-(d^{\om}(x,y) \vee 1)^2/(e^{2}t)}$ for any $t \geq (d^{\om}(x,x)\vee 1)/e$ and any $x,y \in \cC_{\infty}(\om)$, hence,
  \begin{align*}
    \int_{N_{\mathrm{HK}}(\om) \vee \frac{(d^{\om}(x,y)\vee 1)}{e}}^{N_{\mathrm{HK}}(\om) \vee c_{2} (d^{\om}(x,y)\vee 1)} q^{\om}_t(x,y) \, \md t
    \,\leq\,
    \tilde{c} \left(N_{\mathrm{HK}}(\om)(d^{\om}(x,y)\vee 1)\right)e^{-\frac{(d^{\om}(x,y)\vee 1)}{e^2 c_{2} N_{\mathrm{HK}}(\om)}},
  \end{align*}
  where we estimated $N_{\mathrm{HK}}(\om) \vee c_{2} (d^{\om}(x,y)\vee 1) \leq c_{2} N_{\mathrm{HK}}(\om)(d^{\om}(x,y)\vee 1)$ in the prefactor and in the exponential. Hence, absorbing the $(d^{\om}(x,y)\vee 1)$ term into the exponential again, yields the claim.
\end{proof}
We now have everything to show upper bounds on the Green's function. Recall that the killed Green's functions of the GSRW and the VSRW coincide \eqref{eq:Greenkernel:CS:equals:VS}.
\begin{proof}[Proof of Theorem~\ref{thm:green:pointwise:upperbound}]
  We will start with the much easier
  
  \smallskip
  \textbf{Case 1:} $ d \geq 3$.
  Note that for any $a \geq 0$ and $b >0$, there exists $c=c(b)$ such that $e^{-a} \leq c (a \vee 1)^{-b}$. Applying this with $b = d-2$ and combining with Lemma~\ref{lem:heatkernel:before:diffusive} yields
  \begin{align*}
    \int_{0}^{N_{\mathrm{HK}}(\om) \vee c_{2} (d^{\om}(x,y)\vee 1)}q^{\om}_t(x,y) \, \md t
    \,\leq\,
    c N_{\mathrm{HK}}(\om)^{d-1} (d^{\om}(x,y)\vee 1)^{2-d}.
  \end{align*}
  First assume that $d^{\om}(x,y) \geq 1$. Then, \eqref{eq:HKbound:Gaussian} in Theorem \ref{thm:heat:kernel:bounds} implies that 
  \begin{align*}
    \int_{N_{\mathrm{HK}}(\om) \vee c_{2} (d^{\om}(x,y)\vee 1)}^{\infty} q^{\om}_t(x,y) \, \md t
    \,\leq\,
    C_{\mathrm{HK}} \int_{1}^{\infty} t^{-d/2} e^{-c_{1} \frac{d^{\om}(x,y)^2}{t}} \, \md t
    \nonumber\\[.5ex]
    \,\leq\,
    c d^{\om}(x,y)^{2-d}
    \,\leq\,
    c N_{\mathrm{HK}}(\om)^{d-1} (d^{\om}(x,y)\vee 1)^{2-d},
  \end{align*}
  where the second inequality is a standard calculation using the Gamma funciton and the last inequality follows since $N_{\mathrm{HK}}(\om) \geq 1$ and since $(d^{\om}(x,y)\vee 1)= d^{\om}(x,y)$. If $x=y$, again by Theorem \ref{thm:heat:kernel:bounds}, 
  \begin{align*}
    \int_{N_{\mathrm{HK}}(\om) \vee c_{2} (d^{\om}(x,x)\vee 1)}^{\infty} q^{\om}_t(x,x) \, \md t
    \,\leq\,
    C_{\mathrm{HK}} \int_{1}^{\infty} t^{-d/2} \, \md t
    \,=\,
    \frac{C_{\mathrm{HK}}}{d/2 -1},
  \end{align*}
  which yields the claim for $d \geq 3$, since $N_{\mathrm{HK}}(\om) \geq 1$ (and $(d^{\om}(x,x) \vee 1)^{2-d} =1$).

  \smallskip
  \textbf{Case 2:} $ d = 2$. In this case, we estimate $e^{-c \frac{(d^{\om}(x,y)\vee 1)}{N_{\mathrm{HK}}(\om)}} \leq 1$ in Lemma \ref{lem:heatkernel:before:diffusive}, hence, 
  \begin{align*}
    \int_{0}^{N_{\mathrm{HK}}(\om) \vee c_{2} (d^{\om}(x,y)\vee 1)}q^{\om}_t(x,y) \, \md t
    \,\leq\,
    c N_{\mathrm{HK}}(\om).
  \end{align*}
  For the integral starting from $N_{\mathrm{HK}}(\om) \vee c_{2} \,  d^{\om}(x,y)$ we use again the heat kernel upper bound from \eqref{eq:HKbound:Gaussian} and substitute $s = 2 c_{1} d^{\om}(x,y)^2/t$, hence $ds/s = - dt/t$, which, on the event that $d^{\om}(x,y)^2 \geq N_{\mathrm{HK}}(\om)$, yields
  \begin{align}\label{eq:gauss:integral:bound}
    &\int_{N_{\mathrm{HK}}(\om) \vee c_{2} \,  d^{\om}(x,y)}^{d^{\om}(x,y)^2 \vee N_{\mathrm{HK}}(\om)} C_{\mathrm{HK}} t^{-d/2} e^{- c_{1} \frac{d^{\om}(x,y)^2}{t}} \,\md t
    \,\leq\,
    C_{\mathrm{HK}} \int_{1}^{ d^{\om}(x,y)^2} t^{-1} e^{- c_{1} \frac{d^{\om}(x,y)^2}{t}} \,\md t
    \nonumber\\[.5ex]
    &\,=\,
    C_{\mathrm{HK}} \int_{2 c_{1}}^{ 2 c_{1} d^{\om}(x,y)^2} \frac{e^{-s/2}}{s} \,\md s 
    \,\leq\,
    C_{\mathrm{HK}} \int_{2  c_{1}}^{\infty} \frac{e^{-s/2}}{s} \,\md s 
    \,\leq\,
    c
    \,\leq\,
    c N_{\mathrm{HK}}(\om).
  \end{align}
  On the event that $d^{\om}(x,y)^2 < N_{\mathrm{HK}}(\om)$, we estimate $t^{-d/2} \leq 1$ for any $t \geq 1$ to get,up to a constant, the same bound:
  \begin{align*}
    \int_{N_{\mathrm{HK}}(\om) \vee c_{2} \,  d^{\om}(x,y)}^{d^{\om}(x,y)^2 \vee N_{\mathrm{HK}}(\om)} C_{\mathrm{HK}} t^{-d/2}  \,\md t
    \,\leq\,
    C_{\mathrm{HK}} N_{\mathrm{HK}}(\om).
  \end{align*}
  Hence, the above yields that the integral from time $0$ until time $d^{\om}(x,y)^2 \vee N_{\mathrm{HK}}(\om)$ is bounded by $c N_{\mathrm{HK}}(\om)$. The logarithmic main contribution comes from the following region:
  \begin{align*} 
    &\int_{d^{\om}(x,y)^2 \vee N_{\mathrm{HK}}(\om)}^{ n^2 \vee N_{\mathrm{HK}}(\om)} q^{\om}_t(x,y)  \,\md t
    \,\leq\,
    \int_{(d^{\om}(x,y) \vee 1)^2}^{n^2 N_{\mathrm{HK}}(\om)} C_{\mathrm{HK}} t^{-1} \, \md t
    \nonumber\\[.5ex]
    &\qquad\,=\,
    2 C_{\mathrm{HK}} \log\left(\frac{n}{d^{\om}(x,y) \vee 1}\right) \,+\, 2 C_{\mathrm{HK}} \log(N_{\mathrm{HK}}(\om)),
  \end{align*} 
  where we estimated $d^{\om}(x,y)^2 \vee N_{\mathrm{HK}}(\om) \geq (d^{\om}(x,y) \vee 1)^2$ and notice that we can absorb $\log(N_{\mathrm{HK}}(\om)) \leq N_{\mathrm{HK}}(\om)$ in the error term. 

  For the remaining integral after time $n^2 \vee N_{\mathrm{HK}}(\om)$, using Cauchy Schwarz, recalling \eqref{eq:heat:kernel:killed:Semigroup} and using \eqref{eq:HKbound:Gaussian} yields for any $t \geq N_{\mathrm{HK}}(\om)$
  \begin{align*}
    \sum_{k=1}^{|B^{\om}(n)|} e^{-\la^{\om,\th}_{k} t} \psi^{\om,\th}_k(x) \psi^{\om,\th}_k(y)
    &\,\leq\,
    \sqrt{\sum_{k=1}^{|B^{\om}(n)|} e^{-\la^{\om,\th}_{k} t} \psi^{\om,\th}_k(x)^2}
    \sqrt{\sum_{k=1}^{|B^{\om}(n)|} e^{-\la^{\om,\th}_{k} t} \psi^{\om,\th}_k(y)^2}
    \nonumber\\[.5ex]
    &\,\leq\,
    e^{-\frac{\la^{\om,\th}_{1}t}{2}} \sqrt{q^{\om,B^{\om}(n)}_{t/2}(x,x)q^{\om,B^{\om}(n)}_{t/2}(y,y)}
    \\[.5ex]
    &\,\leq\,
    2 e^{-\frac{\la^{\om,\th}_{1}t}{2}} t^{-1}.
  \end{align*}
  Further, note that by \eqref{eq:principal:eigenvalue:GSRW} and Corollary~\ref{cor:KrengelPyke}, for any $n \geq \const[N]{Nconst:ergodic:constants}(\om,1) \vee \const[N]{Nconst:ass:cluster}(\omega) \vee R^{\theta / (\theta' - \theta)}$, where $\th'$ and hence $\ka$ chosen as in the proof of Theorem~\ref{thm:heat:kernel:bounds},
  \begin{align*}
    \la^{\om,\th}_{1}
    \,\geq\,
    \left(C_{\mathrm{S}} \bar{\nu} \bar{\mu}\right)^{-1} n^{-2}
    \,\rdef\,
    C_{\la}n^{-2}.
  \end{align*}
  Recalling again \eqref{eq:heat:kernel:killed:Semigroup}, the above yields that $q^{\om,B^{\om}(n)}_t(x,y) \leq t^{-1} \exp\{-\frac{C_{\la}n^{-2}t}{2}\}$ for any $x,y \in B^{\om}(n)$ and $t \geq N_{\mathrm{HK}}(\om)$. Using this and substituting $t= s n^2/C_{\la}$ yields
  \begin{align*}
    \int_{n^2 \vee N_{\mathrm{HK}}(\om)}^{\infty}  q^{\om,B^{\om}(n)}_t(x,y) \,\md t 
    \,\leq\,
    \int_{n^2}^{\infty}  2 t^{-1} e^{-\frac{C_{\la} t}{2 n^2 }} \,\md t 
    \,=\,
    2 \int_{ C_{\la} }^{\infty}  \frac{e^{-\frac{s}{2}} }{s}\,\md s
    \,\leq\,
    c
  \end{align*}
  as in \eqref{eq:gauss:integral:bound}.
\end{proof}
\begin{proof}[Proof of Corollary~\ref{cor:green:pointwise:upperbound:corollary}]
  Firstly, we quickly argue that there exists $c>0$ such that 
  \begin{align}\label{eq:grid:distance:Euclidean:distance}
    \left| \frac{\lfloor nx \rfloor}{n} - \frac{\lfloor ny \rfloor}{n}\right|_2 \vee \frac{1}{n}
    \,\geq\,
    c \left|x-y\right|_2.
  \end{align}
  If $|x-y|_2 \leq 4\sqrt{d}/n$, we get $\left|\lfloor nx \rfloor/n - \lfloor ny \rfloor/n\right|_2 \vee 1/n \geq 1/n = (4\sqrt{d})^{-1} 4\sqrt{d}/n \geq (4\sqrt{d})^{-1}|x-y|_2$. If $|x-y|_2 > 4\sqrt{d}/n$, by the triangle inequality and since $|\lfloor nx \rfloor/n - x|_2 \leq \sqrt{d}/n$, we get $|\lfloor nx \rfloor/n - \lfloor ny \rfloor/n|_2 \geq |x-y|_2 - 2\sqrt{d}/n > |x-y|_2/2$. Hence, \eqref{eq:grid:distance:Euclidean:distance} holds for any $n$ with $c=(4\sqrt{d})^{-1}$ since $(4\sqrt{d})^{-1}\leq 1/2$. 

  Moreover, notice that for the cluster distance $d^{\om}(x,y) \geq |x-y|_1 \geq |x-y|_2$ and hence, inserting \eqref{eq:grid:distance:Euclidean:distance} into Theorem~\ref{thm:green:pointwise:upperbound} while using that, by Assumption~\ref{ass:cluster}-(ii), $nD \subset B^{\om}_D(n)$ yields 
  \begin{align*}
    &g^{\om}_{nD}(\lfloor nx \rfloor,\lfloor ny \rfloor)
    \,\leq\,
    2 C_{\mathrm{HK}} \log\left(\frac{n}{d^{\om}(\lfloor nx \rfloor,\lfloor ny \rfloor)\vee 1}\right)
    \,+\, \const[C]{const:pointwise:schroedinger:bound:2d}N_{\mathrm{HK}}(\om)
    \nonumber\\[.5ex]
    &\,\leq\,
    2 C_{\mathrm{HK}} \log\left(\frac{1}{c|x-y|_2}\right)
    \,+\, \const[C]{const:pointwise:schroedinger:bound:2d}N_{\mathrm{HK}}(\om)
    \,\leq\,
    2 C_{\mathrm{HK}} \log\left(\frac{1}{|x-y|_2}\right)
    \,+\, c N_{\mathrm{HK}}(\om)
  \end{align*}
  for any $n \geq \const[N]{Nconst:ass:green:bound}(\om,C_d r_D)$, $\prob_0$-a.e.\ $\om$ and any $x \not= y$. Noticing that in $d\geq 3$
  \begin{align*}
    n^{d-2} g^{\om}(\lfloor nx \rfloor,\lfloor ny \rfloor)
    \,\leq\,
    \const[C]{const:pointwise:schroedinger:bound:3d}  N_{\mathrm{HK}}(\om)^{d-1} \left(\frac{n}{d^{\om}(\lfloor nx \rfloor,\lfloor ny \rfloor) \vee 1} \right)^{d-2}
  \end{align*}
  and again inserting the bound in \eqref{eq:grid:distance:Euclidean:distance} concludes the proof. 
\end{proof} 

\subsection{Limits for nonlinear functionals of (smeared) Green's functions}\label{subsec:greens:function:limit:tested}

In this subsection, we will restrict ourselves to the $d=2$ case. We introduce the (piecewise constant) function $g_n \colon \bbR^2 \times \bbR^2 \to \bbR $, defined as
\begin{align}\label{eq:def:rescaled:greens:function:d2:abbreviate}
  g_n(x,y) \equiv g^{\om}_{nD} (\lfloor nx \rfloor, \lfloor ny \rfloor)
  \qquad
  \text{for all}
  \; x,y \in \bbR^2.
\end{align}
Further, we also introduce smeared approximations of the killed Green's kernel, which will appear in the next section when expanding the square in \eqref{eq:L2:smeared:approximation:introduction}:
\begin{align}\label{eq:def:smeared:green:kernels:rcm}
  \begin{split}
    g^{0,\ve}_n(x,y)
    &\,\ldef\,
    \int g_n(x,z') \rh^{\ve}_{y}(z') \, \md z',
    % \;
    % \text{and}
    % \;
    \\[.5ex]
    g^{\ve,\ve}_n(x,y)
    &\,\ldef\,
    \int \int  \rh^{\ve}_{x}(z) g_n(z,z') \rh^{\ve}_{y}(z') \, \md z \, \md z'.
  \end{split}
\end{align}
Moreover, for $k \in \bbN$ and $G \in \{G_n,G^{0,\ve}_n,G^{\ve,\ve}_n,G^{\Si}\}$ and functions $H \colon \bbR_+ \to \bbR$, we define the following operators associated to the kernel $g \in \{g_n, g^{0,\ve}_n, g^{\ve,\ve}_n,g^{\Si}\}$ acting on integrable functions $f \colon D \to \bbR$ as 
\begin{align}\label{eq:def:operator:H:g}
  (H(G) f ) (x)
  \,\ldef\,
  \int_{D} H(g(x ,y )) \, f(y) \,\md y,
\end{align}
where we also abbreviated $g^{\Si} \equiv g^{\Si}_D$. To make the notation feasible, we will further abbreviate $1^n_{\cC_{\infty}(\om)}(x,y) \equiv \indicator_{\{ \lfloor nx \rfloor \in \mathcal{C}_{\infty}(\omega) \}}\,\indicator_{\{ \lfloor ny \rfloor \in \mathcal{C}_{\infty}(\omega) \}}$. 
We now state the main result in this subsection, which will be the technical backbone of the proof of the scaling limit. For the class of functions considered here, this is specific to dimension $d=2$. In $d=3$, one could prove this also for $H(a)=a^2$, while in all other dimensions $d \geq 4$, it only holds for linear $H(a)=a$, as shown in \cite[Theorem 4.1 Equation (4.1)]{andres2025scaling}, where bespoke linearity allowed for a potential theoretic treatment via maximal inequalities. For a general nonlinear $H$, however, we heavily rely on the precise pointwise near diagonal estimate in Theorem~\ref{thm:green:pointwise:upperbound}, that yields the UI-estimate in Proposition~\ref{prop:UI:assumption:analytic}. This is precisely where the integrability condition encoded in the parameter $\ga$ is obtained. 
\begin{prop}\label{prop:tested:green:convex:F}
  Let $d = 2$ and $D \subset \mathbb{R}^{d}$ be a bounded, strongly regular domain. Suppose there exist $\theta \in (0, 1)$ and $p, q \in [1, \infty]$ such that Assumptions~\ref{ass:law},~\ref{ass:cluster} and~\ref{ass:pq} hold. Moreover, let $H$ be a non negative, non decreasing, convex, locally Lipschitz function and assume that there exist $\be>0$ and $M<\infty$ such that $H(a) \leq Me^{\be a}$. Further, assume that $H(0)=0$. Then, for any $|\ga| < \sqrt{\frac{\th_0}{\be} \left(\frac{1}{C_{\Si}} \wedge \frac{\th_0}{C_{\mathrm{HK}}}\right)}$ and $\prob_0$-a.e.\ $\om$
  \begin{align}
    \lim_{n \to \infty} \scprreal{f}{H(\ga^{2} G_n) f}{D}\label{eq:Green:tested:convergence:no:smear}
    &\,=\,
    \th_0^2 \; \scprreal{f}{H(\ga^{2}\th_0^{-1} G^{\Si}) f}{D},
    \\[.5ex]
    \lim_{\ve \downarrow 0} \lim_{n \to \infty} \scprreal{f}{H((\ga \th_0^{-1})^{2} G^{\ve,\ve}_n) f}{D}\label{eq:Green:tested:convergence:both:smear}
    &\,=\,
    \scprreal{f}{H(\ga^{2} \th_0^{-1}  G^{\Si}) f}{D},
    \\[.5ex]
    \lim_{\ve \downarrow 0} \lim_{n \to \infty} \scprreal{f}{H(\ga^{2} \th_0^{-1} G^{0,\ve}_n) f}{D}\label{eq:Green:tested:convergence:one:smear}
    &\,=\,
    \th_0 \scprreal{f}{H(\ga^{2} \th_0^{-1}  G^{\Si}) f}{D},
  \end{align}
  for any bounded measurable $f \colon D \to \bbR$, with $\Sigma$ as in Theorem~\ref{thm:QIP:RG}.
\end{prop}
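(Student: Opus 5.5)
The strategy is a Vitali-type argument. The LCLT for the killed Green's kernel (Theorem~\ref{thm:LCLT_green}) gives pointwise (off-diagonal, locally uniform) convergence of the rescaled kernel. The ergodic theorem in the form of Lemma~\ref{lem:ergodic:lemma:extend:LocHoelder} accounts for the indicator weights, which carry the factors of $\th_0$. The uniform integrability estimate of Proposition~\ref{prop:UI:assumption:analytic}, a consequence of Corollary~\ref{cor:green:pointwise:upperbound:corollary}, controls the near-diagonal region.

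\textbf{Step 1: convergence away from the diagonal.} First I interpret the kernels: $g_n$ is supported on pairs with $\lfloor nx\rfloor,\lfloor ny\rfloor\in\cC_\infty(\om)$. Theorem~\ref{thm:LCLT_green} should give
\[
g_n(x,y)-\th_0^{-1}g^{\Si}(x,y)\,1^n_{\cC_\infty(\om)}(x,y)\to 0
\]
uniformly on $\{|x-y|\ge\delta\}\cap D_\delta\times D_\delta$ for each $\delta>0$, together with a boundary estimate near $\partial D$ using strong regularity. Since $H$ is locally Lipschitz with $H(0)=0$, on this region
\[
H(\ga^2 g_n(x,y))=H\bigl(\ga^2\th_0^{-1}g^{\Si}(x,y)\bigr)\,1^n_{\cC_\infty(\om)}(x,y)+o(1).
\]
Then I integrate against $f(x)f(y)$ and apply Lemma~\ref{lem:ergodic:lemma:extend:LocHoelder} in both variables, approximating the continuous integrand on $\{|x-y|\ge\delta\}$ by products of step functions. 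Each indicator is then replaced by $\th_0$, which produces $\th_0^2$ in \eqref{eq:Green:tested:convergence:no:smear}.

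For the smeared kernels I argue in the same way. Smearing in a variable averages the indicator against $\rh^\ve$, so by the ergodic theorem that indicator becomes $\th_0$ as $n\to\infty$ for fixed $\ve$. Concretely, $g^{\ve,\ve}_n\to\th_0^2\cdot\th_0^{-1}(\rh^\ve*g^\Si*\rh^\ve)$, which is $\th_0\,g^{\Si,\ve,\ve}$. After rescaling by $\th_0^{-2}$ this gives $\ga^2\th_0^{-1}g^{\Si,\ve,\ve}$ with no prefactor, matching \eqref{eq:Green:tested:convergence:both:smear}. For $g^{0,\ve}_n$ only one indicator survives and the smeared one contributes $\th_0$. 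The limit kernel is therefore $g^{\Si,0,\ve}\,1^n(x)$, and the surviving indicator yields the prefactor $\th_0$ in \eqref{eq:Green:tested:convergence:one:smear}. Finally I let $\ve\downarrow0$ using continuity of $g^\Si$ off the diagonal.

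\textbf{Step 2: near-diagonal uniform integrability (main obstacle).} I need
\[
\lim_{\delta\downarrow0}\limsup_n\int\!\!\int_{|x-y|<\delta}H(\ga^2 g_n)\,|f(x)f(y)|\,\md x\,\md y=0.
\]
By Corollary~\ref{cor:green:pointwise:upperbound:corollary} and $H(a)\le Me^{\be a}$, the integrand is at most $c(\om)\,|x-y|^{-2\ga^2\be C_{\mathrm{HK}}}$. The diagonal squares $\lfloor nx\rfloor=\lfloor ny\rfloor$ need separate treatment: there $g_n\le c\log n+cN_{\mathrm{HK}}$, and they have measure $O(n^{-2})$. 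The bound is integrable in $d=2$ provided $2\ga^2\be C_{\mathrm{HK}}<2$, and this follows from the hypothesis $\ga^2<\th_0^2/(\be C_{\mathrm{HK}})\le 1/(\be C_{\mathrm{HK}})$, since $\th_0\le1$. The diagonal contribution is $O(n^{-2}n^{2\ga^2\be C_{\mathrm{HK}}})\to0$ under the same condition.

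For the smeared kernels I use Jensen together with convexity and monotonicity of $H$. This gives $H(c\,g^{\ve,\ve}_n)\le\rh^\ve*H(c\,g_n)*\rh^\ve$, and similarly for $g^{0,\ve}_n$, so the same integrable majorant applies uniformly in $\ve$. The factor $\th_0^{-2}$ in \eqref{eq:Green:tested:convergence:both:smear} is exactly where the condition involving $\th_0/C_{\mathrm{HK}}$ is needed. On the continuum side, the bound \eqref{eq:continuous:green:pointwise:log} with $\ga^2\th_0^{-1}\be C_\Si<1$ makes $H(\ga^2\th_0^{-1}g^\Si)$ integrable, so $\delta\downarrow0$ can be taken in the limit as well. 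The delicate point I expect is bookkeeping: the $\delta$-cutoff must be uniform in both $n$ and $\ve$ simultaneously, and the random constant $N_{\mathrm{HK}}(\om)$ must be absorbed. The latter works because $N_{\mathrm{HK}}(\om)$ is fixed once $n\ge N(\om,C_dr_D)$.

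Combining Steps 1 and 2 and letting first $n\to\infty$, then $\ve\downarrow0$, then $\delta\downarrow0$ yields all three limits.
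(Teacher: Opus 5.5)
Your proposal is correct and is essentially the paper's proof. It combines the LCLT with the extended ergodic lemma on the interior off-diagonal region, and convexity/Jensen with the Corollary-based uniform integrability of Proposition~\ref{prop:UI:assumption:analytic} near the diagonal. The only differences are cosmetic: you use a fixed cutoff $\delta$ rather than $\de=\sqrt{\ve}$ and $3\ve$, and off-diagonal continuity of $g^{\Si}$ in place of Lemma~\ref{lem:continuous:Green's:smeared:converge}. Your separate treatment of the diagonal cells is also unnecessary, since Corollary~\ref{cor:green:pointwise:upperbound:corollary} already covers all $x\neq y$.

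One thing to correct is the boundary strip: it needs no estimate from strong regularity. Your own global majorant $c(\om)|x-y|_2^{-2\ga^2\be C_{\mathrm{HK}}}$ (with $\ga^2$ replaced by $\ga^2\th_0^{-2}$, respectively $\ga^2\th_0^{-1}$, for the smeared kernels) holds uniformly in large $n$ and small $\ve$. Together with $|S_\delta|\to0$, it already makes that contribution vanish, which is in substance the paper's Step~3.
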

The main ingredient of the proof is the local limit theorem for the killed Green's kernel, which was shown by Andres, Slowik and Sokol in \cite{andres2025scaling}. For any $\omega \in \Omega^{*}$ and $n \in \mathbb{N}$, we define the function $\pi_{n}\colon \mathbb{R}^{d} \to \mathcal{C}_{\infty}(\omega)$ that maps any $x \in \mathbb{R}^{d}$ to a closest point of $nx$ in $\mathcal{C}_{\infty}(\omega)$, breaking ties using a fixed ordering on $\mathbb{Z}^{d}$. Then, this reads:
\begin{theorem}[\cite{andres2025scaling} Quenched LCLT for Green's function]\label{thm:LCLT_green}
  Let $d \geq 2$ and $D \subset \mathbb{R}^{d}$ be a bounded, strongly regular domain. Suppose there exist $\theta \in (0, 1)$ and $p, q \in [1, \infty]$ such that Assumptions~\ref{ass:law},~\ref{ass:cluster} and~\ref{ass:pq} hold. For any $\varepsilon, \delta > 0$, set
  \begin{align} \label{eq:defK_eps}
    K_{\varepsilon, \delta}
    \;\ldef\;
    \big\{
    (x, y) \in D \times D :
    \operatorname{dist}(x, \partial D) \wedge \operatorname{dist}(y, \partial D)
    \geq \delta, |x - y|_{2} \geq \varepsilon
    \big\}.
  \end{align}
  Then, for any $\varepsilon, \delta > 0$ with $0 < \varepsilon < \delta$ such that $K_{\varepsilon, \delta} \neq \emptyset$ and $\prob_{0}$-a.e.\ $\omega$,
  \begin{align*}
    \lim_{n \to \infty} \sup_{(x, y) \in K_{\varepsilon, \delta}}
    \biggl|
      n^{d-2} g^{\omega}_{n D}(\pi_{n}(x), \pi_{n}(y)) -  \frac{g_{D}^{\Sigma}(x, y)}{\mathbb{P}[0 \in \mathcal{C}_{\infty}]}
    \biggr|
    \;=\;
    0,
  \end{align*}
  with $\Sigma$ as in Theorem~\ref{thm:QIP:RG}.
\end{theorem}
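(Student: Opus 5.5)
Although this theorem is quoted from \cite{andres2025scaling}, I would prove it by writing the Green's function as a time integral of the killed heat kernel and proving a local limit theorem for that kernel. By \eqref{eq:def:green:rcm:probabilistic}, after the substitution $t = n^{2}s$,
\begin{align*}
  n^{d-2} g^{\om}_{nD}(\pi_n(x),\pi_n(y))
  \,=\,
  \int_0^\infty n^{d} p^{\om,nD}_{n^2 s}(\pi_n(x),\pi_n(y))\,\md s .
\end{align*}
Fix $0<s_0<T<\infty$ and split the integral into $[0,s_0]$, $[s_0,T]$ and $[T,\infty)$. The target is the analogous decomposition $g^{\Si}_D(x,y)=\int_0^\infty k^{D,\Si}_s(x,y)\,\md s$, where $k^{D,\Si}_s$ is the transition density of the killed Brownian motion $W$ under $\Prob^{\Si}_x$.

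\textbf{Short times.} On $K_{\ve,\de}$ we have $|x-y|_2\ge\ve$, so $d^{\om}(\pi_n(x),\pi_n(y))\ge c\ve n$. I would bound the killed kernel by the full kernel and apply the Gaussian and Carne--Varopoulos bounds of Theorem~\ref{thm:heat:kernel:bounds}. For the VSRW these have to be transferred to the relevant speed measure; equivalently, work with $g^{\om,\th^{\om}}$ via \eqref{eq:Greenkernel:CS:equals:VS}. This gives a bound of order $s^{-d/2}e^{-c\ve^2/s}$, uniformly in $K_{\ve,\de}$ for $n$ large. Hence the $[0,s_0]$ part is at most $\int_0^{s_0} s^{-d/2}e^{-c\ve^2/s}\,\md s$, which tends to $0$ as $s_0\downarrow0$. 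The same estimate holds for $k^{D,\Si}$.

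\textbf{Long times.} Since $nD\subset B^{\om}_D(n)$ by Remark~\ref{rem:ass_cluster}, the eigenvalue bound \eqref{eq:principal:eigenvalue:GSRW} together with Corollary~\ref{cor:KrengelPyke} gives $\la_1\ge C_\la n^{-2}$. Combined with the spectral representation \eqref{eq:heat:kernel:killed:Semigroup} and the on-diagonal bound, as at the end of the proof of Theorem~\ref{thm:green:pointwise:upperbound}, this yields $n^d p^{\om,nD}_{n^2s}\le c\,e^{-C_\la s/2}$ for $s\ge 1$. The $[T,\infty)$ part is therefore uniformly small as $T\to\infty$, and likewise in the continuum.

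\textbf{Intermediate times: killed LCLT.} It remains to show that
\begin{align*}
  \sup_{s\in[s_0,T]}\ \sup_{(x,y)\in K_{\ve,\de}}
  \Bigl| n^{d} p^{\om,nD}_{n^2 s}(\pi_n(x),\pi_n(y)) - \th_0^{-1} k^{D,\Si}_s(x,y)\Bigr|
  \,\to\, 0 .
\end{align*}
I would prove this in three steps.
\begin{enumerate}[(a)]
\item \emph{Weak convergence of the killed walk.} By the QIP (Theorem~\ref{thm:QIP:RG}), started from $\pi_n(x)$ (using the uniform version over starting points in compacts), the killed rescaled walk converges to the killed Brownian motion. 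Strong regularity of $D$ (Definition~\ref{def:regular}) ensures that the exit time is a.s.\ continuous under $\Prob^{\Si}$, so that $\Prob^{\om}_{\pi_n(x)}[X^n_s\in A,\ s<\ta_{nD}]\to\int_A k^{D,\Si}_s(x,z)\,\md z$ for balls $A$.
\item \emph{Where the factor $\th_0^{-1}$ comes from.} The walk is reversible for the counting measure on $\cC_\infty(\om)$. By Proposition~\ref{prop:krengel_pyke} applied to $f=\indicator_{\{0\in\cC_\infty\}}$, the number of cluster points in $nA$ is $\th_0|A|n^d(1+o(1))$, uniformly over small balls $A$. Therefore the ball-averaged rescaled kernel converges to $\th_0^{-1}k^{D,\Si}_s$.
\item \emph{From averages to pointwise values.} This is the main obstacle: it requires an oscillation bound for $p^{\om,nD}_{n^2s}(\pi_n(x),\cdot)$ on balls of radius $\eta n$ that is uniform in $n$. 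I would first try to obtain it from the parabolic Harnack inequality, via Moser iteration in the $p,q$-setting of \cite{andres2019heat}, using the Sobolev inequality of Proposition~\ref{prop:Sobolev:ineq}. The iteration constants are uniform in the centre because of Corollary~\ref{cor:KrengelPyke}. The distance $\de$ to $\partial D$ keeps these balls inside $nD$, so the Dirichlet condition does not interfere. Harnack then gives Hölder continuity at scale $n$, and letting $\eta\to0$ upgrades (b) to uniform pointwise convergence on $[s_0,T]\times K_{\ve,\de}$.
\end{enumerate}

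Combining the three time regimes and letting $s_0\downarrow0$ and then $T\to\infty$ completes the argument.
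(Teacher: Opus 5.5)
The paper gives no proof of this theorem. It is quoted from \cite{andres2025scaling} and used only as an input, in the LCLT region of the proof of Proposition~\ref{prop:tested:green:convex:F}, so there is no in-paper argument to compare with. Your plan is the standard one and can be made to work: integrate a killed heat-kernel LCLT over $s\in[s_0,T]$ and control small and large $s$ separately. As written, however, the three time regimes are estimated for two different processes.

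\textbf{Mismatched clocks.} Everything you invoke on $[0,s_0]$ and $[T,\infty)$ is a statement about $q^{\omega}_t$: Theorem~\ref{thm:heat:kernel:bounds}, Lemma~\ref{lem:heatkernel:before:diffusive}, \eqref{eq:principal:eigenvalue:GSRW}, and the on-diagonal bound at the end of the proof of Theorem~\ref{thm:green:pointwise:upperbound}. That is the kernel of the time-changed walk $Y$ with speed measure $1\vee\mu^{\omega}$. Your middle regime, by contrast, is an LCLT for the VSRW kernel $p^{\omega,nD}_t$.

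Identity \eqref{eq:Greenkernel:CS:equals:VS} equates only the integrals over $[0,\infty)$. The truncated integrals $\int_0^{s_0n^2}p^{\omega,nD}_t\,\mathrm{d}t$ and $\int_0^{s_0n^2}q^{\omega,nD}_t\,\mathrm{d}t$ are different quantities, so you cannot take the tails from one clock and the bulk from the other. The bounds do not transfer to $p_t$ either: $1\vee\mu^\omega$ was chosen precisely so that the intrinsic metric equals $d^\omega$, and for the VSRW it does not.

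The simplest repair keeps the VSRW throughout.
\begin{itemize}
\item For short times, use the strong Markov property at the hitting time $\sigma$ of $\{z:|z-\pi_n(y)|_2<\varepsilon n/2\}$:
\[
  n^{d-2}\int_0^{s_0n^2}p^{\omega,nD}_t(\pi_n(x),\pi_n(y))\,\mathrm{d}t
  \,\le\,
  \Prob^{\omega}_{\pi_n(x)}\bigl[\sigma\le s_0n^2\bigr]\,
  \sup_{|z-\pi_n(y)|_2\ge\varepsilon n/2-1} n^{d-2}g^{\omega}_{nD}(z,\pi_n(y)).
\]
The Green's function is clock-free, so the supremum is bounded for large $n$ by Corollary~\ref{cor:green:pointwise:upperbound:corollary}. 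By the QIP, the probability is asymptotically at most $\Prob^{\Sigma}_0[\sup_{s\le s_0}|W_s|_2\ge\varepsilon/4]$, which vanishes as $s_0\downarrow0$.
\item For long times, $\theta^\omega\ge1$ and the variational formula give $\lambda_1^{\mathrm{VSRW}}(nD)\ge\lambda^{\omega,\theta}_1\ge C_\lambda n^{-2}$. The on-diagonal bound $p^{\omega,nD}_{n^2}(z,z)\le cn^{-d}$ follows from Proposition~\ref{prop:Sobolev:ineq} by a Nash argument that uses only $\|\nu^\omega\|_q$. The spectral estimate then gives the $e^{-C_\lambda s/2}$ decay you want for $p$.
\end{itemize}
Running everything for $Y$ instead is possible but costs more. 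You would need a QIP for $Y$ with covariance $\Sigma^2/\bar\theta$, where $\bar\theta=\mean_0[1\vee\mu^\omega(0)]$. You would also need a $\theta^\omega$-weighted cluster count in (b) and a Harnack inequality for $Y$; the factor $\bar\theta$ cancels after integrating in $s$.

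\textbf{Step (c) and the QIP.} Step (c) carries essentially all of the difficulty, and nothing in this paper supplies it. The results of \cite{andres2019heat} used here are heat-kernel upper bounds, whereas a parabolic Harnack inequality also needs the lower (weak Harnack) half of the Moser scheme. Likewise, Theorem~\ref{thm:QIP:RG} is stated from the origin only. Uniformity over starting points $\pi_n(x)$, with $x$ in a compact set, needs an extra argument, for example sup-norm sublinearity of the corrector on $B(0,Rn)$.

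\textbf{A lighter elliptic route.} If you only want the Green's function, an elliptic argument avoids both the time splitting and the speed-measure issue.
\begin{enumerate}
\item The map $z\mapsto g^{\omega}_{nD}(z,\pi_n(y))$ is $\mathcal{L}^\omega$-harmonic on $(nD\cap\mathcal{C}_\infty(\omega))\setminus\{\pi_n(y)\}$.
\item The QIP for the killed walk, plus uniform integrability of $\tau_{nD}/n^2$, gives $n^{-2}\sum_{z\in nA}g^\omega_{nD}(\pi_n(x),z)\to\int_A g^\Sigma_D(x,z)\,\mathrm{d}z$. The uniform integrability follows from $\sup_z\Mean^\omega_z[\tau_{nD}]\le c\bar\nu n^2$, the exit-time bound used in the proof of Lemma~\ref{lem:smooth:approx:continuity:kol:centsov}.
\item Your step (b) turns this into cluster averages converging to $\theta_0^{-1}$ times averages of $g^\Sigma_D$.
\item An elliptic oscillation estimate at macroscopic scales upgrades averages to pointwise values. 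Its amplitude is controlled by Corollary~\ref{cor:green:pointwise:upperbound:corollary}.
\end{enumerate}
Your route, once repaired, proves more, namely a killed heat-kernel LCLT. The price is that it needs a parabolic rather than an elliptic Harnack inequality.
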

To elevate the local limit Theorem to the result in Proposition~\ref{prop:tested:green:convex:F}, the following uniform integrability on the diagonal is crucial. With Corollary~\ref{cor:green:pointwise:upperbound:corollary} at hand, this is now fairly easy to show: 
\begin{prop}\label{prop:UI:assumption:analytic}
  Let $d \geq 2$ and $D \subset \bbR^d$ bounded. Suppose that there exist $\theta \in (0, 1)$ and $p, q \in [1, \infty]$ such that Assumptions~\ref{ass:law}, \ref{ass:cluster} and~\ref{ass:pq} hold. Moreover, let $H$ be a non negative, non decreasing and convex function and assume that there exist $\be>0$ and $M<\infty$ such that $H(a) \leq Me^{\be a}$. Then, for any $|\ga| < \sqrt{\th_0^2/\be C_{\mathrm{HK}}}$, we have 
  \begin{align*}
    \limsup_{\ve \downarrow 0} \limsup_{n \to \infty} \int_{|x-y|_2< \ve} H((\ga \th_0^{-1})^{2} g^{\om}_{nD} (\lfloor nx \rfloor, \lfloor ny \rfloor))\, \md x \, \md y 
    \,=\,
    0
    \qquad
    \text{for $\prob_0$-a.e.\ $\om$.}
  \end{align*}
\end{prop}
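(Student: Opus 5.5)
The plan is to insert the pointwise bound of Corollary~\ref{cor:green:pointwise:upperbound:corollary} into the integrand and then bound an explicit integral that does not depend on $n$. Write $\tilde\ga \ldef \ga\th_0^{-1}$. Fix $\om$ in the full-measure set $\Omega_{\mathrm{reg}}\cap\Omega_0^*\cap\Omega_c$ and let $n \geq \const[N]{Nconst:ass:green:bound}(\om,C_d r_D)$. Then $N_{\mathrm{HK}}(\om)<\infty$ is a fixed number, independent of $x,y$ and $n$. For $d=2$ and $x\neq y$ in $D$, Corollary~\ref{cor:green:pointwise:upperbound:corollary} gives
\[
g^{\om}_{nD}(\lfloor nx\rfloor,\lfloor ny\rfloor)\leq 2C_{\mathrm{HK}}\log\bigl(1/|x-y|_2\bigr)+c\,N_{\mathrm{HK}}(\om).
\]
Since $H$ is non-decreasing and $H(a)\le M e^{\be a}$, we get
\[
H\bigl(\tilde\ga^2 g^{\om}_{nD}(\lfloor nx\rfloor,\lfloor ny\rfloor)\bigr)\le M e^{\be\tilde\ga^2 c N_{\mathrm{HK}}(\om)}\,|x-y|_2^{-2\be\tilde\ga^2 C_{\mathrm{HK}}}.
\]
The diagonal $\{x=y\}$ is a null set, so it can be ignored. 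The right-hand side does not depend on $n$, so the $\limsup_{n\to\infty}$ is bounded by $M e^{c'N_{\mathrm{HK}}(\om)}\int_{D\times D}\indicator_{|x-y|_2<\ve}|x-y|_2^{-\al}\,\md x\,\md y$ with $\al\ldef 2\be\tilde\ga^2C_{\mathrm{HK}}$.

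The condition $|\ga|<\sqrt{\th_0^2/\be C_{\mathrm{HK}}}$ says exactly that $\be\ga^2\th_0^{-2}C_{\mathrm{HK}}<1$, which is $\al<2=d$. Passing to polar coordinates in $y$ around $x$, the integral is at most $|D|\cdot 2\pi\int_0^\ve r^{1-\al}\,\md r = |D|\,\tfrac{2\pi}{2-\al}\,\ve^{2-\al}$. This tends to $0$ as $\ve\downarrow0$, because the prefactor $e^{c'N_{\mathrm{HK}}(\om)}$ is finite and independent of $\ve$. That proves the claim for $\prob_0$-a.e.\ $\om$.

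Some care is needed with the range of validity. The corollary is stated for $x,y$ with $\lfloor nx\rfloor,\lfloor ny\rfloor$ in the cluster. Outside $nD\cap\cC_\infty(\om)$ the Green's function vanishes, so there $H(0)\le M$ holds trivially. The set where this happens and $|x-y|<\ve$ has measure $O(\ve^2)$, so its contribution also vanishes; if $H(0)=0$ it is even zero. For $d\ge3$ (the statement allows $d\ge2$), I would follow the same scheme: the bound $n^{d-2}g\le cN_{\mathrm{HK}}^{d-1}|x-y|^{2-d}$ controls the rescaled kernel, but the unscaled $g^{\om}_{nD}(\lfloor nx\rfloor,\lfloor ny\rfloor)$ is bounded by $cN_{\mathrm{HK}}^{d-1}$ uniformly. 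Hence $H$ of it is uniformly bounded and the integral is $O(\ve^d)$. I expect the main obstacle to be only bookkeeping: making sure the random constant $N_{\mathrm{HK}}(\om)$ enters only multiplicatively, through $e^{\be\tilde\ga^2 cN_{\mathrm{HK}}}$, and never affects the exponent $\al$. This is precisely the uniformity in $x,y$ emphasized in Remark~\ref{rem:Green:bounds:remark}-(ii).
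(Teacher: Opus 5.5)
Your proposal is correct and follows the paper's proof. Like the paper, you insert the $d=2$ bound of Corollary~\ref{cor:green:pointwise:upperbound:corollary} into $H(a)\le Me^{\be a}$ to obtain an $n$-independent dominant $c(\om)|x-y|_2^{-2\be\ga^2\th_0^{-2}C_{\mathrm{HK}}}$, which is locally integrable exactly under the stated condition on $\ga$. Your explicit polar-coordinate estimate $O(\ve^{2-\al})$ simply makes quantitative the paper's appeal to absolute continuity of the Lebesgue integral. Your separate remark for $d\ge 3$, where the unscaled kernel is uniformly bounded by $cN_{\mathrm{HK}}(\om)^{d-1}$, covers a case that the paper's proof leaves implicit.
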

\begin{proof}
  Firstly, using the exponential growth assumption and Corollary \ref{cor:green:pointwise:upperbound:corollary} yields for any $n \geq \const[N]{Nconst:ass:green:bound}(\om,C_d r_D)$, $\prob_0$-a.e.\ $\om$ and Lebesgue almost every $x,y \in D$
  \begin{align}\label{eq:green:upper:bound:corollary:applied}
    H((\ga \th_0^{-1})^{2} g^{\om}_{nD} (\lfloor nx \rfloor, \lfloor ny \rfloor))
    \,\leq\,
    M e^{(\ga \th_0^{-1})^{2} \be \const[C]{const:pointwise:schroedinger:bound:corollary:2d} N_{\mathrm{HK}}(\om)} \left|x-y\right|_2^{-(\ga \th_0^{-1})^{2} \be 2 C_{\mathrm{HK}}}
  \end{align}
  which, since $N(\om) < \infty$ for $\prob_0$-a.e.\ $\om$, is locally integrable in $d=2$ if $\ga$ is chosen such that $(\ga \th_0^{-1})^{2} \be 2 C_{\mathrm{HK}}< 2$. Hence, the claim follows from the absolute continuity of the Lebesgue measure.
\end{proof}
Moreover, we need the following off-diagonal estimate, which also follows from Corollary \ref{cor:green:pointwise:upperbound:corollary}. An equivalently useful estimate could be obtained using the maximal inequalities in \cite[Proposition 5.6]{andres2025scaling}.
\begin{lemma}\label{lem:applied:Green's:maxbound}
  Let $d \geq 2$ and suppose there exist $\theta \in (0, 1)$ and $p, q \in [1, \infty]$ such that Assumptions~\ref{ass:law}, \ref{ass:cluster} and~\ref{ass:pq} hold. Then, there exists $c < \infty$ such that for any $n \geq \const[N]{Nconst:ass:green:bound}(\om,C_d r_D)$, $\prob_0$-a.e.\ $\om$, any $\ve >0$ and any $\et > 2 \ve$
  \begin{align*}
    \sup_{|x-y|_2 \geq \et} \tilde{g}^{\ve}_n(x,y)
    \,\leq\,
    c \log \left( \frac{1}{\et - 2 \ve}\right) + c N_{\mathrm{HK}}(\om)
  \end{align*}
  for any $\tilde{g}^{\ve}_n \in \{g_n, g^{0,\ve}_n, g^{\ve,\ve}_n\}$.
\end{lemma}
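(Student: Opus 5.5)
The plan is to reduce all three kernels to the unsmeared pointwise bound of Corollary~\ref{cor:green:pointwise:upperbound:corollary}. That corollary is stated for $d\geq 2$; in $d\geq 3$ one has $n^{d-2}$ factors, and the analogous power bound is even easier. I focus on $d=2$, where the claim is meaningful.

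\textbf{Step 1: the unsmeared kernel.} Fix $n \geq \const[N]{Nconst:ass:green:bound}(\om,C_d r_D)$ and a $\prob_0$-typical $\om$. For $x\neq y$, the corollary gives
\[
g_n(x,y)\leq 2C_{\mathrm{HK}}\log(1/|x-y|_2)+\const[C]{const:pointwise:schroedinger:bound:corollary:2d}N_{\mathrm{HK}}(\om).
\]
The function $a\mapsto \log(1/a)$ is decreasing. Hence for $|x-y|_2\geq \et$ we get $g_n(x,y)\leq c\log(1/\et)+cN_{\mathrm{HK}}(\om)$, and since $\et-2\ve<\et$ this is at most $c\log(1/(\et-2\ve))+cN_{\mathrm{HK}}(\om)$.

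One bookkeeping point: $\log(1/\et)$ may be negative when $\et>1$. To handle this uniformly, I would replace $\log$ by $\log_+$, or absorb $\log(\mathrm{diam})$ into the $N_{\mathrm{HK}}$ term, using $N_{\mathrm{HK}}\geq 1$ and the fact that $g_n$ vanishes outside $D\times D$ with $D$ bounded. When both points lie in $D$, $|x-y|_2\leq \operatorname{diam}(D)$, so only $\et\leq \operatorname{diam}(D)$ matters. Outside $D$ the kernel is zero, and since $g_n\geq 0$ the bound is trivially true there.

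\textbf{Step 2: the smeared kernels.} For $g^{0,\ve}_n$, use that $\rh^{\ve}_y$ is a probability density supported in $B(y,c\ve)$. Normalise $\rh$ to have support in the unit ball, so the support is $B(y,\ve)$. Then $g^{0,\ve}_n(x,y)$ is an average of $g_n(x,z')$ over $|z'-y|_2\leq \ve$. If $|x-y|_2\geq\et$, every such $z'$ satisfies $|x-z'|_2\geq \et-\ve\geq \et-2\ve>0$. Applying Step 1 with $\et-\ve$ in place of $\et$, and monotonicity of the logarithm, bounds the integrand pointwise, and averaging preserves the bound.

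For $g^{\ve,\ve}_n$, both variables are averaged. The relevant pairs satisfy $|z-z'|_2\geq \et-2\ve>0$, which is exactly where the $\et-2\ve$ in the statement comes from. In both cases the only subtlety is the set of measure zero where $z=z'$ or the floor points coincide. This set is excluded because $\et>2\ve$ keeps $z\neq z'$, so the corollary's requirement $x\neq y$ holds throughout the integration region.

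\textbf{Main obstacle.} The only genuine issue is this discretisation point: the corollary is for $\lfloor nz\rfloor,\lfloor nz'\rfloor$ possibly equal even when $z\neq z'$. That case, however, is already covered by the $\vee 1/n$ in \eqref{eq:grid:distance:Euclidean:distance}, so the bound $\log(1/|z-z'|_2)$ remains valid for all $z\neq z'$. The constant $c$ can be taken as $\max\{2C_{\mathrm{HK}},\const[C]{const:pointwise:schroedinger:bound:corollary:2d}\}$ plus the diameter correction, which is independent of $n,\ve,\et$.
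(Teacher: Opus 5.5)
Your proposal is correct and follows the paper's proof essentially line by line. The $g_n$ case is read off from Corollary~\ref{cor:green:pointwise:upperbound:corollary} using monotonicity of $a\mapsto\log(1/a)$. For $g^{\ve,\ve}_n$ (resp.\ $g^{0,\ve}_n$), the mollifier supports force $|z-z'|_2\geq \et-2\ve$ (resp.\ $\geq \et-\ve$), so the supremum of $g_n$ over that region bounds the average. Your remark on the sign of the logarithm is a legitimate bookkeeping point that the paper passes over: taken literally, the bound fails for very large $\et$, where the left side is $0$ and the right side is negative. Replacing $\log$ by $\log_+$ fixes this and does not affect the application with $\et=3\ve$.
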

\begin{proof}
  For $g = g_n \equiv g^{\om}_{nD} (\lfloor n \cdot \rfloor, \lfloor n \cdot \rfloor)$ the bound follows direclty from Corollary~\ref{cor:green:pointwise:upperbound:corollary}, since 
  \begin{align}
    \sup_{|x-y|_2 \geq \et} g^{\om}_{nD} (\lfloor nx \rfloor, \lfloor ny \rfloor)
    \,\leq\,
    2 C_{\mathrm{HK}} \log\left(\frac{1}{\et}\right) + \const[C]{const:pointwise:schroedinger:bound:corollary:2d} N_{\mathrm{HK}}(\om).
  \end{align}
  For $g=g^{\ve,\ve}_n$, the bound follows since if $|x-y|_2 \geq \et$ and $(z,z') \in \supp(\rh^{\ve}_x) \times \supp(\rh^{\ve}_y) \subset B_{x,\ve} \times B_{y,\ve}$, we also have that $|z-z'|_2 \geq \et-2\ve >0$, implying
  \begin{align}\label{eq:step3:both:smear:max:bound}
    \sup_{|x-y|\geq \et} g^{\ve,\ve}_n(x,y) 
    &\,\leq\,
    \sup_{|x-y|\geq \et} \; \sup_{(z,z') \in B_{x,\ve} \times B_{y,\ve}} g_n(z,z') \left(\int \int \rh^{\ve}_{x}(z) \rh^{\ve}_{y}(z') \,\md z \md z'  \right)
    \nonumber\\[.5ex]
    &\,\leq\,
    \sup_{|z-z'|>\et - 2\ve} g_n(z,z')
    \,\leq\,
    2 C_{\mathrm{HK}} \log\left(\frac{1}{\et-2 \ve}\right) + \const[C]{const:pointwise:schroedinger:bound:corollary:2d} N_{\mathrm{HK}}(\om).
  \end{align}
  Analogously, one gets that 
  \begin{align*}
    \sup_{|x-y|\geq \et} g^{0,\ve}_n(x,y) 
    \,\leq\,
    \sup_{|x-z'|>\et - \ve} g_n(x,z')
    \,\leq\,
    2 C_{\mathrm{HK}} \log\left(\frac{1}{\et-\ve}\right) + \const[C]{const:pointwise:schroedinger:bound:corollary:2d} N_{\mathrm{HK}}(\om)
  \end{align*}
  Since $\et \geq \et - \ve \geq \et - 2 \ve$, the claim is concluded.
\end{proof}
We will require the following generalisation of the ergodic Theorem in \cite[Theorem 3]{boivin2003spectral}, which we will prove in the appendix:
\begin{lemma}\label{lem:ergodic:lemma:extend:LocHoelder}
  Let $D \subset \bbR^d$ be bounded and suppose that $\prob$ satisfies Assumption~\ref{ass:law}. Then, for any bounded functions $h \colon D \times D \to \bbR$ and $f \colon D \to \bbR$ and locally Hoelder continuous $\cH \colon \bbR \to \bbR$,  
  \begin{align}\label{eq:ergodic:lemma:extend:LocHoelder}
    \lim_{n \to \infty} &\int_{D} \cH \left( \int_{D} h(x,y)1^n_{\cC_{\infty}(\om)}(x)\,
      \md x\, \right) f(y) 1^n_{\cC_{\infty}(\om)}(y) \md y
    \nonumber\\[.5ex] 
    &\,=\,
    \th_0 \int_{D} \cH\left( \th_0 \int_{D} h(x,y)\,
      \md x\, \right)f(y) \md y,
  \end{align}
  for $\prob_0$-a.e.\ $\om$. Moreover, if $\cH(0)=0$, it also holds that 
  \begin{align}\label{eq:ergodic:lemma:extend:LocHoelder:H:mapsorigin:to:origin}
    \lim_{n \to \infty} &\int_{D} \cH \left( \int_{D} h(x,y)1^n_{\cC_{\infty}(\om)}(x,y)\,
      \md x\, \right) f(y) \md y
    \nonumber\\[.5ex] 
    &\,=\,
    \th_0 \int_{D} \cH\left( \th_0 \int_{D} h(x,y)\,
      \md x\, \right)f(y) \md y,
  \end{align}
  for $\prob_0$-a.e.\ $\om$. 
\end{lemma}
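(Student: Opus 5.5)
The plan is to reduce the statement to the classical ergodic theorem, applied on the rescaled lattice $\frac1n\bbZ^d$. The first step is to approximate $h$ and $f$ by step functions on dyadic cubes. Fix a mesh $\de>0$ and partition (a box containing) $D$ into cubes $Q_i$ of side $\de$.

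For fixed cubes $Q_i,Q_j$, the ergodic theorem (or Proposition~\ref{prop:krengel_pyke} applied to $\indicator_{\{0\in\cC_\infty\}}$) gives, for $\prob_0$-a.e.\ $\om$,
\begin{align*}
  \int_{Q_i} 1^n_{\cC_\infty(\om)}(x)\,\md x \;\longrightarrow\; \th_0 |Q_i| .
\end{align*}
The proof then proceeds in three steps.

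\textbf{Step 1: continuous $h,f$.} Assume first that $h$ and $f$ are continuous on $\bar D\times\bar D$ and $\bar D$. Set
\begin{align*}
  u_n(y)\ldef\int_D h(x,y)1^n_{\cC_\infty(\om)}(x)\,\md x .
\end{align*}
Uniform continuity of $h$ together with the cube-wise ergodic limits yields $\sup_y |u_n(y)-\th_0\int_D h(x,y)\,\md x|\to 0$. Write $u(y)\ldef\th_0\int_D h(x,y)\,\md x$. Since $|u_n|,|u|\le \|h\|_\infty|D|$, the function $\cH$ is Hölder continuous on this bounded range, so $\sup_y|\cH(u_n(y))-\cH(u(y))|\to0$. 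It then remains to show
\begin{align*}
  \int_D \cH(u(y)) f(y) 1^n_{\cC_\infty(\om)}(y)\,\md y \;\longrightarrow\; \th_0\int_D \cH(u(y)) f(y)\,\md y .
\end{align*}
The integrand $\cH(u)f$ is bounded and continuous, so this is again the cube-wise ergodic theorem.

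\textbf{Step 2: general bounded measurable $h,f$.} This step is the main obstacle, since the statement only assumes boundedness. The plan is to use Lusin/$L^1$-density: choose continuous $h_k,f_k$ with the same sup bounds and with $\|h-h_k\|_{L^1(D\times D)}+\|f-f_k\|_{L^1(D)}\to0$.

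The error caused by replacing $f$ by $f_k$ is bounded by $C\|f-f_k\|_{L^1}$, uniformly in $n$ and $\om$, because $1^n_{\cC_\infty}\le1$. The error from replacing $h$ is controlled by the local Hölder exponent $\al$:
\begin{align*}
  \int_D |u_n-u_n^{k}|^\al\,\md y \;\le\; C\|h-h_k\|_{L^1}^\al ,
\end{align*}
where $u_n^k$ is $u_n$ with $h$ replaced by $h_k$, and the estimate follows by Jensen's inequality. This bound is again uniform in $n$ and in the environment. The limit objects obey the same estimates.

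A diagonal argument then concludes: first let $n\to\infty$ using Step 1 for $h_k,f_k$, then let $k\to\infty$. The full-measure set of $\om$ is the countable intersection of the sets from the ergodic theorem over dyadic cubes, and it does not depend on $h$ or $f$. Hence a.s.\ convergence holds simultaneously for all such $h,f$.

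\textbf{Step 3: the second statement.} For the version with $1^n_{\cC_\infty(\om)}(x,y)$, write
\begin{align*}
  \cH\Bigl(\int_D h(x,y)1^n_{\cC_\infty(\om)}(x)\,\md x\cdot \indicator_{\{\lfloor ny\rfloor\in\cC_\infty(\om)\}}\Bigr) \;=\; \cH(u_n(y))\,\indicator_{\{\lfloor ny\rfloor\in\cC_\infty(\om)\}} .
\end{align*}
This identity holds because $\cH(0)=0$: when $\lfloor ny\rfloor\notin\cC_\infty$, the left side is $\cH(0)=0$ and so is the right side. The second statement therefore reduces to the first.
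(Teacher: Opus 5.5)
Your proof is correct, but it differs from the paper's at the central step: how the ergodic theorem is passed through the nonlinearity $\cH$.

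The paper first mollifies $h$. It then uses Stone--Weierstrass to approximate $h$ uniformly on $\bar D\times\bar D$ by finite tensor sums $\sum_{i\le M} f_i(x)g_i(y)$. For these, the inner integral equals $\sum_i g_i(y)A^i_n(\om)$, with finitely many scalar ergodic averages $A^i_n(\om)=\int_D f_i\,1^n_{\cC_\infty(\om)}\,\md x\to\th_0\int_D f_i\,\md x$. Dominated convergence then handles the inner integral. The outer integral is handled by applying Theorem~\ref{thm:boivin:ergodic:theorem} directly to the merely bounded measurable function $\cH(\sum_i g_iA^i)f$, so $f$ is never approximated.

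You instead obtain uniform-in-$y$ convergence of $u_n$ for continuous $h$ from a modulus-of-continuity and partition argument. You also approximate $f$, so you only ever need the ergodic theorem on countably many dyadic cubes.

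Two ingredients are the same in both proofs: the H\"older--Jensen control of the $L^1$ approximation error, uniform in $n$, and the reduction of the second statement via $\cH(0)=0$. Your route is more elementary, since it avoids the weighted ergodic theorem for arbitrary bounded functions on which Theorem~\ref{thm:boivin:ergodic:theorem} rests. The paper's route trades your partition argument for Stone--Weierstrass plus that stronger input.

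Two small points should be tightened.

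In Step~1 the integrand is really $h(x,y)\indicator_D(x)$, which is discontinuous across $\partial D$. The boundary of a bounded domain may have positive Lebesgue measure, so the boundary cubes do not become negligible. You therefore need the ergodic limit on $Q_i\cap D$ rather than on $Q_i$, and the same applies to the outer integral. This follows from your own Step~2 device: approximate $\indicator_{Q_i\cap D}$ in $L^1$ by finite unions of dyadic cubes. The error is bounded by the measure of the symmetric difference, uniformly in $n$ and $\om$, because $0\le 1^n_{\cC_\infty(\om)}\le 1$.

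Also, Proposition~\ref{prop:krengel_pyke} as stated concerns Euclidean balls, not cubes. For cubes you should invoke the multiparameter ergodic theorem for boxes, using inclusion--exclusion for cubes not anchored at the origin.
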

In the following calculations, we will also encounter the kernels $g^{\Si,0,\ve}$, $g^{\Si,\ve,\ve}$ and $g^{\Si,\ve,\de}$, which are defined as in \eqref{eq:def:smeared:green:kernels:rcm} but with $g^{\Si}$ replacing $g_n$. The final technical ingredient we list before starting the actual proof is a purely analytic Lemma regarding those smooth approximations of $g^{\Si}$ which is also shown in the Appendix: 
\begin{lemma}\label{lem:continuous:Green's:smeared:converge}
  Let $d \geq 2$ and $D \subset \bbR^d$ be bounded. For a convex and continuous function $\cH \colon \bbR_+ \to \bbR$ assume that $ \cH \circ g^{\Si}_D \in L^1(D\times D)$. Then, for $g^{\ve,\de} \in \{g^{\Si,\ve,\ve},g^{\Si,0,\ve},g^{\Si,\ve,\de}\}$ we have 
  \begin{align}\label{eq:continuous:Green's:smeared:L1:kernel:converge}
    \lim_{\ve,\de \downarrow 0}\cH \circ g^{\ve,\de}
    \,=\,
    \cH \circ g^{\Si}
    \qquad\;
    \text{in}
    \;
    L^1(D \times D)\,\text{and a.e.}.
  \end{align}
\end{lemma}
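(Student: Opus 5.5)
The plan is to treat all three kernels $g^{\Si,\ve,\ve}$, $g^{\Si,0,\ve}$, $g^{\Si,\ve,\de}$ at once. Each is an average of $g^{\Si}$ against a probability density on $\bbR^d\times\bbR^d$. Extend $g^{\Si}$ by zero outside $D\times D$, denote the extension $\hat g$, and write
\begin{align*}
  g^{\ve,\de}(x,y) \,=\, \int\int \rh^{\ve}_x(z)\, \hat g(z,z')\, \rh^{\de}_y(z') \,\md z\,\md z',
\end{align*}
where for $g^{\Si,0,\ve}$ the factor $\rh^{\ve}_x(z)\,\md z$ is replaced by the Dirac mass $\de_x(\md z)$. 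The argument then has three steps: a.e. convergence from continuity, an upper bound from Jensen, and a lower bound from an affine minorant, closed by Pratt's lemma.

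\emph{Step 1: a.e. convergence.} I will use that $\hat g$ is continuous on $(\bbR^d\times\bbR^d)\setminus\{(z,z):z\in\bbR^d\}$. Inside $D\times D$ off the diagonal this is standard. At the boundary, the strong regularity of $D$ (Definition~\ref{def:regular}) gives $g^{\Si}_D(z,\cdot)\to 0$ when approaching $\partial D$, locally uniformly in $z$ away from the diagonal, so the zero extension is continuous there as well. Fix $x\neq y$ with $\eta \ldef |x-y|_2>0$. For $\ve,\de<\eta/4$ the supports of $\rh^{\ve}_x$ and $\rh^{\de}_y$ lie in a compact set on which $\hat g$ is uniformly continuous, so $g^{\ve,\de}(x,y)\to \hat g(x,y)$ as $\ve,\de\downarrow 0$ jointly. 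The diagonal is a Lebesgue null set, and continuity of $\cH$ then yields $\cH\circ g^{\ve,\de}\to\cH\circ g^{\Si}$ a.e. on $D\times D$.

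\emph{Step 2: upper domination via Jensen.} Since $\rh^{\ve}_x\otimes\rh^{\de}_y$ (resp.\ $\de_x\otimes\rh^{\ve}_y$) is a probability measure and $\cH$ is convex, Jensen gives
\begin{align*}
  \cH(g^{\ve,\de}(x,y)) \,\leq\, \int\int \rh^{\ve}_x(z)\,\cH(\hat g(z,z'))\,\rh^{\de}_y(z')\,\md z\,\md z' \,\rdef\, U^{\ve,\de}(x,y).
\end{align*}
The function $\cH\circ\hat g$ equals $\cH\circ g^{\Si}\in L^1(D\times D)$ on $D\times D$ and equals the constant $\cH(0)$ elsewhere, so it lies in $L^1_{\mathrm{loc}}(\bbR^{2d})$. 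Hence $U^{\ve,\de}$ is a (partial) mollification of an $L^1_{\mathrm{loc}}$ function. By the standard approximation-of-identity argument (continuity of translations in $L^1$), $U^{\ve,\de}\to\cH\circ g^{\Si}$ in $L^1(D\times D)$. In the one-sided case $g^{\Si,0,\ve}$ one mollifies in a single variable only, and the same continuity-of-translation argument applies.

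\emph{Step 3: lower domination and conclusion.} A convex continuous $\cH$ on $\bbR_+$ has an affine minorant $\cH(t)\geq a+bt$. Applying the argument of Step 2 with $\cH=\id$, and using $g^{\Si}\in L^1(D\times D)$ (by \eqref{eq:continuous:green:pointwise:log}), gives $g^{\ve,\de}\to g^{\Si}$ in $L^1$. So $L^{\ve,\de}\ldef a+b\,g^{\ve,\de}$ converges in $L^1(D\times D)$ to $a+b\,g^{\Si}$. We thus have $L^{\ve,\de}\leq \cH\circ g^{\ve,\de}\leq U^{\ve,\de}$, with both bounds converging in $L^1$ and the middle term converging a.e. The generalized dominated convergence theorem (Pratt's lemma), applied to $|\cH\circ g^{\ve,\de}-\cH\circ g^{\Si}|\leq (U^{\ve,\de}-L^{\ve,\de})+|U^{\ve,\de}-\cH\circ g^{\Si}|+|L^{\ve,\de}-\cH\circ g^{\Si}|$, which tends to $0$ a.e. and is dominated by an $L^1$-convergent sequence, then gives \eqref{eq:continuous:Green's:smeared:L1:kernel:converge}. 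Pratt's lemma is formulated for sequences, so I will apply it along arbitrary sequences $(\ve_k,\de_k)\to 0$.

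The main obstacle is Step 1 near $\partial D$. There one must check that the zero extension of $g^{\Si}_D$ is continuous across the boundary, which is exactly where strong regularity enters. If a uniform version of this is awkward, I would fall back on a.e. convergence of $g^{\ve,\de}\to\hat g$ via Lebesgue points of $\hat g\in L^1_{\mathrm{loc}}$ for product mollifiers, using a maximal-function bound for the strong-type averages over rectangles $B_{x,\ve}\times B_{y,\de}$.
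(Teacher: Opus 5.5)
Your proof is correct, and it obtains the limits differently from the paper.

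\emph{The paper's route.} It gets a.e.\ convergence from the Lebesgue differentiation theorem, using averages over $B_{x,\ve}\times B_{y,\ve}$, or over $B_{y,\de}$ for a.e.\ fixed $x$. It then dominates via Jensen by $\int\int\rh^{\ve}_x(z)\rh^{\ve}_y(z')|\cH(g^{\Si}_D(z,z'))|\,\md z\,\md z'$ and concludes with the generalised dominated convergence theorem. For $g^{\Si,\ve,\de}$ it fixes $\de$, sends $\ve\downarrow 0$, and only afterwards sends $\de\downarrow 0$.

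\emph{What your route buys.} You replace Lebesgue differentiation by off-diagonal continuity of $g^{\Si}_D$. You also make the $L^1$-convergence of the Jensen majorant explicit through continuity of translations. Both steps are uniform in the pair $(\ve,\de)$, so you get the genuine joint limit. That joint limit is what the Cauchy argument in Proposition~\ref{prop:L2:convergence:cgff:smeared} needs, whereas the paper's argument, read literally, only gives the iterated limit. Your two-sided sandwich $a+b\,g^{\ve,\de}\le\cH\circ g^{\ve,\de}\le U^{\ve,\de}$ also covers convex $\cH$ that change sign. Jensen only bounds $\cH(g^{\Si,\ve,\ve}_D)$ from above, not its absolute value as the paper writes; this is harmless in the paper, since $\cH\ge 0$ in every application.

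\emph{What the paper's route buys.} It uses nothing about $g^{\Si}_D$ beyond integrability.

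\emph{Two corrections to your Step~1.} First, you do not need strong regularity or continuity of the zero extension across $\partial D$. For $(x,y)\in D\times D$ with $x\neq y$ and $\ve,\de$ small, the mollifier supports already lie in the open set $\{(z,z')\in D\times D : z\neq z'\}$, where interior continuity suffices. Second, your fallback via averages over $B_{x,\ve}\times B_{y,\de}$ with independent radii fails for general $L^1_{\mathrm{loc}}$ functions, because strong differentiation needs more than $L^1$. It would still go through here, since $\hat g\in L^p_{\mathrm{loc}}$ for some $p>1$.
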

We have now collected all major technical inputs to give the 
\begin{proof}[Proof of Proposition~\ref{prop:tested:green:convex:F}]
  Throughout the proof, we will, mainly for convenience, set $\de \equiv \de(\ve) \ldef \sqrt{\ve}$. In order to apply the LCLT in Theorem~\ref{thm:LCLT_green}, we will cover the integration region by $D \times D \subset K_{3\ve,\de} \cup T_{ 3 \ve} \cup S_{\de}$, where we defined the diagonal region
  \begin{align*}
    T_{3 \ve}
    \,=\,
    \left\{(x,y) \in D \times D \,:\, |x -y|_2 < 3 \ve \right\},
  \end{align*}
  and the boundary region
  \begin{align*}
    S_{\de}
    \,=\,
    \left\{(x,y) \in D \times D \,:\, \dist(x,\partial D) \wedge \dist(y,\partial D) < \de \right\}.
  \end{align*}
  The proof will be devided in three steps, in which we will show convergence of \eqref{eq:Green:tested:convergence:no:smear}-\eqref{eq:Green:tested:convergence:one:smear} by treating those three regions seperately. 
  
  Before we start, note that \eqref{eq:continuous:green:pointwise:log} and the exponential growth assumption on $H$ imply that 
  \begin{align}\label{eq:H_ga:locally:integrable}
    H(\ga^2 \th_0^{-1} g^{\Si}_D(x,y))
    \,\leq\,
    M e^{\ga^2 \th_0^{-1} \be c} |x-y|_2^{-\ga^2 \th_0^{-1} \be 2 C_{\Si}},
  \end{align}
  which is locally integrable in $d=2$ for $|\ga| < \sqrt{\th_0/ \be C_{\Si}}$.
  Since $|T_{3 \ve} \cup S_{\de}| \downarrow \emptyset$ as $\ve \downarrow 0$, the continuity of the Lebesgue measure implies that 
  \begin{align*}
    \lim_{\ve \downarrow 0} \int_{T_{3 \ve} \cup S_{\de}} H(\ga^2 \th_0^{-1} g^{\Si}(x,y)) \, \md x \md y 
    \,=\,
    0.
  \end{align*}
  Thus, at least on the regions $T_{3 \ve}$ and $S_{\de}$, we will only be concerned with $g_n, g^{0,\ve}_n$ and $g^{0,\ve,\ve}_n$. 
  
  \smallskip
  \textbf{1.1 LCLT region: No smearing \eqref{eq:Green:tested:convergence:no:smear}.} We abbreviate $\tilde{H} \equiv H(\ga^2 \cdot)$. Adding $0$ and using the triangle inequality, we get
  \begin{align}\label{eq:step1:no:smear:split:before:LCLT}
    &\left| \int_{K_{3\ve, \de}} \left( \tilde{H}(g_n(x,y)) - \th_0^2 \tilde{H}\left(\frac{g^{\Si}(x,y)}{\th_0}\right) \right)f(x) f(y)  \, \md x \, \md y \right|
    \nonumber\\[.5ex]
    &\,\leq\,
    \Norm{f}{\infty}^2 \bigg(\int_{K_{3\ve, \de}}  \left| \tilde{H}\left(g_n(x,y)\right) - \tilde{H}\left(\frac{g^{\Si}(x,y)}{\th_0}1^n_{\cC_{\infty}(\om)}(x,y) \right) \right|\, \md x \, \md y 
    \nonumber\\[.5ex] 
    &\,+\,
    \left|  \int_{K_{3\ve, \de}} \left( \tilde{H}\left(\frac{g^{\Si}(x,y)}{\th_0}1^n_{\cC_{\infty}(\om)}(x,y)\right)   - \th_0^2 \tilde{H}\left(\frac{g^{\Si}(x,y)}{\th_0}\right)\right)
      \, \md x \md y\right| \bigg).
  \end{align}
  We first take a look at the first term in \eqref{eq:step1:no:smear:split:before:LCLT}. Since, by Lemma \ref{lem:applied:Green's:maxbound} (with $\et=3 \ve$) and \eqref{eq:continuous:green:pointwise:log}, $g_n$ and $g^{\Si}$ are uniformly bounded by a constant $C(\ve)$ on $K_{3\ve,\de}$ for any $\ve>0, n \geq \const[N]{Nconst:ass:green:bound}(\om,C_d r_D)$ and $H$ is locally Lipschitz, there exists $L\equiv L_{[0,C(\ve)]}$ such that $|\tilde{H}(a)-\tilde{H}(b)|\leq L |a-b|$ for all $a,b \in [0,C(\ve)]$ and hence, the integral in the first term in \eqref{eq:step1:no:smear:split:before:LCLT} can be bounded by 
  \begin{align*}
    |K_{3 \ve, \de}| L \sup_{(x,y)\in K_{3\ve, \de}} \left|\left(g^{\om}_{nD}(\pi_n(x),\pi_n(y)) - \frac{g^{\Si}(x,y)}{\th_0}\right)1^n_{\cC_{\infty}(\om)}(x,y)\right|
    \,\xrightarrow{n \to \infty}\,
    0
  \end{align*}
  for $\prob_0$-a.e.\ $\om$, where the convergence follows from the LCLT in Theorem~\ref{thm:LCLT_green} since $3 \ve < \de = \sqrt{\ve}$ for $\ve$ small, where we estimated $1^n_{\cC_{\infty}(\om)}(x,y)\leq1$ and also used that $g_{n}(x,y)=g^{\om}_{nD}(\pi_n(x),\pi_n(y))$ for all $x,y$ such that $\lfloor nx \rfloor, \lfloor ny \rfloor \in \mathcal{C}_{\infty}(\omega)$ and $0$ else. 

  Since $\tilde{H}(0)=0$ implies that $\tilde{H}(a \indicator_A)= \tilde{H}(a)\indicator_A$ for any $a \in \bbR_+$ and any measurable set $A$, we can rewrite the second term in \eqref{eq:step1:no:smear:split:before:LCLT} as
  \begin{align*}
    \left|  \int_{D \times D} \left( \tilde{H}\left(\frac{g^{\Si}(x,y)}{\th_0}\indicator_{K_{3\ve, \de}}\right)1^n_{\cC_{\infty}(\om)}(x,y)   - \th_0^2 \tilde{H}\left(\frac{g^{\Si}(x,y)}{\th_0}\indicator_{K_{3\ve, \de}}\right)\right)
      \, \md x \md y\right|,
  \end{align*}
  which vanishes for any $\ve>0$ fixed and $\prob_0$-a.e.\ $\om$ as $n \to \infty$ upon an application of Lemma~\ref{lem:ergodic:lemma:extend:LocHoelder} choosing $\cH(a)=a$ and $h(x,y) = \tilde{H}\left(\indicator_{K_{3\ve, \de}}(x,y) g^{\Si}(x,y) \th_0^{-1}\right)$, which is bounded by \eqref{eq:continuous:green:pointwise:log}, the definition of $K_{3\ve,\de}$ in \eqref{eq:defK_eps} and the continuity of $H$. 

  \smallskip
  \textbf{1.2 LCLT region: Both entries smeared \eqref{eq:Green:tested:convergence:both:smear}.} Abbreviate $\tilde{H} \equiv H((\ga \th_0^{-1})^{2} \cdot )$. Firstly, adding and substracting $\tilde{H}(\th_0 g^{\Si,\ve,\ve}(x,y))$ and then using again the local Lipschitzness for any fixed $\ve \in (0,1)$ for the first term, we get
  \begin{align}\label{eq:step1:two:smear:start}
    &\left| \int_{K_{3\ve, \de}} \left( \tilde{H}(g^{\ve,\ve}_n(x,y)) - \tilde{H}(\th_0g^{\Si}(x,y))\right) \, f(x) f(y) \md x \md y \right|
    \nonumber\\[.5ex]
    &\quad\,\leq\,
    \Norm{f}{\infty}^2  \bigg(L\int_{K_{3\ve, \de}} \left| g^{\ve,\ve}_n(x,y) - \th_0g^{\Si,\ve,\ve}(x,y)\right| \,\md x \md y 
    \nonumber\\[.5ex]
    &\qquad\,+\,
    \int_{D \times D} \left|\tilde{H} (\th_0 g^{\Si,\ve,\ve}(x,y)) - \tilde{H}(\th_0 g^{\Si}(x,y)) \right| \,\md x \md y \bigg).
  \end{align}
  For the first integral in \eqref{eq:step1:two:smear:start} adding and substracting $(g^{\Si}(z,z')/\th_0) 1^n_{\cC_{\infty}(\om)}(z,z')$ and using that $g_n(z,z')= g^{\om}_{nD}(\pi_n(z),\pi_n(z')) 1^n_{\cC_{\infty}(\om)}(z,z')$, yields
  \begin{align}\label{eq:step1:two:smear:add:percolation}
    &\int_{K_{3\ve, \de}} \left| \int \int \rh^{\ve}_x(z) \rh^{\ve}_y(z') \left( g_n(z,z') - \th_0 g^{\Si}(z,z') \right) \,\md z \md z' \right| \, \md x \, \md y
    \nonumber\\[.5ex] 
    &\,\leq\,
    \int_{K_{3\ve, \de}} \left| \int \int \rh^{\ve}_x(z) \rh^{\ve}_y(z') \left( g^{\om}_{nD}(\pi_n(z),\pi_n(z')) - \frac{g^{\Si}(z,z')}{\th_0} \right)1^n_{\cC_{\infty}(\om)}(z,z') \,\md z \md z' \right| 
    \nonumber\\[.5ex] 
    &\qquad\,+\,
    \left| \int \int \rh^{\ve}_x(z) \rh^{\ve}_y(z') \left( \frac{g^{\Si}(z,z')}{\th_0}1^n_{\cC_{\infty}(\om)}(z,z') - \th_0 g^{\Si}(z,z') \right) \,\md z \md z' \right| \, \md x \, \md y.
  \end{align}
  Now, we observe that if $(x,y) \in K_{3\ve,\de}$ and $(z,z') \in \supp (\rh^{\ve}_x) \times \supp (\rh^{\ve}_y) \subset B_{x,\ve} \times B_{y,\ve}$ we also have $|z-z'| \geq \ve$ and $(z,z') \in K_{\ve,\de-\ve} \left(\not= \emptyset\right)$. Hence, using that $\int_{K_{3\ve, \de}} \left| \int \int \rh^{\ve}_x(z) \rh^{\ve}_y(z') \,\md z \md z' \right| \, \md x \, \md y \leq |K_{3\ve, \de}|$, we estimate the first integral in \eqref{eq:step1:two:smear:add:percolation}, using that the mollifiers are only supported in an $\ve$-ball, 
  \begin{align}\label{eq:step1:two:smear:apply:LCLT}
    &|K_{3\ve, \de}| \sup_{(x,y) \in K_{3\ve, \de}} \sup_{(z,z') \in B_{x,\ve} \times B_{y,\ve}} \left|g^{\om}_{nD}(\pi_n(z),\pi_n(z')) - \frac{g^{\Si}(z,z')}{\th_0}\right|
    \nonumber\\[.5ex] 
    &\,\leq\,
    |K_{3\ve, \de}|  \sup_{(z,z') \in K_{\ve,\de-\ve}} \left|g^{\om}_{nD}(\pi_n(z),\pi_n(z')) - \frac{g^{\Si}(z,z')}{\th_0}\right|
    \,\xrightarrow{n \to \infty}\,
    0
    \qquad
    \text{for $\prob_0$ a.e.\ $\om$,}
  \end{align}
  where convergence follows again from the LCLT in Theorem \ref{thm:LCLT_green}, since $\de-\ve = \sqrt{\ve} - \ve > \ve$ for any $\ve \in (0,1/4)$. For the second term in \eqref{eq:step1:two:smear:add:percolation}, we observe that $h(z,z')\ldef \indicator_{(x,y)\in K_{3 \ve, \de}} \rh^{\ve}_x(z) \rh^{\ve}_y(z')g^{\Si}(z,z')/\th_0 \leq \left(\ve^{-d}\Norm{\rh}{\infty}\right)^2 \indicator_{(z,z')\in K_{\ve,\de-\ve}} g^{\Si}(z,z')/\th_0 \equiv C(\ve)$ is bounded for any fixed $\ve >0$. Hence, applying Lemma \ref{lem:ergodic:lemma:extend:LocHoelder} (choosing $\cH(a)=a$ in \eqref{eq:ergodic:lemma:extend:LocHoelder}) and bespoke $h$ in the inner integral for every $x,y,\ve$ fixed and then using bounded convergence, yields the convergence of the second integral in \eqref{eq:step1:two:smear:add:percolation} to $0$ as $n \to \infty$ for any fixed $\ve>0$ and $\prob_0$-a.e.\ $\om$. 

  The second term in \eqref{eq:step1:two:smear:start} vanishes by Lemma~\ref{lem:continuous:Green's:smeared:converge}, since $\tilde{H}(\th_0 \cdot ) \circ g^{\Si} = H(\ga^{2}\th_0^{-1} \cdot )  \circ g^{\Si} \in L^1(D \times D)$, which follows from \eqref{eq:H_ga:locally:integrable}.

  \smallskip
  \textbf{1.3 LCLT region: One entry smeared \eqref{eq:Green:tested:convergence:one:smear}.} We abbreviate $\tilde{H} \equiv H(\ga^2 \th_0^{-1} \cdot)$. Adding $0$ two times and using the triangle inequality, we get 
  \begin{align*}
    &\left| \int_{K_{3\ve, \de}}  \left(\tilde{H}(g^{0,\ve}_n(x,y)) - \th_0 \tilde{H}(g^{\Si}(x,y)) \right) f(x) f(y)\, \md x \, \md y \right|
    \nonumber\\[.5ex]
    &\,\leq\,
    \Norm{f}{\infty}^2 \bigg( \int_{K_{3\ve, \de}}  \left| \tilde{H}(g^{0,\ve}_n(x,y)) - H\left(\int \frac{g^{\Si}(x,z')}{\th_0} \rh^{\ve}_{y} (z')1^n_{\cC_{\infty}}(x,z') \, \md z'\right) \right|\, \md x \, \md y
    \nonumber\\[.5ex]
    &\quad\,+\,
    \bigg| \int_{K_{3\ve, \de}} \tilde{H}\left(\int \frac{g^{\Si}(x,z')}{\th_0} \rh^{\ve}_{y} (z')1^n_{\cC_{\infty}}(x,z') \, \md z'\right) \, \md x \, \md y 
    \nonumber\\[.5ex]
    &\qquad\qquad\qquad\qquad\qquad\,-\,
    \int_{K_{3\ve, \de}} \th_0 \tilde{H} \left(\int g^{\Si}(x,z') \rh^{\ve}_{y} (z')\, \md z'\right)\, \md x \, \md y \bigg|
    \nonumber\\[.5ex]
    &\quad\,+\,
    \int_{D \times D} \th_0 \left| \tilde{H}\left(\int g^{\Si}(x,z') \rh^{\ve}_{y} (z')\, \md z'\right) - \th_0 \tilde{H}(g^{\Si}(x,y))\right|\,  \md x \, \md y \bigg).
  \end{align*}
  The first term above vanishes $\prob_0$ almost everywhere as $n \to \infty$ for any fixed $\ve >0$ by using the local Lipschitz continuity and following again the arguments in \eqref{eq:step1:two:smear:add:percolation} and \eqref{eq:step1:two:smear:apply:LCLT} leading up to the application of the LCLT on $K_{2 \ve, \de-\ve}$ (since there is only one mollifier this time, $2\ve$ instead of $\ve$ appears here). 
  The second term vanishes $\prob_0$ almost everywhere as $n\to \infty$ for any fixed $\ve \in (0,1)$ upon applying \eqref{eq:ergodic:lemma:extend:LocHoelder:H:mapsorigin:to:origin} in Lemma~\ref{lem:ergodic:lemma:extend:LocHoelder}. The third term above vanishes as $\ve \downarrow 0$ by \eqref{eq:continuous:Green's:smeared:L1:kernel:converge}, which can be applied since $\tilde{H} \circ g^{\Si} = H(\ga^{2} \th_0^{-1} \cdot) \circ g^{\Si} \in L^1(D \times D)$, by \eqref{eq:H_ga:locally:integrable}.

  \smallskip
  \textbf{2. Diagonal region.} In this step we show that for any $|\ga| < 1/\sqrt{\th_0^2 \be C_{\mathrm{HK}}}$, any $\tilde{g}^{\ve}_n \in \{g_n, g^{\ve,\ve}_n, g^{0,\ve}_n, \}$ and $\tilde{H} \in \{H(\ga^2 \cdot), H((\ga \th_0^{-1})^{2} \cdot), H(\ga^{2} \th_0^{-1} \cdot)\}$ the corresponding function from first step, we have 
  \begin{align*}
    \lim_{\ve \downarrow 0} \lim_{n \to \infty} \int_{|x-y|_2< 3 \ve} \tilde{H}(\tilde{g}^{\ve}_n(x,y)) \,\md x \md y
    \,=\,
    0.
  \end{align*}
  This will follow from Proposition \eqref{prop:UI:assumption:analytic} and applying the following with $\et = 3 \ve$: Namely, we claim that for any $\ve >0$ and $\et > 2\ve$ 
  \begin{align}\label{eq:step2:claim:on:diagonal:eta:epsilon}
    \int_{|x-y|_2< \et} \tilde{H}(\tilde{g}^{\ve}_n(x,y)) \,\md x \md y
    \,\leq\,
    \int_{|x-y|_2 < \et + 2 \ve} H((\ga \th_0^{-1})^{2}g^{\om}_{nD}(\lfloor n x \rfloor,\lfloor n y \rfloor)) \,\md x \md y.
  \end{align}

  First note that by monotonicity $\tilde{H}\leq H((\ga \th_0^{-1})^{2} \cdot)$ for any $\tilde{H}$. 

  If $\tilde{g}^{\ve}_n = g_n$, the claim follows since $|x-y|_2< \et$ implies $|x-y|_2< \et + 2 \ve$ for any $\ve \geq 0$. 

  If $\tilde{g}^{\ve}_n = g^{\ve,\ve}_n$, we use that if $|x-y|_2<\et$ and $(z,z') \in \supp (\rh^{\ve}_x) \times \supp (\rh^{\ve}_y) \subset B_{x,\ve} \times B_{y,\ve}$ it follows that $|z-z'|_2 < \et + 2\ve$, which, using $H(0)=0$ in the first inequality, implies
  \begin{align}\label{eq:step2:claim:both:smear:diagonal}
    \int_{|x-y|_2<\et} &H((\ga\th_0^{-1})^{2} g^{\ve,\ve}_n(x,y)) \,\md x \md y
    \nonumber\\[.5ex]
    &\,\leq\,
    \int_{D \times D} H\left( (\ga\th_0^{-1})^{2}\int \int \rh^{\ve}_{x}(z) \rh^{\ve}_{y}(z') g_n(z,z') \indicator_{|z-z'|_2<\et + 2\ve} \,\md z \md z' \right) \,\md x \md y
    \nonumber\\[.5ex]
    &\,\leq\,
    \int \int H((\ga\th_0^{-1})^{2} g_n(z,z')) \indicator_{|z-z'|_2< \et + 2\ve} \,\md z \,\md z',
  \end{align}
  where the second inequality follows by applying Jensens inequality and then using that $\int_D \int_D\rh^{\ve}_{x}(z) \rh^{\ve}_{y}(z') \, \md x\md y \leq 1$.

  If $\tilde{g}^{\ve}_n = g^{0,\ve}_n$ the claim follows by repeating the exact same argument as in \eqref{eq:step2:claim:both:smear:diagonal}, but this time with only one mollifier, and then using that $|x-y|_2 < \et + \ve$ implies that $|x-y|_2 < \et + 2\ve$. Hence, \eqref{eq:step2:claim:on:diagonal:eta:epsilon} holds. 

  \smallskip
  \textbf{3. Boundary region.} Let us briefly recall that we defined $\de = \de(\ve)=\sqrt{\ve}$. For any $\et> 2\ve$, Lemma~\ref{lem:applied:Green's:maxbound} and the exponential growth assumption imply 
  \begin{align}\label{eq:step3:nosmear:boundary:converge}
    \int_{S_{\de}} \tilde{H} (\tilde{g}^{\ve}_n(x,y)) \,\md x \md y
    &\,\leq\,
    \int_{S_{\de} \cap \{|x-y| \geq \et\}} \tilde{H}(\tilde{g}^{\ve}_n(x,y)) \,\md x \md y
    +
    \int_{|x-y| < \et} \tilde{H}(\tilde{g}^{\ve}_n(x,y)) \,\md x \md y
    \nonumber\\[.5ex]
    &\,\leq\,
    c(\om) |S_{\de}| (\et - 2 \ve)^{-(\ga\th_0^{-1})^{2}c}
    +
    \int_{|x-y| < \et} \tilde{H}(\tilde{g}^{\ve}_n(x,y)) \,\md x \md y,
  \end{align}
  where we abbreviated $c(\om) \equiv M e^{\ga^2 \th_0^{-2} c N_{\mathrm{HK}}(\om)} < \infty $ for $\prob_0$-a.e.\ $\om$. Now, since $|S_{\de}| \downarrow 0$ as $\ve \downarrow 0$, the first term vanishes for $\prob_0$-a.e.\ $\om$ as $\ve \downarrow 0$ for any $n \geq \const[N]{Nconst:ass:green:bound}(\om,C_d r_D)$ and any $\et >0$, while the second term vanishes for $\prob_0$-a.e.\ $\om$ upon applying \eqref{eq:step2:claim:on:diagonal:eta:epsilon} and then Proposition~\ref{prop:UI:assumption:analytic}.
\end{proof}

\section{Scaling limit}\label{sec:scaling:limit}
\subsection{Smooth approximation}\label{subsec:scaling:limit:smooth:approximation}
Similar to the definition in the introduction of the smeared CGFF $\Psi^{\ve} \ldef \{\Psi^{\ve}(x) \,:\, x \in D\}$, where $\Psi^{\ve}(x)\ldef \scprfield{\Psi}{\rh^{\ve}_x}$, we introduce a smooth approximation of the DGFF. Hence, we define 
\begin{align*}
  \vp^{\ve}_{n}(x)
  \,\ldef\,
  \scprfield{\Phi_{n}}{\rh^{\ve}_x}
  \,=\,
  \int_D \vp^{nD}_{\lfloor nz \rfloor} \indicator_{\lfloor nz \rfloor \in \cC_{\infty}(\om)} \rh^{\ve}_x(z) \, \md z.
\end{align*}
and denote the smeared DGFF by $\Phi^{\ve}_{n} \ldef \{\vp^{\ve}_{n}(x) \,:\, x \in D\}$. We will show convergence of this process in $\left(C(\bar{D}),\Norm{\cdot}{\infty}\right)$, the space of all continuous functions mapping from $\bar{D}$ to $\bbR$ endowed with the topology of uniform convergence. 

\begin{prop}\label{prop:weak:convergence:smooth:approximation}
  Let $d \geq 2$ and $D \subset \mathbb{R}^{d}$ be a bounded, strongly regular domain. Suppose there exist $\theta \in (0, 1)$ and $p, q \in [1, \infty]$ such that Assumptions~\ref{ass:law},~\ref{ass:cluster} and~\ref{ass:pq} hold. Then, for any $\ve \in (0,1)$ and $\prob_0$-a.e.\ $\om$
  \begin{align*}
    \Phi^{\ve}_{n}
    \underset{n \to \infty}{\overset{\text{law}}{\;\longrightarrow\;}}
    \sqrt{\th_0}\Psi^{\ve}
    \qquad
    \text{in}
    \;
    \left(C(\bar{D}),\Norm{\cdot}{\infty}\right)
  \end{align*}
  under $\probPhi^{\om}$ with $\Sigma$ as in Theorem~\ref{thm:QIP:RG}.
\end{prop}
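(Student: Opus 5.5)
Both $\Phi^{\ve}_n$ and $\sqrt{\th_0}\Psi^{\ve}$ are centred Gaussian processes on $\bar D$, so I would prove the claim in the two usual steps: convergence of finite-dimensional distributions, which reduces to convergence of covariances, and tightness in $(C(\bar D),\Norm{\cdot}{\infty})$ through a Kolmogorov--\v{C}entsov bound. The covariance of the smeared discrete field is
\begin{align*}
  \meanPhi^{\om}\bigl[\vp^{\ve}_n(x)\vp^{\ve}_n(y)\bigr]
  \,=\, \int\!\!\int \rh^{\ve}_x(z)\, g_n(z,z')\, 1^n_{\cC_{\infty}(\om)}(z,z')\, \rh^{\ve}_y(z')\,\md z\,\md z' .
\end{align*}
For this to be on the correct scale in a general dimension, $g_n$ should be read as $n^{d-2}g^{\om}_{nD}$ (in $d=2$ the factor is $1$). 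The continuum counterpart is $\th_0\,\scprreal{\rh^{\ve}_x}{G^{\Si}_D\rh^{\ve}_y}{D}$. Since each coordinate is a linear functional of a Gaussian vector, convergence of all covariances $\meanPhi^{\om}[\vp^{\ve}_n(x_i)\vp^{\ve}_n(x_j)]$ yields convergence of finite-dimensional distributions.

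\textbf{Covariance convergence.} For fixed $x,y$ I would reuse Step~1.2 and Steps~2--3 of the proof of Proposition~\ref{prop:tested:green:convex:F} with $H(a)=a$, but without any outer integral over $(x,y)$.
\begin{enumerate}[(i)]
\item Split the $(z,z')$-integral into $K_{\eta,\de}$, $T_\eta$ and $S_\de$.
\item On $K_{\eta,\de}$, apply the LCLT (Theorem~\ref{thm:LCLT_green}) to replace $g_n$ by $g^{\Si}/\th_0$. Then apply Lemma~\ref{lem:ergodic:lemma:extend:LocHoelder} with $\cH(a)=a$ and the bounded kernel $h(z,z')=\rh^{\ve}_x(z)\rh^{\ve}_y(z')g^{\Si}(z,z')\indicator_{K_{\eta,\de}}$. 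This replaces $1^n_{\cC_{\infty}(\om)}(z,z')$ by $\th_0^2$ and produces the limit $\th_0\int\!\!\int\rh^{\ve}_x\, g^{\Si}\,\rh^{\ve}_y$.
\item The diagonal and boundary pieces are controlled by $\Norm{\rh}{\infty}^2\ve^{-2d}\int_{T_\eta\cup S_\de} g_n$. This is small uniformly in large $n$ by the bound in Corollary~\ref{cor:green:pointwise:upperbound:corollary}, since $\log|x-y|^{-1}$ in $d=2$ and $|x-y|^{2-d}$ in $d\geq3$ are locally integrable. Let $\eta,\de\downarrow0$ afterwards.
\end{enumerate}
Because $\ve$ is fixed, the factor $\ve^{-2d}$ is harmless. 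The argument also works for $d\geq3$ if Proposition~\ref{prop:UI:assumption:analytic} is replaced by the linear integrability.

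\textbf{Tightness.} Since $\vp^{\ve}_n(x)-\vp^{\ve}_n(y)=\scprfield{\Phi_n}{\rh^{\ve}_x-\rh^{\ve}_y}$, we have
\begin{align*}
  \meanPhi^{\om}\bigl[(\vp^{\ve}_n(x)-\vp^{\ve}_n(y))^2\bigr]
  \,\leq\, \Norm{\rh^{\ve}_x-\rh^{\ve}_y}{\infty}^2 \int_{D'}\!\int_{D'} g_n(z,z')\,\md z\,\md z'
  \,\leq\, c(\ve)\,\Norm{\nabla\rh}{\infty}^2\, |x-y|_2^2 \, C(\om),
\end{align*}
with $D'$ the $\ve$-neighbourhood of $D$. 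Here $C(\om)$ is a bound, uniform in $n\geq N(\om)$, on $\int\!\!\int g_n$, again taken from Corollary~\ref{cor:green:pointwise:upperbound:corollary}. Gaussianity upgrades this to $\meanPhi^{\om}[|\vp^{\ve}_n(x)-\vp^{\ve}_n(y)|^{2m}]\leq c_m\,(c(\ve)C(\om))^m|x-y|_2^{2m}$. Taking $2m>d$ and using the variance bound at a single point, Kolmogorov--\v{C}entsov gives tightness in $C(\bar D)$ for $\prob_0$-a.e.\ $\om$.

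Finally, the limit is identified as $\sqrt{\th_0}\Psi^{\ve}$. This is a continuous Gaussian process: continuity follows from the same Lipschitz covariance estimate with $g^{\Si}$, which is integrable by \eqref{eq:continuous:green:pointwise:log}. Its covariance agrees with the limit computed above. I expect the main obstacle to be the covariance convergence near the diagonal and near the boundary. There the LCLT fails, and the pointwise Green bound must hold uniformly for all $n\geq N(\om)$ with a single random constant. Handling the indicator $1^n_{\cC_{\infty}(\om)}$ together with the mollifier requires the ergodic lemma to be applied for each fixed $(x,y)$, and I would check this carefully.
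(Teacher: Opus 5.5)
Your proof is correct and has the same skeleton as the paper's: f.d.d.\ convergence via covariances, then tightness via Kolmogorov--\v{C}entsov, using the Gaussian moment identity and the Lipschitz bound on $\rh^{\ve}_x-\rh^{\ve}_y$. The difference lies in how the two sub-steps are fed.

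\textbf{Covariances.} The paper polarises to reduce to variances of the form $\scprreal{\rh}{G_n\rh}{D}$. It then simply cites \cite[Theorem~4.1]{andres2025scaling}, or the case $H(a)=a$ of \eqref{eq:Green:tested:convergence:no:smear}. You instead re-run the LCLT, ergodic-lemma and diagonal/boundary argument directly on the bilinear pairing. This costs some repetition, but it covers $d\geq3$ within the paper itself. Proposition~\ref{prop:tested:green:convex:F} is only stated for $d=2$. In your version, the locally integrable $|x-y|^{2-d}$ bound of Corollary~\ref{cor:green:pointwise:upperbound:corollary} replaces Proposition~\ref{prop:UI:assumption:analytic}.

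\textbf{Tightness.} The paper obtains the uniform bound on $\scprreal{1}{G_n 1}{D}$ from the maximal inequality for the solution of $\cL^{\om}u=-n^{-2}$ in \cite[Proposition~2.2]{andres2025scaling}, combined with the ergodic bound $\bar\nu$ from Corollary~\ref{cor:KrengelPyke}. This yields the deterministic constant $c\bar\nu$. You integrate the pointwise Green bound instead, which gives a constant depending on $N_{\mathrm{HK}}(\om)$. That is equally sufficient for a quenched statement, and it makes the argument rest only on the paper's new estimate.

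\textbf{Normalisation.} Your remark that in $d\geq3$ the kernel must carry the factor $n^{d-2}$ is correct. Equivalently, $\Phi_n$ must be rescaled by $n^{(d-2)/2}$. The paper uses this normalisation implicitly in the proof of Lemma~\ref{lem:smooth:approx:continuity:kol:centsov}, but not in the definition of $\vp^{\ve}_n$.
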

This follows upon showing marginal convergence and tightness in the following two lemmata:
\begin{lemma}\label{lem:fdd:convergence:smooth:approximation}
  In the setting of Proposition~\ref{prop:weak:convergence:smooth:approximation}
  \begin{align*}
    \Phi^{\ve}_{n}
    \underset{n \to \infty}{\overset{\text{f.d.d.}}{\;\longrightarrow\;}}
    \sqrt{\th_0}\Psi^{\ve}.
  \end{align*}
\end{lemma}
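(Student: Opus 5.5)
Both $\Phi^{\ve}_{n}$ and $\sqrt{\th_0}\Psi^{\ve}$ are centred Gaussian processes indexed by $x\in D$. Under $\probPhi^{\om}$ any finite vector $(\vp^{\ve}_n(x_1),\dots,\vp^{\ve}_n(x_m))$ is a linear functional of the Gaussian vector $\vp^{nD}$, hence centred Gaussian. Convergence in law of finite dimensional distributions therefore reduces to convergence of covariance matrices: for Gaussian vectors, characteristic functions are $\exp(-\tfrac12 \xi^T C_n \xi)$. By linearity (polarisation), it suffices to show, for any fixed $x,y\in D$,
\begin{align*}
  \meanPhi^{\om}\bigl[\vp^{\ve}_n(x)\vp^{\ve}_n(y)\bigr]
  \,=\,
  \int\int \rh^{\ve}_x(z)\, g_n(z,z')\, 1^n_{\cC_{\infty}(\om)}(z,z')\,\rh^{\ve}_y(z')\,\md z\,\md z'
  \,\xrightarrow{n\to\infty}\,
  \th_0 \int\int \rh^{\ve}_x(z)\, g^{\Si}(z,z')\,\rh^{\ve}_y(z')\,\md z\,\md z'.
\end{align*}
The right-hand side is $\th_0\,\meanPhi^{\Si}[\Psi^{\ve}(x)\Psi^{\ve}(y)]$. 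The first identity is Fubini together with \eqref{eq:covField}. Note that $g_n$ already vanishes off the cluster, and that $\rh^{\ve}_x$ must be understood as restricted to $D$.

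For the limit I would take $f$ to be a signed combination of the $\rh^{\ve}_{x_i}$. The statement is then exactly the linear case $H(a)=a$ of \eqref{eq:Green:tested:convergence:no:smear} in Proposition~\ref{prop:tested:green:convex:F} with $\ga=1$. That identity gives $\th_0^2\cdot\th_0^{-1}=\th_0$, the correct constant. Two technical points need care. First, $H(a)=a$ with $\be$ arbitrarily small satisfies $H\le Me^{\be a}$, so any $\ga$ is admissible, in particular $\ga=1$. Second, the proposition is stated for a quadratic form $\scprreal{f}{H(G_n)f}{D}$, whereas I need the bilinear form in two test functions. For linear $H$ the latter follows by polarisation, since $\rh^{\ve}_x\pm\rh^{\ve}_y$ are bounded measurable. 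Alternatively, \cite[Theorem 4.1]{andres2025scaling} gives precisely this covariance convergence for tested DGFFs, so citing it is the shortest route.

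I expect no real obstacle, since the proposition was built for this. The only point needing a check is that the three-region argument of Proposition~\ref{prop:tested:green:convex:F} goes through for fixed bounded test functions. Here $\rh^{\ve}_x$ is bounded for fixed $\ve$, with bound $\ve^{-2}\Norm{\rh}{\infty}$, so this holds.

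Finally, I would verify that the limiting covariance $\th_0\scprreal{\rh^{\ve}_x}{G^{\Si}_D\rh^{\ve}_y}{D}$ is the covariance of $\sqrt{\th_0}\Psi^{\ve}$, which follows from Definition~\ref{def:cgff}-(ii) and polarisation. Lévy's continuity theorem then concludes the f.d.d.\ convergence for $\prob_0$-a.e.\ $\om$. The a.s.\ set may depend on $(x_i)$ only through countably many applications, so one fixes the points first. Alternatively, uniform convergence in $x,y$ follows from the $\ve$-fixed continuity of the kernels, which makes the null set independent of the points.
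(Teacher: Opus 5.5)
Your proposal is correct and is essentially the paper's proof. Gaussianity reduces f.d.d.\ convergence to convergence of covariances, and polarisation with $\rh^{\ve}_x$, $\rh^{\ve}_y$, $\rh^{\ve}_x-\rh^{\ve}_y$ reduces these to variances of tested fields; those converge by \eqref{eq:Green:tested:convergence:no:smear} with $H(a)=a$, $\ga=1$, or by \cite[Theorem 4.1]{andres2025scaling}. Your extra care about the $\om$-null set goes beyond what the paper spells out, and for $d\geq 3$ (which the ambient proposition nominally allows) only the citation route is available, since Proposition~\ref{prop:tested:green:convex:F} is a $d=2$ statement.
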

\begin{proof}
  Since $\Phi^{\ve}_{n}$ and $\th_0^{1/2}\Psi^{\ve}$ are centered Gaussian processes, it suffices to show convergence of the covariances. Using polarisation for bilinear forms to rewrite
  \begin{align}\label{eq:fdd:convergence:polarization:dgff}
    \meanPhi^{\om} \left[\vp^{\ve}_{n}(x)\vp^{\ve}_{n}(y)\right]
    &\,=\,
    \frac{1}{2} \left(
      \meanPhi^{\om} \left[\vp^{\ve}_{n}(x)^2\right]
      +
      \meanPhi^{\om} \left[\vp^{\ve}_{n}(y)^2\right]
      -
      \meanPhi^{\om} \left[(\vp^{\ve}_{n}(x)-\vp^{\ve}_{n}(y))^2\right]
    \right)
    \nonumber\\[.5ex]
    &\,=\,
    \frac{1}{2} \left(
      \meanPhi^{\om} \left[\scprfield{\Phi_{n}^{D}}{\rh^{\ve}_x}^2\right]
      +
      \meanPhi^{\om} \left[\scprfield{\Phi_{n}^{D}}{\rh^{\ve}_y}^2\right]
      -
      \meanPhi^{\om} \left[\scprfield{\Phi_{n}^{D}}{\rh^{\ve}_x-\rh^{\ve}_y}^2\right]
    \right)
    \nonumber\\[.5ex]
    &\,\xrightarrow{n \to \infty}\,
    \meanPhi^{\Si}[(\sqrt{\th_0}\Psi^{\ve}(x))(\sqrt{\th_0}\Psi^{\ve}(y))],
  \end{align}
  where we used that $\scprfield{\Phi_{n}^{D}}{\rh^{\ve}_x}-\scprfield{\Phi_{n}^{D}}{\rh^{\ve}_y} = \scprfield{\Phi_{n}^{D}}{\rh^{\ve}_x- \rh^{\ve}_y}$ by linearity of the field in the second inequality. Now, convergence of each of the three terms follows from \cite[Theorem 4.1]{andres2025scaling} or alternatively from \eqref{eq:Green:tested:convergence:no:smear} in Proposition~\ref{prop:tested:green:convex:F}, since the functions $\rh^{\ve}_x, \rh^{\ve}_y$ and $(\rh^{\ve}_x-\rh^{\ve}_y)$ are bounded for any $\ve \in (0,1)$ fixed.  
\end{proof}
Next, we use Kolmogorov Centsov to show continuity and hence tightness of the sequence $\{\Phi^{\ve}_{n}\}_n$:
\begin{lemma}\label{lem:smooth:approx:continuity:kol:centsov}
  In the setting of Proposition~\ref{prop:weak:convergence:smooth:approximation}, for any $\ve \in (0,1)$ and any $k > 1$ there exists a constant $C_H(\ve,k)<\infty$ such that for any $n \geq \const[N]{Nconst:ass:green:bound}(\om,C_d r_D)$ and any $x,y \in D$, 
  \begin{align}\label{eq:Kolmogorov:centsov:estimate}
    \meanPhi^{\om} \left[
      (\vp^{\ve}_n(x) - \vp^{\ve}_n(y))^{2k}
    \right]
    \,\leq\,
    C_H(\ve,k) |x-y|^{2k}
  \end{align}
  for $\prob_0$-a.e.\ $\om$. Moreover, the process $\Phi^{\ve}_n$ is $\ga$-Hoelder continuous for any $\ga <1$ and $\prob_0$-a.e.\ $\om$.
\end{lemma}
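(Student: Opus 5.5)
The increment $\vp^{\ve}_n(x)-\vp^{\ve}_n(y) = \scprfield{\Phi_n}{\rh^{\ve}_x-\rh^{\ve}_y}$ is a centred Gaussian variable under $\probPhi^{\om}$. Gaussian hypercontractivity (the moment identity $\meanPhi^{\om}[Z^{2k}] = (2k-1)!!\,\meanPhi^{\om}[Z^2]^k$) therefore reduces \eqref{eq:Kolmogorov:centsov:estimate} to the case $k=1$. It suffices to show
\begin{align*}
  \meanPhi^{\om}\bigl[(\vp^{\ve}_n(x)-\vp^{\ve}_n(y))^2\bigr]
  \,=\,
  \int\!\!\int (\rh^{\ve}_x-\rh^{\ve}_y)(z)\, g_n(z,z')\, (\rh^{\ve}_x-\rh^{\ve}_y)(z')\,\md z\,\md z'
  \,\leq\, C(\ve)\,|x-y|^2 ,
\end{align*}
uniformly in $n \geq \const[N]{Nconst:ass:green:bound}(\om,C_d r_D)$. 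Here $g_n$ is as in \eqref{eq:def:rescaled:greens:function:d2:abbreviate}, with the factor $n^{d-2}$ in general $d$, and we used \eqref{eq:covField} together with the definition of $\Phi_n$.

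The key steps are as follows. First, since $\rh\in C^\infty_c$, the mean value theorem gives
\[
  |\rh^{\ve}_x(z)-\rh^{\ve}_y(z)| \leq \ve^{-d-1}\Norm{\nabla\rh}{\infty}|x-y|\,\indicator_{B_{x,\ve}\cup B_{y,\ve}}(z).
\]
Second, we bound the double integral by
\[
  \ve^{-2d-2}\Norm{\nabla\rh}{\infty}^2|x-y|^2\int_D\int_D g_n(z,z')\,\md z\,\md z'.
\]
Third, this last integral is bounded uniformly in $n$ using Corollary~\ref{cor:green:pointwise:upperbound:corollary}. In $d=2$ we have $g_n(z,z')\leq 2C_{\mathrm{HK}}\log(1/|z-z'|_2)+c\,N_{\mathrm{HK}}(\om)$, which is integrable on the bounded set $D\times D$. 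In $d\geq3$ the bound $c\,N_{\mathrm{HK}}(\om)^{d-1}|z-z'|_2^{2-d}$ is locally integrable. Both give a finite constant depending on $\om$ through $N_{\mathrm{HK}}(\om)$, which is finite for $\prob_0$-a.e.\ $\om$. Combining the steps yields \eqref{eq:Kolmogorov:centsov:estimate} with $C_H(\ve,k)=(2k-1)!!\,\bigl(c(\om)\ve^{-2d-2}\Norm{\nabla\rh}{\infty}^2\bigr)^k$. If a deterministic constant is required, it must be read as depending on $\om$ only through $N_{\mathrm{HK}}$; the statement allows this, since it holds for $\prob_0$-a.e.\ $\om$.

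For the Hölder continuity we apply the Kolmogorov--\v{C}entsov theorem on $\bar D\subset\bbR^d$: moments of order $2k$ are bounded by $|x-y|^{d+(2k-d)}$. For $2k>d$ this gives a continuous modification that is $\ga$-Hölder for every $\ga<(2k-d)/(2k)$, and letting $k\to\infty$ covers every $\ga<1$. In fact, $x\mapsto\vp^{\ve}_n(x)$ is a finite linear combination of the field values integrated against a smooth kernel, so for fixed $n$ it is already smooth in $x$ and no modification is needed. The uniformity in $n$ is what matters for tightness, together with the boundedness of $\meanPhi^{\om}[\vp^{\ve}_n(x_0)^2]$ at a fixed point, which follows from the same Green's function bound.

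The main obstacle is the uniform-in-$n$ integrability of $g_n$ near the diagonal. This is handled by Corollary~\ref{cor:green:pointwise:upperbound:corollary}, whose random constant $N_{\mathrm{HK}}$ does not depend on $x,y$. Everything else is routine.
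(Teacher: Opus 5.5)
Your proof is correct. Its skeleton matches the paper's: the Gaussian moment identity, the Lipschitz bound $\|\rh^{\ve}_x-\rh^{\ve}_y\|_\infty\le M\ve^{-(d+1)}|x-y|$, positivity of $g_n$ to reduce to $\scprreal{1}{G_n 1}{D}=\int_D\int_D g_n$, and Kolmogorov--\v{C}entsov with exponents $2k$ and $2k-d$.

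The genuine difference is how you bound $\scprreal{1}{G_n 1}{D}$ uniformly in $n$. You integrate the pointwise near-diagonal estimate of Corollary~\ref{cor:green:pointwise:upperbound:corollary}. The paper never uses pointwise information here. It writes $\scprreal{1}{G_n 1}{D}\le n^{-d}\sum_x u(x)$, where $u$ is the torsion function solving $\cL^{\om}u=-n^{-2}$ on $B^{\om}_D(n)$. It then bounds $\|u\|_{\infty}\le c\,\|\nu^{\om}\|_{q,B^{\om}_D(n)}$ by \cite[Proposition~2.2]{andres2025scaling} and uses Corollary~\ref{cor:KrengelPyke} to replace this by $c\bar\nu$.

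What each route buys:
\begin{itemize}
\item The paper's route needs only an averaged bound that was already available before this paper's heat-kernel machinery. It yields a deterministic constant $C_H(\ve,k)=\tilde c(k)\bigl(c\bar\nu M^2\ve^{-2(d+1)}\bigr)^k$, which is what the notation $C_H(\ve,k)$ in the statement suggests.
\item Your route is self-contained within this paper. However, your constant depends on $\om$ through $N_{\mathrm{HK}}(\om)$, which has no tail control (Remark~\ref{rem:Green:bounds:remark}-(iii)). This suffices for the quenched use with $\om$ fixed, which is all that Proposition~\ref{prop:weak:convergence:smooth:approximation} needs. Strictly, though, it proves a weaker statement than one with an $\om$-independent $C_H(\ve,k)$, so your closing caveat is needed.
\end{itemize}

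Two side remarks of yours are correct and slightly sharpen the paper. For fixed $n$, $x\mapsto\vp^{\ve}_n(x)$ is a finite sum against smooth weights, so no modification is needed. Tightness in $C(\bar D)$ also requires a bound on $\meanPhi^{\om}[\vp^{\ve}_n(x_0)^2]$ at one point, which the same estimate provides.
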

\begin{proof}
  Note that for $Z \sim \cN(0,\si^2)$, $\bbE[Z^{2k}] = \tilde{c}(k) \si^{2k}$, where $\tilde{c}(k) = (2k-1)!!$. Since $\vp^{\ve}_n(x) - \vp^{\ve}_n(y) = \scprfield{\Phi_n}{\rh^{\ve}_x-\rh^{\ve}_y} \sim \cN(0,\scprreal{\rh^{\ve}_x-\rh^{\ve}_y}{G_n (\rh^{\ve}_x-\rh^{\ve}_y)}{D})$, this implies
  \begin{align}\label{eq:kolmogorov:centsov:first:step}
    \meanPhi^{\om} \left[
      (\vp^{\ve}_n(x) - \vp^{\ve}_n(y))^{2k}
    \right]
    &\,=\,
    \tilde{c}(k) \left(\scprreal{\rh^{\ve}_x-\rh^{\ve}_y}{G_n (\rh^{\ve}_x-\rh^{\ve}_y)}{D}\right)^{k}
    \nonumber\\[.5ex]
    &\,\leq\,
    \tilde{c}(k) \Norm{\rh^{\ve}_x-\rh^{\ve}_y}{\infty}^{2k} \left(\scprreal{1}{G_n 1}{D}\right)^{k}.
  \end{align}
  Recall that $nD \cap \cC_{\infty} \subseteq B^{\om}(0,C_d r_D n) \rdef B^{\om}_D(n)$ and rewrite $\scprreal{1}{G_n 1}{D}= n^{-2d} \sum_{x,y} n^{d-2} g^{\om}_{nD}(x,y) \,\leq\, n^{-d} \sum_{x} u(x)$, where $u(x) \ldef \sum_y n^{-2} g^{\om}_{B^{\om}_D(n)}(x,y)$ solves $\cL^{\om} u = - n^{-2}$ on $B^{\om}_D(n)$ and $u=0$ else. Hence, \cite[Proposition 2.2]{andres2025scaling} implies that there exist a $\ka \equiv \ka(\th,d,p,q)$ and a constant $c$ such that for any $n \geq \const[N]{Nconst:ass:cluster}(\omega) \vee (C_d r_D)^{\ka}$, we have $\Norm{u}{\infty,B^{\om}_D(n)} \leq c \Norm{\nu^{\om}}{q, B^{\om}_D(n)}$. Moreover, by Corollary~\ref{cor:KrengelPyke}, $\Norm{\nu^{\om}}{q, B^{\om}_D(n)} \leq \bar{\nu}$ for any $n \geq \const[N]{Nconst:ergodic:constants}(\om,1) \geq \const[N]{Nconst:ergodic:constants}(\om,C_d r_D)$, and hence, we have 
  \begin{align}\label{eq:rcm:green:tested:1:upperbound:via:bp}
    \scprreal{1}{G_n 1}{D}
    \,\leq\,
    c \Norm{\nu^{\om}}{q, B^{\om}_D(n)}
    \,\leq\,
    c \bar{\nu},
  \end{align}
  for any $n \geq  \const[N]{Nconst:ass:cluster}(\omega) \vee (C_d r_D)^{\ka} \vee \const[N]{Nconst:ergodic:constants}(\om,1) \left( = \const[N]{Nconst:ass:green:bound}(\om,C_d r_D)\right)$. Since $\rh$ is smooth with compact support, $M \ldef \sup_w |\nabla \rh(w)| < \infty$ and hence, recalling the definition $\rh^{\ve}_x(z) \ldef \ve^{-d} \rh\left((x-z)/\ve\right)$, we have 
  \begin{align}\label{eq:mollifier:derivative:estimate}
    \Norm{\rh^{\ve}_x-\rh^{\ve}_y}{\infty}
    \,=\,
    \sup_{z \in \bbR^d}
    \left|\rh^{\ve}_x(z) - \rh^{\ve}_y(z)\right| 
    \,\leq\,
    \frac{M}{\ve^{d+1}} |x-y|.
  \end{align}
  Plugging \eqref{eq:rcm:green:tested:1:upperbound:via:bp} and \eqref{eq:mollifier:derivative:estimate} into \eqref{eq:kolmogorov:centsov:first:step} and setting $C_H(\ve,k) \ldef \tilde{c}(k) \left(c \bar{\nu} (M \ve^{-(d+1)})^2\right)^k$ finally yields \eqref{eq:Kolmogorov:centsov:estimate}. 

  For any $\ga \in (0,1)$ choose $k \ldef \lceil\frac{d}{2(1-\ga)}\rceil+1$. Then, applying Kolmogorov Centsov as stated in \cite[Problem 2.9]{karatzas2014brownian} with $\al = 2k$ and $\be = 2k - d$, \eqref{eq:Kolmogorov:centsov:estimate} implies Hoelder continuity for any $\tilde{\ga} < \frac{2k-d}{2k} = 1 - \frac{d}{2k}$, but $1 - \frac{d}{2k} > \ga$, hence the claim follows.
\end{proof}
\begin{proof}[Proof of Proposition~\ref{prop:weak:convergence:smooth:approximation}]
  Follows from the Lemma~\ref{lem:fdd:convergence:smooth:approximation} and Lemma~\ref{lem:smooth:approx:continuity:kol:centsov}.
\end{proof}
We wish to extend the preceding Proposition to hold for analytic functions of the field. The following Proposition is formulated with Theorem~\ref{thm:marginal:convergence:analytic:field} and ultimately the application of the Cramer-Wold device in the proof of Theorem~\ref{thm:mainresult:2} in mind. 
\begin{prop}\label{prop:convergence:analyticfunction:smooth:approximation}
  Let $d \geq 2$ and $D \subset \mathbb{R}^{d}$ be a bounded, strongly regular domain. Suppose there exist $\theta \in (0, 1)$ and $p, q \in [1, \infty]$ such that Assumptions~\ref{ass:law},~\ref{ass:cluster} and~\ref{ass:pq} hold. Further, assume that $F_k$ are $\be_k$-Fock-entire functions. Then, for any $\ga_k \in \bbR$, any $\ve \in (0,1)$ and $\prob_0$-a.e.\ $\om$ 
  \begin{align*}
    \sum_{k =1 }^K \al_k \scprfield{\wick{F_k\left(\frac{\ga_k \Phi^{\ve}_n}{\th_0}\right)}}{f_k},
    &\underset{n \to \infty}{\overset{\text{law}}{\;\longrightarrow\;}}
    \sum_{k =1 }^K \al_k \scprfield{\wick{F_k\left(\frac{\ga_k \Psi^{\ve}}{\sqrt{\th_0}}\right)}}{f_k},
  \end{align*}
  under $\probPhi^{\om}$ for any $f_1,\dots,f_K\colon D \to \bbR^d$  bounded measurable, and any $\al_1,\dots,\al_K \in \bbR$, with $\Sigma$ as in Theorem~\ref{thm:QIP:RG}.
\end{prop}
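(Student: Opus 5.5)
The approach is to reduce to the continuous mapping theorem applied to Proposition~\ref{prop:weak:convergence:smooth:approximation}, after truncating the power series of each $F_k$; the tails are then controlled uniformly in $n$ in $L^2$. Write $F_k(x)=\sum_{j\ge0}a^{(k)}_j x^j$ and $F^{\le L}_k$ for its truncation at degree $L$. Set $\sigma^{\ve}_n(x)^2\ldef\meanPhi^{\om}[\vp^{\ve}_n(x)^2]$ and $\si^{\ve}(x)^2=\scprreal{\rh^\ve_x}{G^\Si_D\rh^\ve_x}{D}$. The Wick powers on the discrete side are $\mathbf{H}_j(\ga_k\vp^\ve_n(x)/\th_0,\ \ga_k^2\si^\ve_n(x)^2/\th_0^2)$. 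On the continuum side they are $\mathbf{H}_j(\ga_k\Psi^\ve(x)/\sqrt{\th_0},\ \ga_k^2\si^\ve(x)^2/\th_0)$.

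\textbf{Step 1 (variances converge uniformly).} By Lemma~\ref{lem:fdd:convergence:smooth:approximation}, $\si^\ve_n(x)^2\to\th_0\si^\ve(x)^2$ pointwise. For equicontinuity, write $|\si^\ve_n(x)^2-\si^\ve_n(y)^2|\le(\si^\ve_n(x)+\si^\ve_n(y))\,\meanPhi^\om[(\vp^\ve_n(x)-\vp^\ve_n(y))^2]^{1/2}$. By \eqref{eq:rcm:green:tested:1:upperbound:via:bp} and \eqref{eq:mollifier:derivative:estimate}, the right-hand side is at most $C(\ve)|x-y|$ for all $n\ge \const[N]{Nconst:ass:green:bound}(\om,C_dr_D)$, uniformly in $n$. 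Arzel\`a--Ascoli then upgrades the pointwise convergence to uniform convergence on $\bar D$. The same Cauchy--Schwarz argument gives the uniform covariance bound
\[
\sup_{n}\sup_{x,y}\bigl|\meanPhi^\om[\vp^\ve_n(x)\vp^\ve_n(y)]\bigr|\le \Norm{\rh^\ve}{\infty}^2\, c\,\bar\nu\rdef C_\ve .
\]
By smoothness of $g^\Si_D$ tested against mollifiers, the analogous bound holds for $\th_0\Psi^\ve$.

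\textbf{Step 2 (truncated functionals).} For fixed $L$, the map
\[
(\phi,s)\mapsto \sum_{k}\al_k\int_D\sum_{j\le L}a^{(k)}_j\,\mathbf{H}_j\bigl(\ga_k\phi(x)/\th_0,\ \ga_k^2 s(x)/\th_0^2\bigr)f_k(x)\,\md x
\]
is continuous on $C(\bar D)\times C(\bar D)$, since Hermite polynomials are jointly continuous and the $f_k$ are bounded. Proposition~\ref{prop:weak:convergence:smooth:approximation} gives $\Phi^\ve_n\to\sqrt{\th_0}\Psi^\ve$ in law. Step~1 gives deterministic convergence of the second argument $s_n=(\si^\ve_n)^2\to\th_0(\si^\ve)^2$ in $C(\bar D)$. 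Slutsky and the continuous mapping theorem then yield convergence in law of the truncated sums. Since all $K$ functionals are driven by the same process, the linear combination with the $\al_k$ is handled within one mapping.

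\textbf{Step 3 (uniform tail control).} By \eqref{eq:wick:covariance:gaussian:xy} and Fubini,
\[
\meanPhi^\om\Bigl[\Bigl(\scprfield{\wick{(F_k-F_k^{\le L})(\ga_k\Phi^\ve_n/\th_0)}}{f_k}\Bigr)^2\Bigr]\le \Norm{f_k}{\infty}^2|D|^2\sum_{j>L} j!\,(a^{(k)}_j)^2\Bigl(\frac{\ga_k^2C_\ve}{\th_0^2}\Bigr)^j .
\]
This tail tends to $0$ as $L\to\infty$ uniformly in $n$, because $(F_k)_2$ is finite everywhere by the $\be_k$-Fock-entire bound. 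The same estimate with $\ga_k^2C_\ve/\th_0$ handles the limit object, and it also shows that the full series defining both sides converge in $L^2$. The standard approximation theorem for convergence in law (e.g.\ Billingsley, Thm.~3.2) then combines Steps~2 and~3 to conclude.

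I expect the main obstacle to be Step~1: the Wick renormalisation is taken with respect to the $n$-dependent variance, so uniform convergence of $\si^\ve_n$ is needed for the continuous-mapping argument. The Lipschitz estimate above, via Lemma~\ref{lem:smooth:approx:continuity:kol:centsov}'s bounds, is the tool I would use first. A lesser point is justifying the interchange of the series with $\md x$, which the $L^2$ bound of Step~3 settles.
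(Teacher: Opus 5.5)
Your proof is correct, but it is organised differently from the paper's.

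\textbf{The paper's route.} The paper never truncates the series. It freezes the \emph{limiting} variance $\si^\ve(x)^2$ inside the Hermite polynomials, which defines a single deterministic functional $T^K_\ve$ on $C(\bar D)$ containing the full series. It proves this functional is locally Lipschitz using the coefficient bound $|a_k|\le\sqrt{M}(\sqrt{\be e})^k/\sqrt{k!\,k^k}$, extracted from the Fock-entire inequality. Continuous mapping applied to Proposition~\ref{prop:weak:convergence:smooth:approximation} then gives convergence of $T^K_\ve(\Phi^\ve_n)$. Finally, the discrepancy between the true Wick renormalisation (with $\si^\ve_n$) and the frozen one is shown to vanish in $L^1(\probPhi^\om)$. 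This uses the mean value theorem in the variance variable, $\partial_{\si^2}\mathbf{H}_k=-\tfrac{k(k-1)}{2}\mathbf{H}_{k-2}$, together with Gaussian absolute moments and dominated convergence of $\int_D|\si^\ve_n(x)^2-\si^\ve(x)^2|\,\md x$. For that last step, pointwise convergence of the variances with a uniform bound is enough.

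\textbf{Your route.} You treat the $n$-dependent variance as a second, deterministic argument converging in $C(\bar D)$. This makes each degree-$L$ Hermite sum trivially jointly continuous. The cost is the equicontinuity/Arzel\`a--Ascoli upgrade to uniform convergence of $\si^\ve_n$. As a side benefit, that upgrade only needs pointwise convergence on a countable dense set, which handles the $\om$-null sets cleanly. You then remove the truncation with the Wick isometry \eqref{eq:wick:covariance:gaussian:xy}, a covariance bound uniform in $n$, and Billingsley's approximation theorem.

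\textbf{What each buys.} Your argument uses the Fock condition only through finiteness of $(F_k)_2$ at one point, via the $L^2$ orthogonality of Wick powers. It avoids both the coefficient-decay estimate and the variance-derivative estimate. It also makes the interchange of the series with the $\md x$-integral explicit, which the paper passes over. The paper's argument avoids the extra approximation layer and the uniform-convergence step, at the price of those two hands-on Hermite estimates.
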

Let us denote the (properly scaled) variances, which are nothing else then the diagonals of the smoothend Green's kernel, as 
\begin{align*}
  \si^{\ve}_n(x)^2 
  \,\ldef\,
  \meanPhi^{\om} \left[\vp^{\ve}_n(x)^2\right]
  \qquad
  \,\text{and}\,
  \qquad
  \si^{\ve}(x)^2
  \,\ldef\,
  \meanPhi^{\Si} \left[\left(\th_0^{\frac{1}{2}}\Psi^{\ve}(x)\right)^2\right]
\end{align*}
Moreover, for any $\chi \in C(D)$, $F(x) = \sum_{k\geq 1} a_k x^k$, and any bounded measurable functions $f$, we define an operator acting on $C(D)$ as 
\begin{align}\label{def:Teps:F:continuous:operator}
  T^{F,\ga,f}_{\ve}(\chi)
  &\,\ldef\,
  \sum_{k \geq 0} a_k \left(\frac{\ga}{\th_0}\right)^k \int_D \mathbf{H}_k(\chi(x),\si^{\ve}(x)^2) f(x) \, \md x.
\end{align}
Hence, in the setting of Proposition~\ref{prop:convergence:analyticfunction:smooth:approximation}, we introduce 
\begin{align}\label{def:Teps:K:sum:continuous:operator}
  T^{K}_{\ve}(\chi)
  \,\ldef\,
  \sum_{k=1}^K T^{F_k,\ga_k,f_k}_{\ve}(\chi)
\end{align}
and note that by \eqref{eq:def:wick:by:hermite} and the definition of the Wick product in \eqref{eq:def:wick:by:hermite},
\begin{align*}
  \sum_{k =1 }^K \al_k \scprfield{\wick{\th_0 F_k(\ga_k \th_0^{-\frac{1}{2}}\Psi^{\ve})}}{f_k}
  \,=\,
  T^{K}_{\ve}(\th_0^{1/2}\Psi^{\ve}).
\end{align*}
Hence, upon verifing continuity of $T^K_{\ve}$, continuous mapping yields:
\begin{lemma}\label{lem:continuous:mapping:Teps}
  In the setting of Proposition~\ref{prop:convergence:analyticfunction:smooth:approximation}
  \begin{align*}
    T^{K}_{\ve}(\Phi^{\ve}_n)
    &\underset{n \to \infty}{\overset{\text{law}}{\;\longrightarrow\;}}
    T^{K}_{\ve}(\sqrt{\th_0}\Psi^{\ve}).
  \end{align*}
\end{lemma}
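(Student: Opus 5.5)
The plan is to obtain the claim from the continuous mapping theorem applied to Proposition~\ref{prop:weak:convergence:smooth:approximation}. That proposition gives $\Phi^{\ve}_n \to \sqrt{\th_0}\Psi^{\ve}$ in law in $\left(C(\bar{D}),\Norm{\cdot}{\infty}\right)$ for $\prob_0$-a.e.\ $\om$. So it suffices to show that, for fixed $\ve\in(0,1)$, the map $T^K_{\ve}\colon C(\bar D)\to\bbR$ (with the weights $\al_k$ included) is well defined and continuous for the sup-norm. Since $T^K_\ve$ is a finite sum of maps $T^{F,\ga,f}_\ve$, it is enough to treat one such map.

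\textbf{Step 1 (pointwise bound on Hermite polynomials).} I would bound $\mathbf{H}_k(x,\si^2)$ crudely and uniformly, with no lower bound on $\si$. This matters because $\si^{\ve}(x)$ may degenerate near $\partial D$. From \eqref{eq:def:wick:by:hermite},
\begin{align*}
  \frac{k!}{m!(k-2m)!} \,=\, \binom{k}{2m}\frac{(2m)!}{m!},
  \qquad
  \frac{(2m)!}{m!} \,\leq\, 4^{m} m!,
  \qquad
  m! \,\leq\, \sqrt{(2m)!} \,\leq\, \sqrt{k!}.
\end{align*}
Combining these gives
\begin{align*}
  |\mathbf{H}_k(x,\si^2)| \,\leq\, 4^k\sqrt{k!}\,(|x|+\si)^k .
\end{align*}
The variance $\si^{\ve}(x)^2 = \th_0\,\scprreal{\rh^{\ve}_x}{G^{\Si}_D\rh^{\ve}_x}{D}$ is bounded uniformly in $x\in\bar D$ for fixed $\ve$. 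Indeed, $\rh^\ve_x$ is bounded by $\ve^{-d}\Norm{\rho}{\infty}$ and $g^\Si_D$ is integrable by \eqref{eq:continuous:green:pointwise:log}. Call this bound $S_\ve$.

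\textbf{Step 2 (coefficient decay from Fock-entireness).} From Definition~\ref{def:beta:entire} we have $k!\,a_k^2 x^k\leq M e^{\be x}$ for every $x>0$. Choosing $x=k/\be$ gives $k!\,a_k^2\leq M (e\be/k)^k$, so
\begin{align*}
  |a_k|\sqrt{k!} \,\leq\, \sqrt{M}\,(e\be/k)^{k/2}.
\end{align*}
Hence, for every $R<\infty$, the series
\begin{align*}
  \sum_{k}|a_k|\,|\ga/\th_0|^k\, 4^k\sqrt{k!}\,(R+S_\ve)^k
\end{align*}
converges, because $(c/k)^{k/2}$ beats any geometric factor.

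\textbf{Step 3 (continuity).} Each summand $\chi\mapsto a_k(\ga/\th_0)^k\int_D \mathbf{H}_k(\chi(x),\si^{\ve}(x)^2)f(x)\,\md x$ is continuous on $C(\bar D)$. It is a polynomial expression in $\chi$ with bounded coefficients, integrated against the bounded function $f$ over the bounded set $D$. On the ball $\{\Norm{\chi}{\infty}\leq R\}$, Steps 1 and 2 bound the $k$-th summand by $|D|\Norm{f}{\infty}$ times the $k$-th term of a convergent numerical series. By the Weierstrass M-test the series converges uniformly on that ball, so the limit $T^{F,\ga,f}_\ve$ is continuous there. Since $R$ is arbitrary, $T^{F,\ga,f}_\ve$ is continuous on all of $C(\bar D)$. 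The same argument applies to $T^K_\ve$.

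Finally, the continuous mapping theorem together with Proposition~\ref{prop:weak:convergence:smooth:approximation} yields the lemma. The main obstacle is Step 1: the bound must hold without any lower bound on the variance, which is why I avoid Cramér-type bounds involving $e^{x^2/(4\si^2)}$ and use the combinatorial estimate above. The estimate must also remain compatible with the coefficient decay in Step 2.
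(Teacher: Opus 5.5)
Your proposal is correct and follows the paper's route. You show that $T^{K}_{\ve}$ is continuous on $\bigl(C(\bar D),\Norm{\cdot}{\infty}\bigr)$ using a pointwise Hermite bound (needing no lower bound on the variance), the uniform bound on $\si^{\ve}(x)^2$, and the coefficient decay $|a_k|\sqrt{k!}\leq \sqrt{M}(e\be/k)^{k/2}$ from Fock-entireness, and then apply the continuous mapping theorem to Proposition~\ref{prop:weak:convergence:smooth:approximation}. The only difference is cosmetic: the paper derives an explicit local Lipschitz bound $|T^{F,\ga,f}_{\ve}(\chi_n)-T^{F,\ga,f}_{\ve}(\chi)|\leq \Norm{\chi_n-\chi}{\infty}\,\cJ(\ve,\Norm{\chi}{\infty})$ via $\partial_x\mathbf{H}_k=k\mathbf{H}_{k-1}$ and the mean value theorem, whereas you get continuity from termwise continuity plus the Weierstrass M-test on sup-norm balls.
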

\begin{proof}
  Note that it is enough to show continuity of $T^{F,\ga,f}_{\ve}$ for any $\ga \in \bbR$, any $\be$-Fock-entire $F$ and any bounded $f$. 
  
  Hence, let $\ga \in \bbR$, $f$ bounded measurable and $F$ satisfying $F_2(x) \leq M e^{\be x}$. First, we observe that $\partial/\partial x \mathbf{H}_k(x,\si^2) = k \mathbf{H}_{k-1}(x,\si^2)$ for $k\geq1$, $\partial/\partial x \mathbf{H}_0(x,\si^2) =0$, and that, using the explicit representation of the Hermite polynomials \eqref{eq:def:wick:by:hermite} and the triangle inequality in the first step,
  \begin{align}\label{eq:hermite:pointwise:bound:binomial}
    |\mathbf{H}_k(x,\si^2)|
    &\,\leq\,
    \sum_{m=0}^{\lfloor \frac{k}{2}\rfloor} \frac{k!}{m!(k-2m)!} |x|^{k-2m}\si^{2m}
    \,\leq\,
    \sum_{m=0}^{\lfloor \frac{k}{2}\rfloor} \binom{k}{2m}|x|^{k-2m}(\sqrt{k}\si)^{2m} 
    \nonumber\\[.5ex]
    &\,\leq\,
    \sum_{m=0}^{k} \binom{k}{m}|x|^{k-m}(\sqrt{k}\si)^{m} 
    \,\leq\,
    (|x|+(\sqrt{k}\si))^k \leq 2^{k-1}(|x|^k + (\sqrt{k}\si)^k),
  \end{align}
  where the second inequality followed from estimating the fraction in the first sum by $\binom{k}{2m} k^m$. Hence, the mean value theorem implies that for any $x,y \in \bbR$ and $\si >0$ 
  \begin{align*}
    \left|\mathbf{H}_k(x,\si^2) - \mathbf{H}_k(y,\si^2)\right|
    \,\leq\,
    (k-1) 2^{k-2}\left((|x| \vee |y|)^{k-1} + (\sqrt{k-1} \si)^{k-1}\right) \left|x - y\right|.
  \end{align*}
  Moreover, recalling \eqref{eq:rcm:green:tested:1:upperbound:via:bp} and that $g^{\Si}_D \in L^1(D \times D)$, for any fixed $\ve >0$, any $x \in D$ and any $n \geq \const[N]{Nconst:ass:green:bound}(\om,C_d r_D)$, we have 
  \begin{align}\label{eq:bound:sigma_bar_epsilon}
    \left(\si^{\ve}_n(x) \vee \si^{\ve}(x) \right)^2
    &\,\leq\,
    \Norm{\rh^{\ve}}{\infty}^2 \left( \scprreal{1}{G^{\Si}_D 1}{D} \vee \scprreal{1}{G_n 1}{D} \right)
    \nonumber\\[.5ex]
    &\,\leq\,
    \Norm{\rh^{\ve}}{\infty}^2 \left(\scprreal{1}{G^{\Si}_D 1}{D} \vee c \bar{\nu}\right)
    \,\rdef\,
    (\bar{\si}^{\ve})^2
    \,<\,
    \infty.
  \end{align}
  Now, let $\{\chi_n\}_{n \in \bbN} \subset C(D)$ be a sequence such that $\Norm{\chi_n - \chi}{\infty} \to 0$ as $n \to \infty$. Hence, there exists an $N \in \bbN$ such that for any $\Norm{\chi_n \vee \chi}{\infty} \leq \Norm{\chi}{\infty} + 1$ for any $n \geq N$ which, combined with the observations from above implies that 
  \begin{align*}
    &\Norm{T^{F,\ga,f}_{\ve}\left(\chi_n\right) - T^{F,\ga,f}_{\ve}\left(\chi\right)}{\infty}
    \nonumber\\[.5ex]
    &\,\leq\,
    \Norm{f}{\infty} \sum_{k \geq 0} |a_k| \left(\frac{\ga}{\th_0}\right)^k |D| \Norm{\mathbf{H}_k(\chi_n,\si^{\ve}(\cdot)^2) - \mathbf{H}_k(\chi,\si^{\ve}(\cdot)^2)}{\infty}
    \nonumber\\[.5ex]
    &\,\leq\,
    \Norm{\chi_n - \chi}{\infty} \Norm{f}{\infty} \sum_{k \geq 0} |a_k| \left(\frac{\ga}{\th_0}\right)^k |D| k 2^{k-2} \left((\Norm{\chi}{\infty} + 1)^{k-1} + (\sqrt{k}\bar{\si}^{\ve})^{k-1}\right)
    \nonumber\\[.5ex]
    &\,\rdef\,
    \Norm{\chi_n - \chi}{\infty} \cJ(\ve,\Norm{\chi}{\infty}).
  \end{align*}
  Hence, continuity of $T^{F,\ga,f}_{\ve}$ follows once we verified that $\cJ(\ve,\Norm{\chi}{\infty})$ is finite. Ignoring the constant $\Norm{f}{\infty}|D|$, this boils down to showing that
  \begin{align}\label{eq:hermite:continuity:summability:condition}
    \sum_{k \geq 0} |a_k| k \left(\frac{2\ga (\Norm{\chi}{\infty} + 1)}{\th_0}\right)^k 
    \,+\,
    \sum_{k \geq 0} |a_k| k^{1+ \frac{k}{2}} \left(\frac{2\ga \bar{\si}^{\ve}}{\th_0}\right)^k 
    \nonumber\\[.5ex]
    \,\leq\,
    \sum_{k \geq 0} |a_k| k^{\frac{k}{2}+2} \left(\frac{2 \ga\left((\Norm{\chi}{\infty} + 1)\vee \bar{\si}^{\ve}\right)}{\th_0}\right)^k 
  \end{align}
  is finite. However, by positivity of the coefficients in $F_2$, we have $a_k^2 k! \leq \frac{F_2(x)}{x^k} \leq M \frac{e^{\be x}}{x^k}$ for any $x > 0$, whence, choosing $x=k/\be$ yields $|a_k| \leq \sqrt{M} (\sqrt{\be e})^k/ \sqrt{k! k^k}$, which implies \eqref{eq:hermite:continuity:summability:condition}, since 
  \begin{align}\label{eq:exponential:growth:implies:hermite:condition}
    \sum_{k \geq 0} |a_k| k^{\frac{k}{2}+2} |x|^k 
    \,\leq\,
    \sqrt{M} \sum_{k \geq 0} \left(\sqrt{\be e}\right)^k \frac{k^2}{\sqrt{k!}} |x|^k 
    \,<\,
    \infty 
    \qquad
    \text{for any}
    \;
    x \in \bbR.
  \end{align}
  Since we have verified the continuity of $T^{F,\ga,f}_{\ve}$ and hence also of $T^{K}_{\ve}$, the claim follows from Proposition~\ref{prop:weak:convergence:smooth:approximation} and continuous mapping. 
\end{proof}
It remains to show that 
\begin{lemma}\label{lem:operator:approximation:Teps:L1}
  In the setting of Proposition~\ref{prop:convergence:analyticfunction:smooth:approximation} 
  \begin{align*}
    \meanPhi^{\om}\left[
      \left|
        \sum_{k =1 }^K \al_k \scprfield{\wick{F_k(\ga_k \th_0^{-1} \Phi^{\ve}_n)}}{f_k}
        \,-\, 
        T^{K}_{\ve}(\Phi^{\ve}_n)
      \right|
    \right]
    &\,\xrightarrow{n \to \infty}\,
    0.
  \end{align*}
\end{lemma}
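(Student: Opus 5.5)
The plan is to compare, pointwise in $x$, two Hermite expansions of the same Gaussian variable that differ only in the variance parameter. Since $\wick{F_k(c\,\vp^{\ve}_n(x))}=\sum_{j}a_j c^j\,\mathbf{H}_j(\vp^{\ve}_n(x),\si^{\ve}_n(x)^2)$, the first term in the lemma uses the discrete variance $\si^{\ve}_n(x)^2$. By contrast, $T^K_\ve(\Phi^\ve_n)$ evaluates the same Hermite polynomials at the limiting variance $\si^{\ve}(x)^2$. By linearity and the triangle inequality it suffices to treat a single $\be$-Fock-entire $F$, a single $\ga$ and a single bounded $f$. The expectation is then bounded by
\begin{align*}
  \Norm{f}{\infty}\sum_{j\geq 0}|a_j|\Bigl|\frac{\ga}{\th_0}\Bigr|^j\int_D \meanPhi^{\om}\Bigl[\bigl|\mathbf{H}_j(\vp^{\ve}_n(x),\si^{\ve}_n(x)^2)-\mathbf{H}_j(\vp^{\ve}_n(x),\si^{\ve}(x)^2)\bigr|\Bigr]\,\md x ,
\end{align*}
where exchanging sum, integral and expectation is justified by Tonelli once the resulting series is shown to be finite.

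\textbf{Step 1 (variance Lipschitz bound for Hermite polynomials).} From the explicit formula \eqref{eq:def:wick:by:hermite} one has $\partial_{\si^2}\mathbf{H}_j(x,\si^2)=-\tfrac{j(j-1)}{2}\mathbf{H}_{j-2}(x,\si^2)$. Combining this with the mean value theorem and \eqref{eq:hermite:pointwise:bound:binomial} gives
\begin{align*}
  |\mathbf{H}_j(x,a)-\mathbf{H}_j(x,b)|\leq j^2 2^{j}\bigl(|x|^{j-2}+(\sqrt{j}\,\bar\si^{\ve})^{j-2}\bigr)|a-b| \qquad\text{for } a,b\leq(\bar\si^{\ve})^2 ,
\end{align*}
with $\bar\si^\ve$ as in \eqref{eq:bound:sigma_bar_epsilon}, which is valid for $n\geq \const[N]{Nconst:ass:green:bound}(\om,C_d r_D)$. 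Since $\vp^\ve_n(x)$ is centred Gaussian with variance at most $(\bar\si^\ve)^2$, we get $\meanPhi^{\om}[|\vp^\ve_n(x)|^{j-2}]\leq (\sqrt{j}\,\bar\si^\ve)^{j-2}$. Hence the whole expression is at most
\begin{align*}
  \Norm{f}{\infty}|D|\,\sup_{x\in D}|\si^\ve_n(x)^2-\si^\ve(x)^2|\,\sum_j |a_j|\,j^2\Bigl(\frac{4|\ga|\bar\si^\ve}{\th_0}\Bigr)^{j} j^{j/2}
\end{align*}
(up to constants). The series is finite by the coefficient bound $|a_j|\leq \sqrt{M}(\sqrt{\be e})^j/\sqrt{j!\,j^j}$, exactly as in \eqref{eq:exponential:growth:implies:hermite:condition}.

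\textbf{Step 2 (uniform convergence of variances).} It remains to show $\sup_{x\in D}|\si^\ve_n(x)^2-\si^\ve(x)^2|\to 0$ for fixed $\ve$ and $\prob_0$-a.e.\ $\om$. For pointwise convergence, note that $\si^\ve_n(x)^2=\scprreal{\rh^\ve_x}{G_n\rh^\ve_x}{D}$, so \eqref{eq:Green:tested:convergence:no:smear} with $H(a)=a$, $\ga=1$ and $f=\rh^\ve_x$ gives the limit $\th_0\scprreal{\rh^\ve_x}{G^\Si\rh^\ve_x}{D}=\si^\ve(x)^2$. For equicontinuity in $x$, uniformly in $n$, we use bilinearity together with \eqref{eq:mollifier:derivative:estimate} and \eqref{eq:rcm:green:tested:1:upperbound:via:bp}:
\begin{align*}
  |\si^\ve_n(x)^2-\si^\ve_n(y)^2|\leq \Norm{\rh^\ve_x-\rh^\ve_y}{\infty}\bigl(\Norm{\rh^\ve_x}{\infty}+\Norm{\rh^\ve_y}{\infty}\bigr)\scprreal{1}{G_n1}{D}\leq C(\ve)\,c\bar\nu\,|x-y| .
\end{align*}
The same bound, with $G^\Si_D$ in place of $G_n$, holds for $\si^\ve$. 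Pointwise convergence on a finite $\de$-net of the bounded set $D$, combined with these uniform Lipschitz bounds, yields uniform convergence. Only countably many null sets arise, so the claim holds for $\prob_0$-a.e.\ $\om$.

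\textbf{Main obstacle.} The analytic estimates are routine. The delicate point is to make the Hermite bounds strong enough in $j$ that the series still converges after taking expectations; the factor $j^{j/2}$ coming from the Gaussian moments must be absorbed by $1/\sqrt{j!\,j^j}$ from the Fock-entire bound. A secondary point is that the a.e.\ set of environments in Step 2 must not depend on $x$, which the net argument handles.
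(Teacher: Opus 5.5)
Your proof is correct and is essentially the paper's argument. It uses the same reduction to a single $(F,\ga,f)$, the same mean value estimate in the variance parameter via $\partial_{\si^2}\mathbf{H}_j=-\tfrac{j(j-1)}{2}\mathbf{H}_{j-2}$ together with \eqref{eq:hermite:pointwise:bound:binomial}, and the same absorption of the Gaussian moment growth $j^{j/2}$ by the Fock-entire coefficient bound.

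The one difference is your Step 2. There you prove uniform convergence of $\si^{\ve}_n(\cdot)^2$ via the uniform Lipschitz bound and a finite net, which makes the $\om$-null set manifestly independent of $x$. The paper instead uses pointwise convergence from Lemma~\ref{lem:fdd:convergence:smooth:approximation} plus dominated convergence in $x$ with dominant $2(\bar{\si}^{\ve})^2$. For that pointwise limit, cite Lemma~\ref{lem:fdd:convergence:smooth:approximation} rather than Proposition~\ref{prop:tested:green:convex:F}, since the proposition is stated only for $d=2$ while the statement is set in $d\geq 2$.
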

\begin{proof}
  By the triangle inequality it is again enough to show the claim only for $T^{F,\ga,f}_{\ve}(\Phi^{\ve}_n)$ for arbitrary $\ga \in \bbR$, $f$ bounded measurable and $F$ satisfying the growth condition. By \eqref{eq:def:wick:by:hermite}, we have
  \begin{align*}
    \scprfield{\wick{F(\ga \th_0^{-1} \Phi^{\ve}_n)}}{f}
    &\,=\,
    \sum_{k \geq 0} a_k \left(\frac{\ga}{\th_0}\right)^k \int_D \wick{\left(\vp^{\ve}_n(x)\right)^k} f(x) \, \md x 
    \nonumber\\[.5ex]
    &\,=\,
    \sum_{k \geq 0} a_k \left(\frac{\ga}{\th_0}\right)^k \int_D \mathbf{H}_k\left(\vp^{\ve}_n(x),\si^{\ve}_n(x)^2\right) f(x) \, \md x.
  \end{align*}
  Hence, by the triangle inequality and the definition of $T^{F,\ga,f}_{\ve}$ in \eqref{def:Teps:F:continuous:operator},
  \begin{align}\label{eq:operator:approximation:L1:step1}
    \meanPhi^{\om}&\left[
      \left|\scprfield{\wick{F(\ga \th_0^{-1} \Phi^{\ve}_n)}}{f} \,-\, T^{F,\ga,f}_{\ve}(\Phi^{\ve}_n)\right|
    \right]
    \,\leq\,
    \nonumber\\[.5ex]
    &\Norm{f}{\infty} \sum_{k \geq 0} |a_k| \left(\frac{\ga}{\th_0}\right)^k \int_D \meanPhi^{\om}\left[\left| \mathbf{H}_k\left(\vp^{\ve}_n(x),\si^{\ve}_n(x)^2\right)  - \mathbf{H}_k\left(\vp^{\ve}_n(x),\th_0 \si^{\ve}(x)^2\right) \right| \right] \md x.
  \end{align}
  Observe that $(\partial/\partial \si^2) \mathbf{H}_k(x,\si^2) = - (k(k-1)/2) \mathbf{H}_{k-2}(x, \si^2)$ for $k \geq 2$ and $(\partial/\partial \si^2) \mathbf{H}_k(x,\si^2) =0$ for $k\in\{0,1\}$. Recalling the bound in \eqref{eq:hermite:pointwise:bound:binomial}, the mean value theorem implies 
  \begin{align*}
    &\left| \mathbf{H}_k\left(\vp^{\ve}_n(x),\si^{\ve}_n(x)^2\right)  \,-\, \mathbf{H}_k\left(\vp^{\ve}_n(x),\si^{\ve}(x)^2\right) \right|
    \nonumber\\[.5ex]
    &\,\leq\,
    \frac{k(k-1)}{2} 2^{k-3} \left(|\vp^{\ve}_n(x)|^{k-2} + \left(\sqrt{k-2}(\si^{\ve}_n(x) \vee \si^{\ve}(x))\right)^{k-2}\right) \left|\si^{\ve}_n(x)^2 - \si^{\ve}(x)^2 
    \right|. 
  \end{align*}
  Finally, noting that $Z \sim \cN(0,\si^2)$, $\bbE[|Z|^k] = \si^{k} 2^{k/2} \Ga((k+1)/2)/\sqrt{\pi}$, that $\vp^{\ve}_n(x) \sim \cN(0, \si^{\ve}_n(x)^2)$, and recalling $\bar{\si}^{\ve}$ from \eqref{eq:bound:sigma_bar_epsilon} yields that 
  \begin{align*}
    \meanPhi^{\om}\left[
      |\vp^{\ve}_n(x)|^{k-2}
    \right]
    \,=\,
    \si^{\ve}_n(x)^{k-2} 2^{\frac{k-2}{2}} \frac{\Ga\left(\frac{k-1}{2}\right)}{\sqrt{\pi}}
    \,\leq\,
    (\bar{\si}^{\ve})^{k-2} 2^{\frac{k-2}{2}} \frac{\Ga\left(\frac{k-1}{2}\right)}{\sqrt{\pi}}
  \end{align*}
  for any $n \geq \const[N]{Nconst:ass:green:bound}(\om,C_d r_D)$. Using the above considerations and linearity of the expectation in the first step gives
  \begin{align*}
    &\meanPhi^{\om}\left[
      \left| \mathbf{H}_k\left(\vp^{\ve}_n(x),\si^{\ve}_n(x)^2\right)  \,-\, \mathbf{H}_k\left(\vp^{\ve}_n(x),\si^{\ve}(x)^2\right) \right|
    \right]
    \nonumber\\[.5ex]
    &\quad\,\leq\,
    \left|\si^{\ve}_n(x)^2 - \si^{\ve}(x)^2 
    \right| \frac{k(k-1)}{2} 2^{k-3}  \left(
      (\bar{\si}^{\ve})^{k-2} 2^{\frac{k-2}{2}} \frac{\Ga\left(\frac{k-1}{2}\right)}{\sqrt{\pi}}
      \,+\,
      \left(\sqrt{k} \bar{\si}^{\ve}\right)^k
    \right)
    \nonumber\\[.5ex]
    &\quad\,\leq\,
    \left|\si^{\ve}_n(x)^2 - \si^{\ve}(x)^2 
    \right| c k^{\frac{k}{2}+2} (4 \bar{\si}^{\ve})^k
  \end{align*}
  for some $c < \infty$ and any $n \geq \const[N]{Nconst:ass:green:bound}(\om,C_d r_D)$, where we estimated $\Ga((k-1)/2) \leq k^{k/2}$ in the last step. Inserting this into \eqref{eq:operator:approximation:L1:step1}, we arrive at 
  \begin{align*}
    \meanPhi^{\om}&\left[
      \left|\scprfield{\wick{F(\ga \th_0^{-1} \Phi^{\ve}_n)}}{f} \,-\, T^{F,\ga,f}_{\ve}(\Phi^{\ve}_n)\right|
    \right]
    \nonumber\\[.5ex]
    &\,\leq\,
    \left(\int_D \left|\si^{\ve}_n(x)^2 - \si^{\ve}(x)^2 
      \right| \, \md x \right) c \Norm{f}{\infty} \sum_{k \geq 0} |a_k|k^{\frac{k}{2}+2} \left(\frac{4 \ga \bar{\si}^{\ve}}{\th_0}\right)^k 
    \nonumber\\[.5ex]
    &\,\rdef\,
    \left(\int_D \left|\si^{\ve}_n(x)^2 - \si^{\ve}(x)^2 
      \right| \, \md x \right) \cJ(\ve),
  \end{align*}
  for any $n \geq \const[N]{Nconst:ass:green:bound}(\om,C_d r_D)$. Lemma~\ref{lem:fdd:convergence:smooth:approximation} implies that $\left|\si^{\ve}_n(x)^2 - \si^{\ve}(x)^2 \right| \to 0$ as $n \to \infty$ for any $\ve>0$ and $x \in D$. Since $|D|<\infty$, for any $n \geq \const[N]{Nconst:ass:green:bound}(\om,C_d r_D)$, an integrable dominant is given by $2 (\bar{\si}^{\ve})^2$ and hence, dominated convergence implies that 
  \begin{align*}
    \int_D \left|\si^{\ve}_n(x)^2 - \si^{\ve}(x)^2 
    \right| \, \md x
    \,\xrightarrow{n \to \infty}\,
    0.
  \end{align*}
  Since \eqref{eq:exponential:growth:implies:hermite:condition} again implies that $\cJ(\ve)< \infty$, the proof is concluded.
\end{proof}
We are now able to give the short
\begin{proof}[Proof of Proposition~\ref{prop:convergence:analyticfunction:smooth:approximation}]
  We abbreviate 
  \begin{align*}
    X^{\ve}_n 
    &\,\ldef\, 
    \sum_{k =1 }^K \al_k \scprfield{\wick{F_k(\ga_k \th_0^{-1} \Phi^{\ve}_n)}}{f_k},
    \quad 
    \tilde{X}^{\ve}_n 
    \,\ldef\, 
    T^{K}_{\ve}(\Phi^{\ve}_n),
    \nonumber\\[.5ex]
    % \text{and}\quad 
    X^{\ve} 
    &\,\ldef\,
    \sum_{k =1 }^K \al_k \scprfield{\wick{F_k(\ga_k \th_0^{-\frac{1}{2}}\Psi^{\ve})}}{f_k} \; \left(\,=\, T^{K}_{\ve}(\th_0^{1/2}\Psi^{\ve})\right).
  \end{align*}
  Moreover, denote by $\cF_Z(t) \ldef E [e^{i t Z}]$ the characteristic function of a random variable $Z$. Then, for any $t \in \bbR$
  \begin{align*}
    \left|\cF_{X^{\ve}_n}(t) - \cF_{X^{\ve}}(t)\right|
    \,\leq\,
    \left|\cF_{X^{\ve}_n}(t) - \cF_{\tilde{X}^{\ve}_n}(t)\right|
    \,+\,
    \left|\cF_{\tilde{X}^{\ve}_n}(t) - \cF_{X^{\ve}}(t)\right|,
  \end{align*}
  where we observe right away that the second term converges by Lemma \ref{lem:continuous:mapping:Teps}. Using that $|e^{ix}-e^{iy}| \leq |x-y|$ for any $x,y \in \bbR$, the convergence of the first term follows from Lemma \ref{lem:operator:approximation:Teps:L1}.
\end{proof}

\subsection{$L^2$-convergence of the smeared fields}\label{subsec:scaling:limit:L2:convergence}
We quickly recall the definition of $g^{\Si,\ve,\de}$ just before Lemma~\ref{lem:continuous:Green's:smeared:converge}, where, for every $\ve,\de >0$ we introduced
\begin{align}
  g^{\Si,\ve,\de}(x,y)
  \,\ldef\,
  \int \int g^{\Si}_D (z,z') \rh^{\ve}_x(z) \rh^{\de}_y(z') \,\md z \md z'
\end{align}
and the associated operator $G^{\Si,\de,\ve}$ as usual. We first show a formula for the second moments of the relevant objects:
\begin{lemma}\label{lem:covariance:formulas:tested:fields}
  For any measurable $f$ and any $F(x) = \sum_k a_k x^k$ such that the right hand side below is finite, 
  \begin{align}
    \meanPhi^{\om}[\scprfield{\wick{F(\ga \Phi_n)}}{f}^2] 
    &\,=\,\label{eq:covariance:formula:dgff:no:smear}
    \scprreal{f}{F_2(\ga^2 G_n) f}{D}
    \\[.5ex]
    \meanPhi^{\om}[\scprfield{\wick{F(\ga  \th_0^{-1} \Phi^{\ve}_n)}}{f}^2] 
    &\,=\,\label{eq:covariance:formula:dgff:both:smear}
    \scprreal{f}{F_2((\ga \th_0^{-1})^2 G^{\ve,\ve}_n) f}{D}
    \\[.5ex]
    \meanPhi^{\om}[\scprfield{\wick{F(\ga \Phi_n)}}{f}\scprfield{\wick{F(\ga \th_0^{-1} \Phi^{\ve}_n)}}{f}]
    &\,=\,\label{eq:covariance:formula:dgff:one:smear}
    \scprreal{f}{F_2(\ga^2 \th_0^{-1} G^{0,\ve}_n) f}{D} 
    \\[.5ex]
    \meanPhi^{\Si}[\scprfield{\wick{F(\ga \th_0^{-\frac{1}{2}}\Psi_{\ve})}}{f}\scprfield{\wick{F(\ga \th_0^{-\frac{1}{2}} \Psi_{\de})}}{f}] 
    &\,=\,\label{eq:covariance:formula:cgff:smeared}
    \scprreal{f}{F_2(\ga^2 \th_0^{-1} G^{\Si,\ve,\de}) f}{D}.
  \end{align}
\end{lemma}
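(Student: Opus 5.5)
The plan is to derive all four identities from a single computation. For jointly centred Gaussian $X,Y$ and coefficients $(a_k)$, the Wick orthogonality relation \eqref{eq:wick:covariance:gaussian:xy} gives
\begin{align*}
  \mean\Bigl[\sum_{k} a_k \wick{X^k} \sum_l a_l \wick{Y^l}\Bigr]
  \,=\,
  \sum_{k} k!\, a_k^2\, \mean[XY]^k
  \,=\,
  F_2\bigl(\mean[XY]\bigr),
\end{align*}
exactly as in \eqref{eq:dgff:covariance:calculation:intro}.

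Each left-hand side is the second moment of $\int_D \wick{F(\cdot)}(x) f(x)\,\md x$. Writing the square as a double integral over $D\times D$ and moving the expectation inside, it becomes $\int\int \meanPhi[\wick{F(\cdot)}(x)\wick{F(\cdot)}(y)] f(x)f(y)\,\md x\,\md y$. The Gaussian pairs are as follows:
\begin{enumerate}[(i)]
\item For \eqref{eq:covariance:formula:dgff:no:smear}, take $X=\ga\vp^{nD}_{\lfloor nx\rfloor}\indicator_{\lfloor nx\rfloor\in\cC_\infty}$ and the analogous $Y$. Then $\mean[XY]=\ga^2 g_n(x,y)$, since $g_n$ vanishes off the cluster, so $F_2(0)=a_0^2$ multiplies the indicator. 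Here the convention $\wick{F(0)}=a_0$ and the definition \eqref{eq:def:dgff:nonlinear:tested} have to be matched carefully.
\item For \eqref{eq:covariance:formula:dgff:both:smear}, take $X=\ga\th_0^{-1}\vp^\ve_n(x)$. By Fubini, $\mean[\vp^\ve_n(x)\vp^\ve_n(y)]=g^{\ve,\ve}_n(x,y)$.
\item For the mixed term \eqref{eq:covariance:formula:dgff:one:smear}, the covariance is $\ga^2\th_0^{-1} g^{0,\ve}_n(x,y)$, with $g^{0,\ve}_n$ as in \eqref{eq:def:smeared:green:kernels:rcm}.
\item For the continuum identity \eqref{eq:covariance:formula:cgff:smeared}, take $\scprfield{\Psi}{\rh^\ve_x}$ and $\scprfield{\Psi}{\rh^\de_y}$. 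Their covariance is $\scprreal{\rh^\ve_x}{G^\Si_D\rh^\de_y}{D}=g^{\Si,\ve,\de}(x,y)$ by Definition~\ref{def:cgff}(ii) and polarisation.
\end{enumerate}

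One point needs care. The Wick powers in \eqref{eq:def:wick:by:hermite} are normalised by each variable's own variance. So $\wick{(cX)^k}=c^k\wick{X^k}$, and the factors $\ga^k$, $(\ga\th_0^{-1})^k$ and so on simply multiply out to give $a_k^2c^{2k}\mean[XY]^k$.

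The only nontrivial step is justifying the interchanges of infinite sums, the integrals over $D\times D$, and the expectation. Two interchanges are involved: the series over $k$ with the $L^2(\probPhi)$ limit, and $\int_D$ with $\meanPhi$. I would treat $\wick{F(X)}$ as an $L^2$-convergent series of orthogonal chaos components, whose norm is $\sum_k k!a_k^2\mean[X^2]^k=F_2(\mean[X^2])<\infty$. I would then apply Fubini/Tonelli to the absolutely convergent double series.

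The hypothesis ``the right hand side is finite'' should be used through the nonnegative majorant $\sum_k k!|a_k|^2|\mean[XY]|^k|f(x)f(y)|$. By Cauchy--Schwarz, $|\mean[XY]|\le(\mean X^2\mean Y^2)^{1/2}$. For the smeared kernels the covariances are nonnegative, because Green's kernels are nonnegative. The majorant is then exactly $F_2(\text{covariance})|f(x)||f(y)|$, whose integral is the assumed finite quantity when $f$ is replaced by $|f|$. I expect this integrability bookkeeping, rather than any structural difficulty, to be the main (modest) obstacle. I would resolve it by assuming finiteness with $|f|$, which is harmless in all later applications since $f$ is bounded.
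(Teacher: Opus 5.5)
Your route is the paper's: Wick orthogonality (Lemma~\ref{lem:wick:formulas}) combined with Fubini. The paper first computes the chaos-by-chaos covariances $\delta_{kl}k!\,\langle f,(G^{0,\ve}_n)^k f\rangle$ and then sums over $k$; you form $F_2(\text{covariance})$ pointwise and then integrate. That is the same computation in the other order, and your integrability bookkeeping is more careful than the paper's one-line appeal to Fubini.

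\emph{The step you deferred fails.} You wrote that ``the convention $\wick{F(0)}=a_0$ and the definition \eqref{eq:def:dgff:nonlinear:tested} have to be matched carefully.'' They cannot be matched: as stated, \eqref{eq:covariance:formula:dgff:no:smear} and \eqref{eq:covariance:formula:dgff:one:smear} are false when $a_0\neq 0$.

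\textbf{Why the constant term breaks.} In \eqref{eq:def:dgff:nonlinear:tested} the indicator sits outside the Wick map. So the zeroth chaos of $\scprfield{\wick{F(\ga\Phi_n)}}{f}$ is $a_0\int_D 1^n_{\cC_\infty(\om)}(x)f(x)\,\md x$. The left-hand sides therefore contain $a_0^2\bigl(\int_D 1^n_{\cC_\infty(\om)}f\bigr)^2$, respectively $a_0^2\int_D 1^n_{\cC_\infty(\om)}f\,\int_D f$. On the right-hand sides, $g_n$ and $g^{0,\ve}_n$ vanish off the cluster, so $F_2$ takes the value $F_2(0)=a_0^2$ there, not $0$, and the constant term is $a_0^2\bigl(\int_D f\bigr)^2$. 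For $f\equiv 1$ the discrepancy in the first identity is $a_0^2\bigl((\int_D 1^n_{\cC_\infty(\om)})^2-|D|^2\bigr)$, which tends to $-a_0^2(1-\th_0^2)|D|^2\neq 0$.

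\textbf{The higher chaoses are fine.} For $k\ge 1$ the outer indicator is absorbed, because $g_n^k$ and $(g^{0,\ve}_n)^k$ already vanish off the cluster. Identities \eqref{eq:covariance:formula:dgff:both:smear} and \eqref{eq:covariance:formula:cgff:smeared} carry no outer indicator and hold for every $a_0$.

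\textbf{Your alternative reading does not rescue it.} Putting the indicator inside the argument, so that off-cluster points contribute $\wick{F(0)}=a_0$, would make \eqref{eq:covariance:formula:dgff:no:smear} exact. However, it contradicts \eqref{eq:def:dgff:nonlinear:tested}. It also contradicts the decomposition $\scprfield{\wick{F(\ga\Phi_n)}}{f}=\scprfield{\wick{\tilde F(\ga\Phi_n)}}{f}+\scprfield{a_0 1^n_{\cC_\infty(\om)}}{f}$ used in the proof of Proposition~\ref{prop:L2:convergence:dgff:smeared}.

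\textbf{The fix.} Prove \eqref{eq:covariance:formula:dgff:no:smear} and \eqref{eq:covariance:formula:dgff:one:smear} for $a_0=0$, or state them with the corrected constant term. The paper's proof makes the same slip: it drops $1^n_{\cC_\infty(\om)}(x)$ in the $k=0$ term as well. Nothing downstream breaks, though. The identities are needed exactly only for $\tilde F=F-a_0$ in Proposition~\ref{prop:L2:convergence:dgff:smeared}, and elsewhere only as upper bounds, where the extra indicator does no harm.
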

This is more or less a consequence of Fubini and \eqref{eq:wick:covariance:gaussian:xy}, which we adjust to our setting in the following Lemma:
\begin{lemma}\label{lem:wick:formulas}
  Let $\ve >0$ and $n \in \bbN$. For $\chi_1, \chi_2 \in \{\vp^{\om}_{\lfloor n \cdot \rfloor}, \vp^{\ve}_n(\cdot)\}$, we have for any $x,y \in D$ 
  \begin{align}\label{eq:wick:formula:dgff}
    \meanPhi^{\om} \left[\wick{\chi_1(x)^k}\wick{\chi_2(y)^l} \right]
    \,=\,
    \de_{kl} k! \meanPhi^{\om} \left[\chi_1(x)\chi_2(y) \right]^k.
  \end{align}
  Moreover, for any $\ve,\de>0$, we have for any $x,y \in D$ 
  \begin{align}\label{eq:wick:formula:cgff:epsilon:delta}
    \meanPhi^{\Si}\left[\wick{\Psi^{\ve}(x)^k}\wick{\Psi^{\de}(y)^l}\right]
    \,=\,
    \de_{kl} k! \meanPhi^{\Si}\left[\Psi^{\ve}(x)\Psi^{\de}(y)\right]^k
  \end{align}
\end{lemma}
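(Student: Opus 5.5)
The plan is to reduce both identities to the classical Wick formula \eqref{eq:wick:covariance:gaussian:xy}. That formula holds for any pair of jointly Gaussian centred random variables $X,Y$, provided each Wick power is formed with the variance of its own variable. So the whole task is to check two things: that the relevant pairs are jointly Gaussian and centred, and that the Wick powers appearing in the paper are Hermite polynomials evaluated at exactly the true variance of each variable.

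\textbf{Discrete case.} Fix $\om$ and $n$. Take $\chi_1(x)$ and $\chi_2(y)$, each either $\vp^{nD}_{\lfloor n\cdot\rfloor}$ or $\vp^{\ve}_n(\cdot)$.

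First, joint Gaussianity. The smeared field is
\begin{align*}
\vp^{\ve}_n(x)=\int_D \vp^{nD}_{\lfloor nz\rfloor}\indicator_{\lfloor nz\rfloor\in\cC_\infty(\om)}\rh^{\ve}_x(z)\,\md z .
\end{align*}
Since $z\mapsto \vp^{nD}_{\lfloor nz\rfloor}$ is piecewise constant on the cells $n^{-1}(v+[0,1)^2)$, this integral is a finite linear combination of the finitely many variables $\{\vp^{nD}_v : v\in nD\cap\cC_\infty(\om)\}$. Hence any pair $(\chi_1(x),\chi_2(y))$ is a linear image of a finite-dimensional centred Gaussian vector, and is therefore jointly centred Gaussian.

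Second, the variances. For the unsmeared field, the Wick power $\wick{(\vp^{nD}_{\lfloor nx\rfloor})^k}$ is by \eqref{eq:def:wick:by:hermite} the Hermite polynomial $\mathbf{H}_k$ with variance $g^{\om}_{nD}(\lfloor nx\rfloor,\lfloor nx\rfloor)$, which is the true variance. For the smeared field, $\wick{\vp^{\ve}_n(x)^k}$ is defined with $\si^{\ve}_n(x)^2=\meanPhi^{\om}[\vp^{\ve}_n(x)^2]$, again the true variance.

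One degenerate case needs attention. If $\lfloor nx\rfloor\notin\cC_\infty(\om)$, the variable is identically $0$ with variance $0$. Then $\mathbf{H}_k(0,0)=\delta_{k0}$, and both sides of the identity agree trivially, with the convention $0^0=1$. The same holds if $\si^{\ve}_n(x)=0$.

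With these checks in hand, \eqref{eq:wick:formula:dgff} is exactly \eqref{eq:wick:covariance:gaussian:xy} applied to $X=\chi_1(x)$ and $Y=\chi_2(y)$.

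\textbf{Continuum case.} The argument is the same. By Definition~\ref{def:cgff}-(i),(ii) and polarisation, $\Psi$ is a Gaussian linear functional on $H^{s}_0(D)$. Since $\rh^{\ve}_x,\rh^{\de}_y\in C_c^\infty\subset H^s_0(D)$, the pair $(\Psi^{\ve}(x),\Psi^{\de}(y))=(\langle\Psi,\rh^{\ve}_x\rangle,\langle\Psi,\rh^{\de}_y\rangle)$ is jointly centred Gaussian. Its Wick powers are defined with the true variances $\si^{\ve}(x)^2$ and $\si^{\de}(y)^2$, so \eqref{eq:wick:formula:cgff:epsilon:delta} follows directly from \eqref{eq:wick:covariance:gaussian:xy}.

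A minor point concerns mollifiers straddling $\partial D$. Since $x\in D$, the function $\rh^{\ve}_x$ may not be compactly supported inside $D$. In that case I will interpret the pairing through the extension of $\Psi$ to $L^2$-functions, using the covariance $\langle f,G^\Si_D f\rangle$; this keeps the pair Gaussian.

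\textbf{Expected obstacle.} None of these steps is hard. The only places that need care are the finite-linear-combination argument for the discrete smeared field and these degenerate or boundary conventions.
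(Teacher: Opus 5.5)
Your proposal is correct and takes the same route as the paper. The paper's proof simply notes that the relevant pairs are jointly centred Gaussian and then invokes the classical Wick/Hermite orthogonality formula \eqref{eq:wick:covariance:gaussian:xy}, citing Janson and Berglund. Your extra checks are details the paper leaves implicit, and you handle them correctly: the finite-linear-combination argument for $\vp^{\ve}_n$, that each Wick power uses the variable's own variance, the zero-variance case, and the extension of $\Psi$ to mollifiers straddling $\partial D$.
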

\begin{proof}
  Since $\chi_1, \chi_2$ are jointly Gaussian w.r.t. $\probPhi^{\om}$ while $\Psi^{\ve}$ is Gaussian w.r.t. $\probPhi^{\Si}$, this follows, for instance, from \cite[Lemma 4.2.9.]{berglund2022introduction} or \cite[Theorem 3.9]{janson1997gaussian}.
\end{proof}
\begin{proof}[Proof of Lemma~\ref{lem:covariance:formulas:tested:fields}]
  We will only show \eqref{eq:covariance:formula:dgff:one:smear}, which is perhaps the most non-trivial. The proof of items \eqref{eq:covariance:formula:dgff:no:smear}, \eqref{eq:covariance:formula:dgff:both:smear} and \eqref{eq:covariance:formula:cgff:smeared} follows analogously from Lemma~\ref{lem:wick:formulas}. Firstly, applying \eqref{eq:wick:formula:dgff} with $\chi_1 = \vp^{\om}_{\lfloor n \cdot \rfloor}$ and $\chi_2 = \vp^{\ve}_n(\cdot)$ and then using that $\meanPhi^{\om}\left[\vp^{\om}_{\lfloor nx \rfloor}\vp^{\ve}_n(y) \right] = g^{0,\ve}_n(x,y)$ (recall the Definition in \eqref{eq:def:smeared:green:kernels:rcm}) yields 
  \begin{align*}
    &\meanPhi^{\om}\left[
      \scprfield{\wick{(\Phi_n)^k}}{f}\scprfield{\wick{(\Phi^{\ve}_n)^l}}{f}
    \right]
    \nonumber\\[.5ex]
    &\qquad\,=\,
    \int_{D \times D} \meanPhi^{\om}\left[
      \wick{(\vp^{\om}_{\lfloor nx \rfloor})^k} \wick{(\vp^{\ve}_n(y))^l}
    \right]
    \indicator_{\lfloor nx \rfloor \in \cC_{\infty(\om)}} f(x) f(y)\,\md x \md y
    \nonumber\\[.5ex]
    &\qquad\,=\,
    \de_{kl} k! \int_{D \times D} g^{0,\ve}_n(x,y)^k f(x) f(y) \,\md x \md y
    \,=\,
    \de_{kl} k! \scprreal{f}{(G^{0,\ve}_n)^k f}{D}.
  \end{align*}
  Hence, using Fubini, we get 
  \begin{align*}
    \meanPhi^{\om}&[\scprfield{\wick{F(\ga \Phi_n)}}{f}\scprfield{\wick{F(\ga \th_0^{-1} \Phi^{\ve}_n)}}{f}]
    \nonumber\\[.5ex]
    &\,=\,
    \meanPhi^{\om}\left[
      \sum_{k \geq 0} a_k \ga^k \scprfield{\wick{(\Phi_n)^k}}{f} \sum_{l \geq 0} a_l (\ga \th_0^{-1})^l \scprfield{\wick{(\Phi^{\ve}_n)^l}}{f}
    \right]
    \nonumber\\[.5ex]
    &\,=\,
    \sum_{k,l \geq 0} a_k a_l \ga^{k+l} (\th_0^{-1})^l \meanPhi^{\om}\left[
      \scprfield{\wick{(\Phi_n)^k}}{f}\scprfield{\wick{(\Phi^{\ve}_n)^l}}{f}
    \right]
    \nonumber\\[.5ex]
    &\,=\,
    \sum_{k \geq 0} a_k^2 \ga^{2k} (\th_0^{-1})^k k! \scprreal{f}{(G^{0,\ve}_n)^k f}{D}
    \,=\,
    \scprreal{f}{F_2(\ga^2 \th_0^{-1} G^{0,\ve}_n)f}{D}.
  \end{align*}
\end{proof}
Next, we show that one can remove the mollifier for the inhomogeneous field. This is the key application of Proposition~\ref{prop:tested:green:convex:F}.
\begin{prop}\label{prop:L2:convergence:dgff:smeared}
  Let $d = 2$ and $D \subset \mathbb{R}^{d}$ be a bounded, strongly regular domain. Suppose there exist $\theta \in (0, 1)$ and $p, q \in [1, \infty]$ such that Assumptions~\ref{ass:law},~\ref{ass:cluster} and~\ref{ass:pq} hold. Further, assume that $F$ is a $\be$-Fock-entire function. Then, for any $|\ga| < \sqrt{\frac{\th_0}{\be} \left(\frac{1}{C_{\Si}} \wedge \frac{\th_0}{C_{\mathrm{HK}}}\right)}$ and $\prob_0$-a.e.\ $\om$ 
  \begin{align*}
    \lim_{\ve \downarrow 0} \lim_{n \to \infty} \Norm{\scprfield{\wick{F(\ga \Phi_n)}}{f} - \scprfield{\wick{ \th_0 F\left(\ga \Phi^{\ve}_n/\th_0\right)}}{f}}{L^2(\probPhi^{\om})}
    \,=\,
    0
  \end{align*}
  for any $f \colon D \to \bbR$ bounded measurable. 
\end{prop}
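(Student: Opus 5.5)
The plan is to expand the square and reduce everything to Proposition~\ref{prop:tested:green:convex:F} via the covariance formulas in Lemma~\ref{lem:covariance:formulas:tested:fields}. Writing $A_n \ldef \scprfield{\wick{F(\ga \Phi_n)}}{f}$ and $B^{\ve}_n \ldef \scprfield{\wick{\th_0 F(\ga \Phi^{\ve}_n/\th_0)}}{f}$, we have
\begin{align*}
  \Norm{A_n - B^{\ve}_n}{L^2(\probPhi^{\om})}^2
  \,=\,
  \meanPhi^{\om}[A_n^2] \,-\, 2\,\meanPhi^{\om}[A_n B^{\ve}_n] \,+\, \meanPhi^{\om}[(B^{\ve}_n)^2].
\end{align*}
By \eqref{eq:covariance:formula:dgff:no:smear}--\eqref{eq:covariance:formula:dgff:one:smear}, these three terms equal $\scprreal{f}{F_2(\ga^2 G_n) f}{D}$, $2\th_0\scprreal{f}{F_2(\ga^2\th_0^{-1} G^{0,\ve}_n) f}{D}$ and $\th_0^2\scprreal{f}{F_2((\ga\th_0^{-1})^2 G^{\ve,\ve}_n) f}{D}$, respectively. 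Here we use bilinearity to pull out the factors $\th_0$ from $B^{\ve}_n$. Since $F$ is only $\be$-Fock-entire, we must first check that the sums defining $A_n$ and $B^{\ve}_n$ converge in $L^2$, so that Fubini in Lemma~\ref{lem:covariance:formulas:tested:fields} is justified. This follows once the right-hand sides are finite, which for $n$ large is a consequence of the pointwise bound \eqref{eq:green:upper:bound:corollary:applied} and our choice of $\ga$.

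Next we apply Proposition~\ref{prop:tested:green:convex:F} with $H \ldef F_2 - F_2(0) = F_2 - a_0^2$. This $H$ is a power series with nonnegative coefficients $k!\,a_k^2$, so it is nonnegative, nondecreasing, convex and locally Lipschitz on $\bbR_+$; it satisfies $H(0)=0$ and $H(a)\le M e^{\be a}$. Hence \eqref{eq:Green:tested:convergence:no:smear}--\eqref{eq:Green:tested:convergence:one:smear} give, in the iterated limit $\lim_{\ve}\lim_n$, the values $\th_0^2 I$, $\th_0^2 I$ and $\th_0 I$ for the three $H$-terms (before the prefactors $1$, $\th_0^2$, $2\th_0$), where $I \ldef \scprreal{f}{H(\ga^2\th_0^{-1}G^{\Si})f}{D}$. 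The $H$-part of the expansion therefore tends to $\th_0^2 I - 2\th_0^2 I + \th_0^2 I = 0$.

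The constant part $a_0^2$ contributes $a_0^2\bigl(\int f\,1^n_{\cC_\infty}\bigr)^2$-type terms, and care is needed here. In $A_n$, the zeroth Wick power carries the indicator $\indicator_{\lfloor nx\rfloor\in\cC_\infty}$, so it contributes $a_0\int f(x)\indicator_{\lfloor nx\rfloor\in\cC_\infty}\,\md x \to a_0\th_0\int f$ by the ergodic theorem (Lemma~\ref{lem:ergodic:lemma:extend:LocHoelder} with $\cH(a)=a$). In $B^{\ve}_n$ the constant is $\th_0 a_0\int f$. I would handle the means separately: $\meanPhi^{\om}[A_n]-\meanPhi^{\om}[B^{\ve}_n] \to 0$, and the $k\ge1$ parts are centred and give exactly the $H$-expansion above. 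Concretely, I would subtract the means first and use $\Norm{X}{L^2}^2 = \mathrm{Var}(X) + (\mean X)^2$.

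The main obstacle is the bookkeeping of the indicators and the factor $\th_0$ in the smeared covariance \eqref{eq:covariance:formula:dgff:both:smear}. One must verify that $\th_0^2 F_2((\ga\th_0^{-1})^2 g^{\ve,\ve}_n)$ is indeed what appears, and that the $1^n_{\cC_\infty}$ factors in the $g_n$-terms are compatible with Proposition~\ref{prop:tested:green:convex:F}. This holds because $g_n$ vanishes off the cluster and $H(0)=0$. All the analytic difficulty (LCLT region, diagonal UI and boundary region) has already been absorbed into Proposition~\ref{prop:tested:green:convex:F}, so the remaining work is this algebraic matching of constants.
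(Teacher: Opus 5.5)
Your proposal is correct and follows essentially the paper's proof. The paper likewise splits off the constant via $\tilde F = F - a_0$, which is equivalent to your centring and is needed because the zeroth-order term carries the indicator $\indicator_{\lfloor nx\rfloor\in\cC_\infty(\om)}$ that the formulas of Lemma~\ref{lem:covariance:formulas:tested:fields} do not capture. It then expands the square, applies Proposition~\ref{prop:tested:green:convex:F} with $H=\tilde F_2=F_2-a_0^2$, and handles the mean via Theorem~\ref{thm:boivin:ergodic:theorem}; that is the cleaner citation for $\int f\,\indicator_{\lfloor nx\rfloor\in\cC_\infty(\om)}\to\th_0\int f$, rather than Lemma~\ref{lem:ergodic:lemma:extend:LocHoelder}. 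One small bookkeeping slip: by \eqref{eq:Green:tested:convergence:both:smear} the doubly smeared term tends to $I$ before its prefactor $\th_0^2$, not to $\th_0^2 I$. That value is what makes your final cancellation $\th_0^2 I - 2\th_0^2 I + \th_0^2 I = 0$ correct.
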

\begin{proof}
  Define $\tilde{F}(x) \ldef F(x) - F(0) = F(x) - a_0 = \sum_{k \geq 1} a_k x^k$, hence $\tilde{F}(0) = 0$. Expanding the square and then applying \eqref{eq:covariance:formula:dgff:no:smear}-\eqref{eq:covariance:formula:dgff:one:smear} from Lemma~\ref{lem:covariance:formulas:tested:fields} yields
  \begin{align}\label{eq:L2:proof:tilde:F:converges}
    &\Norm{\scprfield{\wick{\tilde{F}(\ga \Phi_n)}}{f} - \scprfield{\wick{ \th_0 \tilde{F}(\ga \th_0^{-1}\Phi^{\ve}_n)}}{f}}{L^2(\probPhi^{\om})}^2
    \nonumber\\[.5ex]
    &\quad\,=\,
    \meanPhi^{\om} \left[\scprfield{\wick{\tilde{F}(\ga \Phi_n)}}{f}^2\right]
    \,+\,
    \meanPhi^{\om} \left[\scprfield{\wick{ \th_0 \tilde{F}(\ga \th_0^{-1}\Phi^{\ve}_n)}}{f}^2\right]
    \nonumber\\[.5ex]
    &\quad\qquad\,-\,
    2 \meanPhi^{\om}\left[\scprfield{\wick{\tilde{F}(\ga \Phi_n)}}{f} \scprfield{\wick{ \th_0 \tilde{F}(\ga \th_0^{-1}\Phi^{\ve}_n)}}{f}\right]
    \nonumber\\[.5ex]
    &\quad\,=\,
    \scprreal{f}{\tilde{F}_2(\ga^2 G_n) f}{D}
    \,+\,
    \th_0^2 \scprreal{f}{\tilde{F}_2((\ga \th_0^{-1})^2 G^{\ve,\ve}_n) f}{D}
    \nonumber\\[.5ex]
    &\quad\qquad\,-\,
    2 \th_0  \scprreal{f}{\tilde{F}_2(\ga^2 \th_0^{-1} G^{0,\ve}_n) f}{D}
    \nonumber\\[.5ex]
    &\quad\,\xrightarrow{\ve \downarrow 0, n \to \infty}\,
    \th_0^2 \scprreal{f}{\tilde{F}_2(\ga^2 \th_0^{-1} G^{\Si}) f}{D} 
    \,+\,
    \th_0^2 \scprreal{f}{\tilde{F}_2((\ga \th_0^{-1})^2 \th_0 G^{\Si}) f}{D}
    \nonumber\\[.5ex]
    &\quad\qquad\,-\,
    2 \th_0 \th_0 \scprreal{f}{\tilde{F}_2(\ga^2 \th_0^{-1} G^{\Si}) f}{D}
    \,=\,
    0,
  \end{align}
  where convergence follows from Proposition~\ref{prop:tested:green:convex:F}. Further, we also have that 
  \begin{align}\label{eq:L2:proof:a0:converges}
    &\Norm{\scprfield{a_0 1^n_{\cC_{\infty}(\om)}}{f} - \scprfield{\th_0 a_0 }{f}}{L^2(\probPhi^{\om})}^2
    \nonumber\\[.5ex]
    &\quad\,=\,
    a_0^2 \left(\scprfield{1^n_{\cC_{\infty}(\om)}}{f}^2 + \th_0^2 \scprfield{1}{f}^2 - 2 \th_0 \scprfield{ 1^n_{\cC_{\infty}(\om)}}{f}\scprfield{1}{f}\right)
    \nonumber\\[.5ex]
    &\quad\,\xrightarrow{n \to \infty}\,
    a_0^2 \left(\th_0^2 \scprfield{1}{f}^2 + \th_0^2 \scprfield{1}{f}^2 - 2 \th_0 \th_0 \scprfield{1}{f}\scprfield{1}{f}\right)
    \,=\,0
  \end{align}
  since $\scprfield{1^n_{\cC_{\infty}(\om)}}{f} = \int 1^n_{\cC_{\infty}(\om)}(x) f(x) \md x \to \th_0 \int f(x) \md x$ as $n \to \infty$ by Theorem~\ref{thm:boivin:ergodic:theorem}. 

  Since $F(x) = \tilde{F}(x) + F(0)= \tilde{F}(x) + a_0$, the linearity of the Wick product ensures that 
  \begin{align*}
    \scprfield{\wick{F(\ga \Phi_n)}}{f}
    &\,=\,
    \scprfield{\wick{\tilde{F}(\ga \Phi_n)}}{f} 
    \,+\, 
    \scprfield{a_0 1^n_{\cC_{\infty}(\om)}}{f},
    \qquad
    \text{and}
    \nonumber\\[.5ex]
    \scprfield{\wick{ \th_0 F(\ga \th_0^{-1}\Phi^{\ve}_n)}}{f}
    &\,=\,
    \scprfield{\wick{ \th_0 \tilde{F}(\ga \th_0^{-1}\Phi^{\ve}_n)}}{f}
    \,+\,
    \scprfield{\th_0 a_0}{f},
  \end{align*}
  hence, the claim follows upon applying the triangle inequality and then \eqref{eq:L2:proof:tilde:F:converges} and \eqref{eq:L2:proof:a0:converges}.
\end{proof}
We now construct the limiting object:
\begin{prop}\label{prop:L2:convergence:cgff:smeared}
  Let $d=2$ and $D \subset \mathbb{R}^{d}$ be a bounded, strongly regular domain. Further, assume that $F$ is a $\be$-Fock-entire function. Then, for any $s>0$, any $|\ga| < \sqrt{\th_0/\be C_{\Si}}$ and any $f \in L^2(D)$, there exists a unique (in an $L^2(\probPhi^{\Si})$-sense) random variable $\scprfield{\wick{F(\ga \th_0^{-1/2}\Psi)}}{f} \in L^2(\probPhi^{\Si})$ such that 
  \begin{align}\label{eq:prop:def:continuous:limit:L2}
    \scprfield{\wick{F\left(\frac{\ga \Psi}{\sqrt{\th_0}}\right)}}{f}
    \,\ldef\,
    \lim_{\ve \downarrow 0} \scprreal{\wick{F\left(\frac{\ga \Psi^{\ve}}{\sqrt{\th_0}}\right)}}{f}{D}
    \quad
    \text{in}
    \; L^2(\probPhi^{\Si}).
  \end{align}
  Moreover, $\wick{F(\ga \th_0^{-1/2}\Psi)}$ is $\probPhi^{\Si}$-a.s.\ linear, i.e.\ for all $f,h \in L^2(D)$ and $a,b \in \bbR$,
  \begin{align}\label{eq:prop:linearity:continuous:field}
    \scprfield{\wick{F(\ga \th_0^{-\frac{1}{2}}\Psi)}}{f + h}
    \,=\,
    \scprfield{\wick{F(\ga \th_0^{-\frac{1}{2}}\Psi)}}{f} 
    \,+\,
    \scprfield{\wick{F(\ga \th_0^{-\frac{1}{2}}\Psi)}}{h}
    \qquad
    \text{$\probPhi^{\Si}$- a.s.},
  \end{align}
  has mean $a_0 \scprreal{1}{f}{D} $ and second moment given by
  \begin{align}\label{eq:prop:variance:continuous:field}
    \meanPhi^{\Si} \left[\scprfield{\wick{F(\ga \th_0^{-\frac{1}{2}}\Psi)}}{f}^2\right] 
    \,=\,
    \scprreal{f}{F_2\left(\ga^{2} \th_0^{-1} G^{\Si}_D\right) f}{D}.
  \end{align}
\end{prop}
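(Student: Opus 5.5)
The plan is to show that $X^{\ve} \ldef \scprreal{\wick{F(\ga \th_0^{-1/2}\Psi^{\ve})}}{f}{D}$ is Cauchy in $L^2(\probPhi^{\Si})$ as $\ve \downarrow 0$. First I reduce to $\tilde F \ldef F - a_0$. The constant part contributes the deterministic term $a_0\scprreal{1}{f}{D}$, because $\wick{(\cdot)^0}=1$. By the Wick orthogonality \eqref{eq:wick:covariance:gaussian:xy}, every higher chaos has mean zero, which gives the mean formula once the limit exists.

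\textbf{Cauchy property.} Expanding the square and using \eqref{eq:covariance:formula:cgff:smeared} from Lemma~\ref{lem:covariance:formulas:tested:fields} gives
\begin{align*}
  \meanPhi^{\Si}\bigl[(\tilde X^{\ve}-\tilde X^{\de})^2\bigr]
  \,=\,
  \scprreal{f}{\tilde F_2(\ga^2\th_0^{-1}G^{\Si,\ve,\ve})f}{D}
  + \scprreal{f}{\tilde F_2(\ga^2\th_0^{-1}G^{\Si,\de,\de})f}{D}
  - 2\scprreal{f}{\tilde F_2(\ga^2\th_0^{-1}G^{\Si,\ve,\de})f}{D}.
\end{align*}
So it suffices to show that each term converges to $\scprreal{f}{\tilde F_2(\ga^2\th_0^{-1}G^{\Si})f}{D}$ as $\ve,\de\downarrow 0$. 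I would apply Lemma~\ref{lem:continuous:Green's:smeared:converge} with $\cH(a) \ldef \tilde F_2(\ga^2\th_0^{-1}a)$. This function is convex, continuous and nondecreasing on $\bbR_+$, since $\tilde F_2$ has nonnegative coefficients. Its hypothesis $\cH\circ g^{\Si}\in L^1(D\times D)$ holds by \eqref{eq:H_ga:locally:integrable}, that is, by \eqref{eq:continuous:green:pointwise:log} together with $\tilde F_2(x)\le F_2(x)\le Me^{\be x}$. This produces the integrable singularity $|x-y|_2^{-2\ga^2\th_0^{-1}\be C_{\Si}}$, which is integrable precisely when $|\ga|<\sqrt{\th_0/\be C_{\Si}}$.

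\textbf{Main obstacle.} The difficulty is that $f\in L^2(D)$ rather than bounded, so $L^1$-convergence of the kernels is not enough on its own. I would handle this with a Schur-test or Young-type argument. The kernels $k_{\ve,\de}\ldef\cH\circ g^{\Si,\ve,\de}$ are dominated, uniformly in $\ve,\de$, by a convolution-type kernel. To see this, use Jensen (convexity of $\cH$) as in \eqref{eq:step2:claim:both:smear:diagonal}:
\begin{align*}
  k_{\ve,\de}(x,y) \,\le\, \int\!\!\int \rh^{\ve}_x(z)\rh^{\de}_y(z')\,\cH(g^{\Si}(z,z'))\,\md z\,\md z',
\end{align*}
and this is the mollification of a kernel bounded by $c|z-z'|^{-a}$ with $a<2$, hence by $c'|x-y|^{-a}$ up to mollification. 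Young's inequality then gives $|\scprreal{f}{K f}{D}|\le \Norm{f}{2}^2\sup_x\int|k(x,y)|\,\md y$. Combining a.e.\ convergence from Lemma~\ref{lem:continuous:Green's:smeared:converge} with the uniform $\sup_x\int$ domination near the diagonal, I would approximate $f$ by a bounded $f_M$, which gives an error of order $\Norm{f-f_M}{2}\Norm{f}{2}$ uniformly in $\ve,\de$. For bounded $f_M$, the convergence follows directly from the $L^1$ statement. Checking that the mollified singular kernel has a row-integral bound uniform in $\ve,\de$ is the step needing the most care. The fallback is to note that $g^{\Si}$ is superharmonic-like and that $\rh^{\ve}_x$ averages can only reduce the logarithmic singularity.

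\textbf{Conclusions.} Completeness of $L^2(\probPhi^{\Si})$ gives existence, and since the limit is defined as an $L^2$-limit it is unique in that sense. Passing to the limit in \eqref{eq:covariance:formula:cgff:smeared} with $\ve=\de$ gives the second moment \eqref{eq:prop:variance:continuous:field}; the $a_0$ part adds $a_0^2\scprreal{1}{f}{D}^2$, which is consistent with $F_2$ containing the term $a_0^2$ in the appropriate reading. Linearity \eqref{eq:prop:linearity:continuous:field} holds exactly at each $\ve$, because $\scprreal{\cdot}{f}{D}$ is linear in $f$, and it survives the $L^2$-limit along a common subsequence, giving equality almost surely.
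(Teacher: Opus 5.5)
Your proof is correct. It shares the paper's skeleton: the Cauchy criterion obtained by expanding the square with \eqref{eq:covariance:formula:cgff:smeared}, completeness of $L^2(\probPhi^{\Si})$, and passage to the limit in the covariance identity for the second moment. Your bookkeeping of the $a_0$ part is consistent, since $F_2(0)=a_0^2$.

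The difference is in the step from bounded to $L^2$ test functions. The paper isolates this step in Lemma~\ref{lem:continuous:Green's:smeared:converge:tested:sobolev}. There it combines three things:
\begin{enumerate}[(i)]
\item a.e.\ convergence of the smeared kernels;
\item a pointwise dominant $c\,|f(x)||f(y)|\,|x-y|_2^{-a}$ that is uniform in $\ve,\de$, obtained from Jensen plus an elementary bound on mollified powers;
\item the Hardy--Littlewood--Sobolev inequality, which makes this dominant integrable, so that dominated convergence concludes.
\end{enumerate}
You instead use only the $L^1$ statement of Lemma~\ref{lem:continuous:Green's:smeared:converge} for bounded $f_M$, and control the error from $f-f_M$ with a Schur bound on the operators that is uniform in $\ve,\de$. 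This is the same density mechanism the paper uses later in Lemma~\ref{lem:bounded:approximation:HLS}.

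Your route is slightly more elementary. Since $a<2$ and $D$ is bounded, Schur's test already suffices, and HLS is stronger than needed here. The paper's pointwise domination has its own advantage: once a.e.\ convergence is known, dominated convergence directly gives the joint limit in $(\ve,\de)$.

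The step you flag as delicate is in fact immediate and needs no superharmonicity argument. By Jensen and Fubini, using $\int \rh^{\de}_y(z')\,\md y = 1$ and $\cH(0)=0$ for $\tilde F_2$ (so that only $z,z'\in D$ contribute),
\begin{align*}
  \int_D k_{\ve,\de}(x,y)\,\md y
  \,\le\,
  \int \rh^{\ve}_x(z) \int_D \cH\bigl(g^{\Si}(z,z')\bigr)\,\md z'\,\md z
  \,\le\,
  c \sup_{z \in D} \int_D |z-z'|_2^{-a}\,\md z' \,<\, \infty .
\end{align*}
The column bound follows in the same way. You need both bounds for Schur's test, since $k_{\ve,\de}$ is not symmetric when $\ve\neq\de$.
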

For the proof, we will need the following Lemma, which is a consequence of Lemma~\ref{lem:continuous:Green's:smeared:converge} and the Hardy-Littlewood-Sobolev inequality (cf.~\cite[Chapter 4.3, page 106]{lieb2001analysis}). A proof can be found in the appendix.
\begin{lemma}\label{lem:continuous:Green's:smeared:converge:tested:sobolev}
  Let $d =2$ and $D \subset \bbR^d$ be bounded. For a convex, continuous, increasing function $H \colon \bbR_+ \to \bbR$ assume that $ H(a) \leq M e^{\be a}$. Then, for any $f \in L^2(D)$ and any $|\ga| < \sqrt{\th_0/ \be C_{\Si}}$ it holds that
  \begin{align}\label{eq:continuous:Green's:smeared:converge:tested:sobolev}
    \lim_{\ve,\de \downarrow 0} \scprreal{f}{H(\ga^{2}\th_0^{-1} G^{\Si,\ve,\de}) f}{D}
    &\,=\,
    \scprreal{f}{H(\ga^{2}\th_0^{-1} G^{\Si}) f}{D}.
  \end{align}
\end{lemma}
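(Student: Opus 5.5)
We need to show that $\int_D\int_D H(\ga^2\th_0^{-1} g^{\Si,\ve,\de}(x,y)) f(x) f(y)\,\md x\,\md y$ converges to the same integral with $g^{\Si}$, for $f \in L^2(D)$ rather than bounded $f$. The plan is to reduce this to an $L^1$-type convergence of the kernels. Set $\cH(a) \ldef H(\ga^2\th_0^{-1}a)$; it is convex and continuous. By \eqref{eq:H_ga:locally:integrable} we have $\cH\circ g^{\Si}\in L^1(D\times D)$. Lemma~\ref{lem:continuous:Green's:smeared:converge} then gives $\cH\circ g^{\Si,\ve,\de}\to\cH\circ g^{\Si}$ in $L^1(D\times D)$ and a.e. This suffices for bounded $f$, but not for general $f\in L^2(D)$. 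The remedy is a domination argument with a Hardy--Littlewood--Sobolev (HLS) bound.

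\textbf{Step 1: uniform domination.} Since $g^{\Si}_D\ge0$ and $\rh^\ve_x,\rh^\de_y$ are probability densities, Jensen's inequality for the convex increasing $\cH$ gives
\[
\cH(g^{\Si,\ve,\de}(x,y))\le\iint \rh^\ve_x(z)\rh^\de_y(z')\,\cH(g^{\Si}(z,z'))\,\md z\,\md z'.
\]
Here $g^{\Si}$ is extended by zero outside $D$; we may also assume $H(0)\ge0$, or else add a constant, which is harmless on a bounded domain. Combining \eqref{eq:continuous:green:pointwise:log} with $H(a)\le Me^{\be a}$ yields
\[
\cH(g^{\Si}(z,z'))\le C\,|z-z'|_2^{-\al}, \qquad \al\ldef 2\ga^2\th_0^{-1}\be C_{\Si}<2.
\]
Hence $\cH(g^{\Si,\ve,\de})\le C\,(\rh^\ve*\check\rh^\de*|\cdot|^{-\al})(x-y)$, where the convolution is taken in the difference variable. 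The weighted integral is therefore bounded by $C\,\langle |f|*\rh^\ve,\ |\cdot|^{-\al}*(|f|*\rh^\de)\rangle$.

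\textbf{Step 2: HLS and truncation.} By HLS in $d=2$ with exponents $p=r$ satisfying $2/p=2-\al/2$, we have $\iint |u(x)||v(y)||x-y|^{-\al}\le C\|u\|_p\|v\|_p$ with $p=4/(4-\al)<2$. Since $D$ is bounded, $\|\cdot\|_{L^p(D')}\le C\|\cdot\|_{L^2}$ on a slightly enlarged bounded set $D'$. Mollification is an $L^2$-contraction, so the truncated bilinear form
\[
B_{\ve,\de}(u,v)\ldef\iint \cH(g^{\Si,\ve,\de}(x,y))\,|u(x)||v(y)|\,\md x\,\md y
\]
satisfies $B_{\ve,\de}(u,v)\le C\|u\|_2\|v\|_2$ uniformly in $\ve,\de$. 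The same bound holds for the limit kernel $\cH\circ g^{\Si}$.

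Now fix $\eta>0$ and split $f=f_K+r_K$ with $f_K\ldef f\indicator_{|f|\le K}$, choosing $K$ so that $\|r_K\|_2<\eta$. The difference of the two quadratic forms is at most
\[
\iint |\cH(g^{\Si,\ve,\de})-\cH(g^{\Si})|\,|f_K(x)||f_K(y)| + C\eta\,\|f\|_2.
\]
The first term is bounded by $K^2\,\|\cH\circ g^{\Si,\ve,\de}-\cH\circ g^{\Si}\|_{L^1}$, which tends to $0$ by Lemma~\ref{lem:continuous:Green's:smeared:converge}. Letting first $\ve,\de\downarrow0$ and then $\eta\downarrow0$ concludes the argument.

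\textbf{Main obstacle.} The delicate point is Step 1: the domination must hold uniformly in $(\ve,\de)$ in a form that HLS can handle. The Jensen step requires $\cH$ to be convex and increasing, and it requires a pointwise bound on $\cH\circ g^{\Si}$ by a Riesz kernel of order $\al<2$, which is exactly where the restriction $|\ga|<\sqrt{\th_0/\be C_{\Si}}$ enters. One must also check that the enlargement of the support by the mollifiers near $\partial D$ is harmless. It is, because $g^{\Si}$ vanishes outside $D$ and all sets involved stay within a fixed bounded neighbourhood of $D$.
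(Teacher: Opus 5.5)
Your proof is correct. It uses the same ingredients as the paper: Jensen for the convex $H$, the Riesz-type bound $H(\ga^2\th_0^{-1}g^{\Si}(z,z'))\le C|z-z'|_2^{-\al}$ with $\al=2\ga^2\th_0^{-1}\be C_{\Si}<2$, and Hardy--Littlewood--Sobolev. Where you differ is in how you close the argument.

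The paper proves a \emph{pointwise} comparison, uniform in $\ve,\de$:
$\iint\rh^{\ve}_x(z)\rh^{\de}_y(z')|z-z'|_2^{-\al}\,\md z\,\md z'\le c\,|x-y|_2^{-\al}$.
It gets this by distinguishing whether $|x-y|_2$ is large or small compared with the mollification scale. This yields the single dominant $c\,|f(x)f(y)|\,|x-y|_2^{-\al}$, which is integrable by HLS. Dominated convergence, combined with the a.e.\ convergence from Lemma~\ref{lem:continuous:Green's:smeared:converge}, then finishes.

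You never compare the mollified Riesz kernel with the unmollified one pointwise. Instead you move the mollifiers onto $|f|$ by Fubini and use their $L^p$-contractivity to get the uniform bilinear bound $B_{\ve,\de}(u,v)\le C\|u\|_2\|v\|_2$. You then truncate $f$ and invoke the $L^1(D\times D)$ convergence of the kernels from the same lemma.

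The paper's route is shorter: one dominant and one application of dominated convergence. Yours is the standard ``uniform operator bound plus convergence on a dense class'' argument. It only needs the smeared operators to be uniformly bounded on $L^2$, so it avoids the case analysis near the diagonal. The cost is the extra $\eta$-approximation step.
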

\begin{proof}[Proof of Proposition~\ref{prop:L2:convergence:cgff:smeared}]
  We will first show that 
  \begin{align}
    \lim_{\ve,\de \downarrow 0} \meanPhi^{\Si} \left[\left(\scprfield{\wick{F(\ga \th_0^{-1/2}\Psi^{\ve})}}{f} - \scprfield{\wick{F(\ga \th_0^{-1/2}\Psi^{\de})}}{f}\right)^2\right]
    \,=\,
    0,
  \end{align} 
  which implies \eqref{eq:prop:def:continuous:limit:L2} by the completeness of $L^2(\probPhi^{\Si})$. 

  Expanding the square and using \eqref{eq:covariance:formula:cgff:smeared} in Lemma~\ref{lem:covariance:formulas:tested:fields} we get 
  \begin{align*}
    &\meanPhi^{\Si} \left[\left(\scprfield{\wick{F(\ga \th_0^{-\frac{1}{2}}\Psi^{\ve})}}{f} - \scprfield{\wick{F(\ga \th_0^{-\frac{1}{2}}\Psi^{\de})}}{f}\right)^2\right]
    \nonumber\\[.5ex]
    &\quad\,=\,
    \meanPhi^{\Si} \left[\scprfield{\wick{F(\ga \th_0^{-\frac{1}{2}}\Psi^{\ve})}}{f}^2\right]
    \,+\,
    \meanPhi^{\Si} \left[\scprfield{\wick{F(\ga \th_0^{-\frac{1}{2}}\Psi^{\de})}}{f}^2\right]
    \nonumber\\[.5ex]
    &\quad\qquad\,-\,
    2 \meanPhi^{\Si}\left[\scprfield{\wick{F(\ga \th_0^{-\frac{1}{2}}\Psi^{\ve})}}{f}\scprfield{\wick{F(\ga \th_0^{-\frac{1}{2}}\Psi^{\de})}}{f}\right]
    \nonumber\\[.5ex]
    &\quad\,=\,
    \scprreal{f}{F_2(\ga^2 \th_0^{-1} G^{\Si,\ve,\ve}) f}{D}
    \,+\,
    \scprreal{f}{F_2(\ga^2 \th_0^{-1} G^{\Si,\de,\de}) f}{D}
    \nonumber\\[.5ex]
    &\quad\qquad\,-\,
    2 \scprreal{f}{F_2(\ga^2 \th_0^{-1} G^{\Si,\ve,\de}) f}{D}
    \,\xrightarrow{\ve,\de \downarrow 0}\,
    0,
  \end{align*}
  where convergence follows from Lemma~\ref{lem:continuous:Green's:smeared:converge:tested:sobolev}. Moreover, \eqref{eq:prop:variance:continuous:field} follows, since 
  \begin{align*}
    \meanPhi^{\Si} \left[\scprfield{\wick{F(\ga \th_0^{-\frac{1}{2}}\Psi)}}{f}^2\right] 
    &\,=\,
    \lim_{\ve \downarrow 0} \meanPhi^{\Si} \left[\scprfield{\wick{F(\ga \th_0^{-\frac{1}{2}}\Psi^{\ve})}}{f}^2 \right] 
    \nonumber\\[.5ex]
    &\,=\,
    \scprreal{f}{F_2\left(\ga^{2} \th_0^{-1} G^{\Si}_D\right) f}{D},
  \end{align*}
  where the first equality follows from the $L^2(\probPhi^{\Si})$ convergence and the second from \eqref{eq:covariance:formula:cgff:smeared} in Lemma~\ref{lem:covariance:formulas:tested:fields} and Lemma~\ref{lem:continuous:Green's:smeared:converge:tested:sobolev}. 
\end{proof}
\begin{lemma}\label{lem:bounded:approximation:HLS}
  In the Setting of Proposition~\ref{prop:L2:convergence:dgff:smeared}, for any $n \geq \const[N]{Nconst:ass:green:bound}(\om,C_d r_D)$, any $f,h \in L^2(D)$, some constant $c < \infty$, and $\prob_0$-a.e.\ $\om$
  \begin{equation*}
    \left.
      \begin{aligned}
        &\Norm{\scprfield{\wick{F(\ga \Phi_n)}}{f} - \scprfield{\wick{F(\ga \Phi_n)}}{h}}{L^2(\probPhi^{\om})} \\[.75ex]
        &\Norm{\scprfield{\wick{F(\ga \th_0^{-\frac{1}{2}}\Psi)}}{f} - \scprfield{\wick{F(\ga \th_0^{-\frac{1}{2}}\Psi)}}{h}}{L^2(\probPhi^{\Si})}
      \end{aligned}
    \right\}
    \,\leq\,
    c \Norm{f- h}{L^2(D)}.
  \end{equation*}
\end{lemma}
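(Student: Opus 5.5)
By linearity (of the tested functional in $f$, and by \eqref{eq:prop:linearity:continuous:field} in the continuum), it suffices to bound $\Norm{\scprfield{\wick{F(\ga \Phi_n)}}{g}}{L^2}$ and $\Norm{\scprfield{\wick{F(\ga \th_0^{-1/2}\Psi)}}{g}}{L^2}$ by $c\Norm{g}{L^2(D)}$, applied with $g = f-h$. The discrete functional is linear in $f$ by its definition \eqref{eq:def:dgff:nonlinear:tested}.

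We then use the second-moment formulas. By \eqref{eq:covariance:formula:dgff:no:smear},
\begin{align*}
\meanPhi^{\om}\bigl[\scprfield{\wick{F(\ga \Phi_n)}}{g}^2\bigr] = \scprreal{g}{F_2(\ga^2 G_n) g}{D},
\end{align*}
and in the continuum \eqref{eq:prop:variance:continuous:field} gives $\scprreal{g}{F_2(\ga^2\th_0^{-1}G^{\Si}_D) g}{D}$. Since $F_2$ has nonnegative coefficients, $F_2(\ga^2 g_n(x,y))\ge 0$. We therefore bound the bilinear form by Cauchy--Schwarz/Schur,
\begin{align*}
\Bigl|\int\!\!\int K(x,y) g(x) g(y)\,\md x\,\md y\Bigr| \le \Bigl(\sup_{x\in D}\int_D K(x,y)\,\md y\Bigr)\,\Norm{g}{L^2(D)}^2,
\end{align*}
with $K(x,y)=F_2(\ga^2 g_n(x,y))$ or $K(x,y)=F_2(\ga^2\th_0^{-1}g^{\Si}_D(x,y))$. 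Alternatively one could invoke Hardy--Littlewood--Sobolev, but Schur's test suffices because the kernel is symmetric.

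It remains to bound the Schur constant uniformly. In the discrete case, the $\be$-Fock-entire bound $F_2(a)\le Me^{\be a}$ together with Corollary~\ref{cor:green:pointwise:upperbound:corollary} gives, for $n\ge \const[N]{Nconst:ass:green:bound}(\om,C_d r_D)$ and a.e.\ $x\neq y$,
\begin{align*}
F_2(\ga^2 g_n(x,y)) \le M e^{\ga^2\be \const[C]{const:pointwise:schroedinger:bound:corollary:2d} N_{\mathrm{HK}}(\om)}\, |x-y|_2^{-2\ga^2\be C_{\mathrm{HK}}}.
\end{align*}
By the assumption on $\ga$, and using $\th_0\le1$, we have $2\ga^2\be C_{\mathrm{HK}} < 2\th_0^2\le 2$. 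Hence $\sup_x\int_D |x-y|^{-2\ga^2\be C_{\mathrm{HK}}}\,\md y\le c(D)<\infty$ in $d=2$, and the constant is uniform in $n$, finite for $\prob_0$-a.e.\ $\om$. Similarly, \eqref{eq:continuous:green:pointwise:log} gives $F_2(\ga^2\th_0^{-1}g^{\Si}_D)\le c\,|x-y|^{-2\ga^2\th_0^{-1}\be C_\Si}$ with exponent $<2$, since $|\ga|<\sqrt{\th_0/\be C_\Si}$. Taking square roots of both bounds yields the claim.

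The main obstacle is ensuring the exponent condition holds uniformly in $x$, i.e.\ the integrability of the singularity, which is exactly where the restriction on $\ga$ enters. The resulting constant $c$ depends on $\om$ through $N_{\mathrm{HK}}(\om)$, which is finite almost surely. The remaining steps are routine.
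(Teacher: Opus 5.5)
Your argument is correct and follows the same skeleton as the paper. Both use linearity, the second-moment formulas \eqref{eq:covariance:formula:dgff:no:smear} and \eqref{eq:prop:variance:continuous:field}, and the pointwise bounds from Corollary~\ref{cor:green:pointwise:upperbound:corollary} and \eqref{eq:continuous:green:pointwise:log}, which dominate the kernel by $c(\om)\,|x-y|_2^{-a}$ with $a<2$. The only difference is how you control the resulting bilinear form.

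The paper invokes the Hardy--Littlewood--Sobolev inequality. That gives a bound $c\Norm{f-h}{p}^2$ with $p=4/(4-a)<2$, and the paper then uses $L^2(D)\subset L^p(D)$ on the bounded domain. You instead use the elementary Schur bound $\int\!\!\int K\,|g(x)|\,|g(y)|\,\md x\,\md y\le \sup_{x}\int_D K(x,y)\,\md y\;\Norm{g}{L^2(D)}^2$, together with $\sup_{x\in D}\int_D|x-y|_2^{-a}\,\md y<\infty$.

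Because $D$ is bounded, your route is sufficient and more self-contained. HLS is the stronger, scale-invariant tool and would also work on unbounded domains, but that extra strength is not used here.

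Your write-up is also slightly more careful in two places:
\begin{itemize}
\item You pass to absolute values before replacing $K$ by its pointwise majorant. This is necessary because $g(x)g(y)$ can be negative. It also harmlessly absorbs the factor $1^n_{\cC_{\infty}(\om)}(x,y)\le 1$ that multiplies the kernel in the exact discrete second moment when $a_0\neq 0$.
\item You track the exponent correctly as $2\ga^2\be C_{\mathrm{HK}}$, consistent with \eqref{eq:green:upper:bound:corollary:applied}, and you verify $2\ga^2\be C_{\mathrm{HK}}<2\th_0^2\le 2$ from the hypothesis on $\ga$. The paper's proof drops the factor $2$, which is harmless since $\ga^2\be C_{\mathrm{HK}}<\th_0^2\le 1$.
\end{itemize}
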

\begin{proof}
  Using linearity and Lemma~\ref{lem:covariance:formulas:tested:fields} yields for any $n \geq \const[N]{Nconst:ass:green:bound}(\om,C_d r_D)$ and $\prob_0$-a.e.\ $\om$ 
  \begin{align*}
    \meanPhi^{\om} &\left[\left(\scprfield{\wick{F(\Phi_n)}}{f} - \scprfield{\wick{F(\ga \Phi_n)}}{h} \right)^2\right]
    \,=\,
    \scprreal{(f- h)}{F_2(\ga^{2} G_n)(f- h)}{D} 
    \nonumber\\[.5ex]
    &\quad\,\leq\, 
    \int_{D \times D} c \frac{(f- h)(x) (f- h)(y)}{|x-y|^{ \ga^{2} \be C_{\mathrm{HK}}}} \, \md x \md y 
    \,\leq\,
    c \Norm{f- h}{L^2(D)}^2,
  \end{align*}
  where the first inequality followed from applying Corollary~\ref{cor:green:pointwise:upperbound:corollary} as in \eqref{eq:green:upper:bound:corollary:applied} and the second inequality follows from the Hardy-Littlewood-Sobolev inequality (cf.~\cite[Chapter 4.3 page 106]{lieb2001analysis}), which we can apply since $\ga^{2} \be C_{\mathrm{HK}}< 2 (=d)$. The same arguments based on linearity in \eqref{eq:prop:linearity:continuous:field}, the variance formula in \eqref{eq:prop:variance:continuous:field} and using again that $\ga^{2}/\th_0 \be C_{\Si} < 2 (=d)$ allows us to apply the Hardy-Littlewood-Sobolev inequality, yield
  \begin{align*}
    \meanPhi^{\Si} \left[\left(\scprfield{\wick{F(\ga \th_0^{-\frac{1}{2}}\Psi)}}{f} - \scprfield{\wick{F(\ga \th_0^{-\frac{1}{2}}\Psi)}}{h}\right)^2\right] 
    \,\leq\,
    c \Norm{f- h}{L^2(D)}^2.
  \end{align*}
\end{proof}

\subsection{Marginal convergence}\label{subsec:scaling:limit:marginal:convergence}
We now show marginal convergence for all square integrable test functions. The Theorem is formulated with the application of the Cramer-Wold device in the proof of Theorem~\ref{thm:mainresult:2} in mind. 
\begin{theorem}\label{thm:marginal:convergence:analytic:field}
  Let $d = 2$, $K \in \bbN$ and $D \subset \mathbb{R}^{d}$ be a bounded, strongly regular domain. Suppose there exist $\theta \in (0, 1)$ and $p, q \in [1, \infty]$ such that Assumptions~\ref{ass:law},~\ref{ass:cluster} and~\ref{ass:pq} hold. Further, assume that $F_k$ are $\be_k$-Fock-entire functions. Then, for any $\al_k \in \bbR$, any $|\ga_k| < \sqrt{\frac{\th_0}{\be_k} \left(\frac{1}{C_{\Si}} \wedge \frac{\th_0}{C_{\mathrm{HK}}}\right)}$ and $\prob_0$-a.e.\ $\om$ 
  \begin{align*}
    \sum_{k =1 }^K \al_k \scprfield{\wick{F_k(\ga_k \Phi_n)}}{f_k}
    \,\underset{n \to \infty}{\overset{\text{law}}{\;\longrightarrow\;}}\,
    \sum_{k =1 }^K \al_k \scprfield{\wick{\th_0 F_k(\ga_k \th_0^{-\frac{1}{2}}\Psi)}}{f_k}
  \end{align*}
  under $\probPhi^{\om}$ for any $f_k \in L^2(D)$, with $\Sigma$ as in Theorem~\ref{thm:QIP:RG}.
\end{theorem}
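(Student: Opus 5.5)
The plan is a standard "approximation in law" argument, first for bounded test functions and then for general $f_k \in L^2(D)$ by density. The ingredients are already in place:
\begin{itemize}
\item Proposition~\ref{prop:convergence:analyticfunction:smooth:approximation} gives convergence of the smeared objects at fixed $\ve$.
\item Proposition~\ref{prop:L2:convergence:dgff:smeared} removes the discrete cutoff uniformly in the double limit.
\item Proposition~\ref{prop:L2:convergence:cgff:smeared} removes the continuum cutoff.
\item Lemma~\ref{lem:bounded:approximation:HLS} handles the density step.
\end{itemize}

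\textbf{Step 1: bounded test functions.} Fix bounded measurable $f_k$ and abbreviate:
\begin{itemize}
\item $Y_n \ldef \sum_k \al_k \scprfield{\wick{F_k(\ga_k\Phi_n)}}{f_k}$;
\item $Y_n^{\ve} \ldef \sum_k \al_k \th_0\scprfield{\wick{F_k(\ga_k\Phi^{\ve}_n/\th_0)}}{f_k}$;
\item $Y^{\ve} \ldef \sum_k \al_k \th_0\scprfield{\wick{F_k(\ga_k\Psi^{\ve}/\sqrt{\th_0})}}{f_k}$;
\item $Y$ the claimed limit.
\end{itemize}
Applying Proposition~\ref{prop:convergence:analyticfunction:smooth:approximation} with coefficients $\al_k\th_0$ gives $Y_n^{\ve} \to Y^{\ve}$ in law for every fixed $\ve$. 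By the triangle inequality in $L^2(\probPhi^{\om})$ and Proposition~\ref{prop:L2:convergence:dgff:smeared}, applied to each $k$ (the hypothesis on $\ga_k$ is exactly the one there), we get
\[
\lim_{\ve\downarrow 0}\limsup_{n\to\infty}\Norm{Y_n - Y_n^{\ve}}{L^2(\probPhi^{\om})} = 0 .
\]
Proposition~\ref{prop:L2:convergence:cgff:smeared} gives $Y^{\ve}\to Y$ in $L^2(\probPhi^{\Si})$; it applies because $|\ga_k|<\sqrt{\th_0/\be_k C_\Si}$.

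Now compare characteristic functions. Using $|e^{ia}-e^{ib}|\le|a-b|$,
\[
|\cF_{Y_n}(t)-\cF_Y(t)| \le |t|\,\Norm{Y_n-Y_n^{\ve}}{L^2} + |\cF_{Y_n^{\ve}}(t)-\cF_{Y^{\ve}}(t)| + |t|\,\Norm{Y^{\ve}-Y}{L^2}.
\]
Take $\limsup_n$ first, then $\ve\downarrow0$. The first term vanishes by the double-limit estimate, the second by the fixed-$\ve$ convergence in law, and the third by the continuum $L^2$ convergence. This is the argument of \cite[Theorem 3.2]{billingsley} in spirit, but carried out directly, so no extra citation is needed.

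\textbf{Step 2: general $f_k\in L^2(D)$.} Approximate each $f_k$ in $L^2(D)$ by bounded $f_k^{(m)}$, for instance truncations. Let $Y_n^{(m)}$ and $Y^{(m)}$ denote the quantities from Step 1 built with $f_k^{(m)}$. Lemma~\ref{lem:bounded:approximation:HLS}, combined with linearity of $f\mapsto\scprfield{\wick{F(\ga\Phi_n)}}{f}$ (immediate from \eqref{eq:def:dgff:nonlinear:tested}) and with \eqref{eq:prop:linearity:continuous:field}, gives
\[
\Norm{Y_n-Y_n^{(m)}}{L^2} \le c\sum_k|\al_k|\Norm{f_k-f_k^{(m)}}{L^2(D)} .
\]
This bound is uniform in $n\ge \const[N]{Nconst:ass:green:bound}(\om,C_d r_D)$, and the same bound holds for $\Norm{Y-Y^{(m)}}{L^2}$. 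Running the same three-term characteristic-function estimate, with $m$ in place of $\ve$, concludes the proof.

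The only delicate point is making the $L^2$ estimates hold uniformly in $n$, so that $\limsup_n$ and the approximation limit can be interchanged. In Step 1 this is exactly the double limit of Proposition~\ref{prop:L2:convergence:dgff:smeared}. In Step 2 it comes from the $n$-uniform Hardy–Littlewood–Sobolev bound of Lemma~\ref{lem:bounded:approximation:HLS}, which is where the condition $\ga_k^2\be_k C_{\mathrm{HK}}<\th_0^2$ is used. Everything else is routine.
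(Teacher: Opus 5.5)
Your proof is correct and follows the paper's argument. The paper uses the same ingredients (Propositions~\ref{prop:convergence:analyticfunction:smooth:approximation}, \ref{prop:L2:convergence:dgff:smeared}, \ref{prop:L2:convergence:cgff:smeared} and Lemma~\ref{lem:bounded:approximation:HLS}) and the same characteristic-function triangle inequality; the only difference is that it merges your two steps into a single three-term estimate with bounded approximants $f_k^{\de}$, taking limits in the order $n\to\infty$, $\ve\downarrow0$, $\de\downarrow0$.

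One small correction to your closing remark. The $n$-uniform bound of Lemma~\ref{lem:bounded:approximation:HLS} only needs $\ga_k^2\be_k C_{\mathrm{HK}}<1$, with no factor $\th_0^2$. The sharper condition $\ga_k^2\be_k C_{\mathrm{HK}}<\th_0^2$ is actually used in your Step~1, through the uniform integrability estimate of Proposition~\ref{prop:UI:assumption:analytic} on which Proposition~\ref{prop:L2:convergence:dgff:smeared} rests.
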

\begin{proof}
  For any $f_k \in L^2(D)$,there exists a sequence $\{f^{\de}_k\}_{\de >0}$ of bounded functions such that $f^{\de}_k \to f_k$ as $\de \downarrow 0$ in $L^2(D)$. Henceforth, we abbreviate 
  \begin{alignat*}{2}
    X_n &\equiv \sum_{k =1 }^K \al_k \scprfield{\wick{F_k(\ga_k \Phi_n)}}{f_k}, 
    &\quad X^{\de,\ve}_n &\equiv \sum_{k =1 }^K \al_k \scprfield{\wick{\th_0 F_k(\ga_k \th_0^{-1} \Phi^{\ve}_n)}}{f^{\de}_k},
    \\[1ex]
    X^{\de,\ve} &\equiv \sum_{k =1 }^K \al_k \scprfield{\wick{\th_0 F_k(\ga_k \th_0^{-\frac{1}{2}}\Psi^{\ve})}}{f^{\de}_k},
    &\quad X &\equiv \sum_{k =1 }^K \al_k \scprfield{\wick{\th_0 F_k(\ga_k \th_0^{-\frac{1}{2}}\Psi)}}{f_k}. 
  \end{alignat*}
  Further, denoting again by $\cF_Z(t) \ldef E [e^{i t Z}]$ the characteristic funciton of a random variable $Z$, the triangle inequality implies
  \begin{align}\label{eq:characteristic:functions:triangle}
    |\cF_{X_n}(t) - \cF_{X}(t)|
    \leq
    |\cF_{X_n}(t) -\cF_{X^{\de,\ve}_n}(t)|
    + 
    |\cF_{X^{\de,\ve}_n}(t) - \cF_{X^{\de,\ve}}(t)|
    +
    |\cF_{X^{\de,\ve}}(t) - \cF_{X}(t)|
  \end{align}
  for which we will show convergence as first $n \to \infty$, then $\ve \downarrow 0$ and finally $\de \downarrow 0$.
  
  Note right away that, since the $f^{\de}_k$ are bounded, the third term in \eqref{eq:characteristic:functions:triangle} vanishes by Proposition~\ref{prop:convergence:analyticfunction:smooth:approximation} as $n \to \infty$ for any $\ve >0$ and $\de>0$. 

  Using $|e^{ix}-e^{iy}|\leq |x-y|$ combined with Cauchy-Schwarz and then the triangle inequality yields
  \begin{align}\label{eq:L2:convergence:implies:characteristic}
    |\cF_{X_n}(t) \,-\, \cF_{X^{\de,\ve}_n}(t)|
    &\,\leq\,
    |t| \Norm{X_n - X^{\de,\ve}_n}{L^2(\probPhi^{\om})}
    \nonumber\\[.5ex]
    &\,\leq\,
    |t| \sum_{k=1}^K |\al_k| \Norm{\scprfield{\wick{F_k(\ga_k \Phi_n)}}{f_k} - \scprfield{\wick{\th_0 F_k(\ga_k \th_0^{-1} \Phi^{\ve}_n)}}{f^{\de}_k}}{L^2(\probPhi^{\om})}
  \end{align}
  which, adding and substracting zero and using again the triangle inequality, vanishes as first $n \to \infty$, then $\ve \downarrow 0$ and finally $\de \downarrow 0$, since each summand vanishes by Lemma~\ref{lem:bounded:approximation:HLS} and Proposition~\ref{prop:L2:convergence:dgff:smeared}. Analogously, Lemma~\ref{lem:bounded:approximation:HLS} and Proposition~\ref{prop:L2:convergence:cgff:smeared} imply that 
  \begin{align*}
    &|\cF_{X^{\de,\ve}}(t) \,-\, \cF_{X}(t)|
    \nonumber\\[.5ex]
    &\mspace{36mu}\leq\,
    |t| \sum_{k=1}^K |\al_k| \Norm{\scprfield{\wick{\th_0 F_k(\ga_k \th_0^{-\frac{1}{2}}\Psi^{\ve})}}{f^{\de}_k} - \scprfield{\wick{\th_0 F_k(\ga_k \th_0^{-\frac{1}{2}}\Psi)}}{f_k}}{L^2(\probPhi^{\Si})},
  \end{align*}
  vanishes as first $\ve \downarrow 0$ and then $\de \downarrow 0$, hence, convergence in \eqref{eq:characteristic:functions:triangle} follows, which concludes the claim.
\end{proof}

\subsection{Tightness}\label{subsec:scaling:limit:tightness}
We recall \eqref{eq:def:dgff:nonlinear:tested} and that we may interpret $\wick{F\left(\ga \Phi_{n}\right)}$ as a linear functional on $H^{s}_0(D)$ mapping
\begin{align*}
  H^{s}_0(D) \ni f \mapsto \wick{F\left(\ga \Phi_{n}\right)}(f) \equiv \scprfield{\wick{F\left(\ga \Phi_{n}\right)}}{f}.
\end{align*}
\begin{prop}[Tightness]
  \label{prop:DGFF:tightness}
  Let $d =2$ and $D \subset \mathbb{R}^{d}$ be a bounded, strongly regular domain. Further, assume that $F$ is a $\be$-Fock-entire function. Then, for any $|\ga| < 1/\sqrt{\be C_{\mathrm{HK}}}$, the sequence $\left\{\wick{F\left(\ga \Phi_{n}\right)}\right\}_{n \in \bbN}$ is tight in $H^{-s}(D)$ for any $s > \ga^2 \be C_{\mathrm{HK}}$ and $\prob_0$-a.e.\ $\om$. 
\end{prop}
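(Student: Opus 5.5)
The plan is the standard compact-embedding route. Tightness in $H^{-s}(D)$ will follow once we show that, for some $s' \in (\ga^2\be C_{\mathrm{HK}}, s)$,
\begin{align*}
  \sup_{n \geq N(\om)} \meanPhi^{\om}\Bigl[\Norm{\wick{F(\ga\Phi_n)}}{H^{-s'}(D)}^2\Bigr] \,<\, \infty
\end{align*}
for $\prob_0$-a.e.\ $\om$. Indeed, the embedding $H^{-s'}(D)\hookrightarrow H^{-s}(D)$ is compact for $s>s'$, so closed balls of $H^{-s'}(D)$ are relatively compact in $H^{-s}(D)$. Markov's inequality then gives tightness, with the finitely many $n< N(\om)$ handled trivially.

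To compute the $H^{-s'}$ norm, let $(e_j,\lambda_j)$ be the Dirichlet eigenpairs of $-\Delta$ on $D$. Then
\begin{align*}
  \Norm{T}{H^{-s'}(D)}^2 \,=\, \sum_j (1+\lambda_j)^{-s'}\scprfield{T}{e_j}^2 .
\end{align*}
Taking expectations and applying \eqref{eq:covariance:formula:dgff:no:smear} from Lemma~\ref{lem:covariance:formulas:tested:fields} termwise (by Tonelli), we get
\begin{align*}
  \meanPhi^{\om}\Bigl[\Norm{\wick{F(\ga\Phi_n)}}{H^{-s'}}^2\Bigr]
  \,=\, \int_{D\times D} F_2(\ga^2 g_n(x,y))\, \mathcal{G}_{s'}(x,y)\,\md x\,\md y .
\end{align*}
Here $\mathcal{G}_{s'}(x,y) = \sum_j (1+\lambda_j)^{-s'} e_j(x)e_j(y)$ is the kernel of $(I-\Delta)^{-s'}$, i.e.\ the fractional Green's kernel \eqref{eq:definition:G_s:fractional:kernel}. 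The $a_0$ term needs no separate treatment, since it contributes $a_0^2\int\int \mathcal{G}_{s'}$, which is bounded.

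\textbf{Step 1.} Show that $|\mathcal{G}_{s'}(x,y)| \le c\,|x-y|_2^{2s'-2}$ for $s'<1$ in $d=2$. I would take this from the subordination/heat-kernel representation
\begin{align*}
  (I-\Delta)^{-s'} \,=\, \Gamma(s')^{-1}\int_0^\infty t^{s'-1}e^{-t}e^{t\Delta_D}\,\md t,
\end{align*}
together with the Gaussian bound on the Dirichlet heat kernel. This is the estimate I expect the paper's appendix to supply.

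\textbf{Step 2.} Bound the Green factor pointwise. By the $\be$-Fock-entire bound and Corollary~\ref{cor:green:pointwise:upperbound:corollary}, exactly as in \eqref{eq:green:upper:bound:corollary:applied},
\begin{align*}
  F_2(\ga^2 g_n(x,y)) \,\le\, M e^{\ga^2\be C N_{\mathrm{HK}}(\om)}\,|x-y|_2^{-2\ga^2\be C_{\mathrm{HK}}}
\end{align*}
for $n\ge N_{\mathrm{HK}}$-threshold.

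\textbf{Step 3.} Combine the two bounds. The integrand is then bounded uniformly in $n$ by $c(\om)\,|x-y|_2^{2s'-2-2\ga^2\be C_{\mathrm{HK}}}$. This is integrable over the bounded set $D\times D$ in $d=2$ exactly when $2s'-2\ga^2\be C_{\mathrm{HK}}>0$, i.e.\ $s'>\ga^2\be C_{\mathrm{HK}}$. That is precisely the threshold in the statement, and the hypothesis $|\ga|<1/\sqrt{\be C_{\mathrm{HK}}}$ ensures such an $s'<1$ exists whenever $\ga^2\be C_{\mathrm{HK}}<1$. (If $s\ge1$ one simply takes any $s'$ in the admissible range below $\min(s,1)$.)

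The main obstacle is Step 1 when the kernel is not sign-definite near the boundary: the integrand is not guaranteed positive, so the upper bound on $F_2$ alone does not suffice. Since $F_2\ge0$, however, it is enough to bound $|\mathcal{G}_{s'}|$, and the subordination formula in fact gives $\mathcal{G}_{s'}\ge0$ because the Dirichlet heat kernel is nonnegative. This resolves the sign issue. A second subtlety is justifying the termwise exchange of expectation and eigen-sum; I would do this first for a truncated sum $j\le J$ and then pass to the limit by monotone convergence.
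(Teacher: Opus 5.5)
Your proposal is correct and follows the paper's proof essentially step for step: expand the $H^{-s}(D)$ norm in Dirichlet eigenfunctions, use the covariance formula, bound $F_2(\gamma^2 g_n)$ via Corollary~\ref{cor:green:pointwise:upperbound:corollary}, bound the fractional kernel by $c\,|x-y|_2^{2s-2}$ through the heat-kernel representation, and conclude with Markov's inequality and Rellich compactness. The differences are cosmetic. You work directly with the kernel of $(I-\Delta)^{-s'}$, where the paper bounds $(1+\lambda_k)^{-s}\le\lambda_k^{-s}$. You fix a single intermediate $s'<1$, which avoids the paper's separate cases $s=1$ and $s>1$. You also state explicitly the nonnegativity of the kernel and the handling of the finitely many $n$ below the random threshold, both of which the paper uses only implicitly.
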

\begin{proof} For the convenience of the reader, we will keep the structure of the proof given in \cite[Proposition 4.2]{andres2025scaling}.

  \smallskip
  \textsc{Step 1.} In a first step, we show that for $\prob_{0}$-a.e.\ $\omega$
  \begin{align}
    \label{eq:uniform:boundedness}
    \sup_{n \geq \const[N]{Nconst:ass:green:bound}(\om,C_d r_D)}
    \meanPhi^{\omega}\Bigl[ \| \wick{F\left(\ga \Phi_{n}\right)} \|_{H^{-s}(D)}^{2}\Bigr]
    \;<\;
    \infty,
  \end{align}
  for any $s > \ga^2 \be C_{\mathrm{HK}}$.
  By \cite[Theorem~6.3.1]{davies1995spectral}, for any bounded domain $D \subset \mathbb{R}^{d}$, the negative Dirichlet Laplacian, $-\Delta$, on $D$ has an empty essential spectrum and a compact resolvent. Furthermore, by \cite[Corollary~4.2.3]{davies1995spectral} there exists a complete orthonormal set of eigenfunctions $(e_{k}^{D})_{k \in \mathbb{N}}$ of $-\Delta$ in $L^{2}(D)$ with corresponding eigenvalues $\lambda_{k}^{D}$ of $-\Delta$ satisfying $0 < \lambda_{1}^{D}$ and $\lambda_{k}^{D} \leq \lambda_{k+1}^{D}$ for any $k \geq 1$. Hence, in view of \eqref{eq:def:sobolev:scalarproduct:Hs}, $\bigl((1-\Delta)^{-s/2}e_{k}^{D}\bigr)_{k \in \mathbb{N}}$ is an orthonormal basis of $H^{s}_0(D)$ and
  \begin{align*}
    \|\wick{F\left(\ga \Phi_{n}\right)}\|_{H^{-s}(D)}^{2}
    \;\ldef\;
    \sum_{k=1}^{\infty}
    % \frac{\Phi_{n}^{D}(e_{n}^{D})^{2}}{(1+\lambda_{n}^{D})^{s}}
    \frac{1}{(1+\lambda_{k}^{D})^{s}}\, \wick{F\left(\ga \Phi_{n}\right)}(e^{D}_k)^{2},
  \end{align*}
  cf.~\cite[Proof of Lemma~5.5]{giacomin2001equilibrium}, \cite[Lemma~4.1]{cipriani2020discrete} and \cite[p.~1872f.]{chiarini2024stochastic} (note that the exponent in the definition of the Sobolev space and its dual given in \cite{chiarini2024stochastic} differs from ours by a factor of $2$). Further, we define for $s >0$ 
  \begin{align}\label{eq:definition:G_s:fractional:kernel}
    G_s(x,y)
    \,\ldef\,
    \sum_{k \geq 0} \frac{1}{(\la^{D}_k)^s} e^{D}_k(x) e^{D}_k(y),
  \end{align}
  and observe that by Lemma~\ref{lem:G_s:fractional:kernel:bounds}, there exists a $c \equiv c(s) < \infty$ such that 
  \begin{align}\label{eq:G_s:fractional:kernel:bounds}
    G_s(x,y)
    \,\leq\,
    c
    \begin{cases}
      |x-y|_2^{2s - 2},
      &\quad 0\,<\,s\,<\,d/2,
      \\[.5ex] 
      \log \left(\frac{1}{|x-y|_2}\right)\,+\, 1,
      &\quad s\,=\,d/2,
      \\[.5ex] 
      1,
      &\quad s\,>\,d/2.
    \end{cases}
  \end{align}
  From \eqref{eq:covariance:formula:dgff:no:smear} in Lemma~\ref{lem:covariance:formulas:tested:fields}, it follows that
  \begin{align*}
    \meanPhi^{\omega}\bigl[\left(\wick{F\left(\ga \Phi_{n}\right)}(e_{k}^{D})\right)^2\bigr]
    &\,=\,
    \scprreal{e_{k}^{D}}{F_2(\ga^2 G_n) e_{k}^{D}}{D}
    \nonumber\\[.5ex]
    &\,=\,
    \int_{D \times D} e^{D}_k (x) e^{D}_k (y) F_2(\ga^2 g^{\om}_{nD}(\lfloor n x \rfloor,\lfloor n y \rfloor)) \, \md x \md y,
  \end{align*}
  and hence, using Fubini, estimating $1+ \la^{D}_k > \la^{D}_k$ and recalling \eqref{eq:definition:G_s:fractional:kernel} in the first step and then applying Corollary~\ref{cor:green:pointwise:upperbound:corollary} combined with the estimate in \eqref{eq:green:upper:bound:corollary:applied} yields 
  \begin{align*}
    \meanPhi^{\omega}\Bigl[\|\wick{F\left(\ga \Phi_{n}\right)}\|_{H^{-s}(D)}^{2}\Bigr]
    &\,<\,
    \int_{D\times D} G_s(x,y) F_2(\ga^2 g^{\om}_{nD}(\lfloor n x \rfloor,\lfloor n y \rfloor))  \, \md x \md y
    \nonumber\\[.5ex]
    &\,\leq\,
    c(\om) \int_{D\times D} G_s(x,y) |x-y|_2^{- \ga^{2} \be 2 C_{\mathrm{HK}}}  \, \md x \md y
  \end{align*}
  for any $n \geq \const[N]{Nconst:ass:green:bound}(\om,C_d r_D)$, where we abbreviate $c(\om) \equiv M e^{\ga^2 \be \const[C]{const:pointwise:schroedinger:bound:corollary:2d} N_{\mathrm{HK}}(\om)} < \infty$, which is finite for $\prob_{0}$-a.e.\ $\om$. 

  Now, first assume that $\ga^2 \be C_{\mathrm{HK}} <s < 1 (=d/2)$. Then, inserting the first bound in \eqref{eq:G_s:fractional:kernel:bounds} yields
  \begin{align*}
    \meanPhi^{\omega}\Bigl[\|\wick{F\left(\ga \Phi_{n}\right)}\|_{H^{-s}(D)}^{2}\Bigr]
    \,\leq\, 
    c(\om) c \int_{D \times D} |x-y|_2^{2s - 2 - \ga^{2} \be 2 C_{\mathrm{HK}}} \, \md x \md y, 
  \end{align*}
  which is finite in $d=2$ if $s-\ga^{2} \be C_{\mathrm{HK}} > 0$, hence if $s > \ga^{2} \be C_{\mathrm{HK}}$. 

  If $s= 1$, inserting the second bound in \eqref{eq:G_s:fractional:kernel:bounds} yields 
  \begin{align*}
    \meanPhi^{\omega}\Bigl[\|\wick{F\left(\ga \Phi_{n}\right)}\|_{H^{-s}(D)}^{2}\Bigr]
    \leq
    c(\om) c \int_{D \times D} \left(\log\left(\frac{1}{|x-y|_2}\right) + 1 \right)|x-y|_2^{- \ga^{2} \be 2 C_{\mathrm{HK}}}  \, \md x \md y,
  \end{align*}
  which is finite for any $\ga^{2} \be C_{\mathrm{HK}}<1$. Similarly, for any $s > 1$, \eqref{eq:G_s:fractional:kernel:bounds} ensures that $G_s(x,y) \leq c$ and hence, the corresponding integral is again finite for any $\ga^{2} \be C_{\mathrm{HK}}<1$ and any $s > 1$. Hence, we have shown \eqref{eq:uniform:boundedness} for any $s > \ga^2 \be C_{\mathrm{HK}}$. 
  \smallskip

  \textsc{Step 2.} 
  In order to prove tightness, fix $s > \ga^2 \be C_{\mathrm{HK}}$. Then, there exists $t > \ga^2 \be C_{\mathrm{HK}}$ such that $s > t > \ga^2 \be C_{\mathrm{HK}}$. By \emph{Step 1}, for any $\varepsilon > 0$ there exists $R_{\varepsilon} \in (0, \infty)$ such that, for $\prob_{0}$-a.e.\ $\omega$,
  \begin{align*}
    \sup_{n \in \mathbb{N}}
    \probPhi^{\omega}\bigl[ \|\wick{F\left(\ga \Phi_{n}\right)}\|_{H^{-t}(D)}^{2} > R_{\varepsilon}\bigr]
    \;\leq\;
    \frac{1}{R_{\varepsilon}}
    \sup_{n \in \mathbb{N}}
    \meanPhi^{\omega}\Bigl[ \|\wick{F\left(\ga \Phi_{n}\right)}\|_{H^{-t}(D)}^{2}\Bigr]
    \;\leq\;
    \varepsilon.
  \end{align*}
  On the other hand, by Rellich's theorem, see \cite[Proposition~4.3.4]{taylor2023partial}, the inclusion operator $H_{0}^{s}(D) \hookrightarrow H_{0}^{t}(D)$ is compact. This implies that, for any $R > 0$, the closed ball $\bigl\{G \in H^{-t}(D) : \| G \|_{H^{-t}(D)} \leq R\bigr\}$ is compact in $H^{-s}(D)$, which yields tightness of $\left\{\wick{F\left(\ga \Phi_{n}\right)}\right\}_{n \in \bbN}$ in $H^{-s}(D)$. 
\end{proof}
\begin{proof}[Proof of Theorem~\ref{thm:mainresult:1}]
  Follows immediately from Proposition~\ref{prop:DGFF:tightness} and applying Theorem~\ref{thm:marginal:convergence:analytic:field} with $K=1$.
\end{proof}
\begin{proof}[Proof of Theorem~\ref{thm:mainresult:2}]
  Note that Theorem~\ref{thm:marginal:convergence:analytic:field} and the Cramer Wold device imply that the vector
  \begin{align*}
    &\left(\scprfield{\wick{F_1(\ga_1 \Phi_n)}}{f_1},\dots, \scprfield{\wick{F_K(\ga_K \Phi_n)}}{f_K} \right)
    \\[.5ex]
    &\qquad 
    \,\underset{n \to \infty}{\overset{\text{law}}{\;\longrightarrow\;}}\,
    \left(\scprfield{\wick{\th_0 F_1(\ga_1 \th_0^{-\frac{1}{2}}\Psi)}}{f_1},\dots,  \scprfield{\wick{\th_0 F_K(\ga_K \th_0^{-\frac{1}{2}}\Psi)}}{f_K}\right)
  \end{align*}
  converges for any $(f_k)_{k=1}^K \in H^{-s_1}(D) \times \dots \times H^{-s_K}(D)$. Moreover, Proposition~\ref{prop:DGFF:tightness} implies the tightness of each component, hence, using a union bound, tightness of the whole vector on the tuple space follows. 
\end{proof}

\appendix
\section{Extension of the Ergodic Theorem}\label{sec:appendix:ergodic:theorem}
Recall the definitions of $\th_0$ and $1^n_{\cC_{\infty}(\om)}(x,y)$. The following is a slight reformulation of \cite[Theorem 5.3]{flegel2019homogenization}, which in turn is already a generalisation of the earlier result in \cite[Theorem 3]{boivin2003spectral}:
\begin{theorem}[Ergodic Theorem]\label{thm:boivin:ergodic:theorem}
  Let $D \subset \bbR^d$ be bounded and suppose that $\prob$ satisfies Assumption~\ref{ass:law}. Then, for any bounded function $h \colon D \to \bbR$,
  \begin{align*}
    \lim_{n \to \infty} \int_{D} h(x) 1^n_{\cC_{\infty}(\om)}(x) \,\md x 
    \,=\,
    \th_0 \int_{D} h(x) \,\md x, 
  \end{align*}
  for $\prob_0$-a.e.\ $\om$. Moreover, the null set does not depend on $h$.
\end{theorem}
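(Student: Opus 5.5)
The plan is to read the statement as weak-$*$ convergence in $L^{\infty}(D)$ of the densities $f_n^{\om}(x) \ldef \indicator_{\{\lfloor nx \rfloor \in \cC_{\infty}(\om)\}}$ towards the constant $\th_0$. Since $D$ is bounded, every bounded $h \colon D \to \bbR$ lies in $L^1(D)$. The densities satisfy $0 \leq f_n^{\om} \leq 1$ uniformly in $n$ and $\om$. It therefore suffices to prove the convergence on one fixed countable family of test functions whose linear span is dense in $L^1(D)$. If this holds off a single null set, that null set works for every bounded $h$, which is exactly the ``moreover'' part of the statement.

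\emph{Step 1 (balls).} Fix $R$ with $D \subset [-R,R]^d$ and let $B$ be an open Euclidean ball in $[-R,R]^d$. Decompose $\bbR^d$ into the cubes $n^{-1}(z+[0,1)^d)$, $z \in \bbZ^d$, on each of which $f_n^{\om}$ is constant. Then
\begin{align*}
  \int_B f^{\om}_n(x)\,\md x
  \,=\,
  n^{-d}\sum_{z \in (nB)\cap\bbZ^d} \indicator_{\{0 \in \cC_{\infty}\}}\circ\tau_z(\om)
  \,+\, O\bigl(n^{-1}\bigr),
\end{align*}
where the error accounts for the cubes meeting $\partial(nB)$. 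There are $O(n^{d-1})$ such cubes, each of volume $n^{-d}$, and the error is uniform over $B$ since the radii are bounded by $R$. Applying Proposition~\ref{prop:krengel_pyke} to $f = \indicator_{\{0\in\cC_{\infty}\}} \in L^1(\Om)$, whose mean is $\th_0$, gives $\int_B f_n^{\om}\,\md x \to \th_0|B|$ uniformly over all such balls, $\prob$-a.s. Because $\prob_0 \ll \prob$, the same holds $\prob_0$-a.s. This produces one null set that works for all balls simultaneously, and hence for all indicators $\indicator_{B\cap D}$, after writing $\indicator_{B\cap D}$ via finite unions or differences if needed. More simply, one can restrict to rational balls contained in $D$ together with $\indicator_D$ itself.

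\emph{Step 2 (approximation).} Simple functions built from countably many rational balls (or dyadic cubes, whose indicators can be handled by the same counting argument) are dense in $L^1(D)$. For bounded $h$ and any such finite combination $g$, the bound $0\le f_n^{\om}\le 1$ gives
\begin{align*}
  \Bigl|\int_D h f^{\om}_n - \th_0\int_D h\Bigr|
  \,\leq\,
  \Bigl|\int_D g f^{\om}_n - \th_0\int_D g\Bigr|
  \,+\,
  2\Norm{h-g}{L^1(D)}.
\end{align*}
Taking $\limsup_n$ and then letting $g \to h$ in $L^1(D)$ concludes the proof on the null set fixed in Step 1.

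The main, though modest, obstacle is the boundary bookkeeping in Step 1. There I must check that $\lfloor nx\rfloor = z$ exactly on the cube $n^{-1}(z+[0,1)^d)$, and that the mismatch between the lattice sum over $(nB)\cap\bbZ^d$ and the integral over $B$ is controlled by the number of boundary cubes, uniformly over balls. Otherwise the argument is a direct consequence of the uniform ergodic theorem of Krengel and Pyke.
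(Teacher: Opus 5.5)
Your proof is correct, but it takes a different route from the paper. The paper rewrites $\int_D h(x)\indicator_{\{\lfloor nx\rfloor\in\cC_{\infty}(\om)\}}\,\md x$ as the weighted lattice sum $n^{-d}\sum_{z} h_n(z/n)\,\indicator_{\{0\in\cC_{\infty}(\tau_z\om)\}}$, where $h_n$ are $n$-dependent cube averages of $h$. It then invokes the ergodic theorem with varying test functions of Flegel--Heida--Slowik (their Theorem~5.3, generalising Boivin--Derriennic), using Lebesgue differentiation to get $h_n\to h$ a.e. Both the treatment of general bounded $h$ and the independence of the null set from $h$ are left to that cited result.

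You instead apply an ergodic theorem only to the single function $\indicator_{\{0\in\cC_{\infty}\}}$ on a countable family of balls, via Proposition~\ref{prop:krengel_pyke}, which the paper already quotes. The uniform bound $0\le f_n^{\om}\le 1$ then does the rest through an $L^1$ density argument. This is more elementary and makes the ``moreover'' clause transparent. It even gives the convergence for every $h\in L^1(D)$ off one null set, without assuming $h$ bounded. The price is that it uses boundedness of the ergodic weight. The weighted-ergodic-theorem route is the one that adapts directly to unbounded weights such as $\mu^{\om}$ or $\nu^{\om}$; in your approach you would then additionally need an a.s.\ bound on $\sup_n\int |f_n^{\om}|^p$ together with H\"older.

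One point in Step~1 should be tidied up. For a general domain, $\indicator_{B\cap D}$ cannot be written through finite unions or differences of balls. Adding $\indicator_D$ to your family is also not justified by Step~1, and for open $D$ it is unnecessary anyway. The clean version is to fix an open cube $U=(-R,R)^d\supset D$, extend $h$ by zero, and use rational balls contained in $U$. Density of the linear span of their indicators in $L^1(U)$ does not follow from finite unions either, since balls overlap. It follows from the Vitali covering lemma, or by duality: if $\phi\in L^\infty(U)$ satisfies $\int_B\phi=0$ for all such balls, then $\phi=0$ a.e.\ by Lebesgue differentiation. This also covers bounded $D$ that are not open, which the statement allows. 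If you prefer dyadic cubes instead, note that Proposition~\ref{prop:krengel_pyke} as quoted only covers balls, so you would need a separate ergodic theorem for shifted boxes.
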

\begin{proof}
  Note that we can rewrite the above integral defining the cube average $h_{n}(z) \ldef n^{d} \int_{z + [0,1/n)^d} h(x)\, \md x$ as 
  \begin{align*}
    \int_{D} h(x) \indicator_{\lfloor nx \rfloor \in \cC_{\infty}(\om)} \, \md x
    \;=\;
    \frac{1}{n^d} \sum_{z \in nD \cap \bbZ^d} h_n\left(\frac{z}{n}\right) \indicator_{0 \in \cC_{\infty}(\tau_{z} \om)},
  \end{align*}
  hence, the convergence follows upon an application of \cite[Theorem 5.3]{flegel2019homogenization}, since we assumed that $h$ is bounded and the Lebesgue differentiation Theorem guarantees that $h_n \to h$ almost everywhere. 
\end{proof}
\begin{proof}[Proof of Lemma~\ref{lem:ergodic:lemma:extend:LocHoelder}]
  We first extend $h$ by $0$ on $\partial D$ to define the smooth approximation $h^{\ve}(x,y) \ldef \int \int \rh^{\ve}_x(z) \rh^{\ve}_y(z') h(z,z') \, \md z \md z'$ for every $(x,y) \in \bar{D}\times \bar{D}$ and $\ve>0$ and observe that  
  \begin{align}\label{eq:lem:ergodic:mollifier:delta:epsilon}
    \lim_{\ve \downarrow 0} \Norm{h^{\ve} - h}{L^1(D \times D)}
    \,=\,
    0,
    \;
    \text{and}
    \;
    h^{\ve} \in C(\bar{D} \times \bar{D}).
  \end{align}
  Further, we define the following subalgebra of $C(\bar{D} \times \bar{D})$ as 
  \begin{align*}
    \cA
    \,\ldef\,
    \left\{ h_M(x,y) = \sum_{i =1}^M f_i(x) g_i(y) \,:\, M \in \bbN, \,\text{and}\, f_i,g_i  \in C(\bar{D})\right\},
  \end{align*}
  and one readily verifies that $\cA$ inherits the point seperating property from $ C(\bar{D})$ and also contains a non-zero constant function. By the Stone-Weierstrass Theorem, $\cA$ is dense in $C(\bar{D}\times \bar{D})$ and hence, for any $\ve >0$, there exists a sequence $\{h^{\ve}_M \}_M \subset \cA$ such that $h^{\ve}_M$ converges uniformly on $\bar{D}\times \bar{D}$ to $h^{\ve}$ as $M \to \infty$ for any $\ve >0$. Moreover, $h^{\ve}_M \leq C \Norm{h}{\infty}$ and, by the triangle inequality and \eqref{eq:lem:ergodic:mollifier:delta:epsilon},
  \begin{align}\label{eq:ergodic:lemma:extension:L1:approx}
    \lim_{\ve \downarrow 0} \lim_{M \to \infty} \Norm{h^{\ve}_M - h}{L^1(D \times D)}
    \,=\,
    0.
  \end{align}
  Further, since $h$ and $h^{\ve}_M$ are bounded, there exists a constant $c>0$ depending on $|D|$ such that both $\int h^{\ve}_M (x,y) \md x$ and $\int h(x,y) \md y$ are in the intervall $I_h = [-c\Norm{h}{\infty},c\Norm{h}{\infty}]$ for any $x,y \in D$. By the local Hoelderness of $\cH$, there exist a $\ga \in (0,1]$ and a constant $L \equiv L(I_h)$ such that $|\cH(a)-\cH(b)|\leq L |a-b|^{\ga}$ for any $a,b \in I_h$ and thus, we can approximate 
  \begin{align}\label{eq:ergodic:lemma:extension:approx:L1:h:eps:M}
    \lim_{\ve \downarrow 0} &\lim_{M \to \infty} \int_{D} \left|\cH \left( \int_{D} h^{\ve}_M(x,y)\,
        \md x\, \right)
      \,-\,
      \int_{D} \cH \left( \int_{D} h(x,y)\,
        \md x\, \right)\right| f(y)\md y
    \nonumber\\[.5ex]
    &\,\leq\,
    \lim_{\ve \downarrow 0} \lim_{M \to \infty} 
    \Norm{f}{\infty} L \int_{D} \int_{D} \left|
      h^{\ve}_M(x,y) - h(x,y)
    \right|^{\ga}
    \,\md x \md y
    \nonumber\\[.5ex]
    &\,\leq\,
    \lim_{\ve \downarrow 0} \lim_{M \to \infty} 
    \Norm{f}{\infty} L
    |D \times D|^{1-\ga} \left(\int_{D} \int_{D} \left|
        h^{\ve}_M(x,y) - h(x,y)
      \right|
      \,\md x \md y\right)^{\ga}
    \,=\,
    0,
  \end{align}
  where the last step involved Jensens inequality and convergence follows from \eqref{eq:ergodic:lemma:extension:L1:approx}. Using that $1^n_{\cC_{\infty}}\leq 1$, one repeats the same argument to approximate $h(x,y)1^n_{\cC_{\infty}}(x)$ by $h^{\ve}_M(x,y)1^n_{\cC_{\infty}}(x)$ uniformly in $n$. Hence, it is only left to verify that 
  \begin{align*}
    \lim_{n \to \infty} \int_{D} \cH &\left( \int_{D} h^{\ve}_M(x,y)1^n_{\cC_{\infty}(\om)}(x)\,
      \md x\, \right) f(y) 1^n_{\cC_{\infty}(\om)}(y) \md y
    \nonumber\\[.5ex]
    &\,=\,
    \th_0 \int_{D} \cH \left( \th_0 \int_{D} h^{\ve}_M(x,y)\,
      \md x\, \right) f(y)  \md y,
  \end{align*}
  for any $\ve, M$ fixed, since the result will then follow upon first letting $n\to \infty$ and then proceeding as in \eqref{eq:ergodic:lemma:extension:approx:L1:h:eps:M}. Since $h^{\ve}_M \in \cA$, there exist (bounded) $f_i,g_i \in C(\bar{D})$ such that $h^{\ve}_M(x,y) = \sum_{i=1}^{M} f_i(x) g_i(y)$ and hence, the left hand side above equals
  \begin{align*}
    &\int_D \cH \left(\sum_{i=1}^M  g_i(y) \int_{D} f_i(x)1^n_{\cC_{\infty}(\om)}(x)\,
      \md x\, \right)f(y) 1^n_{\cC_{\infty}(\om)}(y)  \md y
    \nonumber\\[.5ex]
    &\,=\,
    \int_D \cH \left(\sum_{i=1}^M  g_i(y) A^i\,\right) f(y)1^n_{\cC_{\infty}(\om)}(y) \md y
    \nonumber\\[.5ex]
    &\qquad\,+\,
    \int_D \cH \left( \left(\sum_{i=1}^M  g_i(y) A^i_n(\om)\,\right) - \cH \left(\sum_{i=1}^M  g_i(y) A^i\,\right) \right)f(y) 1^n_{\cC_{\infty}(\om)}(y) \md y
  \end{align*}
  where we defined $A^i_n(\om) \ldef \int_{D} f_i(x)1^n_{\cC_{\infty}(\om)}(x)\,\md x$ and $A^i \ldef \th_0 \int_{D} f_i(x)\,\md x$. Now, by Theorem~\ref{thm:boivin:ergodic:theorem}, we know that $\lim_{n \to \infty} A^i_n(\om) = A^i$ for any $i \leq M$ and hence, the second integral above vanishes by an application of the domiated convergence Theorem as $n \to \infty$ for any $\ve,M$ fixed and $\prob_0$ -a.e.\ $\om$. Again by Theorem~\ref{thm:boivin:ergodic:theorem}, the first integral converges to 
  \begin{align*}
    \lim_{n \to \infty} \int_D \cH \left(\sum_{i=1}^M  g_i(y) A^i\,\right) f(y) 1^n_{\cC_{\infty}(\om)}(y) \md y
    \,=\,
    \th_0 \int_D \cH \left(\sum_{i=1}^M  g_i(y) A^i\,\right) f(y)\md y,
  \end{align*}
  which concludes the proof since, recalling the definitions above, $\sum_{i=1}^M  g_i(y) A^i = \sum_{i=1}^M g_i(y) \th_0 \int_{D} f_i(x)\,\md x = \th_0 \int_D h^{\ve}_M(x,y) \,\md x$.

  Note that \eqref{eq:ergodic:lemma:extend:LocHoelder:H:mapsorigin:to:origin} follows from \eqref{eq:ergodic:lemma:extend:LocHoelder} since $\cH(0)=0$ readily implies that $\cH(a 1^n_{\cC_{\infty}(\om)}(x,y)) = \cH(a 1^n_{\cC_{\infty}(\om)}(x)) 1^n_{\cC_{\infty}(\om)}(y)$ for any $a \in \bbR$. 
\end{proof}

\section{Analytical Lemmas}\label{sec:appendix:analytical:results}
\begin{proof}[Proof of Lemma~\ref{lem:continuous:Green's:smeared:converge}]
  We start with $g^{\Si,\ve,\ve}_D$: Note that by the definition of the mollifier $\rh^{\ve}_x$, we have $\rh^{\ve}_x (z) \rh^{\ve}_y (z') \leq \ve^{-2d} \Norm{\rh}{\infty}^2 \indicator_{B_{x,\ve}\times B_{y,\ve}}(z,z')$ and $\int \int \rh^{\ve}_x (z) \rh^{\ve}_y (z') \, \md z \md z'=1$. Hence, since $\ve^{-2d} \leq c (|B_{x,\ve}\times B_{y,\ve}|)^{-1}$,
  \begin{align}\label{eq:proof:lem:green:smeared:eps:eps:LDT:application}
    \left|g^{\Si,\ve,\ve}_D(x,y) - g^{\Si}_D(x,y)\right|
    &\,=\,
    \left|\int \int \rh^{\ve}_x (z) \rh^{\ve}_y (z') \left(g^{\Si}_D(z,z') -  g^{\Si}_D(x,y)\right)\, \md z \md z'\right|
    \nonumber\\[.5ex]
    &\,\leq\,
    \frac{c}{|B_{x,\ve}\times B_{y,\ve}|} \int_{B_{x,\ve}\times B_{y,\ve}} \left|g^{\Si}_D(z,z') -  g^{\Si}_D(x,y)\right| \, \md z \md z'. 
  \end{align}
  Since $|B_{x,\ve}\times B_{y,\ve}| \downarrow \{(x,y)\}$ as $\ve \downarrow 0$ and $g^{\Si}_D \in L^1(D \times D)$, the Lebesque differentiation theorem implies that the right hand side above tends to $0$ as $\ve \downarrow 0$ for Lebesgue almost every $(x,y) \in \bbR^{d} \times \bbR^{d}$. Since $\cH$ is continuous, this also implies that $\cH(g^{\Si,\ve,\ve}_D(x,y))$ converges to $\cH(g^{\Si}_D(x,y))$ for for Lebesgue almost every $(x,y) \in \bbR^{d} \times \bbR^{d}$. It only remains to argue why it is valid to swap limits. For this end, notice that the convexity of $\cH$ combined with an application of Jensen's inequality yields the dominant sequence
  \begin{align}\label{eq:proof:lem:green:smeared:eps:eps:apply:Jensen:application}
    \left|\cH(g^{\Si,\ve,\ve}_D(x,y)) \right|
    \,\leq\,
    \int \int \rh^{\ve}_x(z) \rh^{\ve}_y(z') \left|\cH(g^{\Si}_D(z,z'))\right|\,\md z \md z'
    \,\rdef\,
    \cH^{\ve,\ve}(g^{\Si}_D(x,y))
  \end{align}
  for any $\ve >0$. Using that mollifiers integrate to $1$ and that $
  \cH \circ g^{\Si}_D \in L^1(D \times D)$, one easily verifies that $\cH^{\ve,\ve}(g^{\Si}_D(x,y))$ is integrable for any $\ve>0$. Moreover, again since $\cH \circ g^{\Si}_D \in L^1(D \times D)$, one gets convergence of $\cH^{\ve,\ve}(g^{\Si}_D(x,y))$ to $\left|\cH(g^{\Si}_D (x,y))\right|$ in $L^1(D \times D)$ as $\ve \downarrow 0$. Hence, by the generalised DCT, it follows that $\cH(g^{\Si,\ve,\ve}(x,y))$ converges to $ \cH (g^{\Si}(x,y))$ in $L^1(D\times D)$.
  
  We next show the claim for $g^{\Si,0,\ve}_D$. Fix $x$ and observe that the function $g_x(y) = g^{\Si}_D(x,y)$ is in $L^1(D)$ for Lebesgue a.e.\ $x$, since $\int \int g_x(y) \,\md y \md x < \infty$ and $g_x \geq 0$ implies that $x \mapsto \int g_x(y) \,\md y$ is finite for a.e.\ $x$. Hence, an analogous application of the Lebesgue differentiation theorem as in \eqref{eq:proof:lem:green:smeared:eps:eps:LDT:application} yields that for almost every $x,y \in \bbR^d$
  \begin{align*}
    \left|g^{\Si,0,\de}_D(x,y) - g^{\Si}_D(x,y)\right|
    \,\leq\,
    \frac{c}{|B_{y,\de}|} \int_{B_{y,\de}} \left|g^{\Si,0,\de}_D(x,z') -  g^{\Si}_D(x,y)\right| \, \md z'
    \,\xrightarrow{\de \downarrow 0}\,
    0,
  \end{align*}
  which implies that $\cH(g^{\Si,0,\de}(x,y)) \to \cH(g^{\Si}(x,y))$ for a.e.\ $x,y \in \bbR^d$. Choosing the dominant sequence $\cH^{0,\de}(g^{\Si}_D(x,y))$ which converges to $\cH(g^{\Si}_D(x,y))$ in $L^1(D \times D)$, we get that $\cH(g^{\Si,0,\de}(x,y)) \to \cH(g^{\Si}(x,y))$ in $L^1(D \times D)$ by the generalised DCT.
  
  To conclude, we treat $g^{\Si,\ve,\de}$: We first fix $\de >0$. Since the function $g^{\de}_y(x) \ldef g^{\Si,0,\de}_D(x,y) $ is in $L^1(D)$ for any fixed $y$ and $\de>0$, an application of the Lebesgue differentiation theorem yields that $g^{\Si,\ve,\de}_D(x,y)\to g^{\Si,0,\de}_D(x,y)$ for Lebesgue a.e.\ $x \in \bbR^d$ and any fixed $y \in \bbR^d$, $\de >0$. Hence, again also $\cH(g^{\Si,\ve,\de}_D(x,y)) \to \cH(g^{\Si,0,\de}_D(x,y))$ for Lebesgue a.e.\ $x$. As a dominant sequence, we again pick $\cH^{\ve,\de}(g^{\Si}_D(x,y))$, which converges in $L^1(D \times D)$ to $\cH^{0,\de}(g^{\Si}_D(x,y))$ for any fixed $\de$. Hence, by the generalised DCT, $\cH(g^{\Si,\ve,\de}(x,y))$ converges to $\cH(g^{\Si,0,\de}_D (x,y))$ in $L^1(D \times D)$ as $\ve \downarrow 0$ for any fixed $\de >0$. Now, the previous step implies that $\cH(g^{\Si,0,\de}_D (x,y))$ converges to $\cH(g^{\Si}_D (x,y))$ in $L^1(D \times D)$ as $\de \downarrow 0$, which concludes the proof.
\end{proof}
\begin{proof}[Proof of Lemma \ref{lem:continuous:Green's:smeared:converge:tested:sobolev}]
  Since $f \in L^2(D)$ is finite almost everywhere, Lemma~\ref{lem:continuous:Green's:smeared:converge} and the local integrability of $H(\ga^2 \th_0^{-1} \cdot) \circ g^{\Si}_D$ for any $\ga < \sqrt{\th_0/ \be C_{\Si}}$ (cf.\ \eqref{eq:H_ga:locally:integrable}) imply that 
  \begin{align*}
    \lim_{\ve,\de \downarrow 0} f(x) f(y) H(\ga^{2}\th_0^{-1} g^{\Si,\ve,\de}_D(x,y))
    \,=\,
    f(x) f(y) H(\ga^{2}\th_0^{-1} g^{\Si}_D(x,y)),
  \end{align*}
  consequently, it only remains to find a suitable dominant. Distinguishing cases where $|x-y|_2 \geq 3 \ve$ or else yields for any $\ve,\de>0$ and some $c>0$
  \begin{align*}
    \left.
      \mspace{-6mu}
      \begin{array}{lr}
        &\int \int \rh^{\ve}_x(z) \rh^{\de}_y(z') |z-z'|_2^{-\tilde{\ga} \be C_{\Si}} \,\md z \md z'
        \\[.5ex] 
        &\int \rh^{\ve}_y(z') |x-z'|_2^{-\tilde{\ga} \be C_{\Si}} \,\md z'
      \end{array}
    \right\}
    \,\leq\,
    c  |x-y|_2^{-\tilde{\ga} \be C_{\Si}}
    \qquad
    \text{for a.e. $x,y$}.
  \end{align*}
  Hence, using the convexity of $H$ as in \eqref{eq:proof:lem:green:smeared:eps:eps:apply:Jensen:application} and then inserting the bounds from above and from \eqref{eq:H_ga:locally:integrable} yields the dominant
  \begin{align*}
    \left|f(x)f(y)H(\ga^{2}\th_0^{-1} g^{\Si,\ve,\de}_D(x,y)) \right|
    \,\leq\,
    c \frac{f(x) f(y)}{|x-y|_2^{\ga^{2}\th_0^{-1}  \be 2 C_{\Si}}}.
  \end{align*}
  Now, applying the Hardy-Littlewood-Sobolev inequality as stated in \cite[Chapter 4.3, page 106]{lieb2001analysis} with $\la = \ga^{2}\th_0^{-1}  \be 2 C_{\Si}<2 (=d)$ and $p=r=2d/(2d-\la) \leq 4/(4-2)=2$ yields
  \begin{align*}
    \int_{D \times D}\frac{f(x) f(y)}{|x-y|_2^{\tilde{\ga} \be C_{\Si}}} \, \md x \md y
    \,\leq\,
    c \Norm{f}{p} \Norm{f}{r}
    \,\leq\,
    c \Norm{f}{2}^2
    \,<\,
    \infty,
  \end{align*}
  where we also used that that $L^{p}(D)\subseteq L^{2}(D)$ for $p \leq 2$ since $D$ is bounded. Hence, the claim follows from the dominated convergence theorem. 
\end{proof}
\begin{lemma}\label{lem:G_s:fractional:kernel:bounds}
  For $d\geq 2$, $G_s$ as defined in \eqref{eq:definition:G_s:fractional:kernel}, then there exists $c \equiv c(s) <\infty$  such that 
  \begin{align}
    G_s(x,y)
    \,\leq\,
    c
    \begin{cases}
      |x-y|_2^{2s - 2},
      &\quad 0\,<\,s\,<\,d/2,
      \\[.5ex] 
      \log \left(\frac{1}{|x-y|_2}\right)\,+\, 1,
      &\quad s\,=\,d/2,
      \\[.5ex] 
      1,
      &\quad s\,>\,d/2.
    \end{cases}
  \end{align}
  for any $x \not= y$. 
\end{lemma}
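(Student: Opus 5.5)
The natural route is the heat-kernel representation of fractional powers. Since $(e^{D}_k,\lambda^D_k)$ are the Dirichlet eigenpairs of $-\Delta$ on $D$, I would write $\lambda^{-s} = \Gamma(s)^{-1}\int_0^\infty t^{s-1}e^{-\lambda t}\,\md t$. Summing over $k$ (Tonelli, for $x\neq y$, after checking absolute convergence below) should give
\begin{align*}
  G_s(x,y) \,=\, \frac{1}{\Gamma(s)} \int_0^\infty t^{s-1}\, p^D_t(x,y)\, \md t,
\end{align*}
where $p^D_t(x,y)=\sum_k e^{-\lambda^D_k t} e^D_k(x)e^D_k(y)$ is the Dirichlet heat kernel of $D$. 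Since $p^D_t\ge 0$, the problem reduces to pointwise bounds on $p^D_t$.

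I would use two standard facts. The first is domination by the free heat kernel, $p^D_t(x,y)\le (4\pi t)^{-d/2}e^{-|x-y|_2^2/(4t)}$, which follows from the maximum principle or from the probabilistic representation as a killed Brownian motion. The second is exponential decay for large times, $p^D_t(x,y)\le c\,e^{-\lambda^D_1 t/2}$ for $t\ge1$. To get this, I would apply Cauchy--Schwarz in the eigen-expansion exactly as in the proof of Theorem~\ref{thm:green:pointwise:upperbound}:
\begin{align*}
  p^D_t(x,y)\,\le\, e^{-\lambda_1^D t/2}\sqrt{p^D_{t/2}(x,x)\,p^D_{t/2}(y,y)} \,\le\, c\, e^{-\lambda^D_1 t/2}.
\end{align*}
The last step uses $p^D_{1/2}(x,x)\le (2\pi)^{-d/2}$. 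With these, I split the time integral at $t=1$. The part $\int_1^\infty t^{s-1}e^{-\lambda^D_1 t/2}\,\md t$ is bounded by a constant $c(s)$, using $\lambda^D_1>0$. The part over $[0,1]$ is bounded by
\begin{align*}
  \int_0^1 t^{s-1-d/2}e^{-r^2/(4t)}\,\md t,\qquad r=|x-y|_2 .
\end{align*}

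The remaining step is this elementary integral. If $s>d/2$, it is bounded by $\int_0^1 t^{s-1-d/2}\,\md t<\infty$, uniformly in $r$. If $s<d/2$, the substitution $t=r^2u$ gives at most $r^{2s-d}\int_0^\infty u^{s-1-d/2}e^{-1/(4u)}\,\md u$, and the integral converges at both ends precisely because $s<d/2$. If $s=d/2$, I would cut at $t=r^2$: the region $t\le r^2$ contributes a constant via the same substitution, and $\int_{r^2}^1 t^{-1}\,\md t=2\log(1/r)$ for $r<1$; since $D$ is bounded, $r\ge1$ contributes only a constant.

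One thing does not match. The statement writes $|x-y|_2^{2s-2}$, whereas this argument produces $|x-y|_2^{2s-d}$. The two agree in $d=2$, which is the only case used in Proposition~\ref{prop:DGFF:tightness}. So I would prove the $2s-d$ version and note that it coincides with the stated one when $d=2$, the dimension of interest.

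The main obstacle is justifying the kernel identity: convergence of $\sum_k \lambda_k^{-s}e_k(x)e_k(y)$ pointwise for $x\neq y$, and the interchange of sum and integral, for arbitrary bounded $D$ with possibly unbounded eigenfunctions. I would handle this by defining $G_s$ through the integral formula, which is justified in $L^2$ by the spectral theorem since the kernel is nonnegative and finite off the diagonal. I would then note that the series equals this kernel almost everywhere, which is all the $L^2$-pairing in the tightness proof requires.
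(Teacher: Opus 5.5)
Your proposal is correct and follows essentially the same route as the paper: the representation $\lambda^{-s}=\Gamma(s)^{-1}\int_0^\infty t^{s-1}e^{-\lambda t}\,\md t$, Gaussian domination of the Dirichlet heat kernel with the substitution $u=|x-y|_2^2/t$, and exponential decay from the spectral representation for large times, organised into the three cases $s<d/2$, $s=d/2$ and $s>d/2$. Your remark on the exponent is also right, since the paper's own computation produces $c\,|x-y|_2^{2s-d}$. The stated $2s-2$ is therefore a typo, and it is harmless only because the tightness argument uses $d=2$.
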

\begin{proof}
  Recall the definition of the Gamma function $\Ga(s) \ldef \int_0^{\infty} u^{s-1} e^{-u} \, \md u$. Since $1/(\la^{D}_k)^s = 1/\Ga(s) \int_0^{\infty} t^{s-1}e^{-\la^{D}_k t} \, \md t$, we get 
  \begin{align*}
    G_s(x,y) 
    =
    \frac{1}{\Ga(s)} \int_{0}^{\infty} t^{s-1} \left(\sum_{k \geq 0} e^{-\la^{D}_k t} e^{D}_k(x) e^{D}_k(y)\right) \, \md t 
    =
    \frac{1}{\Ga(s)} \int_{0}^{\infty} t^{s-1} k^{D}_{t}(x,y) \, \md t,
  \end{align*} 
  where in the last step we used the spectral representation of the killed heat kernel of a standard Brownian motion denoted by $k^{D}_{t}(x,y)$. 

  Now, assume that $s <d/2$. Using that $k^{D}_{t}(x,y) \leq (4 \pi t)^{-d/2} t^{-d/2} e^{-\frac{|x-y|_2^2}{4t}}$ and substituting $u= |x-y|_2^2/t$ yields 
  \begin{align*}
    G_s(x,y)
    &\,\leq\,
    c \int_{0}^{\infty} t^{s-1-d/2} e^{-\frac{|x-y|_2^2}{4t}}
    \,=\,
    c \int_{0}^{\infty} \left(\frac{|x-y|_2^2}{u}\right)^{s-1-d/2} e^{-\frac{u}{4}} |x-y|_2^2 u^{-2} \,\md u
    \nonumber\\[.5ex]
    &\,=\,
    c |x-y|_2^{2s - d} \int_{0}^{\infty} u^{d/2 -s -1} e^{-u/4} \, \md u
    \,\leq\,
    c |x-y|_2^{2s-d},
  \end{align*}
  since for $s < d/2$ the integral 
  \begin{align*}
    \int_{0}^{\infty} u^{d/2 -s -1} e^{-u/4} \, \md u
    \,\leq\,
    \int_{0}^{1} u^{d/2 -s -1}  \, \md u 
    \,+\,
    \int_{1}^{\infty}  c e^{-\tilde{c} u} \, \md u
    \,<\,
    \infty
  \end{align*} 
  is finite. 

  If $s= d/2$, we first assume that $|x-y|\leq 1$ and split the integral using that $k^{D}_{t}(x,y) \leq c t^{d/2} e^{-|x-y|_2^2/4t}$ 
  \begin{align*}
    G_s(x,y)
    \,\leq\,
    \int_{0}^{|x-y|_2^2} c t^{-1} e^{-|x-y|_2^2/4t} \, \md t
    \,+\,
    \int_{|x-y|_2^2}^{1} c t^{-1} \, \md t
    \,+\,
    \int_{1}^{\infty} c k^{D}_{t}(x,y) \, \md t.
  \end{align*}
  Substituting again $u = |x-y|_2^2/t$ and proceeding as before one readily finds that the first integral is bounded by $\int_{1}^{\infty} \frac{e^{-u/4}}{u} \, \md u \,<\,\infty$. The second integral is bounded by $2c \log(1/|x-y|_2^2)$. For the last integral, we use that the spectral representation implies that $k^{D}_{t}(x,y)\leq c e^{-\la^{D}_1 t}$ and hence, since $\la^{D}_1 >0$, the last integral is bounded by a constant as well. If $|x-y|_2^2>1$, one splits the integral into $t \leq 1$ and $t \geq 1$, and observes that both integrals contribute a finite constant. 
  
  Finally, if $s > d/2$, using again that $k^{D}_{t}(x,y) \leq c t^{d/2}$ for $ t \leq 1$ and that $k^{D}_{t}(x,y)\leq c e^{-\la^{D}_1 t}$ for any $t >1$ yields 
  \begin{align*}
    G_s(x,y) 
    \,\leq\,
    c \int_{0}^{1} t^{s-d/2 -1} \, \md t 
    \,+\,
    c \int_{1}^{\infty} e^{-\la^{D}_1 t} \, \md t
    \,\leq\,
    \tilde{c},
  \end{align*}
  where the first integral is finite since $s > d/2$ and the second is finite since $\la^{D}_1 >0$. 
\end{proof}

\subsection*{Acknowledgement.} C.F.P. is grateful to Perla Sousi for helpful discussions throughout the project, especially on a first draft of the bounds on the Green's kernel.

\bibliographystyle{alpha}
\bibliography{literature}

\end{document}